\newtheorem{theorem}{Theorem}[section]
\newtheorem{lemma}[theorem]{Lemma}
\newtheorem{note}[theorem]{Note}
\newtheorem{prop}[theorem]{Proposition}
\newtheorem{cor}[theorem]{Corollary}
\newtheorem*{Theorem1'}{Theorem 1'}
\theoremstyle{definition}
\theoremstyle{remark}
\newcommand{\groupPresentation}[2]{\langle #1 \mid #2 \rangle}
\newcommand{\gen}[1]{\langle #1 \rangle}
\newcommand \GL{{\mathrm{GL}}}
\newcommand \Z{{\mathbb Z}}
\newcommand \N{{\mathbb N}}
\title[The automorphism group of finite $2$-groups associated to the Macdonald group]{The automorphism group of finite $2$-groups associated to the Macdonald group}
\author{Alexander Montoya Ocampo}
\address{Department of Mathematics and Statistics, University of Regina, Canada}
\email{alexandermontoya1996@gmail.com}
\author{Fernando Szechtman}
\address{Department of Mathematics and Statistics, University of Regina, Canada}
\email{fernando.szechtman@gmail.com}
\thanks{The second author was partially supported by NSERC grant RGPIN-2020-04062}
\subjclass[2020]{20D45, 20D15, 20D20}
\keywords{Automorphism Group, $2$-group, Macdonald Group, Deficiency Zero}
\begin{document}

\begin{abstract} We consider the Macdonald group 
$\langle x,y\,|\, x^{[x,y]}=x^{1+2^m\ell},\, y^{[y,x]}=y^{1+2^m\ell}\rangle$ and its Sylow 2-subgroup 
$J=\langle x,y\,|\, x^{[x,y]}=x^{1+2^m\ell},\, y^{[y,x]}=y^{1+2^m\ell}, x^{2^{3m-1}}=y^{2^{3m-1}}=1\rangle$,
where $m\geq 1$ and $\ell$ is odd. Then $J$ has order $2^{7m-3}$, and nilpotency class 5 if $m>1$ and 3 if $m=1$.
We determine the automorphism group of the 2-groups $J$, $H=J/Z(J)$ and $K=H/Z(H)$, where
$|H|=2^{6m-3}$ and $|K|=2^{5m-3}$. Explicit multiplication, power, and commutator formulas for $J$, $H$, and $K$ are given, and used
in the calculation of $\mathrm{Aut}(J)$, $\mathrm{Aut}(H)$, and $\mathrm{Aut}(K)$.
\end{abstract}

\maketitle

\section{Introduction}

We fix a positive integer $m$ and odd integers $\ell$ and $\alpha$, with $\alpha=1+2^m\ell$, throughout the paper,
and consider the groups
$$
J=J(\alpha)=\langle x,y\,|\, x^{[x,y]}=x^{\alpha},\, y^{[y,x]}=y^{\alpha}, x^{2^{3m-1}}=y^{2^{3m-1}}=1\rangle,
$$
$$
H=J/Z(J)=\langle x,y\,|\, x^{[x,y]}=x^{\alpha},\, y^{[y,x]}=y^{\alpha}, x^{2^{2m-1}}=y^{2^{2m-1}}=1\rangle,
$$
$$
K=H/Z(H)=\langle x,y\,|\, x^{[x,y]}=x^{\alpha},\, y^{[y,x]}=y^{\alpha}, x^{2^{2m-1}}=y^{2^{2m-1}}=[x,y]^{2^{m-1}}=1\rangle.
$$
As shown in \cite{MS}, $J$, $H$, and $K$ have order $2^{7m-3}$, $2^{6m-3}$, and $2^{5m-3}$,
and nilpotency class 5, 4, and 3 if $m>1$, and 3, 2, and 1 if $m=1$. In this paper we determine their automorphism groups.

Given a group $T$ and $i\geq 0$, we let $\langle 1\rangle=Z_0(T),Z_1(T),Z_2(T),\dots$ stand for the terms of the upper central series of~$T$, so that $Z_{i+1}(T)/Z_i(T)$ is the center of $T/Z_i(T)$, and we write $\mathrm{Aut}_i(T)$ for the kernel of the canonical map
$\mathrm{Aut}(T)\to \mathrm{Aut}(T/Z_i(T))$.

In Theorem \ref{teoremafinal}, for $m>1$, we determine the isomorphism type of each factor of the series 
$$
1\subset \mathrm{Aut}_1(J)\subset\mathrm{Aut}_2(J)\subset \mathrm{Aut}_3(J)\subset
\mathrm{Inn}(J)\mathrm{Aut}_3(J)\subset\mathrm{Aut}_4(J)\subset \mathrm{Aut}(J)
$$
of normal subgroups of $\mathrm{Aut}(J)$. The order of $\mathrm{Aut}(J)$ is obtained as an immediate consequence. It turns out
that $|\mathrm{Aut}(J)|=2^{8m}$ if $m>2$ and $|\mathrm{Aut}(J)|=2^{15}$ if $m=2$. We also find 8 explicit generators for $\mathrm{Aut}(J)$
when $m>2$, and 7 when $m=2$. 

The corresponding results for $\mathrm{Aut}(H)$ and $\mathrm{Aut}(K)$ are derived in Sections \ref{autgmodh} and \ref{autgmod2}, respectively.

The structure of $J$ is analyzed in \cite{MS}, in wider generality, where we find that $J$ is the Sylow 2-subgroup of the finite, nilpotent, Macdonald group \cite{M}
$$
G=\langle x,y\,|\, x^{[x,y]}=x^{\alpha},\, y^{[y,x]}=y^{\alpha}\rangle.
$$
The automorphism group of all remaining Sylow subgroups of $G$ are studied in \cite{MS2}, but for an exceptional case.

As a finite group with 2 generators and 2 relations, $G$ is
a finite group of deficiency zero. Such groups have attracted considerable attention
since Mennicke \cite{Me} found the first finite group requiring 3 generators and 3 relations,
namely $M(a,b,c)=\langle x,y,z\,|\, x^y=x^a, y^{z}=y^b, z^x=z^c\rangle$, where $|a|,|b|,|c|\geq 2$. 
His group was investigated in depth in \cite{A,AA,Ja,JR,S,W} but its order is still known
in certain cases only. Following Mennicke's paper, other examples
of finite groups of deficiency zero were encountered in
\cite{Al,AS,AS2,CR,CR2, CRT, J, K, M, P, R, W2, W3, W4, W5}. 

One motivation for our study of $\mathrm{Aut}(J)$ was the detailed and recent studies of the automorphism groups of other families of finite groups of prime power order, as found in \cite{BC, C, C2, Ma, Ma2}, for
instance, all of which are concerned with metacyclic groups. The structure of $J$ is considerably more complicated. 
A second incentive for our investigation of $\mathrm{Aut}(J)$ is to apply it to
the question that arises by replacing $\alpha$ with another integer $\alpha'=1+2^m\ell'$ with $\ell'$ odd: when are 
the finite 2-groups $J(\alpha)$ and $J(\alpha')$ isomorphic? This turns
out to be a nontrivial isomorphism problem, settled in \cite{MS3},
where the automorphism groups of $J$, $H$, and~$K$ play a critical role. In 
analyzing $\mathrm{Aut}(J)$ we found the structure of the factors $\mathrm{Aut}(J)/\mathrm{Aut}_4(J)$ and
$\mathrm{Aut}_4(J)/\mathrm{Inn}(J)\mathrm{Aut}_3(J)$ so challenging that we felt compelled to
derive explicit multiplication, power, and commutator formulas for~$J$, as exhibited in Section \ref{section.formulas.J}.
Our commutator formula from Theorem \ref{thm.commutators.J} plays a key role in the structure of the Wamsley
groups $
W_{\pm}(\beta,\gamma,\delta)=\langle X,Y,Z\,|\, X^Z=X^\beta, Y^{Z^{\pm 1}}=Y^\gamma, Z^\delta=[X,Y]\rangle,
$
as defined in \cite{W2}. See \cite{PS} for details of this application. As very little is known at the moment
about the Wamsley
groups, this application provides a third reason for the existence of this paper.
The proofs of the multiplication, power, and commutator formulas are fairly long, so we placed them in an appendix at 
the end of the paper (Theorem \ref{thm.commutators.J} is proven in more generality than the other formulas
so that it can be applied in \cite{PS}, which deals with all Sylow subgroups of the Wamsley groups).
These formulas as well as the orders of the automorphism groups of $J$, $H$, and $K$
agree with the output produced by GAP and Magma, respectively, for all tested values of $\alpha$.
A final reason
to produce this paper is that our strategy to approach $\mathrm{Aut}(T)$, when $T$ is $J$, $H$, or~$K$,
may be of use
to study the automorphism groups of other finite nilpotent groups $T$. This strategy
essentially consists of considering the series of normal subgroups of $\mathrm{Aut}(T)$
\begin{equation}
\label{norseraca}
\mathrm{Aut}_0(T)=1\subseteq \mathrm{Aut}_1(T)\subseteq\dots\subseteq \mathrm{Aut}_c(T)=\mathrm{Aut}(T),
\end{equation}
where $c$ is the nilpotency class of $T$, and understanding the factors $\mathrm{Aut}_{i+1}(T)/\mathrm{Aut}_i(T)$ of (\ref{norseraca}) 
for all $0\leq i<c$. An imbedding tool, namely Proposition \ref{zi2}, 
allows us to derive information from the foregoing sections of (\ref{norseraca}) arising from $T/Z(T)$ to those arising from $T$.
Thus, we begin our work with $T=K=H/Z(H)$, continue with $T=H=J/Z(J)$, and culminate it with $T=J$. We next describe
how this process is realized and assume for the remaining of this section that $m>1$.

For $T=K$, we have $c=3$, with $\mathrm{Aut}_{1}(K)\cong Z_1(K)^2$ and 
$\mathrm{Aut}_{2}(K)/\mathrm{Aut}_{1}(K)\cong (Z_2(K)/(Z_1(K))^2$ via 
natural maps, as indicated in Propositions \ref{autk} and \ref{autexcel}. Here 
$Z_1(K)\cong (\Z/2^{m-1}\Z)^2$ and $Z_2(K)/Z_1(K)\cong (\Z/2\Z)^2\times\Z/2^{m-1}\Z$, as described in Section \ref{autgmod2}.
Regarding $\mathrm{Aut}(K)/\mathrm{Aut}_2(K)$, we can view $K/Z_2(K)$ as a free module of rank 2 over $\Z/2^{m-1}\Z$.
As $\mathrm{Aut}_2(K)$ is the kernel of the natural map $\mathrm{Aut}(K)\to \mathrm{Aut}(K/Z_2(K))$,
we can see $\mathrm{Aut}(K)/\mathrm{Aut}_2(K)$ as a subgroup of $\GL_2(\Z/2^{m-1}\Z)$. This allows us
to show in Theorem \ref{autk6} that $\mathrm{Aut}(K)/\mathrm{Aut}_2(K)\cong ((\Z/2^{m-1}\Z)^\times \times (\Z/2\Z)^2)\rtimes \Z/2\Z$ if $m>2$, with $\Z/2\Z$
acting by inversion on $(\Z/2^{m-1}\Z)^\times$ and by switching factors on $(\Z/2\Z)^2$, and $\mathrm{Aut}(K)/\mathrm{Aut}_2(K)\cong
\GL_2(\Z/2\Z)$ if $m=2$. 

For $T=H$, we have $c=4$, and $\mathrm{Aut}_{2}(H)\cong Z_2(H)\times Z_2(H)$
via a natural map, as indicated in Proposition \ref{auth}. Here $Z_2(H)\cong (\Z/2^{m-1}\Z)^2\times \Z/2^{m}\Z$, as given in Section 
\ref{autgmodh}. The factor $\mathrm{Aut}_{3}(H)/\mathrm{Aut}_{2}(H)$ is difficult to compute directly,
so we determine $\mathrm{Inn}(H)\mathrm{Aut}_2(H)/\mathrm{Aut}_2(H)$ and $\mathrm{Aut}_3(H)/\mathrm{Inn}(H)\mathrm{Aut}_2(H)$
instead. General principles easily yield $\mathrm{Inn}(H)\mathrm{Aut}_2(H)/\mathrm{Aut}_2(H)\cong (\Z/2^{m-1}\Z)^2$,
and we have $\mathrm{Aut}_3(H)/\mathrm{Inn}(H)\mathrm{Aut}_2(H)\hookrightarrow \mathrm{Aut}_2(K)/\mathrm{Inn}(K)\mathrm{Aut}_1(K)$
by the imbedding tool of Proposition \ref{zi2}.
The latter group is isomorphic to $(\Z/2\Z)^4$ by Proposition \ref{outow}. This is used in Theorem \ref{tamanio}
to prove that $\mathrm{Aut}_3(H)/\mathrm{Inn}(H)\mathrm{Aut}_2(H)$ is the Klein 4-group. The last factor, namely
$\mathrm{Aut}(H)/\mathrm{Aut}_3(H)$, imbeds into $\mathrm{Aut}(K)/\mathrm{Aut}_2(K)$, again by Proposition \ref{zi2},
where $\mathrm{Aut}(K)/\mathrm{Aut}_2(K)$ was described above. This description allows us to prove in Theorem \ref{gpexth}  that
$\mathrm{Aut}(H)/\mathrm{Aut}_3(H)$ is isomorphic to the Klein 4-group if $m>2$ and to $\Z/2\Z$ if $m=2$.

For $T=J$, we have $c=5$, with $\mathrm{Aut}_1(J)\cong (\Z/2^m\Z)^2$ and
$\mathrm{Aut}_2(J)/\mathrm{Aut}_1(J)\cong (\Z/2^m\Z)^2$ via natural maps, as given in Propositions \ref{autj1} and \ref{autjo}.
Applying Proposition \ref{zi2} twice and appealing to the foregoing description of $\mathrm{Aut}_1(K)$, we have
$$
\mathrm{Aut}_3(J)/\mathrm{Aut}_2(J)\hookrightarrow \mathrm{Aut}_2(H)/\mathrm{Aut}_1(H)\hookrightarrow
\mathrm{Aut}_1(K)\cong Z_1(K)\times Z_1(K)\cong (\Z/2^{m-1}\Z)^4.
$$
This allows us in Theorem \ref{autj3} to compute $\mathrm{Aut}_3(J)/\mathrm{Aut}_2(J)\cong(\Z/2^{m-1}\Z)^2\times \Z/2\Z$
as well as $\mathrm{Inn}(J)\mathrm{Aut}_3(J)/\mathrm{Aut}_3(J)\cong (\Z/2^{m-1}\Z)^2$. We have
$\mathrm{Aut}(J)/\mathrm{Aut}_4(J)\hookrightarrow \mathrm{Aut}(H)/\mathrm{Aut}_3(H)$  and
$\mathrm{Aut}_4(J)/\mathrm{Inn}(J)\mathrm{Aut}_3(J)\hookrightarrow \mathrm{Aut}_3(H)/\mathrm{Inn}(H)\mathrm{Aut}_2(H)$
by Proposition \ref{zi2}.
Our prior work on $\mathrm{Aut}(H)$ allows us in Theorem \ref{teoremafinal} to show that
$\mathrm{Aut}_4(J)/\mathrm{Inn}(J)\mathrm{Aut}_3(J)\cong \Z/2\Z$ as well as the fact that $\mathrm{Aut}(J)/\mathrm{Aut}_4(J)$
is the Klein 4-group if $m>2$ and $\Z/2\Z$ if $m=2$.

Regarding notation, given a group $T$, 
we write
$[a,b]=a^{-1}b^{-1}ab,\; b^a=a^{-1}ba,\; {}^a b=aba^{-1}$, for $a,b\in T$, recalling that
\begin{equation}\label{comfor} [a,bc]=[a,c][a,b]^c,\; [bc,a]=[b,a]^c\; [c,a].
\end{equation}
The order of a torsion element $a$ of $T$ will be denoted by $o(a)$. If $S$ is a normal subgroup of $T$,
we sometimes write $\overline{T}$ for $T/S$ and $\overline{t}$ for $tS\in \overline{T}$. We let $\delta:T\to\mathrm{Aut}(T)$ stand for the canonical map $a\mapsto a\delta$, where $a\delta$ is conjugation by $a$,
namely the map $b\mapsto b^a$. Function composition proceeds from left to right.
The automorphisms of $T$ belonging to $\mathrm{Aut}_i(T)$ will be said to be $i$-central,
while those in $\mathrm{Aut}_1(T)$ will simply be said to be central. Note that central
and inner automorphisms commute with each other. Observe also that for $a\in T$, $a\delta\in \mathrm{Aut}_i(T)$ if and only if
$a\in Z_{i+1}(T)$, so that $\mathrm{Inn}(T)\cap \mathrm{Aut}_i(T)=Z_{i+1}(T)\delta\cong Z_{i+1}(T)/Z(T)$. If $n\in\N$, we let $T^n$ stand for the direct product of $n$ copies of $T$.

\section{Background on the Sylow 2-subgroup of the Macdonald group}\label{backJ}

We set
\begin{equation}\label{presn}
    J = \groupPresentation{A,B}{A^{[A,B]}=A^\alpha,\; B^{[B,A]}=B^\alpha,\; A^{2^{3m-1}}=1,\; B^{2^{3m-1}}=1},\quad
    C = [A,B].
\end{equation}
Note the existence of an automorphism $\theta$ of $J$ satisfying $A\leftrightarrow B$, $C\leftrightarrow C^{-1}$.
The following facts were proven in \cite{MS}, as indicated below.

The group $J$ has order $2^{7m-3}$ and every element of $J$ can be written uniquely in the form \cite[Theorem 7.1]{MS}
\begin{equation}\label{gol}
    A^i B^j C^k,\quad 0\leq i<2^{3m-1},\;
    0\leq j<2^{2m-1},\; 0\leq k<2^{2m-1}.
\end{equation}

Moreover $A$, $B$,  $C$ have orders \cite[Proposition 7.2]{MS}
\begin{equation}\label{gol2}
    o(A)=2^{3m-1},\;
    o(B)=2^{3m-1},\;
    o(C)=2^{2m},
\end{equation}
and the following relations hold \cite[Section 6]{MS}:
\begin{equation}\label{put}
    A^{2^{2m-1}}B^{2^{2m-1}}=1,\;
    A^{2^{3m-2}}=B^{2^{3m-2}}=C^{2^{2m-1}}.
\end{equation}

If $m>1$ \cite[Proposition 8.1]{MS}, then $J$ is nilpotent of class 5, where
\begin{equation}\label{put3}
    Z_1(J)=\langle A^{2^{2m-1}}\rangle,\;
    Z_2(J)=\langle  A^{2^{2m-1}},  C^{2^{m-1}}\rangle,
\end{equation}
\begin{equation}\label{put4}
    Z_3(J)=\langle A^{2^{m}},  B^{2^{m}}, C^{2^{m-1}}\rangle,\;
    Z_4(J)=\langle  A^{2^{m-1}},  B^{2^{m-1}}, C\rangle,
\end{equation}
{$Z_3(J)$ is an abelian group of order $2^{4m-2}$, and $J$ has exponent $2^{3m-1}$ \cite[Proposition 9.2]{MS}.

If $m=1$ \cite[Proposition 8.2]{MS}, then $J$ is nilpotent of class 3, and isomorphic to the generalized quaternion group
$Q= \groupPresentation{u,v}{u^4=v^2,\; u^v=u^{-1}}$ of order 16. We thus have
$\mathrm{Aut}(Q)=\mathrm{Hol}(\Z/8\Z)$, the holomorph of $\Z/8\Z$;
$Q/Z_1(Q)\cong D_8$, the dihedral group of order 8, with $\mathrm{Aut}(D_8)=D_8$; 
$Q/Z_2(Q)\cong (\Z/2\Z)^2$, with $\mathrm{Aut}((\Z/2\Z)^2)=\GL_2(\Z/2\Z)$.

\section{Structural formulas for $J$}\label{section.formulas.J}

We fix the following integers for the remainder of the paper: $s = 2^{m-1}$, $u = s^2$, as well as $r = s/2$ provided $m>1$.
In this notation, we have $o(A) = 4us = o(B)$, $o(C)= 4u$, $A^{2u} B^{2u} = 1$, $A^{2us} = C^{2u} = B^{2us}$, and $Z_1(J) = \langle A^{2u}\rangle$.

For all $n\in\Z$, define
\[
\phi(n) = \frac{(n-1)n}{2},
\qquad\varphi(n) = \frac{n(n-1)(n-2)}{6}.
\]

All statements below are demonstrated in the Appendix.

\begin{theorem}\label{thm.commutators.J}
    For all $n,t\in\Z$ the following identities hold in $J$:
    \begin{align}
        &[C^n,A^t]
        = A^{-2s\ell nt - 4u\ell^2\phi(n)t},\label{eq.comj.2}\\
        &[C^n,B^t]
        = B^{2s\ell nt - 4u\ell^2\phi(n+1)t},\label{eq.comj.3}\\
        &[A^n,B^t]
        = A^{\textup{exp} A} B^{\textup{exp} B} C^{\textup{exp} C} A^{\xi_1},\label{eq.comj.4}
    \end{align}
    where $\textup{exp} A = -2s\ell\phi(n)t$, $\textup{exp} B = 2s\ell n\phi(t)$, $\textup{exp}\,C = nt - 2s\ell\phi(n)\phi(t)$ and
    \[
    \xi_1(n,t)
    = 2u\ell^2\{2\varphi(n+1)t + (2n-7)\phi(n)\phi(t) - 2n\phi(t) - (3n+1)n\varphi(t)\}.
    \]
\end{theorem}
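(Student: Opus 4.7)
The plan is to prove the three identities in sequence: (\ref{eq.comj.2}) by double induction, (\ref{eq.comj.3}) by applying the automorphism $\theta$ to (\ref{eq.comj.2}), and (\ref{eq.comj.4}) by inducting first on $t$ at $n=1$ and then on $n$, with the previous identities serving as straightening rules throughout. The central arithmetic ingredient is
\[
\alpha^n \;\equiv\; 1 + 2s\ell n + 4u\ell^2 \phi(n) \pmod{o(A)},
\]
valid because $(2s\ell)^3 = 8us\ell^3 = 2\,o(A)\,\ell^3$, together with the ambient relations $A^{2u}B^{2u}=1$ and $A^{2us}=C^{2u}=B^{2us}$ used to convert high powers of $A$, $B$, $C$ into one another.

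To prove (\ref{eq.comj.2}), I would fix $t=1$ and induct on $n$. The base case $[C,A]=A^{-2s\ell}$ is immediate from $A^C=A^\alpha$, and the inductive step reads
\[
[C^{n+1},A] \;=\; [C,A]^{C^n}\cdot[C^n,A] \;=\; A^{-2s\ell\alpha^n}\cdot[C^n,A];
\]
the expansion of $\alpha^n$ above and the identity $\phi(n+1)-\phi(n)=n$ then close the induction. Passing from $t$ to $t+1$ is immediate because $[C^n,A]\in\langle A\rangle$ commutes with $A^t$, so exponents add. For (\ref{eq.comj.3}), I would apply $\theta$ to (\ref{eq.comj.2}) to obtain $[C^{-n},B^t] = B^{-2s\ell nt - 4u\ell^2\phi(n)t}$, then substitute $n\mapsto -n$ and use $\phi(-n)=\phi(n+1)$.

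The proof of (\ref{eq.comj.4}) is the principal difficulty. For $n=1$, I would induct on $t$ via
\[
[A,B^{t+1}] \;=\; [A,B^t]\cdot[A,B]^{B^t} \;=\; [A,B^t]\cdot C\cdot[C,B^t],
\]
with $[C,B^t]$ supplied by (\ref{eq.comj.3}); rewriting the result in the normal form $A^{\ast}B^{\ast}C^{\ast}A^{\ast}$ requires commuting powers of $B$ past $C$, which creates new $A$-contributions through (\ref{eq.comj.2}) and the reduction $B^{2u}=A^{-2u}$. The $n$-step is $[A^{n+1},B^t]=[A,B^t]^{A^n}\cdot[A^n,B^t]$, with conjugation of the intermediate $C^{\ast}$-factor by $A^n$ handled again by (\ref{eq.comj.2}).

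The hard part will be the correction term $\xi_1$. It absorbs every third-order contribution: the cubic term in the expansion of $\alpha^n$ that survives against smaller coefficients after multiplication, the iterated straightening of $B$-powers past $C$-powers, and the reduction of surplus $B^{2u}$ factors to $A^{-2u}$. Because the nilpotency class of $J$ is $5$ for $m>1$, commutators of depth $4$ and $5$ persist and must be tracked precisely, and reassembling them into the exact combination $2\varphi(n+1)t+(2n-7)\phi(n)\phi(t)-2n\phi(t)-(3n+1)n\varphi(t)$, rather than merely an equivalent polynomial modulo $o(A)$, is what forces the bulk of the calculation into the appendix.
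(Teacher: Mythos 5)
Your plan is essentially the paper's proof (given in the Appendix as the more general Theorem \ref{nuevofer}): the first identity from the expansion of $\alpha^n$, the second by applying $\theta$ together with $\phi(-n)=\phi(n+1)$, and the third by a double induction in which the first two identities serve as straightening rules and $\xi_1$ collects the central $A^{2u\ell^2(\cdots)}$ debris. The only structural difference is that you run the double induction for \eqref{eq.comj.4} in the opposite order (all $t$ at $n=1$, then induct on $n$) whereas the paper first establishes $[A^n,B]$ for all $n$ and then inducts on $t$; both orders work because the inductive hypothesis ``for all exponents of the other generator'' supplies the commutators $[A^w,B]$ (resp.\ $[B^w,A^n]$) with $w$ a multiple of $2s$ that arise during straightening.

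One point your plan does not address: the inductions only establish the identities for nonnegative $n$ and $t$, while the statement is for all integers. Since $\phi$ and $\varphi$ carry denominators $2$ and $6$, one cannot simply reduce exponents modulo $o(A)$ or $o(C)$; the paper extends to negative exponents via the periodicities $\phi(n+2\cdot 4usk)\equiv\phi(n)$ and $\varphi(n+6\cdot 4usk)\equiv\varphi(n)\bmod 4us$, replacing $n$ (or $t$) by $n+Nk$ for a suitable multiple $N$ of $o(A)$. This is routine but must be said explicitly for the formula for $\xi_1$, which is cubic in $n$ and $t$, to be meaningful for negative arguments.
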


\begin{theorem}\label{thm.product.J}
    Let $i,j,k,a,b,c\in\Z$. Then
    $
    (A^iB^jC^k)(A^aB^bC^c)
    = A^{\textup{exp} A} B^{\textup{exp} B} C^{\textup{exp} C} A^{\xi_2},
    $
    where
    \begin{align*}
        \textup{exp} A
        &= i + a + 2s\ell\{j\phi(a) - ka\},\\
        \textup{exp} B
        &= j + b + 2s\ell\{kb - jab - \phi(j)a\},\\
        \textup{exp}\, C
        &= k + c - ja + 2s\ell\{jka - \phi(j+1)\phi(a)\},\\
        \xi_2(j,k,a,b)
        = 2u\ell^2 \{\phi(j)\phi(a)(-2j + &2a - 2b + 5)
        - 2\phi(j)a(j + k + b - 1)
        - j\phi(a)(4k - 2a + 1)\\
        + 2\phi(k)(a - b)
        &- ka(j - 2)
        - 2j(ab + \varphi(a+1))
        + (3a + 1)a\varphi(j)\}.
    \end{align*}
\end{theorem}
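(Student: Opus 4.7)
The plan is to prove the formula by direct calculation using Theorem \ref{thm.commutators.J}, systematically rewriting the product $A^iB^jC^k \cdot A^aB^bC^c$ into canonical order $A^?B^?C^?$ with any residual central factor collected in $A^{\xi_2}$ at the end. The basic move is the identity $xy = yx\,[x,y]$: each time a letter is pushed past another, we incur a commutator that Theorem \ref{thm.commutators.J} has already computed in closed form.

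Concretely, I would first commute $C^k$ past $A^a$ via \eqref{eq.comj.2}, then commute $B^j$ past $A^a$ via \eqref{eq.comj.4} (rearranged as $B^jA^a = A^aB^j[A^a,B^j]^{-1}$), and finally commute the remaining $C^k$ past $B^b$ via \eqref{eq.comj.3}. After these three moves the $A$'s sit on the left, the $B$'s in the middle and the $C$'s on the right, with several correction factors interspersed. Each correction from Theorem \ref{thm.commutators.J} splits naturally into a first-order piece (involving $2s\ell$) and a second-order piece (involving $4u\ell^2$). The first-order pieces, being non-central powers of $A$, $B$ or $C$, must still be shifted into position via additional but smaller commutations, producing yet more second-order corrections along the way.

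Any factor with a coefficient of $4u\ell^2$ or higher lies in $\langle A^{2u}\rangle = Z_1(J)$, thanks to the relations \eqref{put} which give $B^{2u}=A^{-2u}$ and $C^{2u}=A^{2us}$. Such factors commute with everything and can be accumulated at the right end as a single power of $A$, namely $A^{\xi_2}$. After carrying out all moves and collecting contributions, the three exponents in the canonical part should match the values $\textup{exp}\,A$, $\textup{exp}\,B$, $\textup{exp}\,C$ given in the statement, and the central accumulation should equal $\xi_2$.

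The main obstacle will be the bookkeeping of second-order contributions. Moving $B^j$ past $A^a$ produces, via \eqref{eq.comj.4}, a $C^{-2s\ell\phi(j)\phi(a)}$ factor at first order; when this $C$-power is then pushed past the subsequent $B^b$ through \eqref{eq.comj.3}, it generates secondary second-order corrections that feed into $\xi_2$. Analogous interactions occur between the $A^{-2s\ell ka}$ piece coming from \eqref{eq.comj.2} and the letters to its right, and the explicit $\xi_1$ correction already present in \eqref{eq.comj.4} contributes directly. Each term of the polynomial $\xi_2(j,k,a,b)$, including the degree-five pieces $\phi(j)\phi(a)(-2j+2a-2b+5)$ and $(3a+1)a\varphi(j)$, must be traced back to one or more of these primary or secondary sources, and the resulting polynomial identity must close exactly. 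This is the lengthy algebraic check that justifies the authors' decision to relegate the verification to the appendix.
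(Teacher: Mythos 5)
Your plan is exactly the paper's proof: push $C^k$ past $A^a$, then $B^j$ past $A^a$, then the accumulated $C$-powers past the $B$'s, using the closed forms of Theorem \ref{thm.commutators.J}, reducing the resulting first-order ($2s\ell$) corrections by further small commutations, and dumping every $4u\ell^2$-order factor into $Z_1(J)=\langle A^{2u}\rangle$ via $B^{2u}=A^{-2u}$ and $C^{2u}=A^{2us}$ to form $A^{\xi_2}$, exactly as in the appendix. The only caveat is that what you present is a correct blueprint rather than a finished proof: the closing polynomial identity for $\xi_2(j,k,a,b)$ (including the simplification of $\xi_1(2s\ell(j\phi(a)-ka),\,j-2s\ell\phi(j)a)$ modulo $4us$) is the actual substance of the paper's argument and still has to be carried out.
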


\begin{cor}\label{cor.inverse.J}
    Let $a,b,c\in\Z$. Then $(A^aB^bC^c)^{-1} = A^{\textup{exp} A} B^{\textup{exp} B} C^{\textup{exp} C} A^{\xi_3}$, where
    \begin{align*}
        \textup{exp} A
        &= -a - 2s\ell(\phi(a+1)b + ac),\\
        \textup{exp} B
        &= -b + 2s\ell(a\phi(b+1) + bc),\\
        \textup{exp}\, C
        &= -c - ab - 2s\ell\phi(a+1)\phi(b),\\
        \xi_3
        = 4u\ell^2\{a\phi(b) - \phi(a)b\}c + \xi_2&(-b,0,-a,0)
        + \xi_2(0,-c,-a-2s\ell\phi(a+1)b,-b+2s\ell a\phi(b+1)).
    \end{align*}
\end{cor}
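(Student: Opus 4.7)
The plan is to derive this from Theorem \ref{thm.product.J} by writing $(A^aB^bC^c)^{-1} = C^{-c}B^{-b}A^{-a}$ and reducing the right-hand side to canonical form in two successive multiplications, collecting all central $A$-tail corrections at the end.

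The first step computes $B^{-b}A^{-a}$ by applying Theorem \ref{thm.product.J} to $(A^0 B^{-b} C^0)$ and $(A^{-a} B^0 C^0)$. Using the elementary identity $\phi(-n) = \phi(n+1)$, this yields $B^{-b}A^{-a} = A^{\alpha}B^{\beta}C^{\gamma}A^{\xi_2(-b,0,-a,0)}$, where $\alpha = -a - 2s\ell\phi(a+1)b$, $\beta = -b + 2s\ell a\phi(b+1)$, and $\gamma = -ab - 2s\ell\phi(a+1)\phi(b)$. Note that $\alpha$ and $\beta$ already match the third and fourth arguments of the second $\xi_2$ in the target formula for $\xi_3$.

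The second step multiplies on the left by $C^{-c}$. The tail $A^{\xi_2(-b,0,-a,0)}$ lies in $\langle A^{2u}\rangle = Z_1(J)$, so it commutes with $C^{-c}$ and can be transported to the end of the word. Applying Theorem \ref{thm.product.J} to $(A^0 B^0 C^{-c})$ and $(A^{\alpha} B^{\beta} C^0)$, and using $\phi(1) = 0$, produces $C^{-c}A^{\alpha}B^{\beta} = A^{\alpha(1+2s\ell c)}B^{\beta(1-2s\ell c)}C^{-c}A^{\xi_2(0,-c,\alpha,\beta)}$. Concatenating with the leftover $C^{\gamma}$ yields the $C$-exponent $\gamma - c$, which coincides with the target value of $\textup{exp}\,C$.

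The final step matches the $A$- and $B$-exponents. Expanding $\alpha(1+2s\ell c)$ and $\beta(1-2s\ell c)$ exposes discrepancies with the target values of $\textup{exp} A$ and $\textup{exp} B$ equal to $-4u\ell^2\phi(a+1)bc$ and $-4u\ell^2 a\phi(b+1)c$ respectively, both multiples of $2u$ and hence central. Invoking the relation $B^{2u} = A^{-2u}$ from (\ref{put}) to recast the $B$-discrepancy as a power of $A$, these corrections combine into $A^{4u\ell^2 c(a\phi(b+1)-b\phi(a+1))}$, which via the identity $\phi(n+1) = \phi(n) + n$ collapses to the first summand of $\xi_3$. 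I expect the principal bookkeeping obstacle to lie in confirming that all stray correction terms really are multiples of $2u$, so that they can be safely absorbed into the final $A$-tail; once this is checked, the rest of the proof is routine algebraic substitution.
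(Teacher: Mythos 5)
Your proposal is correct and follows essentially the same route as the paper: write $(A^aB^bC^c)^{-1}=C^{-c}B^{-b}A^{-a}$, apply Theorem \ref{thm.product.J} twice (using $\phi(-n)=\phi(n+1)$), and absorb the central discrepancies $-4u\ell^2\phi(a+1)bc$ and $-4u\ell^2a\phi(b+1)c$ into the $A$-tail via $A^{2u}B^{2u}=1$ and $\phi(n+1)=\phi(n)+n$. The only cosmetic difference is that you peel off the trailing $C^{\gamma}$ before the second multiplication, which is harmless since the last exponent of the right factor enters the product formula only additively in $\textup{exp}\,C$.
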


\begin{prop}\label{prop.bigCommutator.J}
    Let $i,j,k,a,b,c\in\Z$. Then
    $
    [A^iB^jC^k,A^aB^bC^c]
    \equiv A^{\textup{exp} A} B^{\textup{exp} B}C^{\textup{exp} C}\mod Z_1(J),
    $
    where
    \begin{align*}
        \textup{exp} A
        &= 2s\ell\{j\phi(a) - \phi(i)b + ic - ka\},\\
        \textup{exp} B
        &= 2s\ell\{i\phi(b) - \phi(j)a + kb + j(ib - ab - c)\},\\
        \textup{exp}\, C
        &= ib - ja + 2s\ell\{\phi(a)(\phi(j) + jb) - \phi(i)(\phi(b) + jb) + ijc - kab\}.
    \end{align*}
\end{prop}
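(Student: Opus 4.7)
The plan is to descend to the quotient $H = J/Z_1(J)$, where both Theorem \ref{thm.commutators.J} and Theorem \ref{thm.product.J} acquire simple ``$\ell^2$-free'' versions, and then expand the commutator there via the identities in (\ref{comfor}). From (\ref{put}) we read off $B^{2u} = A^{-2u}$ and $C^{2u} = A^{2us} = (A^{2u})^s$, so all three of $A^{2u}$, $B^{2u}$, $C^{2u}$ lie in $Z_1(J) = \langle A^{2u}\rangle$. Hence $X^{2u\ell^2 N}\in Z_1(J)$ for every $X\in\{A,B,C\}$ and $N\in\Z$, which kills every $\ell^2$-correction in Theorem \ref{thm.commutators.J}, namely the factors $A^{\xi_1}$, $A^{-4u\ell^2\phi(n)t}$, and $B^{-4u\ell^2\phi(n+1)t}$, and likewise kills $A^{\xi_2}$ in Theorem \ref{thm.product.J}. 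Passing to $H$ yields exact identities
\[
[C^n,A^t] = A^{-2s\ell nt},\quad [C^n,B^t] = B^{2s\ell nt},\quad [A^n,B^t] = A^{-2s\ell\phi(n)t}B^{2s\ell n\phi(t)}C^{nt - 2s\ell\phi(n)\phi(t)},
\]
and a product formula in $H$ whose only surviving term is the leading $A^{\textup{exp} A}B^{\textup{exp} B}C^{\textup{exp} C}$ expression.

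Two successive applications of the identities in (\ref{comfor}), first peeling powers of $C^c, B^b, A^a$ off the right argument and then of $A^i, B^j, C^k$ off the left, decompose $[A^iB^jC^k,A^aB^bC^c]$ as an ordered product of the nine basic commutators $[X,Y]$ with $X\in\{A^i,B^j,C^k\}$, $Y\in\{A^a,B^b,C^c\}$, each conjugated by an explicit monomial in $\{B^j,C^k,B^b,C^c\}$. The three diagonal commutators vanish, and each of the surviving six is evaluated by the simplified formulas above. Each conjugation $(A^pB^qC^r)^g$ is then unwound via $(xy)^g = x^g y^g$ and $x^g = x\,[x,g]$, producing only further basic commutators, again expanded by the simplified formulas. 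Finally, repeated use of the simplified product formula multiplies the six conjugated pieces together, and the total $A$-, $B$-, and $C$-exponents are collected and compared with the claimed $\textup{exp} A$, $\textup{exp} B$, $\textup{exp} C$.

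The main obstacle is the bookkeeping for the $C$-exponent. Its leading piece $ib - ja$ is immediate from the $C$-components of $[A^i,B^b]$ and $[B^j,A^a]$, but the accompanying coefficient of $2s\ell$, namely $\phi(a)(\phi(j)+jb) - \phi(i)(\phi(b)+jb) + ijc - kab$, is the algebraic sum of contributions from three distinct sources: the internal $\phi$-corrections already present in the simplified formula for $[A^n,B^t]$; the secondary commutators generated when a $C$-part is conjugated by powers of $A$, $B$, and $C$; and the $2s\ell$-terms produced by the simplified product formula as the six pieces are amalgamated. Recognising this sum in the compact $\phi$-form of the claim requires methodical use of polynomial identities such as $\phi(m+n) = \phi(m)+\phi(n)+mn$ to consolidate the many cross-terms into the stated expression.
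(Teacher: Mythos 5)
Your strategy is genuinely different from the paper's. The paper writes $[A^iB^jC^k,A^aB^bC^c]=u^{-1}v^{-1}uv$ and evaluates it in three applications of Theorem \ref{thm.product.J}, using Corollary \ref{cor.inverse.J} for the two inverses, all reduced modulo $Z_1(J)$; you instead expand the commutator by iterating the identities \eqref{comfor} into six conjugated basic commutators $[X,Y]$ with $X\in\{A^i,B^j,C^k\}$, $Y\in\{A^a,B^b,C^c\}$, and reassemble with the product formula. Your preliminary observation is correct and worth making explicit: since $Z_1(J)=\langle A^{2u}\rangle$, $B^{2u}=A^{-2u}$ and $C^{2u}=A^{2us}$, every correction carrying a factor $2u\ell^2$ or $4u\ell^2$ dies modulo $Z_1(J)$, so Theorems \ref{thm.commutators.J} and \ref{thm.product.J} collapse to the clean forms (these are exactly Corollaries \ref{cor.commutator.H} and \ref{cor.product.H} for $H=J/Z_1(J)$). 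What your route buys is that you never touch $\xi_1$, $\xi_2$, or the inverse formula; what it costs is that the six conjugations spawn secondary commutators whose $C$-parts must be tracked through further conjugations, whereas the paper's route funnels everything through one fixed product formula.

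That said, as written your argument is an outline rather than a proof. The entire content of the proposition is the exact coefficients in $\textup{exp} A$, $\textup{exp} B$, $\textup{exp}\,C$, and your final step --- ``the total exponents are collected and compared with the claim'' --- is precisely the part that is not carried out. Nothing in your plan would fail, but until the collection is actually performed (in particular the consolidation of the $2s\ell$-terms in the $C$-exponent into $\phi(a)(\phi(j)+jb)-\phi(i)(\phi(b)+jb)+ijc-kab$ via $\phi(m+n)=\phi(m)+\phi(n)+mn$ and $\phi(n+1)+\phi(n)=n^2$), the stated formulas have not been verified. You should either complete the computation or at least exhibit the six evaluated pieces and the final amalgamation explicitly.
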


\begin{theorem}\label{thm.power.J}
    Let $a,b,c,n,k,t\in\Z$. Then
    $
    (A^aB^bC^c)^n
    = A^{\textup{exp} A} B^{\textup{exp} B} C^{\textup{exp} C} A^{\xi_4},
    $
    where
    \begin{align*}
        \textup{exp} A
        &= na + 2s\ell\{a^2b\varphi(n) + (\phi(a)b - ac)\phi(n)\},\\
        \textup{exp} B
        &= nb + 2s\ell\{(b(c - ab) - a\phi(b))\phi(n) - 2ab^2\varphi(n)\},\\
        \textup{exp}\, C
        &= nc - ab\phi(n) + 2s\ell\{a^2\phi(b)\varphi(n) + \phi(a)\phi(b)\phi(n) - a^2b^2\sigma_2(1,n) - (\phi(a)b-ac)b\varphi(n+1)\},
    \end{align*}
    \begin{align*}
        \xi_4
        &= 2u\ell^2\{a(\varphi(b)+2\phi(b)-2\phi(c))\phi(n)
        + a^2(3\varphi(b)-2b\phi(b))(2\varphi(n)+\phi(n))\\
        &\qquad\qquad + 2a\phi(b)c\varphi(n) - 2a^2b\phi(b)(\sigma_1(1,n)-\varphi(n))\\
        &\qquad\qquad + (2bc+7\phi(b))(a^2\varphi(n) + \phi(a)\phi(n))\\
        &\qquad\qquad - 2\phi(b)(a+b)(\sigma_1(a,n) - a\varphi(n)) - 2b\sigma_2(a,n)\\
        &\qquad\qquad + (2\phi(b)-2c-b)(a^2b\sigma_2(1,n) + (\phi(a)b-ac)\varphi(n+1))\\
        &\qquad\qquad + 2b(a(\phi(a)b-ac)+b^2(\phi(a)-a^2))(\sigma_1(1,n) + \varphi(n+1))\\
        &\qquad\qquad + 2a^2b^2(a-b)(\sigma_3(n) + \sigma_1(1,n) - \varphi(n))\\
        &\qquad\qquad - 2c((c-ab)b\varphi(n+1) - 2ab^2\sigma_2(1,n))\\
        &\qquad\qquad + 2b(\sigma_1(ab,n) + (\phi(c+1) - abc)\phi(n) - ab(2c+1)\varphi(n))\},
    \end{align*}
    \[
    \sigma_1(k,t) = \frac{k\phi(t)(k\phi(t)-1)}{2},\quad
    \sigma_2(k,t) = \frac{k\phi(t)(k^2\phi(t)-1)}{6},
    \]
    \[
    \sigma_3(t) = \frac{1}{6}\left(\frac{(t-1)t(6(t-1)^3+9(t-1)^2+t-2)}{30} - 3\phi(t)^2 + 4\varphi(t) + 2\phi(t)\right).
    \]
\end{theorem}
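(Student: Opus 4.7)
The plan is to induct on $n$, first handling $n \geq 0$ and then extending to $n < 0$. Write $E_A(n), E_B(n), E_C(n)$ for the three main exponents and $\Xi(n)$ for $\xi_4$. The base cases $n = 0, 1$ are immediate once one checks that $\phi, \varphi, \sigma_1(k,\cdot), \sigma_2(k,\cdot)$, and $\sigma_3$ all vanish at $0$ and $1$, so that $E_A, E_B, E_C, \Xi$ all vanish there. For the inductive step, write
\[
(A^aB^bC^c)^{n+1} \;=\; \bigl(A^{E_A(n)} B^{E_B(n)} C^{E_C(n)} A^{\Xi(n)}\bigr)\cdot \bigl(A^a B^b C^c\bigr),
\]
and observe that $A^{\Xi(n)} \in Z_1(J) = \langle A^{2u}\rangle$ is central, so it commutes through to the far right. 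Applying Theorem~\ref{thm.product.J} with $(i,j,k) = (E_A(n), E_B(n), E_C(n))$, the task reduces to verifying that each of the four claimed exponents at $n+1$ matches what the product formula produces.

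Each verification becomes a polynomial identity in $n, a, b, c$ once $\phi, \varphi, \sigma_1, \sigma_2, \sigma_3$ are expanded. The discrete-calculus inputs are the first differences $\phi(n+1) - \phi(n) = n$ and $\varphi(n+1) - \varphi(n) = \phi(n)$, together with analogous closed-form differences for $\sigma_1(k,n+1) - \sigma_1(k,n)$, $\sigma_2(k,n+1) - \sigma_2(k,n)$, and $\sigma_3(n+1) - \sigma_3(n)$, each obtained by direct computation from the stated formulas. With these in hand, the checks for $E_A(n+1), E_B(n+1), E_C(n+1)$ are straightforward bookkeeping across monomials in $a,b,c$, controlled by the linear and quadratic (in $n$) parts of the claimed expressions.

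The main obstacle is the $\Xi$ identity. Because $\Xi$ carries the coefficient $2u\ell^2$, the element $A^{\Xi}$ lies in $Z_1(J)$ and $\Xi$ need only be correct modulo $2s = o(A)/2u$. Nevertheless, the inductive step funnels contributions into the $A$-exponent from three distinct sources: the $\xi_2$ term of Theorem~\ref{thm.product.J}, the quadratic corrections produced when two factors of the form $X^{2s\ell(\cdots)}$ merge (pushing their combined content into $Z_1(J)$), and the interactions of $B^{E_B(n)}$ and $C^{E_C(n)}$ with the incoming $A^a B^b C^c$ via the commutator identities of Theorem~\ref{thm.commutators.J}. The plan is to form $\Xi(n+1) - \Xi(n)$, expand as a polynomial in $n$, group by the monomials $a^i b^j c^k$ (of total degree at most $4$), and match term-by-term against the product-formula output. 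For $n < 0$, one either invokes Corollary~\ref{cor.inverse.J} on the established positive-$n$ formula or verifies $n = -1$ directly and inducts downward by multiplying by $(A^aB^bC^c)^{-1}$.
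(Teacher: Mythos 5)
Your proposal is correct in outline but takes a genuinely different route from the paper. The paper does not induct on $n$: it expands $(A^aB^bC^c)^n$ directly as $A^{na}(B^bC^c)^{A^{(n-1)a}}\cdots(B^bC^c)^{A^a}(B^bC^c)$, normalizes this word from scratch using the commutator identities of Theorem \ref{thm.commutators.J}, and arrives at explicit finite sums $\sum_{k=0}^{n-1}g_i(k)$ and $\sum_{k=0}^{n-1}g_i(k)k$ which are then evaluated in closed form by Lemma \ref{lemma.sums.J}; the functions $\sigma_1,\sigma_2,\sigma_3$ are precisely those closed forms. Your induction is the ``differentiated'' version of the same computation: the first-difference identities you need for $\phi,\varphi,\sigma_1,\sigma_2,\sigma_3$ (e.g.\ $\sigma_2(1,n+1)-\sigma_2(1,n)=\varphi(n+1)$) are exactly the summand-level content of Lemma \ref{lemma.sums.J}, so the total algebra is comparable. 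What your approach buys is modularity, since Theorem \ref{thm.product.J} is used as a black box rather than reworking everything from the commutator formulas; what it loses is that the closed forms must be known in advance (verification rather than derivation). Two points to be careful about. First, after substituting $(i,j,k)=(E_A(n),E_B(n),E_C(n))$ into the product formula, the resulting exponents agree with $E_A(n+1),E_B(n+1),E_C(n+1)$ only up to multiples of $2u$, which must be converted to central powers of $A$ via $B^{2u}=A^{-2u}$ and $C^{2u}=A^{2us}$ and folded into the $\Xi$ check; you identify this correctly, but note that the $\Xi$ identity is then a congruence modulo $2s$ after dividing by $2u\ell^2$, not a polynomial identity over $\Z$, so $\ell$-dependent reductions (as in the paper's own proofs, where $2u\ell^2\cdot(-7)$ is replaced by $2us$ modulo $4us$) necessarily enter. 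Second, for $n<0$ the downward induction requires the forward-difference congruence at negative arguments; since $\phi,\varphi,\sigma_1,\sigma_2,\sigma_3$ are integer-valued polynomials, their residues modulo $4us$ are periodic in $n$, so validity for all $n\ge 0$ does extend to all $n\in\Z$ --- this is in substance the periodicity argument via \eqref{eq.comj.1new} and \eqref{eq.comj.10new} that the paper itself uses, and it would be cleaner to invoke it directly than to route through Corollary \ref{cor.inverse.J}.
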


\section{Order of the terms of the upper central series of $J$}\label{sizeJ}

Unless otherwise stated we assume from now on that $m>1$. It follows from \eqref{gol2} and \eqref{put3} that $|Z_1(J)|=2^m$.
By \eqref{presn} and \eqref{put3}, we have $Z_2(J)=\langle  A^{2^{2m-1}}\rangle \langle  C^{2^{m-1}}\rangle$,
so \eqref{gol}, \eqref{gol2}, and \eqref{put} yield $|Z_2(J)|=2^{2m}$. By~\eqref{presn}, there is an epimorphism
$f:J\to (\Z/{2^{m-1}}\Z)^2$, whose kernel contains $Z_4(J)$ by \eqref{put4}.
As $|J|=2^{7m-3}$, we infer $|\ker(f)|=2^{5m-1}$. But \eqref{gol} ensures $|Z_4(J)|\geq 2^{5m-1}$,
so $Z_4(J)=\ker(f)$ has order $2^{5m-1}$. The order of $Z_3(J)$ was indicated in Section \ref{backJ}, so we have
\begin{equation}\label{put5}
    \begin{split}
        |Z_1(J)|=2^{m},\;
        |Z_2(J)|=2^{2m},\;&
        |Z_3(J)|=2^{4m-2},\;
        |Z_4(J)|=2^{5m-1},\\
        |J|=2^{7m-3},\;&
        |\mathrm{Inn}(J)|=2^{6m-3}.
    \end{split}
\end{equation}

\section{The automorphism group of $J/Z_2(J)$}\label{autgmod2}

Set $K =J/Z_2(J)$. It follows from \eqref{presn}, \eqref{put3}, and \cite[Proposition 5.1]{MS} that $K$ has presentation
\begin{equation}\label{put6}
    K = \groupPresentation{a,b}{a^{[a,b]}=a^\alpha,\; b^{[b,a]}=b^\alpha,\; a^{2^{2m-1}}=1,\; b^{2^{2m-1}}=1,\; [a,b]^{2^{m-1}}=1},
\end{equation}
and we set $c = [a,b]$. We have an automomorphism $a\leftrightarrow b$, $c\leftrightarrow c^{-1}$, say $\mu$, of $K$.

We deduce from \eqref{put3} and \eqref{put4} that
\begin{equation}\label{put7}
    Z_1(K)=\langle a^{2^{m}}, b^{2^{m}}\rangle,\;
    Z_2(K)=\langle a^{2^{m-1}}, b^{2^{m-1}}, c\rangle.
\end{equation}
The indicated orders from \eqref{put5} yield
$$
|Z_1(K)|=2^{2(m-1)},\;
|Z_2(K)|=2^{3m-1},\;
|K|=2^{5m-3},\;
|\mathrm{Inn}(K)|=2^{3m-1}.
$$

We may now deduce from (\ref{gol}) that every element of $K$ can be written uniquely in the form
$a^i b^j c^k$, where $0\leq i,j<2^{2m-1}$ and $0\leq k <2^{m-1}$, and that $o(a)=o(b)=2^{2m-1}$
and $o(c)=2^{m-1}$.

\begin{prop}\label{autk}
    For every $x,y\in Z_1(K)$ the assignment $a\mapsto ax,\; b\mapsto by$ extends to a central automorphism $\Omega_{(x,y)}$ of $K$ that fixes $Z_2(K)$ pointwise. Moreover, the corresponding map, say
    $\Omega:Z_1(K)\times Z_1(K)\to \mathrm{Aut}(K)$, is a group monomorphism whose image is $\mathrm{Aut}_1(K)$. In particular,
    $|\mathrm{Aut}_1(K)|=2^{4(m-1)}$.
\end{prop}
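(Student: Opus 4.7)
The plan is to use the presentation \eqref{put6} of $K$ to extend the assignment $a\mapsto ax$, $b\mapsto by$ to an endomorphism of $K$, then to show this endomorphism is in fact an automorphism that fixes $Z_2(K)$ pointwise, and finally that every central automorphism arises this way. The crucial preliminary fact is that since $o(a) = o(b) = 2^{2m-1}$, the elements $a^{2^m}$ and $b^{2^m}$ have order $2^{m-1}$, so every element of $Z_1(K) = \langle a^{2^m}, b^{2^m}\rangle$ has exponent dividing $2^{m-1}$. This single numerical fact drives the whole construction.

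First I would verify that $a\mapsto ax$, $b\mapsto by$ respects each relation of \eqref{put6}. Centrality of $x, y$ together with \eqref{comfor} immediately yields $[ax, by] = [a, b] = c$, so the commutator is preserved. The relation $a^{[a,b]} = a^\alpha$ then reduces to $x = x^\alpha$, i.e.\ $x^{2^m\ell} = 1$, which follows from the exponent bound since $2^{m-1}$ divides $2^m\ell$; symmetrically for the $b$-relation. The power relations reduce to $x^{2^{2m-1}} = y^{2^{2m-1}} = 1$, again by the exponent bound, while $c^{2^{m-1}} = 1$ is automatic because $c$ is fixed. This relation check is the only step that is not purely formal, and I regard it as the main obstacle, though the exponent bound makes it essentially mechanical.

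Next I would observe that $\Omega_{(x,y)}$ fixes the generators $a^{2^{m-1}}, b^{2^{m-1}}, c$ of $Z_2(K)$ pointwise, since $(ax)^{2^{m-1}} = a^{2^{m-1}} x^{2^{m-1}} = a^{2^{m-1}}$ and similarly for $b^{2^{m-1}}$. As $Z_1(K)\subseteq Z_2(K)$, the restriction of $\Omega_{(x,y)}$ to $Z_1(K)$ is the identity, so the composition law becomes transparent: $a\,\Omega_{(x,y)}\Omega_{(x',y')} = (ax)\Omega_{(x',y')} = (ax')\cdot x = a(xx')$, and analogously for $b$, giving $\Omega_{(x,y)}\Omega_{(x',y')} = \Omega_{(xx',yy')}$. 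Hence $\Omega$ is a homomorphism, each $\Omega_{(x,y)}$ has two-sided inverse $\Omega_{(x^{-1},y^{-1})}$ and is therefore an automorphism, and $\Omega$ is injective since $\Omega_{(x,y)} = 1$ forces $x = y = 1$ upon evaluating at $a$ and $b$.

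Since $\Omega_{(x,y)}$ is by construction the identity modulo $Z_1(K)$, we have $\mathrm{Im}(\Omega)\subseteq \mathrm{Aut}_1(K)$. Conversely, any $\varphi\in\mathrm{Aut}_1(K)$ sends $a, b$ to $ax, by$ for unique $x, y\in Z_1(K)$, whence $\varphi = \Omega_{(x,y)}$ since they agree on the generating set $\{a,b\}$. The order $|\mathrm{Aut}_1(K)| = |Z_1(K)|^2 = 2^{4(m-1)}$ follows at once.
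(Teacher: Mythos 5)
Your proposal is correct and follows essentially the same route as the paper: verify the defining relations of the presentation \eqref{put6} using the centrality of $x,y$ and the fact that $Z_1(K)$ has exponent $2^{m-1}$, observe that $Z_2(K)$ is fixed pointwise, deduce the composition law $\Omega_{(x,y)}\Omega_{(x',y')}=\Omega_{(xx',yy')}$ and hence invertibility, and note that every central automorphism is of this form by definition. The only (harmless) difference is that you spell out the relation check and the $Z_2(K)$-fixing computation in slightly more detail than the paper does.
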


\begin{proof}
    Let $x,y\in Z_1(K)$. Then $[ax,by]=[a,b]=c$ by \eqref{comfor}. As $Z_1(K)$ has exponent $2^{m-1}$,
    the defining relations of $K$ are preserved. This yields
    an endomorphism $\Omega_{(x,y)}$ of $K$. Note that $\Omega_{(x,y)}$ fixes $Z_2(K)$ pointwise, because $c\mapsto [ax,by]=c$, and
    $$
    a^{2^{m}}\mapsto (ax)^{2^{m}} = a^{2^{m}}x^{2^{m}} = a^{2^{m}},\;
    b^{2^{m}}\mapsto (by)^{2^{m}} = b^{2^{m}}y^{2^{m}} = b^{2^{m}}.
    $$

    It is now easy to see that $\Omega_{(x,y)}\circ \Omega_{(x^\prime,y^\prime)}=\Omega_{(x,y)(x^\prime,y^\prime)}$. As $\Omega_{(1,1)}=1_K$,
    each $\Omega_{(x,y)}$ is an automorphism of $K$. It is clear that $\Omega_{(x,y)}$ is trivial only
    when $(x,y)$ is trivial. Finally, by definition, every central automorphism of $K$ must be of the form $\Omega_{(x,y)}$
    for some $x,y\in Z_1(K)$.
\end{proof}

\begin{prop}\label{autk2}
    We have
    $\mathrm{Inn}(K)\cap\mathrm{Aut}_1(K)=\langle c\delta, a^{2^{m-1}}\delta, b^{2^{m-1}}\delta\rangle$,
    $|\mathrm{Inn}(K)\cap\mathrm{Aut}_1(K)|=2^{m+1}$,
    $$
    |\mathrm{Inn}(K)\mathrm{Aut}_1(K)|=2^{6m-6},\; \mathrm{Inn}(K)\mathrm{Aut}_1(K)\subset \mathrm{Aut}_2(K).
    $$
\end{prop}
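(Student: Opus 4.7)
The plan is to exploit the general identity stated in the introduction, namely $\mathrm{Inn}(T)\cap\mathrm{Aut}_i(T)=Z_{i+1}(T)\delta$, together with the explicit description of $Z_2(K)$ from (\ref{put7}) and the orders computed in Section \ref{autgmod2}. Setting $i=1$ for $T=K$, we immediately get $\mathrm{Inn}(K)\cap\mathrm{Aut}_1(K)=Z_2(K)\delta$, and since $Z_2(K)=\langle a^{2^{m-1}},b^{2^{m-1}},c\rangle$ by (\ref{put7}), applying $\delta$ (a homomorphism) yields the identification with $\langle c\delta,a^{2^{m-1}}\delta, b^{2^{m-1}}\delta\rangle$. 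This gives the first assertion.

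For the order, the same general fact, together with $\ker(\delta)=Z_1(K)$, gives $|Z_2(K)\delta|=|Z_2(K)|/|Z_1(K)|=2^{3m-1}/2^{2(m-1)}=2^{m+1}$, which is the second assertion. Then I would apply the standard formula $|XY|=|X|\,|Y|/|X\cap Y|$ to $X=\mathrm{Inn}(K)$ and $Y=\mathrm{Aut}_1(K)$. We have $|\mathrm{Inn}(K)|=2^{3m-1}$ from Section \ref{autgmod2}, $|\mathrm{Aut}_1(K)|=2^{4(m-1)}$ from Proposition \ref{autk}, and $|\mathrm{Inn}(K)\cap\mathrm{Aut}_1(K)|=2^{m+1}$ just computed, producing $|\mathrm{Inn}(K)\mathrm{Aut}_1(K)|=2^{3m-1+4m-4-m-1}=2^{6m-6}$.

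For the final inclusion $\mathrm{Inn}(K)\mathrm{Aut}_1(K)\subset \mathrm{Aut}_2(K)$, observe that $\mathrm{Aut}_1(K)\subset\mathrm{Aut}_2(K)$ is immediate from the chain (\ref{norseraca}). For $\mathrm{Inn}(K)\subset\mathrm{Aut}_2(K)$, I would again invoke the general fact: for any $a\in K$, $a\delta\in \mathrm{Aut}_2(K)$ iff $a\in Z_3(K)$. Since $K$ has nilpotency class $3$ (as recorded in the introduction, because $m>1$), we have $Z_3(K)=K$, so every inner automorphism is $2$-central.

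There is no real obstacle here: the proposition is essentially a bookkeeping consequence of the general identification $\mathrm{Inn}(T)\cap\mathrm{Aut}_i(T)\cong Z_{i+1}(T)/Z(T)$, the explicit upper central series data from (\ref{put7}) and (\ref{put5}), and Proposition \ref{autk}. The only thing requiring a moment of care is confirming that the nilpotency class of $K$ is exactly $3$ (and in particular at most $3$), which was noted in the introduction.
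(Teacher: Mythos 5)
Your proposal is correct and follows essentially the same route as the paper: both identify $\mathrm{Inn}(K)\cap\mathrm{Aut}_1(K)$ with $Z_2(K)\delta\cong Z_2(K)/Z_1(K)$, compute its order as $2^{m+1}$, and apply the product formula with the orders from Proposition \ref{autk} and Section \ref{autgmod2}. The only (harmless) difference is at the final inclusion, where the paper checks directly that $a^b=ac$ and $b^a=bc^{-1}$ with $c\in Z_2(K)$, while you invoke $Z_3(K)=K$ from the nilpotency class; both are valid.
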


\begin{proof}
    Given that $Z_2(K)/Z_1(K)$ is generated by the cosets of $a^{2^{m-1}}$, $b^{2^{m-1}}$, $c$ and has order $2^{m+1}$,
    we deduce that $Z_2(K)/Z_1(K)\cong\mathrm{Inn}(K)\cap\mathrm{Aut}_1(K)=
    \langle a^{2^{m-1}}\delta, b^{2^{m-1}}\delta, c\delta\rangle$ has order $2^{m+1}$.

    We infer from Proposition \ref{autk} that $|\mathrm{Inn}(K)\mathrm{Aut}_1(K)|=2^{6m-6}$.
    On the other hand, it is obvious  that $\mathrm{Aut}_1(K)\subset\mathrm{Aut}_2(K)$.
    As $b^a=bc^{-1}$, $a^b=ac$, with $c\in Z_2(K)$, we deduce that $\mathrm{Inn}(K)$ is included in $\mathrm{Aut}_2(K)$ as well.
\end{proof}

Recalling the meaning of $s$ and $u$, as given in Section \ref{section.formulas.J}, we have $a^{2u} = 1 = b^{2u}$, $c^s = 1$, $Z_1(K) = \gen{a^{2s},b^{2s}}$, and $Z_2(K) = \gen{a^s,b^s,c}$.

The following three corollaries are consequences of the main results of Section \ref{section.formulas.J}.

\begin{cor}\label{cor.commutator.K}
    For all $n,t\in\Z$ the following identities hold in $K$:
    \begin{align*}
        [c^n,a^t]
        &= a^{-2s\ell nt},\\
        [c^n,b^t]
        &= b^{2s\ell nt},\\
        [a^n,b^t]
        &= a^{-2s\ell\phi(n)t} b^{2s\ell n\phi(t)} c^{nt - 2s\ell\phi(n)\phi(t)}.
    \end{align*}
\end{cor}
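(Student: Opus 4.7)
The plan is to deduce each of the three identities directly from Theorem \ref{thm.commutators.J} by projecting modulo $Z_2(J)$, since $K = J/Z_2(J)$ by definition. Writing $a,b,c$ for the images of $A,B,C$, the relevant relations in $K$ are $a^{2u} = 1$, $b^{2u} = 1$, and $c^s = 1$, as recorded earlier in Section~\ref{autgmod2}. The entire argument reduces to observing that every ``correction term'' appearing in Theorem \ref{thm.commutators.J} is divisible by the relevant order in $K$.

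First I would handle the two mixed commutators. Theorem \ref{thm.commutators.J} gives
$[C^n, A^t] = A^{-2s\ell nt - 4u\ell^2 \phi(n) t}$ in $J$. Since $4u = 2\cdot 2u$ is a multiple of $o(a)=2u$, the second summand in the exponent vanishes in $K$, yielding $[c^n,a^t] = a^{-2s\ell nt}$. The identity for $[c^n,b^t]$ is entirely parallel: the unwanted term $-4u\ell^2\phi(n+1)t$ is killed by $b^{2u}=1$.

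For the third identity I would start from $[A^n, B^t] = A^{\exp A} B^{\exp B} C^{\exp C} A^{\xi_1}$ in $J$, where $\exp A$, $\exp B$, and $\exp C$ are exactly the expressions claimed in the corollary, and where $\xi_1(n,t) = 2u\ell^2\{\cdots\}$. The point is that $\xi_1$ carries an explicit factor of $2u$, independently of the intricate bracketed expression. Hence $a^{\xi_1}=1$ in $K$, and the remaining three exponents descend verbatim to produce
\[
[a^n,b^t] = a^{-2s\ell\phi(n)t}\, b^{2s\ell n\phi(t)}\, c^{nt - 2s\ell\phi(n)\phi(t)}.
\]

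There is no real obstacle here; the work was front-loaded into Theorem \ref{thm.commutators.J}. The only thing to verify is the divisibility of each correction term by the appropriate order in $K$, which is immediate from inspection of the factors $4u$ and $2u\ell^2$ appearing in the $J$-level formulas.
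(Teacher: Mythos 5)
Your proof is correct and is exactly the argument the paper intends: the corollary is stated without proof as an immediate consequence of Theorem \ref{thm.commutators.J} under the projection $J\to K=J/Z_2(J)$, with each correction term vanishing because it carries an explicit factor of $4u$ or $2u$ while $o(a)=o(b)=2u$ in $K$. Nothing further is needed.
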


\begin{cor}\label{cor.bigCommutator.K}
    Let $i,j,k,n,t,q\in\Z$. Then $[a^ib^jc^k,a^nb^tc^q]\equiv c^{it - jn}\mod Z_1(K)$.
\end{cor}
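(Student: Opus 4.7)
The plan is to invoke Proposition \ref{prop.bigCommutator.J} in $J$ and then transfer the congruence down to $K$. Proposition \ref{prop.bigCommutator.J} gives, for arbitrary integers,
\[
[A^iB^jC^k,\, A^nB^tC^q] \equiv A^{\textup{exp} A}\, B^{\textup{exp} B}\, C^{\textup{exp} C} \mod Z_1(J),
\]
where $\textup{exp} A$ and $\textup{exp} B$ are visibly divisible by $2s\ell$, and $\textup{exp} C = it - jn + 2s\ell(\cdots)$. Since $Z_1(J) \subseteq Z_2(J)$, passing to $K = J/Z_2(J)$ turns this congruence into an honest equality
\[
[a^ib^jc^k,\, a^nb^tc^q] = a^{\textup{exp} A}\, b^{\textup{exp} B}\, c^{\textup{exp} C} \quad \text{in } K.
\]

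The next step is to collapse each of the three factors modulo $Z_1(K)$. The description $Z_1(K) = \langle a^{2s}, b^{2s}\rangle$ from \eqref{put7} places $a^{\textup{exp} A}$ and $b^{\textup{exp} B}$ directly in $Z_1(K)$, since $2s$ divides each of these exponents. For the $c$-factor I would use that $c^s = 1$ in $K$ (from the defining presentation \eqref{put6}) together with $s \mid 2s\ell$, so that the $2s\ell(\cdots)$ contribution to $\textup{exp} C$ vanishes and $c^{\textup{exp} C} = c^{it - jn}$. Combining these observations yields $[a^ib^jc^k, a^nb^tc^q] \equiv c^{it - jn} \mod Z_1(K)$.

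I do not expect a substantive obstacle: the argument is a routine reduction once Proposition \ref{prop.bigCommutator.J} is in hand, with the only subtlety being the observation that $2s\ell$-th powers of $c$ are trivial in $K$. Should one wish to avoid citing Proposition \ref{prop.bigCommutator.J}, an equally short self-contained proof is available: Corollary \ref{cor.commutator.K} together with $c^{2s\ell} = 1$ in $K$ gives $[c, a^t], [c, b^t] \in Z_1(K)$ and $[a^n, b^t] \equiv c^{nt} \mod Z_1(K)$, so $K/Z_1(K)$ is nilpotent of class at most $2$ with $\bar c$ central and $[\bar a, \bar b] = \bar c$; a brief expansion using the commutator identities \eqref{comfor} (absorbing the $c^k$ and $c^q$ factors into the center and reducing $[a^ib^j, a^nb^t]$) then recovers the same formula $c^{it - jn}$.
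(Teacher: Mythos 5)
Your argument is correct and is exactly the derivation the paper intends: it states only that this corollary is a consequence of the main results of Section \ref{section.formulas.J}, and your reduction via Proposition \ref{prop.bigCommutator.J} — noting that $\textup{exp} A$ and $\textup{exp} B$ are multiples of $2s$ (hence land in $Z_1(K)=\langle a^{2s},b^{2s}\rangle$) and that $c^s=1$ kills the $2s\ell(\cdots)$ part of $\textup{exp} C$ — is that consequence spelled out. Your alternative self-contained route through Corollary \ref{cor.commutator.K} and the class-$2$ structure of $K/Z_1(K)$ is also valid, but no further comment is needed.
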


\begin{note}\label{note.K.1}
    Given $x,y\in K$, there exist $t\in\Z$ and $z\in Z_1(K)$ such that $[x,y]^s = (c^t z)^s$, so $[x,y]^s = 1$.
\end{note}

\begin{cor}\label{cor.power.K}
    Let $i,j,k,n\in\Z$. Then $(a^ib^jc^k)^n = a^{\textup{exp} a} b^{\textup{exp} b} c^{\textup{exp} c}$, where
    \begin{align*}
        \textup{exp}\,a
        &= ni + 2s\ell\{i^2j\varphi(n) + (\phi(i)j - ik)\phi(n)\},\\
        \textup{exp}\,b
        &= nj + 2s\ell\{(j(k - ij) - i\phi(j))\phi(n) - 2ij^2\varphi(n)\},\\
        \textup{exp}\,c
        &= nk - ij\phi(n).
    \end{align*}
\end{cor}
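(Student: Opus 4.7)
The plan is to obtain Corollary \ref{cor.power.K} directly from Theorem \ref{thm.power.J} by reducing modulo $Z_2(J)$. Applying that theorem to $A^iB^jC^k\in J$ (performing the substitution $a\mapsto i$, $b\mapsto j$, $c\mapsto k$ in the variable names used in Theorem \ref{thm.power.J}, to avoid clashing with the generators $a,b,c$ of $K$) yields an identity
\[
(A^iB^jC^k)^n = A^{E_A}B^{E_B}C^{E_C}A^{\xi_4}
\]
in $J$, where $E_A, E_B, E_C, \xi_4$ are the integers produced by Theorem \ref{thm.power.J} under that substitution. I would then push this identity through the canonical projection $J\to K=J/Z_2(J)$, under which $A,B,C$ become $a,b,c$, and simplify the resulting exponents in $K$.

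After projection, $E_A$ and $E_B$ already agree verbatim with the claimed $\textup{exp}\,a$ and $\textup{exp}\,b$, so no work is needed on the first two factors. For the third factor, write $E_C = nk - ij\phi(n) + 2s\ell\, W$ with $W\in\Z$; since $o(c)=s$ in $K$ (as recorded just before the statement), the identity $c^{2s\ell W}=(c^s)^{2\ell W}=1$ collapses the correction term, so $c^{E_C}=c^{nk-ij\phi(n)}$. For the trailing $A^{\xi_4}$ factor, Theorem \ref{thm.power.J} exhibits $\xi_4$ as $2u\ell^2$ times an integer; since $o(a)=2u$ in $K$, this gives $a^{\xi_4}=1$. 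Combining these reductions produces the formula stated in the corollary.

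There is no real obstacle here once Theorem \ref{thm.power.J} is in hand: the content of Corollary \ref{cor.power.K} is precisely that the two correction terms appearing in the $J$-formula vanish in $K$ for order-of-magnitude reasons ($c^s=1$ kills the $2s\ell(\cdots)$ portion of $\textup{exp}\,C$, and $a^{2u}=1$ kills $A^{\xi_4}$). The genuinely combinatorial labor involving $\varphi$, $\phi$, and the $\sigma_i$ is done once and for all in the Appendix proof of Theorem \ref{thm.power.J}; Corollaries \ref{cor.commutator.K} and \ref{cor.bigCommutator.K} would be handled by the same reduction procedure from Theorem \ref{thm.commutators.J} and Proposition \ref{prop.bigCommutator.J}, respectively.
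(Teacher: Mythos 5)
Your reduction is correct and is exactly what the paper intends when it says these corollaries "are consequences of the main results of Section \ref{section.formulas.J}": project the identity of Theorem \ref{thm.power.J} through $J\to K=J/Z_2(J)$, where $c^s=1$ kills the $2s\ell(\cdots)$ correction in $\textup{exp}\,C$ and $a^{2u}=1$ kills $A^{\xi_4}$, while the first two exponents survive verbatim. Nothing is missing.
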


\begin{note}\label{note.K.2}
    If $s\mid\phi(n)$, then $s\mid\varphi(n)$ and $(a^ib^jc^k)^n = a^{ni} b^{nj} c^{nk}$. Thus, the exponent of $K$ is $2u$.
\end{note}

It follows from Notes \ref{note.K.1} and \ref{note.K.2} that for any $x,y\in K$, the assignment
$a\mapsto x, b\mapsto y$ extends to an endomorphism of $K$ if and only if it preserves the first and second defining relations of $K$.

\begin{cor}\label{cor.product.K}
    Let $i,j,k,n,t,q\in\Z$. Then $(a^ib^jc^k)(a^nb^tc^q) = a^{\textup{exp} a} b^{\textup{exp} b} c^{\textup{exp} c}$, where
    \begin{align*}
        \textup{exp}\,a
        &= i + n + 2s\ell\{j\phi(n) - kn\},\\
        \textup{exp}\,b
        &= j + t + 2s\ell\{kt - jnt - \phi(j)n\},\\
        \textup{exp}\,c
        &= k + q - jn.
    \end{align*}
\end{cor}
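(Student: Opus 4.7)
The plan is to obtain Corollary \ref{cor.product.K} as an immediate consequence of Theorem \ref{thm.product.J} by reducing modulo $Z_2(J)$. First I would apply the canonical projection $\pi:J\to K=J/Z_2(J)$, which sends $A\mapsto a$, $B\mapsto b$, $C\mapsto c$. Applying $\pi$ term-by-term to the identity in Theorem \ref{thm.product.J}, after renaming the second-factor exponents $(a,b,c)\mapsto (n,t,q)$ to match the statement of the corollary, I obtain
\[
(a^ib^jc^k)(a^nb^tc^q) = a^{E_1}\, b^{E_2}\, c^{E_3}\, a^{\bar\xi_2},
\]
where $E_1,E_2,E_3$ are the expressions for $\textup{exp} A,\textup{exp} B,\textup{exp} C$ in Theorem \ref{thm.product.J} with $(a,b,c)\to(n,t,q)$, and $\bar\xi_2$ is the analogous rewriting of $\xi_2$.

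Next I would verify that the extraneous pieces collapse. There are two: the trailing $a^{\bar\xi_2}$, and the ``$2s\ell\{jkn-\phi(j+1)\phi(n)\}$'' summand in $E_3$ that is absent from the stated $\textup{exp} c = k+q-jn$. For the first, I would observe that $\bar\xi_2 = 2u\ell^2\cdot N$ for an integer $N$ (each summand in the braces is manifestly integral, using that $\phi$ and $\varphi$ return integers on integer inputs), and that $a$ has order $2u$ in $K$; hence $a^{\bar\xi_2} = (a^{2u})^{\ell^2 N} = 1$. For the second, I would note $c$ has order $s$ in $K$, so any exponent of the form $2s\ell\cdot(\text{integer})$ is swallowed.

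The main exponents $E_1$ and $E_2$ already coincide verbatim with the claimed $\textup{exp} a$ and $\textup{exp} b$, so no further simplification is needed there. Thus after the two cancellations above, what remains is precisely the formula stated in Corollary \ref{cor.product.K}.

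Essentially, there is no real obstacle: the proof is a mechanical transfer of Theorem \ref{thm.product.J} to the quotient, and the only ``content'' is recording that $2u\mid\bar\xi_2$ (so $a^{\bar\xi_2}$ dies since $o(a)=2u$ in $K$) and that the surplus piece in $\textup{exp} C$ is a multiple of $s=o(c)$ in $K$. Both facts are visible from the definitions of $\xi_2$ in Theorem \ref{thm.product.J} and the orders recorded just after \eqref{put7}.
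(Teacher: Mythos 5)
Your proposal is correct and is exactly the intended argument: the paper states Corollary \ref{cor.product.K} without proof as a consequence of Theorem \ref{thm.product.J}, and the reduction you spell out (project $J\to K=J/Z_2(J)$, kill $a^{\xi_2}$ since $2u\mid\xi_2$ and $o(a)=2u$, and absorb the $2s\ell\{\cdots\}$ surplus in the $c$-exponent since $o(c)=s$) is precisely what that deduction amounts to. The remaining exponents match the claimed formulas verbatim, so nothing further is needed.
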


\begin{prop}\label{autexcel}
    For any $x,y\in Z_2(K)$, the assignment $a\mapsto ax,\; b\mapsto by$ extends to a 2-central automorphism $\Gamma_{(x,y)}$ of $K$ that fixes $Z_1(K)$ and $Z_2(K)/Z_1(K)$ pointwise. Thus, $|\mathrm{Aut}_2(K)|=2^{6m-2}$ and the map $g:Z_2(K)\times Z_2(K)\to \mathrm{Aut}_2(K)/\mathrm{Aut}_1(K)$, given by $(x,y)\mapsto \Gamma_{(x,y)}\mathrm{Aut}_1(K)$ is a group epimorphism with kernel $Z_1(K)^2$, so that $\mathrm{Aut}_2(K)/\mathrm{Aut}_1(K)\cong (Z_2(K)/Z_1(K))^2$.
\end{prop}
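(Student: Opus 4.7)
My plan is to produce $\Gamma_{(x,y)}$ by checking that $a\mapsto ax,\; b\mapsto by$ respects the defining relations in \eqref{put6}, and then to deduce from the same identities that it fixes $Z_1(K)$ pointwise and acts trivially on $Z_2(K)/Z_1(K)$. Once $\Gamma_{(x,y)}$ is known to be a 2-central automorphism, defining $g$, computing its kernel, and checking surjectivity will each be routine, and the order count follows.

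The main obstacle is the twisted conjugation relation $(ax)^{[ax,by]}=(ax)^\alpha$ (the mirror relation for $by$ follows by applying the switch automorphism $\mu$). The remaining relations $a^{2u}=b^{2u}=[a,b]^s=1$ transfer to $ax,by$ immediately from Notes \ref{note.K.1} and \ref{note.K.2}. To handle the twisted conjugation, I will expand $[ax,by]$ with \eqref{comfor}; since $x,y\in Z_2(K)$, every commutator containing $x$ or $y$ lies in $Z_1(K)\subseteq Z(K)$, so $[ax,by]\equiv c\pmod{Z_1(K)}$. Centrality of the error term reduces $(ax)^{[ax,by]}$ to $(ax)^c=a^cx^c=a^\alpha\cdot x^c$, and Corollary \ref{cor.commutator.K} shows that each of the generators $a^s,b^s,c$ of $Z_2(K)$ centralizes $c$, so $x^c=x$. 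On the other side, $[x,a]\in Z_1(K)=Z(K)$, so the class-$2$ identity $(ax)^n=a^nx^n[x,a]^{\phi(n)}$ is available. Taking $n=\alpha$, I will use Note \ref{note.K.2} at $n=2s\ell$ (where $s\mid\phi(2s\ell)$) to see $x^{2s\ell}=1$, hence $x^\alpha=x$, and the fact that $Z_1(K)$ has exponent $s$ together with $s\mid\phi(\alpha)=s\ell+2u\ell^2$ to see $[x,a]^{\phi(\alpha)}=1$. Both sides of the relation therefore equal $a^\alpha x$.

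The same identity $(ax)^n=a^nx^n[x,a]^{\phi(n)}$ then gives the pointwise fix of $Z_1(K)$ at $n=2s$, since $x^{2s}=1$ by Note \ref{note.K.2} and the commutator correction is trivial; and the identity on $Z_2(K)/Z_1(K)$ at $n=s$, since $x^s\in Z_1(K)$ for every $x\in Z_2(K)$ when $m>1$ (using that $(a^s)^s=a^u=(a^{2s})^{s/2}$ lies in $Z_1(K)$ as $s$ is even, and analogously for $b^s$, while $c^s=1$). Bijectivity is automatic: the endomorphism induced by $\Gamma_{(x,y)}$ on the class-$2$ quotient $K/Z_1(K)$ admits the obvious inverse $\bar a\mapsto\bar a\bar x^{-1}$, $\bar b\mapsto \bar b\bar y^{-1}$, and $\Gamma_{(x,y)}$ is the identity on $Z_1(K)$, so it is an automorphism of $K$; it is 2-central since $ax\equiv a$, $by\equiv b\pmod{Z_2(K)}$.

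The preceding computations also show $\Gamma_{(x,y)}\circ\Gamma_{(x',y')}\equiv\Gamma_{(xx',yy')}\pmod{\mathrm{Aut}_1(K)}$, because the second application sends $x$ and $y$ to elements of their $Z_1(K)$-cosets; hence $g$ is a group homomorphism. Its kernel is $Z_1(K)^2$, as $\Gamma_{(x,y)}$ is central exactly when its induced map on $K/Z_1(K)$ is trivial, which happens iff $\bar x=\bar y=1$. For surjectivity, any $\phi\in\mathrm{Aut}_2(K)$ fixes $K/Z_2(K)$ pointwise, so $\phi(a)=ax$ and $\phi(b)=by$ for some $x,y\in Z_2(K)$, and $\phi=\Gamma_{(x,y)}$ since $a,b$ generate $K$. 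This yields $\mathrm{Aut}_2(K)/\mathrm{Aut}_1(K)\cong(Z_2(K)/Z_1(K))^2$, and combining with $|\mathrm{Aut}_1(K)|=2^{4(m-1)}$ from Proposition \ref{autk} and $|Z_2(K)/Z_1(K)|=2^{m+1}$ gives $|\mathrm{Aut}_2(K)|=2^{6m-2}$.
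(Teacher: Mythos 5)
Your proof is correct, and it follows the same skeleton as the paper's (verify the twisted conjugation relation for $ax$, deduce the one for $by$ via $\mu$, note that the power relations are automatic by Notes \ref{note.K.1} and \ref{note.K.2}, show $\Gamma_{(x,y)}$ fixes $Z_1(K)$ and $Z_2(K)/Z_1(K)$ pointwise, then do the routine bookkeeping for $g$), but the engine driving the key verification is genuinely different. The paper writes $x=a^{si}b^{sj}c^{k}$ in normal form and evaluates $(ax)^{\alpha}$ and $(ax)^{c}$ coordinatewise via Corollary \ref{cor.power.K}, Note \ref{note.K.2}, and Corollary \ref{cor.product.K}; you instead isolate the structural inputs --- $[x,a]\in Z_1(K)=Z(K)$, $c$ centralizes $Z_2(K)$ (from Corollary \ref{cor.commutator.K}), $Z_1(K)$ has exponent $s$, $Z_2(K)$ has exponent $2s$, and $s\mid\phi(\alpha)$ --- and let the class-two collection identity $(ax)^{n}=a^{n}x^{n}[x,a]^{\phi(n)}$ do the rest. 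This is leaner and makes transparent exactly which features of $K$ are used, at the price of a few side observations ($x^{2s}=1$, $x^{s}\in Z_1(K)$) that the normal-form computation absorbs automatically. Your bijectivity argument also differs: the paper notes that an endomorphism of a finite nilpotent group fixing the center pointwise is injective, whereas you invert the induced map on $K/Z_1(K)$ and combine with the identity on $Z_1(K)$; this is valid, though the word ``obvious'' hides the small check that the inverse assignment is itself a well-defined endomorphism of $K/Z_1(K)$ and that the composite fixes $\bar a,\bar b$, which follows because both induced maps fix $Z_2(K)/Z_1(K)$ (and hence $\bar x,\bar y$) pointwise.
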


\begin{proof}
    Let $x,y\in Z_2(K)$. Then $x = a^{si} b^{sj} c^k$ and $y = a^{sa} b^{sb} c^c$ for some $i,j,k,n,t,q\in\Z$.

    By \eqref{comfor}, we have
    \begin{equation}\label{eq.K.1}
        [ax,by]\equiv[a,b]\mod Z_1(K).
    \end{equation}

    Since $Z_2(K)$ is abelian, $[x,c] = 1$. Then
    \[
    (ax)^{[ax,by]}
    = (ax)^c
    = a^c a^{si} b^{sj} c^k
    = a^{\alpha + si} b^{sj} c^k.
    \]

    As $s\mid\phi(\alpha)$, Note \ref{note.K.1} gives
    \[
    (ax)^\alpha
    = (a^{1+si} b^{sj} c^k)^\alpha
    = a^{\alpha(1+si)} b^{\alpha sj} c^{\alpha k}
    = a^{\alpha + si} b^{sj} c^k.
    \]

    Thus $(ax)^{[ax,by]} = (ax)^\alpha$. By the automorphism $a\leftrightarrow b$, $(by)^{[by,ax]} = (by)^\alpha$. Thus all defining relations of $K$ are preserved and the given assignment extends to an endomorphism $\Gamma_{(x,y)}$ of $K$.

    Since $s\mid\phi(2sa)$, Note \ref{note.K.2} gives
    \[
    a^{2s}\Gamma_{(x,y)}
    = (a^{1+si} b^{sj} c^k)^{2s}
    = a^{2s(1+si)} b^{2uj} c^{2sk}
    = a^{2s},
    \]
	and likewise for $b$, so $\Gamma_{(x,y)}$ fixes $Z_1(K)$ and the nilpotency of $K$ ensures $\Gamma_{(x,y)}$ is an automorphism.

    By Corollary \ref{cor.power.K} there exists $z_1\in Z_1(K)$ such that
    \[
    a^{s}\Gamma _{(x,y)}
    = (a^{1+si} b^{sj} c^k)^s
    = a^{s(1+si)} b^{uj} c^{sk - (1+si)sj\phi(s)} z_1
    = a^s (a^{ui} b^{uj} z_1).
    \]

    Applying the automorphism $a\leftrightarrow b$ to the above equality yields $(by)^s = b^s (b^{un} a^{ut} z_2)$ for some $z_2\in Z_1(K)$, so $b^s\Gamma_{(x,y)} = b^s (b^{un} a^{ut} z_2)$. Also, from \eqref{eq.K.1}, there exists $z\in Z_1(K)$ such that $c\Gamma_{(x,y)} = cz$. Thus $\Gamma_{(x,y)}$ fixes $Z_2(K)/Z_1(K)$ pointwise.
\end{proof}

\begin{prop}\label{autk3}
    For $n,t\in\Z$ such that $nt\equiv 1\mod 2^{2m-1}$, the assignment
    \begin{equation}\label{defrs}
        a\mapsto a^n,\; b\mapsto b^t
    \end{equation}
    extends to an automorphism $f_n$ of $K$. The corresponding map
    $f:(\Z/2^{2m-1}\Z)^\times\to\mathrm{Aut}(K)$ is a group monomorphism, whose image will be denoted by $S$.
\end{prop}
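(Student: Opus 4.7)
My plan is to verify that the assignment (\ref{defrs}) respects the defining relations of $K$ in (\ref{put6}), so that $f_n$ extends to an endomorphism of $K$; the homomorphism property of $f$, invertibility of $f_n$, and injectivity of $f$ will then follow easily on the generators.

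First I would dispatch the trivial relations. The order relations $(a^n)^{2u}=1=(b^t)^{2u}$ follow immediately from $a^{2u}=1=b^{2u}$, and $[a^n,b^t]^{s}=1$ is a direct instance of Note \ref{note.K.1}. The substantive step is the conjugation relation $(a^n)^{[a^n,b^t]}=(a^n)^{\alpha}$ together with its symmetric analogue for $b^t$. Using Corollary \ref{cor.commutator.K}, I would write $[a^n,b^t]=zc^{k}$, where $z\in Z_1(K)$ (the $a$- and $b$-factors of the formula have exponents divisible by $2s$, hence lie in $Z_1(K)$) and $k=nt-2s\ell\phi(n)\phi(t)$. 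Since $z$ is central, conjugating $a^n$ by $[a^n,b^t]$ reduces to conjugating by $c^k$, which, by iteration of $a^c=a^\alpha$, yields $a^{n\alpha^k}$. The relation thus reduces to $\alpha^{k-1}\equiv 1\pmod{2^{2m-1}}$.

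This is where the main work lies. For $m>1$, the multiplicative order of $\alpha=1+2^m\ell$ in $(\Z/2^{2m-1}\Z)^\times$ is exactly $s=2^{m-1}$; granting this standard 2-adic fact, it suffices to show $s\mid k-1$. But $k-1=(nt-1)-2s\ell\phi(n)\phi(t)$, and $nt\equiv 1\pmod{2^{2m-1}}$ certainly forces $nt\equiv 1\pmod s$, while $2s\ell\phi(n)\phi(t)$ is visibly a multiple of $s$. The analogous relation for $b^t$ then follows symmetrically, either by a parallel computation using $b^{c^{-1}}=b^\alpha$ or by conjugating through the involution $\mu$ from Section \ref{autgmod2}.

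Having the endomorphism $f_n$ in hand, I would check on the generators $a,b$ that $f_{n_1}\circ f_{n_2}=f_{n_1n_2}$, whence $f$ is a homomorphism; in particular $f_n\circ f_t=f_1=\mathrm{id}_K$, so $f_n\in\mathrm{Aut}(K)$. Injectivity of $f$ is immediate: if $f_n=\mathrm{id}_K$, then $a^n=a$ forces $n\equiv 1\pmod{2^{2m-1}}$. The principal obstacle is the bookkeeping in the conjugation step — confirming that the $Z_1(K)$-portion of $[a^n,b^t]$ really is central and combining this with the order-of-$\alpha$ computation modulo $2^{2m-1}$.
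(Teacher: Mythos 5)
Your proposal is correct and follows essentially the same route as the paper: split $[a^n,b^t]$ into a central factor times a power of $c$ congruent to $c$, so that conjugation by $[a^n,b^t]$ agrees with conjugation by $c$ and the first two relations are preserved, the remaining relations and the monomorphism claim being routine. The only difference is cosmetic: the paper obtains $[a^n,b^t]\equiv[a,b]^{nt}\equiv c\bmod Z_1(K)$ abstractly from $[a,b]\in Z_2(K)$ and \eqref{comfor}, whereas you read the decomposition off Corollary \ref{cor.commutator.K} and then detour through the multiplicative order of $\alpha$ modulo $2^{2m-1}$, where simply noting $c^k=c$ (since $o(c)=s$ and $k\equiv 1\bmod s$) would already finish.
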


\begin{proof}
    Since $[a,b]\in Z_2(K)$, \eqref{comfor} yields
    \begin{equation}\label{rse}
        [a^n,b^t]\equiv [a,b]^{nt}\equiv [a,b]\mod Z_1(K).
    \end{equation}

    This implies that the first two defining relations of $K$ are preserved, which produces an endomorphism $f_n$ of $K$. As $f$ is clearly a homomorphism, $f_n$ has inverse $f_t$ and is then an automorphism. Evidently, $f$ is a monomorphism.
\end{proof}

Thus $|S|=${\boldmath$\varphi$}$(2^{2m-1})=2^{2m-2}=2^{m}${\boldmath$\varphi$}$(2^{m-1})$, where {\boldmath$\varphi$} is Euler's totient function.

\begin{prop}\label{autk4}
    We have $f_n\in \mathrm{Aut}_2(K)$ if and only if $n\equiv 1\mod 2^{m-1}$. Moreover,
    \[
    \mathrm{Aut}_2(K)\cap S=\langle f_{1+2^{m-1}}\rangle,\; |\mathrm{Aut}_2(K)\cap S|=2^{m},\;
    S/(\mathrm{Aut}_2(K)\cap S)\cong (\Z/2^{m-1}\Z)^\times.
    \]
\end{prop}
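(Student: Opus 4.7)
The plan is to translate membership in $\mathrm{Aut}_2(K)$ into a concrete congruence on $n$, and then recognize $\mathrm{Aut}_2(K)\cap S$ as the kernel of a natural reduction map on $S\cong (\Z/2^{2m-1}\Z)^\times$ via the isomorphism $f$ of Proposition \ref{autk3}.

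For the ``if and only if'' part, by definition $f_n\in \mathrm{Aut}_2(K)$ iff $f_n$ induces the identity on $K/Z_2(K)$. Since $K/Z_2(K)$ is generated by the cosets of $a$ and $b$, this amounts to $a^{n-1},b^{t-1}\in Z_2(K)$. Using $Z_2(K)=\gen{a^{2^{m-1}},b^{2^{m-1}},c}$ together with the uniqueness of the normal form for elements of $K$ (inherited from \eqref{gol}), one obtains that $a^j\in Z_2(K)$ iff $2^{m-1}\mid j$. So the two conditions become $n\equiv 1\mod 2^{m-1}$ and $t\equiv 1\mod 2^{m-1}$, and these are equivalent: reducing $nt\equiv 1\mod 2^{2m-1}$ modulo $2^{m-1}$ shows that $n\equiv 1\mod 2^{m-1}$ forces $t\equiv 1\mod 2^{m-1}$ and vice versa.

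For the remaining statements, under $f$ the subgroup $\mathrm{Aut}_2(K)\cap S$ corresponds to the kernel of the canonical surjection $\rho\colon (\Z/2^{2m-1}\Z)^\times\twoheadrightarrow (\Z/2^{m-1}\Z)^\times$. This at once yields $|\mathrm{Aut}_2(K)\cap S|=|(\Z/2^{2m-1}\Z)^\times|/|(\Z/2^{m-1}\Z)^\times|=2^{2m-2}/2^{m-2}=2^m$, and $S/(\mathrm{Aut}_2(K)\cap S)\cong (\Z/2^{m-1}\Z)^\times$ by the first isomorphism theorem. To identify the generator, I would prove by induction on $k$ the standard 2-adic identity $(1+2^r)^{2^k}\equiv 1+2^{r+k}\mod 2^{r+k+1}$, valid for $r\geq 2$, and specialize it to $r=m-1$ to conclude that $1+2^{m-1}$ has order exactly $2^m$ in $(\Z/2^{2m-1}\Z)^\times$. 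Since $\langle f_{1+2^{m-1}}\rangle\subseteq \mathrm{Aut}_2(K)\cap S$ and the two groups have equal cardinality, equality follows.

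The delicate point is the 2-adic order computation: the identity above requires $r\geq 2$, so the generator argument works cleanly for $m\geq 3$. The boundary case $m=2$ (where $\ker\rho$ is the whole of $(\Z/8\Z)^\times$, a Klein four-group) must be handled separately, although the congruence characterization, the cardinality count, and the quotient isomorphism remain valid without change.
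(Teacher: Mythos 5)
Your proof follows essentially the same route as the paper's: characterize membership in $\mathrm{Aut}_2(K)$ via $a^{n-1},b^{t-1}\in Z_2(K)$ and the normal form, identify $\mathrm{Aut}_2(K)\cap S$ under $f$ with the kernel of $(\Z/2^{2m-1}\Z)^\times\to(\Z/2^{m-1}\Z)^\times$, count orders, and exhibit $f_{1+2^{m-1}}$ as a generator. Your caveat about $m=2$ is well taken and in fact sharper than the paper's argument, which simply asserts that $f_{1+2^{m-1}}$ has order $2^m$: for $m=2$ the element $3$ has order $2$ in $(\Z/8\Z)^\times$ while the kernel is the Klein four-group, so the cyclicity claim genuinely requires $m\geq 3$; the congruence criterion, the order $2^m$, and the quotient isomorphism are unaffected.
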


\begin{proof}
    Note that $f_n\in \mathrm{Aut}_2(K)$ if and only if
    $a^{n-1},b^{t-1}\in Z_2(K) = \langle a^{2^{m-1}}, b^{2^{m-1}}, c\rangle$. The normal form of the elements of $K$ makes the last condition equivalent to $n\equiv 1\mod 2^{m-1}$.

    Let $T$ be the kernel of the canonical group epimorphism $(\Z/2^{2m-1}\Z)^\times\to (\Z/2^{m-1}\Z)^\times$.
    Thus, $T$ corresponds to $\mathrm{Aut}_2(K)\cap S$ under $f$, whence
    \[
    \mathrm{Aut}_2(K) S/\mathrm{Aut}_2(K)\cong
    S/(\mathrm{Aut}_2(K)\cap S)\cong (\Z/2^{2m-1}\Z)^\times/T\cong (\Z/2^{m-1}\Z)^\times.
    \]

    Moreover, as $|T|=${\boldmath$\varphi$}$(2^{2m-1})/${\boldmath$\varphi$}$(2^{m-1})=2^{m}$, it follows that
		$|\mathrm{Aut}_2(K)\cap S|=2^{m}$.
    But $f_{1+2^{m-1}}$ is in $\mathrm{Aut}_2(K)\cap S$ and has order $2^{m}$, so it generates $\mathrm{Aut}_2(K)\cap S$.
\end{proof}


\begin{prop}\label{newauto}
    The assignment $a\mapsto ab^r,\; b\mapsto b$ extends to an automorphism, say $\Phi$, of~$K$.
\end{prop}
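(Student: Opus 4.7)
The plan is to apply the criterion established just before Corollary \ref{cor.product.K}: Notes \ref{note.K.1} and \ref{note.K.2} guarantee that any assignment $a\mapsto x,\, b\mapsto y$ automatically preserves the third defining relation and the order relations of $K$, so it suffices to verify that $x=ab^r$ and $y=b$ satisfy $x^{[x,y]}=x^\alpha$ and $y^{[y,x]}=y^\alpha$. Once this is done, surjectivity is immediate, since $b=\Phi(b)$ and $a=\Phi(a)b^{-r}$ both lie in the image, so $\Phi$ is a bijection of the finite group $K$.

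To begin, I would compute
\[
[ab^r,b]=[a,b]^{b^r}[b^r,b]=c\cdot[c,b^r]=c\cdot b^{2s\ell r}=c\,b^{u\ell},
\]
using \eqref{comfor} and Corollary \ref{cor.commutator.K}, together with the identity $2s\ell r=u\ell$. The crucial observation is that $b^{u\ell}=(b^{2s})^{r\ell}\in Z_1(K)$, so conjugation by $[ab^r,b]$ coincides with conjugation by $c$. The first relation $x^{[x,y]}=x^\alpha$ thus reduces to $(ab^r)^c=(ab^r)^\alpha$, which, after cancelling $ab^r$ from both sides, is equivalent to the cleaner commutator identity $[ab^r,c]=(ab^r)^{2s\ell}$.

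The heart of the proof is to verify this last identity by showing that both sides equal $a^{2s\ell}b^{u\ell}$. For the left-hand side, \eqref{comfor} and Corollary \ref{cor.commutator.K} give $[ab^r,c]=[a,c]^{b^r}[b^r,c]=(a^{2s\ell})^{b^r}\cdot b^{-2s\ell r}$, and a direct application of Corollary \ref{cor.commutator.K} to $[a^{2s\ell},b^r]$ shows this commutator vanishes: each of its three exponents is absorbed by the order of the generator it sits on (the $a$-exponent $-2s\ell\,\phi(2s\ell)\,r$ and the $b$-exponent $4s^2\ell^2\phi(r)$ are multiples of $2u=2s^2$, while the $c$-exponent is a multiple of $s$). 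For the right-hand side, Corollary \ref{cor.power.K} with $(i,j,k,n)=(1,r,0,2s\ell)$ writes $(ab^r)^{2s\ell}=a^{2s\ell+E_a}b^{2s\ell r+E_b}c^{E_c}$, and a 2-adic valuation count using $v_2(2s\ell)=m$, $v_2(r)=m-2$, $v_2(\phi(2s\ell))=m-1$, $v_2(\varphi(2s\ell))=m$ shows $E_a,E_b\equiv 0\pmod{2u}$ and $E_c\equiv 0\pmod s$. Finally, the two sides are reconciled via $b^{2s\ell r}=b^{-2s\ell r}$, which holds because $4s\ell r=2^{2m-1}\ell$ is a multiple of $|b|=2u$.

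The second relation $b^{[b,ab^r]}=b^\alpha$ is then immediate from what we have done: $[b,ab^r]=[ab^r,b]^{-1}=c^{-1}b^{-u\ell}$ with $b^{-u\ell}$ central, so $b^{[b,ab^r]}=b^{c^{-1}}=b^\alpha$ by the original defining relation on $a$ and $b$. I expect the 2-adic bookkeeping for $(ab^r)^{2s\ell}$ to be the main obstacle, particularly in the boundary case $m=2$, where several correction terms are only just absorbed by the order relations (for instance, the term $-2s\ell(r^2+\phi(r))\phi(2s\ell)$ contributing to $E_b$ has 2-adic valuation exactly $2m-1$, meeting the required bound with no slack).
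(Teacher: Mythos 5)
Your proof is correct and follows essentially the same route as the paper's: reduce $(ab^r)^{[ab^r,b]}$ to $(ab^r)^c$ using the fact that $[ab^r,b]$ differs from $c$ by a central element of $K$, compare with $(ab^r)^\alpha$, and get surjectivity because $ab^r$ and $b$ generate $K$. The only difference is one of bookkeeping: you verify $(ab^r)^{2s\ell}=a^{2s\ell}b^{2s\ell r}$ by hand from Corollary \ref{cor.power.K} with 2-adic valuation counts, whereas the paper obtains the equivalent fact $(ab^r)^\alpha=a^\alpha b^{\alpha r}$ in one line from Note \ref{note.K.2} (since $s\mid\phi(2s\ell)$, that note applies directly and spares the computation, including the tight $m=2$ case you flag).
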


\begin{proof}
    By Corollary \ref{cor.bigCommutator.K}, $[ab^r,b]\equiv c\mod Z_1(K)$ and, by Corollary \ref{cor.commutator.K}, $[b^r,c] = b^u$, so
    \[
    (ab^r)^{[ab^r,b]}
    = (ab^r)^c
    = a^\alpha b^{r+u}.
    \]

    On the other hand, Note \ref{note.K.2} ensures that
    \[
    (ab^r)^\alpha
    = a^\alpha b^{\alpha r}
    = a^\alpha b^{r+u}.
    \]

    Thus $(ab^r)^{[ab^r,b]} = (ab^r)^\alpha$ and the given assignment extends to an endomorphism $\Phi$ of $K$. Since $ab^r$ and $b$ generate $K$, then $\Phi$ is surjective and hence an automorphism.
\end{proof}

In agreement with our convention on function composition,
if $V$ is a module over a commutative ring $R$ with identity and $V$ admits a finite basis $\{v_1,\dots,v_n\}$,
in order to make the correspondence between $\mathrm{Aut}(V)$ and $\GL_n(R)$ an isomorphism, we will construct
the matrix of a given automorphism of $V$ row by row instead of column by column.


\begin{theorem}\label{autk6}
    The canonical map $P:\mathrm{Aut}(K)\to \mathrm{Aut}(K/Z_2(K))$
    is a group homomorphism with kernel $\mathrm{Aut}_2(K)$ and image $(S\langle\mu,\Phi\rangle)^P$, so that
    $\mathrm{Aut}(K)=\mathrm{Aut}_2(K)S\langle \mu, \Phi \rangle$. Moreover, if $m>2$ then
    $\mathrm{Aut}(K)/\mathrm{Aut}_2(K)\cong ((\Z/2^{m-1}\Z)^\times \times (\Z/2\Z)^2)\rtimes \Z/2\Z$, with $\Z/2\Z$
    acting by inversion on $(\Z/2^{m-1}\Z)^\times$ and by switching factors on $(\Z/2\Z)^2$, and
    if $m=2$ then $\mathrm{Aut}(K)/\mathrm{Aut}_2(K)\cong\GL_2(\Z/2\Z)$. In particular,
    $\mathrm{Aut}(K)$ has order $2^{7m-1}$ if $m>2$,
    and $2^{11}\times 3$ if $m=2$, in which case $P$ is surjective. Furthermore,
    if $m>2$ (resp. $m=2$) then every element of $\mathrm{Aut}(K)$ can be written uniquely
    in the form $gf_n\Phi^i(\Phi^\mu)^j\mu^k$ (resp. $g(\mu\Phi)^i\mu^k$), where $g\in \mathrm{Aut}_2(K)$,
    $1\leq n<2^{m-1}$ is odd, and $0\leq i,j,k\leq 1$ (resp. $0\leq i\leq 2$ and $0\leq k\leq 1$).
\end{theorem}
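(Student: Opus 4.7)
The plan is to identify $\mathrm{Aut}(K/Z_2(K))$ with $\GL_2(\Z/s\Z)$ via the free $\Z/s\Z$-module structure of $K/Z_2(K)\cong (\Z/s\Z)^2$ with basis the images of $a, b$ (in the row-by-row convention), and then to analyse $P(\mathrm{Aut}(K))$ as a subgroup of matrices. That $P$ is a group homomorphism with kernel $\mathrm{Aut}_2(K)$ is immediate from the definition of $\mathrm{Aut}_2(K)$. The inclusion $\mathrm{Aut}_2(K)\cdot S\langle \mu,\Phi\rangle\subseteq\mathrm{Aut}(K)$ is already in hand from Propositions \ref{autk3}, \ref{autk4}, and \ref{newauto}.

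Next I would compute the matrices of the generators under $P$: writing $\widehat{g} = P(g)$, one gets
\[
\widehat{f_n} = \begin{pmatrix} n & 0 \\ 0 & n^{-1} \end{pmatrix},\quad
\widehat{\mu} = \begin{pmatrix} 0 & 1 \\ 1 & 0 \end{pmatrix},\quad
\widehat{\Phi} = \begin{pmatrix} 1 & r \\ 0 & 1 \end{pmatrix},\quad
\widehat{\Phi^{\mu}} = \begin{pmatrix} 1 & 0 \\ r & 1 \end{pmatrix},
\]
all with entries reduced modulo $s$. Let $G$ denote the subgroup they generate. For $m>2$, since $r^2\equiv 0\mod s$, the matrices $\widehat{\Phi}$ and $\widehat{\Phi^{\mu}}$ commute and each has order $2$, and $\widehat{f_n}$ centralises both (because $nr\equiv r\mod s$ for odd $n$). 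Hence $\langle \widehat{f_n}, \widehat{\Phi}, \widehat{\Phi^{\mu}}\rangle \cong (\Z/s\Z)^{\times}\times (\Z/2\Z)^{2}$, and since $\widehat{\mu}$ conjugates $\widehat{f_n}$ to $\widehat{f_n}^{-1}$ and swaps $\widehat{\Phi}$ with $\widehat{\Phi^{\mu}}$, $G$ acquires the claimed semidirect-product structure $((\Z/s\Z)^{\times}\times(\Z/2\Z)^{2})\rtimes \Z/2\Z$. For $m=2$, $\widehat{f_n}$ is trivial and a direct check shows $\langle \widehat{\mu}, \widehat{\Phi}\rangle = \GL_{2}(\Z/2\Z)\cong S_{3}$, with $\widehat{\mu}\widehat{\Phi}$ of order $3$ and $\widehat{\mu}$ of order $2$.

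The main obstacle is proving the reverse inclusion $P(\mathrm{Aut}(K))\subseteq G$. Given $\varphi\in\mathrm{Aut}(K)$ with matrix $M$, the plan is to compose $\varphi$ with $\mu$ if necessary to make $M_{11}$ odd, and then with a suitable $f_n$ (for $n\equiv M_{11}^{-1}\mod s$) to arrange $M_{11}\equiv 1\mod s$. The delicate remaining step is to show that the defining relations of $K$ force $M_{22}\equiv 1\mod s$ and $M_{12}, M_{21}\in r(\Z/s\Z)$ for $m>2$, thereby placing $M$ in $\langle \widehat{\Phi}, \widehat{\Phi^{\mu}}\rangle$. The argument hinges on Corollary \ref{cor.bigCommutator.K} (to compute $[\varphi(a),\varphi(b)]$ modulo $Z_{1}(K)$), Corollary \ref{cor.power.K} (to evaluate relevant powers), and the requirement that the equations $\varphi(a)^{[\varphi(a),\varphi(b)]}=\varphi(a)^{\alpha}$ and its twin hold. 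Unravelling these imposes the claimed congruences on the entries of $M$; the manipulations are intricate because of the interaction with the arithmetic of $\alpha = 1+2^{m}\ell$ through the structural formulas of Section \ref{section.formulas.J}.

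Once this inclusion is established, the remaining assertions are routine: $|\mathrm{Aut}(K)| = |\mathrm{Aut}_{2}(K)|\cdot|G|$ together with $|\mathrm{Aut}_{2}(K)|=2^{6m-2}$ from Proposition \ref{autexcel} yields $2^{6m-2}\cdot 2^{m+1}=2^{7m-1}$ for $m>2$, and $2^{6m-2}\cdot 6=2^{11}\cdot 3$ for $m=2$. Surjectivity of $P$ for $m=2$ is immediate since $G = \GL_{2}(\Z/2\Z) = \mathrm{Aut}(K/Z_{2}(K))$. The normal forms for $\mathrm{Aut}(K)/\mathrm{Aut}_{2}(K)$ come from the unique expression of each element of $G$: for $m>2$, as $\widehat{f_n}\widehat{\Phi}^{\epsilon}\widehat{\Phi^{\mu}}^{\delta}\widehat{\mu}^{\tau}$, lifted to $f_{n}\Phi^{\epsilon}(\Phi^{\mu})^{\delta}\mu^{\tau}$; for $m=2$, as $(\widehat{\mu}\widehat{\Phi})^{i}\widehat{\mu}^{k}$, lifted to $(\mu\Phi)^{i}\mu^{k}$.
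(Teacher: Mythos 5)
Your architecture coincides with the paper's: identify $\mathrm{Aut}(K/Z_2(K))$ with $\GL_2(\Z/2^{m-1}\Z)$, compute the matrices of $f_n$, $\mu$, $\Phi$, $\Phi^\mu$ under $P$, check that they generate a subgroup $W$ of the stated isomorphism type and order $2^{m+1}$ (resp.\ $\GL_2(\Z/2\Z)$ when $m=2$), and then prove $\mathrm{Im}(P)\subseteq W$. The generator computations and the structure of $W$ are correct as you state them, and for $m=2$ your argument is in fact complete, since there $W=\GL_2(\Z/2\Z)$ is the whole codomain and the containment is automatic.

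For $m>2$, however, the reverse inclusion is the entire substance of the theorem, and you only describe it: ``unravelling these imposes the claimed congruences \dots\ the manipulations are intricate.'' Nothing in your outline actually derives the congruences, and this is where all the work lies. What is needed (and what the paper does) is the following. For $\Psi\in\mathrm{Aut}(K)$ write $a^\Psi=a^ib^jz$, $b^\Psi=a^eb^fw$ with $z,w\in Z_2(K)$, and let $d=if-je$ be the determinant. Corollary \ref{cor.bigCommutator.K} gives $c^\Psi\equiv c^{d}\bmod Z_1(K)$; since $Z_1(K)$ is central, preservation of $a^{[a,b]}=a^\alpha$ forces $(a^ib^jz)^{c^{d}}=(a^ib^jz)^\alpha$, and similarly for the second relation. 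Putting $a^ib^jz$ into normal form via Corollary \ref{cor.product.K} and computing both sides --- the power by Note \ref{note.K.2}, the conjugate from $a^{c}=a^{\alpha}$ and $b^{c}=b^{1-2s\ell}$ --- yields $i(d-1)\equiv 0$ and $j(d+1)\equiv 0\bmod 2^{m-1}$, together with an analogous pair for $e,f$. Since $d$ and at least one of $i,j$ are units mod $2^{m-1}$, one deduces $d\equiv\pm 1$, and then that the appropriate two of the four entries are $\equiv 0\bmod 2^{m-2}$ while the other two multiply to $\pm 1\bmod 2^{m-1}$; this is exactly membership in $W$. Without carrying out this computation your proof establishes only $W\subseteq\mathrm{Im}(P)$, i.e.\ a lower bound for $|\mathrm{Aut}(K)|$, not the equality, the order, or the uniqueness of the normal form $gf_n\Phi^i(\Phi^\mu)^j\mu^k$.
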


\begin{proof}
    Note that $K/Z_2(K)$ is a free module over $\Z/2^{m-1}\Z$ of rank 2, so $P$ gives rise to a homomorphism
    $D:\mathrm{Aut}(K)\to (\Z/2^{m-1}\Z)^\times$, $\Psi\mapsto D_\Psi$, the determinant of $\Psi^P$.

    Let $\Psi\in\mathrm{Aut}(K)$. Then
    $$
    a^\Psi=a^i b^j z,\; b^\Psi=a^e b^f w\quad i,j,e,f\in\N, z,w\in Z_2(K).
    $$

    Let $d$ be any positive integer that maps into $D_\Psi$ under $\Z\to \Z/2^{m-1}\Z$. Since $c\in Z_2(K)$,
    we have
    \begin{equation}\label{det}
        c^\Psi\equiv [a^i b^j z,a^e b^f w]\equiv c^{if-je}\equiv c^{d}\mod Z_1(K).
    \end{equation}

    Taking $z = a^{sg} b^{sh} c^k$ with $g,h,k\in\Z$ and applying Corollary \ref{cor.product.K}, yields $a^i b^j z = a^{i + sg + ujg} b^{j + sh} c^k$. Thus, Note \ref{note.K.2} gives
    \[
    (a^i b^j z)^\alpha
    = a^{\alpha(i + sg + ujg)} b^{\alpha(j + sh)} c^{\alpha k}
    = a^{i + sg + ujg + 2s\ell i} b^{j + sh + 2s\ell j} c^k.
    \]
    Since $\alpha^d\equiv 1 + 2s\ell d\mod 2u$, $(1 - 2s\ell)^d\equiv 1 - 2s\ell d\mod 2u$, and $b^c = b^{1 - 2s\ell}$, we have
    \[
    (a^i b^j z)^{c^d}
    = a^{\alpha^d(i + sg + ujg)} b^{(1 - 2s\ell)^d(j + sh)} c^k
    = a^{i + sg + ujg + 2s\ell id} b^{j + sh - 2s\ell jd} c^k.
    \]

    As $\Psi$ preserves the relations $a^c=a^\alpha$ and $b^c=b^\alpha$ it follows from \eqref{det} that $(a^i b^j z)^{c^d}=(a^i b^j z)^\alpha$ and $(a^e b^f w)^{c^d}=(a^e b^f w)^\alpha$. Therefore, from above, $a^{2s\ell i(d-1)} b^{-2s\ell j(d+1)} = 1$, and similarly, $a^{2s\ell e(d-1)} b^{-2s\ell f(d+1)} = 1$. From this we infer 
    \begin{equation}\label{coli14}
        i(d-1)\equiv 0,\, j(d+1)\equiv 0,\, e(d-1)\equiv 0,\, f(d+1)\equiv 0\quad \mod 2^{m-1}.
    \end{equation}

    Since $d$ is invertible modulo $2^{m-1}$, at least one of $i,j$ must be invertible modulo $2^{m-1}$ by \eqref{det}.

    Assume until further notice that $m>2$. 
		If $2\nmid i$ then \eqref{coli14} yields $d\equiv 1\mod 2^{m-1}$, whence $2j\equiv 0\mod 2^{m-1}$, that
    is, $j\equiv 0\mod 2^{m-2}$. Since $m>2$, we infer $2\nmid f$. From $d\equiv 1\mod 2^{m-1}$ we also obtain
    $2e\equiv 0\mod 2^{m-1}$, that is, $e\equiv 0\mod 2^{m-2}$. If
    $2\nmid j$ then replacing $\Psi$ by $\mu\Psi$ the previous case yields $D_\Psi=-1$, $2\nmid e$,
    and $i,f\equiv 0\mod 2^{m-2}$.

    We have shown that $D_\Psi=\pm 1$ as well as the following: if $D_\Psi=1$ then
    $if\equiv 1\mod 2^{m-1}$ and $j,e\equiv 0\mod 2^{m-2}$, while
    if $D_\Psi=-1$ then $je\equiv -1\mod 2^{m-1}$ and $i,f\equiv 0\mod 2^{m-2}$.

    Going back to the general case $m>1$, we fix the $\Z/2^{m-1}\Z$-basis $\{a Z_2(K),bZ_2(K)\}$ of $K/Z_2(K)$, and 
		identify $\mathrm{Aut}(K/Z_2(K))$
    with $\GL_2(\Z/2^{m-1}\Z)$. We then have the following matrices:
    $$
    M_n=f_n^P=
    \left(\begin{array}{cc}
    [n] & 0\\ 0 & [t]
    \end{array}\right),\,
    Q=\mu^P=
    \left(\begin{array}{cc}
    0 & 1\\ 1 & 0
    \end{array}\right),\,
    R=\Phi^P=
    \left(\begin{array}{cc}
    1 & w\\ 0 & 1
    \end{array}\right),
    $$
    where $z\mapsto [z]$ is the canonical projection $\Z\to\Z/2^{m-1}\Z$,
    $n\in\Z$ is odd with inverse $t$ modulo $2^{m-1}$, and $w=[2^{m-2}]$.
    Set $U=\{M_n\,|\, n\in\Z, n\text{ odd}\}$, a group of order {\boldmath$\varphi$}$(2^{m-1})=2^{m-2}$,
    and $T=R^Q$.

    Suppose first $m>2$. Then $V=U\langle R\rangle\langle T\rangle$ is an abelian
    group of order $2^m$ isomorphic to $(\Z/2^{m-1}\Z)^\times \times (\Z/2\Z)^2$. Moreover, every element of $V$ has
    determinant 1, so $Q\notin V$, but $Q$ normalizes $V$,
    conjugating every $M_r$ into its inverse, and $R$ and $T$ into each other. Therefore $W=V\rtimes\langle Q\rangle$
    is a group of order $2^{m+1}$. If $D_\Psi=1$ then $if\equiv 1\mod 2^{m-1}$ and $j,e\equiv 0\mod 2^{m-2}$, which makes it obvious that
    $\Psi^P\in V$. Thus, if $D_\Psi=-1$ then $\Psi^P\in W$. This proves
    $$\mathrm{Aut}(K)^P=W=U\langle R\rangle\langle T\rangle\langle Q\rangle=(S \langle \Phi\rangle
    \langle \Phi^\mu\rangle\langle \mu\rangle)^P.
    $$
    As $\ker(P)=\mathrm{Aut}_2(K)$, we see that
    $\mathrm{Aut}(K)=\mathrm{Aut}_2(K)S\langle \Phi \rangle\langle \Phi^\mu \rangle\langle \mu \rangle$ and
    $\mathrm{Aut}(K)/\mathrm{Aut}_2(K)\cong W$.
    Since $|\mathrm{Aut}_2(K)|=2^{6m-2}$
    by Proposition \ref{autexcel}, and $|W|=2^{m+1}$ by above, we deduce $|\mathrm{Aut}(K)|=2^{7m-1}$ with uniqueness
    of expression.

    Suppose next $m=2$. Then $T,Q$ generate $\GL_2(\Z/2\Z)$ and $P$ is surjective,
    so by Proposition \ref{autexcel}
    $$
    |\mathrm{Aut}(K)|=|\mathrm{Aut}_2(K)|\times|\GL_2(\Z/2\Z)|=2^{10}\times 6=2^{11}\times 3,
    $$
    with uniqueness of expression and $\mathrm{Aut}(K)/\mathrm{Aut}_2(K)\cong \GL_2(\Z/2\Z)$.
\end{proof}

\begin{cor}
    Every automorphism of $K$ has determinant $\pm 1$ when acting on $K/Z_2(K)$. Moreover, if $m>2$ then
    $\mathrm{Aut}_2(K)S\langle \Phi,\Phi^\mu \rangle$ is the kernel of the corresponding determinant map, as well as the
    pointwise stabilizer of $c Z_1(K)$ when $\mathrm{Aut}(K)$ acts on $K/Z_1(K)$.
\end{cor}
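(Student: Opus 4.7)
The strategy is to extract everything from the explicit matrix description of $\mathrm{Aut}(K)^P$ already produced in the proof of Theorem \ref{autk6}. Since $\ker(P) = \mathrm{Aut}_2(K)$ acts trivially on $K/Z_2(K)$, the determinant is a function of $\Psi^P$ only; similarly, equation \eqref{det} in that proof shows that the induced action on $cZ_1(K)$ is determined by $D_\Psi$ alone. So the entire statement reduces to a computation with the four generating matrices $M_n$, $Q$, $R$, $T$.

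The first step is to simply read off the determinants: one has $\det M_n = nt \equiv 1 \pmod{2^{m-1}}$, $\det R = \det T = 1$, and $\det Q = -1$. Since $Q$ is the only generator with non-trivial determinant, and every element of $\mathrm{Aut}(K)$ lies in $\mathrm{Aut}_2(K) S\langle\mu,\Phi\rangle$, every automorphism of $K$ has determinant $\pm 1$ on $K/Z_2(K)$.

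For the rest of the corollary I assume $m>2$ and use the unique-factorization form $gf_n\Phi^i(\Phi^\mu)^j\mu^k$ from Theorem \ref{autk6}: such an element has determinant $(-1)^k$, so the kernel of $D$ is precisely the $k=0$ stratum, namely $\mathrm{Aut}_2(K) S\langle\Phi,\Phi^\mu\rangle$. For the pointwise stabilizer of $cZ_1(K)$, equation \eqref{det} gives $c^\Psi\equiv c^{D_\Psi}\pmod{Z_1(K)}$. Using the normal form $a^ib^jc^k$ with $0\leq k<2^{m-1}$ together with $Z_1(K)=\langle a^{2^m},b^{2^m}\rangle$, I would note that $cZ_1(K)$ has order exactly $2^{m-1}$ in $K/Z_1(K)$. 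Hence the stabilizer condition becomes $D_\Psi\equiv 1\pmod{2^{m-1}}$, which, since $-1\not\equiv 1\pmod{2^{m-1}}$ when $m>2$, is equivalent to $D_\Psi=1$, i.e., to membership in the kernel of $D$.

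There is no real obstacle here once Theorem \ref{autk6} is in hand; the only point requiring some care is the hypothesis $m>2$, which enters in two distinct places: once so that the factorization as $gf_n\Phi^i(\Phi^\mu)^j\mu^k$ is available (for $m=2$ the outer quotient collapses to $\GL_2(\Z/2\Z)$, with a different structure), and once to exclude the coincidence $-1=1$ in $\Z/2^{m-1}\Z$ that would make the determinant map fail to distinguish the two stabilizers.
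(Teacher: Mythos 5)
Your proposal is correct and follows exactly the route the paper intends: the corollary is stated without proof as an immediate consequence of Theorem \ref{autk6}, and you extract it from the same data, namely the determinants of the matrices $M_n$, $R$, $T=R^Q$, $Q$, the unique factorization $gf_n\Phi^i(\Phi^\mu)^j\mu^k$, and the congruence $c^\Psi\equiv c^{D_\Psi}\bmod Z_1(K)$ from \eqref{det}. Your observations that $cZ_1(K)$ has order $2^{m-1}$ in $K/Z_1(K)$ and that the hypothesis $m>2$ is needed precisely to separate $-1$ from $1$ in $(\Z/2^{m-1}\Z)^\times$ are exactly the points that make the second assertion work.
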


We next obtain a description of $\mathrm{Aut}(K)$ in terms of group extensions.

\begin{theorem}\label{gpextk}
    We have the following series of normal subgroups of $\mathrm{Aut}(K)$:
    \begin{equation}\label{norser}
        1\subset\mathrm{Aut}_1(K)\subset\mathrm{Aut}_2(K)\subset\mathrm{Aut}(K),
    \end{equation}
    where $\mathrm{Aut}_1(K)\cong (\Z/2^{m-1}\Z)^4$, $\mathrm{Aut}_2(K)/\mathrm{Aut}_1(K)\cong (\Z/2^{m-1}\Z)^2\times (\Z/2\Z)^4$,
    and the factor $\mathrm{Aut}(K)/\mathrm{Aut}_2(K)\cong ((\Z/2^{m-1}\Z)^\times \times (\Z/2\Z)^2)\rtimes \Z/2\Z$, with $\Z/2\Z$
    acting by inversion on $(\Z/2^{m-1}\Z)^\times$ and by switching factors on $(\Z/2\Z)^2$ if $m>2$, while
    $\mathrm{Aut}(K)/\mathrm{Aut}_2(K)\cong\GL_2(\Z/2\Z)$ if $m=2$.
\end{theorem}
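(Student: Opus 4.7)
The plan is to read off the theorem as an assembly of results already established earlier in this section, identifying the abstract isomorphism types of $Z_1(K)$ and $Z_2(K)/Z_1(K)$ from the normal form of $K$.

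First, I will address normality. For each $i\geq 0$, the subgroup $Z_i(K)$ is characteristic in $K$, so the canonical map $\mathrm{Aut}(K)\to \mathrm{Aut}(K/Z_i(K))$ is a well-defined group homomorphism, and $\mathrm{Aut}_i(K)$, being its kernel, is normal in $\mathrm{Aut}(K)$. This immediately gives the series \eqref{norser} as a chain of normal subgroups.

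Next, I will determine the two bottom factors by combining the propositions of this section with structural computations from the normal form of $K$. Recall $Z_1(K) = \gen{a^{2^m},b^{2^m}}$ and $Z_2(K) = \gen{a^{2^{m-1}},b^{2^{m-1}},c}$, and that every element of $K$ admits a unique expression $a^i b^j c^k$ with $0\le i,j<2^{2m-1}$, $0\le k<2^{m-1}$. Using this uniqueness, I would check that an equation of the shape $a^{2^m i}b^{2^m j}=1$ forces $2^{m-1}\mid i$ and $2^{m-1}\mid j$, so that $Z_1(K)\cong (\Z/2^{m-1}\Z)^2$. Proposition \ref{autk} then gives
\[
\mathrm{Aut}_1(K)\cong Z_1(K)\times Z_1(K)\cong (\Z/2^{m-1}\Z)^4.
\]
For the second factor, the same normal form shows that in $Z_2(K)/Z_1(K)$ the classes of $a^{2^{m-1}}$ and $b^{2^{m-1}}$ have order exactly 2, the class of $c$ has order $2^{m-1}$, and the three are independent, so $Z_2(K)/Z_1(K)\cong (\Z/2\Z)^2\times \Z/2^{m-1}\Z$. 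Proposition \ref{autexcel} then yields
\[
\mathrm{Aut}_2(K)/\mathrm{Aut}_1(K)\cong (Z_2(K)/Z_1(K))^2\cong (\Z/2^{m-1}\Z)^2\times (\Z/2\Z)^4.
\]

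The top factor is the content of Theorem \ref{autk6}, which I would simply quote: for $m>2$ we have $\mathrm{Aut}(K)/\mathrm{Aut}_2(K)\cong ((\Z/2^{m-1}\Z)^\times\times (\Z/2\Z)^2)\rtimes \Z/2\Z$ with the $\Z/2\Z$-action generated by $Q$ inverting $M_n$ and swapping $R$ with $T=R^Q$, while for $m=2$ we have $\mathrm{Aut}(K)/\mathrm{Aut}_2(K)\cong \GL_2(\Z/2\Z)$. At this stage there is essentially no calculation left; the only subtlety is to be consistent with the semidirect product structure identified inside $\GL_2(\Z/2^{m-1}\Z)$ in the proof of Theorem \ref{autk6}. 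The main obstacle, namely the control of $\mathrm{Aut}(K)/\mathrm{Aut}_2(K)$ via the determinant and congruences \eqref{coli14}, has already been handled there, so the present theorem is a clean packaging of those results.
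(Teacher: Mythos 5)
Your proposal is correct and follows essentially the same route as the paper, which derives the theorem as an immediate consequence of Propositions \ref{autk} and \ref{autexcel} and Theorem \ref{autk6}; you merely make explicit the (routine) identification of the isomorphism types of $Z_1(K)$ and $Z_2(K)/Z_1(K)$ from the normal form and the normality of the $\mathrm{Aut}_i(K)$ as kernels. No gaps.
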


\begin{proof}
    Immediate consequence of Propositions \ref{autk} and \ref{autexcel}, and Theorem \ref{autk6}.
\end{proof}

Our next result is designed for later use, but it can also be used to insert an additional normal
subgroup of $\mathrm{Aut}(K)$ to the series \eqref{norser}, namely $\mathrm{Inn}(K)\mathrm{Aut}_1(K)$.

\begin{prop}\label{outow}
    We have
    \begin{equation}\label{iso3}
        \mathrm{Aut}_2(K)/\mathrm{Inn}(K)\mathrm{Aut}_1(K)\cong Z_2(K)^2/(Z_1(K)\times \langle c\rangle)^2\cong (\Z/2\Z)^4.
    \end{equation}
    Moreover, $\mathrm{Aut}_2(K)/\mathrm{Inn}(K)\mathrm{Aut}_1(K)$ is generated by the cosets of
    $\psi_1,\psi_2,\psi_3,\psi_4\in \mathrm{Aut}_2(K)$, where these automorphisms are respectively given by
	$$
    a\mapsto a^{1+s},b\mapsto b;\;
    a\mapsto a, b\mapsto b^{1+s};\;
    a\mapsto ab^{s},b\mapsto b;\;
    a\mapsto a,b\mapsto ba^{s},
	$$
	and whose existence is ensured by Proposition \ref{autexcel}.
\end{prop}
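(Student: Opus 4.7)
The plan is to push $\mathrm{Inn}(K)\mathrm{Aut}_1(K)/\mathrm{Aut}_1(K)$ through the isomorphism coming from Proposition~\ref{autexcel}, namely $Z_2(K)^2/Z_1(K)^2\cong \mathrm{Aut}_2(K)/\mathrm{Aut}_1(K)$ induced by $(x,y)\mapsto\Gamma_{(x,y)}\mathrm{Aut}_1(K)$. Since $K$ has nilpotency class~$3$, $[K,K]\subseteq Z_2(K)$, so for every $w\in K$ the inner automorphism $w\delta$, which acts by $a\mapsto a[a,w]$ and $b\mapsto b[b,w]$, lies in $\mathrm{Aut}_2(K)$ and coincides with $\Gamma_{([a,w],[b,w])}$. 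Thus $\mathrm{Inn}(K)\mathrm{Aut}_1(K)/\mathrm{Aut}_1(K)$ corresponds to the image of the map $K\to Z_2(K)^2/Z_1(K)^2$ given by $w\mapsto ([a,w],[b,w])Z_1(K)^2$.

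By Corollary~\ref{cor.bigCommutator.K}, for $w=a^ib^jc^k$ we have $[a,w]\equiv c^j$ and $[b,w]\equiv c^{-i}\pmod{Z_1(K)}$. Letting $i,j$ vary freely shows this image equals $(\langle c\rangle Z_1(K)/Z_1(K))^2$. The normal form of $K$ yields $Z_1(K)\cap\langle c\rangle=1$, so $Z_1(K)\langle c\rangle\cong Z_1(K)\times\langle c\rangle$, and taking the quotient gives
\[
\mathrm{Aut}_2(K)/\mathrm{Inn}(K)\mathrm{Aut}_1(K)\cong Z_2(K)^2/(Z_1(K)\langle c\rangle)^2.
\]
Since $Z_2(K)=\langle a^s,b^s,c\rangle$ and $Z_1(K)\langle c\rangle=\langle a^{2s},b^{2s},c\rangle$, the quotient $Z_2(K)/Z_1(K)\langle c\rangle$ is generated by the images of $a^s$ and $b^s$, both of order~$2$, so it is isomorphic to $(\Z/2\Z)^2$ and its square is $(\Z/2\Z)^4$, establishing the first assertion.

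For the generators, a direct inspection of the defining formulas shows that $\psi_1,\psi_2,\psi_3,\psi_4$ correspond under the identification above to the pairs $(a^s,1)$, $(1,b^s)$, $(b^s,1)$, $(1,a^s)\in Z_2(K)^2$. Their images in $Z_2(K)^2/(Z_1(K)\langle c\rangle)^2\cong(\Z/2\Z)^4$ form a basis of this elementary abelian $2$-group, so they generate the quotient. The one genuinely delicate step is verifying that the image of the inner automorphisms is exactly $(\langle c\rangle Z_1(K)/Z_1(K))^2$ rather than a proper subgroup; this is where the unrestricted choice of $i$ and $j$ in $w=a^ib^jc^k$ is essential, after which everything else is a direct assembly of the tools already in place.
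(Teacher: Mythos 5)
Your proposal is correct and follows essentially the same route as the paper: both push $\mathrm{Inn}(K)\mathrm{Aut}_1(K)$ through the epimorphism $g:Z_2(K)^2\to\mathrm{Aut}_2(K)/\mathrm{Aut}_1(K)$ of Proposition \ref{autexcel}, identify its preimage as $(Z_1(K)\times\langle c\rangle)^2$ (the paper via $a\delta=\Gamma_{(1,c^{-1})}$, $b\delta=\Gamma_{(c,1)}$, you via the commutator formula of Corollary \ref{cor.bigCommutator.K}), and read off the quotient and its generators. The only step you gloss over is that the images of $a^s$ and $b^s$ in $Z_2(K)/Z_1(K)\langle c\rangle$ are independent (being generated by two elements of order $2$ does not by itself give $(\Z/2\Z)^2$); this follows at once from the order count $|Z_2(K)|/|Z_1(K)\langle c\rangle|=2^{3m-1}/2^{3m-3}=4$ or from the normal form.
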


\begin{proof}
    Let $g:Z_2(K)^2\to \mathrm{Aut}_2(K)/\mathrm{Aut}_1(K)$ be the epimorphism defined in Proposition \ref{autexcel},
    and let $\pi:\mathrm{Aut}_2(K)/\mathrm{Aut}_1(K)\to \mathrm{Aut}_2(K)/\mathrm{Inn}(K)\mathrm{Aut}_1(K)$ be the canonical
    projection.

    From $\Gamma_{(1,c^{-1})}=a\delta$ and $\Gamma_{(1,c)}=b\delta$ we deduce $\langle c\rangle^2 g=\mathrm{Inn}(K)\mathrm{Aut}_1(K)/\mathrm{Aut}_1(K)=\ker\pi$. Since $\ker g=Z_1(K)^2$, we deduce $\ker g\pi=
    (Z_1(K)\times \langle c\rangle)^2$. The first isomorphism theorem now yields~(\ref{iso3}).
    Moreover, $\mathrm{Aut}_2(K)/\mathrm{Inn}(K)\mathrm{Aut}_1(K)$ is generated by the images of
    $(a^s,1), (b^s,1), (1,a^s), (1,b^s)$
    under the epimorphism $Z_2(K)^2\to \mathrm{Aut}_2(K)/\mathrm{Inn}(K)\mathrm{Aut}_1(K)$, namely the cosets of $\psi_1,\psi_2,\psi_3,\psi_4$.
\end{proof}

All factors of the new series
$1\subset\mathrm{Aut}_1(K)\subset\mathrm{Inn}(K)\mathrm{Aut}_1(K)\subset\mathrm{Aut}_2(K)\subset\mathrm{Aut}(K)$
have already been computed, except for the second, which can be determined as follows:
$$
\mathrm{Inn}(K)\mathrm{Aut}_1(K)/\mathrm{Aut}_1(K)\cong
\mathrm{Inn}(K)/\mathrm{Aut}_1(K)\cap \mathrm{Inn}(K)\cong K\delta/Z_2(K)\delta\cong K/Z_2(K)\cong (\Z/2^{m-1}\Z)^2.
$$
This is compatible with Proposition \ref{outow} and the structure of $\mathrm{Aut}_2(K)/\mathrm{Aut}_1(K)$.

\section{The automorphism group of $J/Z_1(J)$}\label{autgmodh}

Set $H = J/Z_1(J)$. It follows from \eqref{presn}, \eqref{put3}, and \cite[Proposition 5.1]{MS} that $H$ has presentation
$$
H = \groupPresentation{A,B}{A^{[A,B]}=A^\alpha,\; B^{[B,A]}=B^\alpha,\; A^{2^{2m-1}}=1,\; B^{2^{2m-1}}=1},
$$
and we set $C=[A,B]$, noting that $C^{2^{2m-1}}=1$ by \eqref{put}. Observe the automorphism
$A\leftrightarrow B$, $C\leftrightarrow C^{-1}$, say $\nu$, of $H$. It follows
from \eqref{put3} and \eqref{put4} that
$$
Z_1(H)=\langle C^{2^{m-1}}\rangle,\;
Z_2(H)=\langle A^{2^{m}}, B^{2^{m}}, C^{2^{m-1}}\rangle,\;
Z_3(H)=\langle A^{2^{m-1}}, B^{2^{m-1}}, C\rangle.
$$

As $Z_3(J)$ is abelian, so is $Z_2(H)\cong Z_3(J)/Z_1(J)$, and \eqref{put5} yields
$$
|Z_1(H)|=2^{m},\;
|Z_2(H)|=2^{3m-2},\;
|Z_3(H)|=2^{4m-1},\;
|H|=2^{6m-3},\;
|\mathrm{Inn}(H)|=2^{5m-3}.
$$

We may now deduce from \eqref{gol} that every element of $H$ can be written uniquely in the form
$A^i B^j C^k$, where $0\leq i,j,k<2^{2m-1}$, and that $o(A)=o(B)=o(C)=2^{2m-1}$.

Recalling the notation of Section \ref{section.formulas.J}, we have $A^{2u} = B^{2u}=C^{2u} = 1$, $Z_1(H) = \gen{C^s}$, $Z_2(H) = \gen{A^{2s},B^{2s},C^{s}}$, and $Z_3(H) = \gen{A^s,B^s,C}$.

The following four corollaries follow from the main results in Section \ref{section.formulas.J}.

\begin{cor}\label{cor.commutator.H}
    For all $n,t\in\Z$ the following identities hold in $H$:
    \begin{align*}
        &[C^n,A^t]
        = A^{-2s\ell nt},\\
        &[C^n,B^t]
        = B^{2s\ell nt},\\
        &[A^n,B^t]
        = A^{-2s\ell\phi(n)t} B^{2s\ell n\phi(t)} C^{nt - 2s\ell\phi(n)\phi(t)}.
    \end{align*}
\end{cor}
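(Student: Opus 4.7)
The plan is to derive these three identities directly from Theorem \ref{thm.commutators.J} by reducing modulo $Z_1(J) = \langle A^{2u}\rangle$. Since $H = J/Z_1(J)$, any power of the form $A^{2u\cdot r}$ with $r\in\Z$ becomes trivial in $H$. Moreover, from \eqref{put} we have $A^{2u}B^{2u}=1$ in $J$, so $B^{2u}\in Z_1(J)$, which forces $B^{2u}=1$ in $H$ as well; in other words, powers of $B$ whose exponent is a multiple of $2u$ also vanish.

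First I would handle the third identity, since it is the most delicate. Theorem \ref{thm.commutators.J} gives
\[
[A^n,B^t] = A^{-2s\ell\phi(n)t}\, B^{2s\ell n\phi(t)}\, C^{nt-2s\ell\phi(n)\phi(t)}\, A^{\xi_1},
\]
where $\xi_1 = 2u\ell^2\{\cdots\}$ is by inspection a multiple of $2u$. Hence $A^{\xi_1}\in Z_1(J)$ and its image in $H$ is trivial, leaving exactly the claimed formula.

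Next I would address the first two identities. By Theorem \ref{thm.commutators.J},
\[
[C^n,A^t] = A^{-2s\ell nt - 4u\ell^2\phi(n)t}, \qquad [C^n,B^t] = B^{2s\ell nt - 4u\ell^2\phi(n+1)t}.
\]
The correction terms $-4u\ell^2\phi(n)t$ and $-4u\ell^2\phi(n+1)t$ are both multiples of $2u$, so the corresponding factors $A^{-4u\ell^2\phi(n)t}$ and $B^{-4u\ell^2\phi(n+1)t}$ become trivial in $H$ (using $A^{2u}=1$ and $B^{2u}=1$ in $H$, as discussed). The remaining expressions are precisely those claimed in the corollary.

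There is no serious obstacle: the entire content of the corollary is a routine modular reduction of the $J$-level formulas, and the only bookkeeping to verify is that each discarded exponent really is divisible by $2u$, which is immediate from the explicit shapes of the correction terms and of $\xi_1$.
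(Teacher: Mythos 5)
Your proposal is correct and is exactly the argument the paper intends: the corollary is stated as an immediate consequence of Theorem \ref{thm.commutators.J}, obtained by projecting to $H=J/Z_1(J)$ and discarding the factors $A^{\xi_1}$, $A^{-4u\ell^2\phi(n)t}$, and $B^{-4u\ell^2\phi(n+1)t}$, all of which lie in $Z_1(J)=\langle A^{2u}\rangle$ (the last via $A^{2u}B^{2u}=1$). Your bookkeeping that each discarded exponent is a multiple of $2u$ is accurate, so nothing is missing.
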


\begin{cor}\label{cor.product.H}
    Let $i,j,k,a,b,c\in\Z$. Then $(A^iB^jC^k)(A^aB^bC^c) = A^{\textup{exp} A} B^{\textup{exp} B} C^{\textup{exp} C}$, where
    \begin{align*}
        \textup{exp} A
        &= i + a + 2s\ell\{j\phi(a) - ka\},\\
        \textup{exp} B
        &= j + b + 2s\ell\{kb - jab - \phi(j)a\},\\
        \textup{exp}\, C
        &= k + c - ja + 2s\ell\{jka - \phi(j+1)\phi(a)\}.
    \end{align*}
\end{cor}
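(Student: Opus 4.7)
The plan is to deduce Corollary \ref{cor.product.H} as a direct consequence of Theorem \ref{thm.product.J} by projecting to the quotient $H = J/Z_1(J)$. Theorem \ref{thm.product.J} gives, in $J$, the identity
$$(A^iB^jC^k)(A^aB^bC^c) = A^{\textup{exp} A} B^{\textup{exp} B} C^{\textup{exp} C} A^{\xi_2},$$
where the exponents $\textup{exp} A$, $\textup{exp} B$, $\textup{exp} C$ are literally the same expressions appearing in Corollary \ref{cor.product.H}, and $\xi_2 = \xi_2(j,k,a,b)$ is the explicit polynomial displayed there.

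The key observation is that every monomial in the closed form for $\xi_2$ carries the prefactor $2u\ell^2$, so $\xi_2$ is an integer multiple of $2u$. Since $Z_1(J)=\langle A^{2u}\rangle$ by \eqref{put3}, this forces $A^{\xi_2}\in Z_1(J)$. Reducing the product identity modulo $Z_1(J)$ therefore erases the $A^{\xi_2}$ factor and preserves the three remaining factors, which yields precisely the formula claimed in the corollary.

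There is no substantive obstacle here: the content of the corollary is already packaged inside Theorem \ref{thm.product.J}. The only step requiring any thought is pairing the identification $Z_1(J) = \langle A^{2u}\rangle$ with the observation, obtained by inspection, that every term of $\xi_2$ contributes a multiple of $2u$, so that the correction factor sinks into $Z_1(J)$ and vanishes in $H$.
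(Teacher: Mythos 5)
Your proposal is correct and matches the paper's (implicit) argument: the paper derives Corollary \ref{cor.product.H} from Theorem \ref{thm.product.J} exactly by passing to $H=J/Z_1(J)$, where the correction factor $A^{\xi_2}$, being a power of $A^{2u}$ and hence an element of $Z_1(J)=\langle A^{2u}\rangle$, disappears. Nothing further is needed.
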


\begin{cor}\label{cor.bigCommutator.H}
    Let $i,j,k,a,b,c\in\Z$. Then
    $[A^iB^jC^k,A^aB^bC^c] = A^{\textup{exp} A} B^{\textup{exp} B}C^{\textup{exp} C}$, where
    \begin{align*}
        \textup{exp} A
        &= 2s\ell\{j\phi(a) - \phi(i)b + ic - ka\},\\
        \textup{exp} B
        &= 2s\ell\{i\phi(b) - \phi(j)a + kb + j(ib - ab - c)\},\\
        \textup{exp}\, C
        &= ib - ja + 2s\ell\{\phi(a)(\phi(j) + jb) - \phi(i)(\phi(b) + jb) + ijc - kab\}.
    \end{align*}
\end{cor}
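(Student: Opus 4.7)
The plan is to obtain Corollary \ref{cor.bigCommutator.H} essentially for free by pushing Proposition \ref{prop.bigCommutator.J} through the quotient map $\pi: J \to H = J/Z_1(J)$. The key observation is that the identity in Proposition \ref{prop.bigCommutator.J} is a congruence modulo $Z_1(J)$, and $\pi$ precisely kills $Z_1(J)$, so the congruence upgrades to an exact equality in $H$.

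First I would note that by the presentation of $H$ given at the start of Section \ref{autgmodh} and the definition $C=[A,B]$, the projection $\pi$ sends the generators $A,B,C$ of $J$ to the generators of $H$ of the same name. Hence, for any $i,j,k,a,b,c\in\Z$, $\pi$ sends the element $A^iB^jC^k\in J$ to the element $A^iB^jC^k\in H$, and analogously for $A^aB^bC^c$, so it sends the commutator in $J$ to the corresponding commutator in $H$.

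Next I would apply Proposition \ref{prop.bigCommutator.J} in $J$ to obtain
\[
[A^iB^jC^k,A^aB^bC^c]\equiv A^{\textup{exp} A}B^{\textup{exp} B}C^{\textup{exp} C}\mod Z_1(J),
\]
with the exponents exactly as in the statement of Corollary \ref{cor.bigCommutator.H}. Applying $\pi$ to both sides kills the $Z_1(J)$-error term and produces the claimed equality in $H$. The uniqueness of the normal form in $H$ (recalled just before the corollary: every element of $H$ is uniquely $A^iB^jC^k$ with $0\leq i,j,k<2^{2m-1}$) gives the identity an unambiguous meaning; any particular integer exponents may, of course, be reduced modulo $o(A)=o(B)=o(C)=2^{2m-1}$.

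There is no real obstacle: the entire proof is the single observation that $Z_1(J)$ is exactly what one divides out by to form $H$, which converts the mod-$Z_1(J)$ statement of Proposition \ref{prop.bigCommutator.J} into an identity of elements of $H$. Consequently the proof is essentially a one-line invocation of Proposition \ref{prop.bigCommutator.J}.
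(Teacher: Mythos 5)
Your proof is correct and is exactly the intended derivation: the paper simply states that Corollary \ref{cor.bigCommutator.H} "follows from the main results in Section \ref{section.formulas.J}," and the route is precisely yours — since $H=J/Z_1(J)$ and the generators $A,B,C$ of $H$ are the images of those of $J$, the congruence modulo $Z_1(J)$ in Proposition \ref{prop.bigCommutator.J} becomes an equality in $H$ under the quotient map.
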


\begin{cor}\label{cor.power.H}
    Let $a,b,c,n,k,t\in\Z$. Then
    $(A^aB^bC^c)^n = A^{\textup{exp} A} B^{\textup{exp} B} C^{\textup{exp} C}$, where
    \begin{align*}
        \textup{exp} A
        &= na + 2s\ell\{a^2b\varphi(n) + (\phi(a)b - ac)\phi(n)\},\\
        \textup{exp} B
        &= nb + 2s\ell\{(b(c - ab) - a\phi(b))\phi(n) - 2ab^2\varphi(n)\},\\
        \textup{exp}\, C
        &= nc - ab\phi(n) + 2s\ell\{a^2\phi(b)\varphi(n) + \phi(a)\phi(b)\phi(n) - a^2b^2\sigma_2(1,n) - (\phi(a)b-ac)b\varphi(n+1)\}.
    \end{align*}
\end{cor}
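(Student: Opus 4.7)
The plan is to deduce Corollary \ref{cor.power.H} directly from Theorem \ref{thm.power.J} by projecting to the quotient $H = J/Z_1(J)$. The point is that the power formula in the corollary has the same $\textup{exp} A$, $\textup{exp} B$, $\textup{exp} C$ as the one in Theorem \ref{thm.power.J}; the only difference is the absence of the trailing factor $A^{\xi_4}$. So essentially all we need to verify is that $A^{\xi_4}$ lies in $Z_1(J)$ and therefore becomes trivial in $H$.

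Concretely, I would start by applying Theorem \ref{thm.power.J} to the element $A^aB^bC^c\in J$ to obtain
\[
(A^aB^bC^c)^n \;=\; A^{\textup{exp} A}\,B^{\textup{exp} B}\,C^{\textup{exp} C}\,A^{\xi_4},
\]
with $\textup{exp} A$, $\textup{exp} B$, $\textup{exp} C$ exactly as in Corollary \ref{cor.power.H}, and with $\xi_4 = 2u\ell^2\{\cdots\}$. Next I would use the structural information recalled at the start of Section \ref{backJ} (see \eqref{put3}): $Z_1(J) = \langle A^{2^{2m-1}}\rangle = \langle A^{2u}\rangle$. Since $\xi_4$ is visibly a multiple of $2u$, the element $A^{\xi_4}$ is a power of $A^{2u}$ and hence lies in $Z_1(J)$. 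Passing to the quotient $H$, the factor $A^{\xi_4}$ becomes trivial, and the identity reduces to the claim of the corollary.

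The only point that merits a bit of care — and the mildest obstacle I anticipate — is confirming that $\xi_4/(2u\ell^2)$ is genuinely an integer, so that $A^{\xi_4}$ really is a well-defined power of $A^{2u}$. This is a routine integrality check for the combinatorial expressions built from $\phi$, $\varphi$, $\sigma_1$, $\sigma_2$, $\sigma_3$ appearing in Theorem \ref{thm.power.J}, and it is implicit in the statement of that theorem. With that in hand, the corollary follows from Theorem \ref{thm.power.J} by a one-line reduction modulo $Z_1(J)$.
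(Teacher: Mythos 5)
Your proposal is correct and is exactly the paper's route: the paper states that Corollary \ref{cor.power.H} "follows from the main results in Section \ref{section.formulas.J}," i.e.\ from Theorem \ref{thm.power.J} by reducing modulo $Z_1(J)=\langle A^{2u}\rangle$, which kills the factor $A^{\xi_4}$ since $\xi_4$ is an integer multiple of $2u$. The integrality of the bracketed expression in $\xi_4$ is indeed guaranteed by the combinatorial (sum) interpretations of $\phi$, $\varphi$, $\sigma_1$, $\sigma_2$, $\sigma_3$ in Lemma \ref{lemma.sums.J}, as you anticipated.
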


\begin{note}\label{note.H.1}
    If $s\mid\phi(n)$, then $s\mid\varphi(n)$ and $s\mid\varphi(n+1)$, so
	$$(A^aB^bC^c)^n = A^{na} B^{nb} C^{nc - ab\phi(n) - 2s\ell a^2b^2\sigma_2(1,n)}.$$ In particular, $(A^aB^bC^c)^{2u} = C^{uab}$
    and the exponent of $H$ is $2^{2m}$.
\end{note}

\begin{prop}\label{auth}
    For every $x,y\in Z_2(H)$ the assignment $A\mapsto Ax,\; B\mapsto By$ extends to a 2-central automorphism $\Pi_{(x,y)}$ of $H$ that fixes $Z_2(H)$ pointwise. Moreover, the corresponding map
    $\Pi:Z_2(H)\times Z_2(H)\to \mathrm{Aut}(H)$ is a group monomorphism whose image is $\mathrm{Aut}_2(H)$. In particular,
    $|\mathrm{Aut}_2(H)|=2^{6m-4}$.
\end{prop}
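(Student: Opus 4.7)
The plan is to mirror the proof of Proposition \ref{autexcel}, but now working with $Z_2(H)$ in place of $Z_2(K)$, and relying on the structural formulas specialized to $H$ (Corollaries \ref{cor.commutator.H}--\ref{cor.power.H} and Note \ref{note.H.1}). Fix $x,y\in Z_2(H)$ and write $x=A^{2si}B^{2sj}C^{sk}$, $y=A^{2sn}B^{2st}C^{sq}$, so that $Ax=A^{1+2si}B^{2sj}C^{sk}$ (by Corollary \ref{cor.product.H}) and similarly for $By$. The goal is to verify each of the four defining relations of $H$ is preserved.

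First I would check $[Ax,By]\equiv C\pmod{Z_1(H)}$. Applying Corollary \ref{cor.bigCommutator.H}, the $A$- and $B$-exponents acquire an overall factor of $2s\ell$ which, combined with the factors of $2s$ in $j_1=2sj$, $a_1=2sn$, and in $\phi(i_1),\phi(b_1)$, puts them in $2u\Z$, hence they vanish in $H$; the $C$-exponent reduces modulo $2u$ to $1+2s(i+t)$, so $[Ax,By]=C^{1+2s(i+t)}$. Since $Z_2(H)$ is abelian and commutes with $C$ (because $[C,A^{2s}]=A^{-4u\ell}=1$ and similarly for $B^{2s}$, $C^s$, by Corollary \ref{cor.commutator.H}), we get $(Ax)^{[Ax,By]}=(Ax)^{C^{1+2s(i+t)}}$, which by Corollary \ref{cor.product.H} equals $A^{\alpha+2si}B^{2sj}C^{sk}\cdot(\text{terms killed in }H)$. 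On the other hand, since $s\mid\phi(\alpha)$, Note \ref{note.H.1} gives $(Ax)^\alpha$ the same normal form in $H$; thus the first relation is preserved. The second relation follows by applying the switching automorphism $\nu$. The relations $A^{2u}=1=B^{2u}$ follow immediately from Note \ref{note.H.1} applied with $n=2u$, since every exponent acquires enough factors of $s$ or $u$ to vanish in $H$. This yields an endomorphism $\Pi_{(x,y)}$.

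Next I would show $\Pi_{(x,y)}$ fixes $Z_2(H)=\langle A^{2s},B^{2s},C^s\rangle$ pointwise. Since $s\mid\phi(2s)$, Note \ref{note.H.1} applied to $(Ax)^{2s}$ gives $A$-exponent $\equiv 2s\pmod{2u}$, while the $B$- and $C$-exponents and all correction terms vanish mod $2u$; hence $(Ax)^{2s}=A^{2s}$, and analogously $(By)^{2s}=B^{2s}$. From $[Ax,By]=C^{1+2s(i+t)}$, raising to the $s$th power gives $C^{s+2u(i+t)}=C^s$. Once $\Pi_{(x,y)}$ fixes $Z_2(H)$ it is in particular an automorphism, by the standard nilpotency argument (fixing terms of the upper central series ensures bijectivity), and $\Pi_{(x,y)}\in\mathrm{Aut}_2(H)$ since $A,B$ are fixed modulo $Z_2(H)$.

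Finally, for the map $\Pi$: because each $\Pi_{(x',y')}$ fixes $Z_2(H)$ pointwise, the composition $\Pi_{(x,y)}\Pi_{(x',y')}$ sends $A\mapsto (Ax')\cdot(x\Pi_{(x',y')})=A(xx')$, and similarly on $B$, so $\Pi$ is a homomorphism. Injectivity is trivial ($Ax=A\Rightarrow x=1$). For surjectivity onto $\mathrm{Aut}_2(H)$, any $\Psi\in\mathrm{Aut}_2(H)$ must satisfy $A^\Psi=Ax$, $B^\Psi=By$ for some $x,y\in Z_2(H)$, and then $\Psi=\Pi_{(x,y)}$ because both agree on the generators. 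The order count follows from $|Z_2(H)|=2^{3m-2}$, giving $|\mathrm{Aut}_2(H)|=2^{6m-4}$. The main technical obstacle is the first step: verifying that the defining relation $A^{[A,B]}=A^\alpha$ survives the perturbation $A\mapsto Ax$, which requires carefully tracking how the high-order correction terms in Corollaries \ref{cor.product.H}, \ref{cor.bigCommutator.H}, and Corollary \ref{cor.power.H} reduce modulo $2u$; every exponent appearing must ultimately lie in $2u\Z$ so that it disappears in $H$.
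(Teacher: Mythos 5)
Your proposal is correct and follows essentially the same route as the paper's proof: rewrite $Ax$ and $By$ in normal form, use Corollary \ref{cor.bigCommutator.H} to get $[Ax,By]=C^{1+2s(i+t)}$, use Note \ref{note.H.1} to match $(Ax)^{[Ax,By]}$ with $(Ax)^\alpha$ and to kill the relation $A^{2u}=1$, invoke the switch $A\leftrightarrow B$ for the second relation, check that $Z_2(H)$ is fixed pointwise, and conclude via nilpotency and the definition of $\mathrm{Aut}_2(H)$. The only point treated more casually than in the paper is the normal form of $By$ (which requires moving $B$ past $A^{2sn}$, producing an extra $C^{-2sn}$), but this does not affect any of your stated conclusions.
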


\begin{proof}
    By Corollary \ref{cor.commutator.H}, $[B,A^{2sa}] = C^{-2sa}$ and $[C^s,B^{2s}] = 1$.
	Let $x=A^{2si}B^{2sj}C^{sk}$ as well as $y=A^{2sa} B^{2sb} C^{sc}$. Then
    \[
    By
    = B A^{2sa} B^{2sb} C^{sc}
    = A^{2sa} B C^{-2sa} B^{2sb} C^{sc}
    = A^{2sa} B^{1+2sb} C^{s(c-2a)}.
    \]

    Applying Corollary \ref{cor.bigCommutator.H} to $[Ax,By] = [A^{1+2si} B^{2sj} C^{sk},A^{2sa} B^{1+2sb} C^{s(c-2a)}] = A^{\textup{exp} A} B^{\textup{exp} B} C^{\textup{exp} C}$ gives $\textup{exp} A\equiv 0\mod 2u$, $\textup{exp} B\equiv 0\mod 2u$, $\textup{exp}\, C\equiv 1 + 2s(i+b)\mod 2u$, so
    \begin{equation}\label{eq.H.1}
        [Ax,By]
        = C^{1 + 2s(i+b)}.
    \end{equation}

    As $C^{2s(i+b)}\in Z_1(H)$ and, by Corollary \ref{cor.commutator.H}, $[A^{1+2si},C] = A^{2s\ell}$ and $[B^{2sj},C] = 1$, then
    \[
    (Ax)^{[Ax,By]} = (A^{1+2si} B^{2sj} C^{sk})^C = A^{\alpha+2si} B^{2sj} C^{sk}.
    \]

    On the other hand, Note \ref{note.H.1} yields
    \[
    (Ax)^\alpha
    = (A^{1+2si} B^{2sj} C^{sk})^\alpha
    = A^{\alpha(1+2si)} B^{\alpha 2sj} C^{\alpha sk}
    = A^{\alpha+2si} B^{2sj} C^{sk}.
    \]

    Thus $(Ax)^{[Ax,By]} = (Ax)^\alpha$. By the automorphism $A\leftrightarrow B$, $(By)^{[By,Ax]} = (By)^\alpha$.

    By Note \ref{note.H.1}, $(Ax)^{2u} = (A^{1+2si} B^{2sj} C^{sk})^{2u} = C^{u(1+2si)(2sj)} = 1$. By the automorphism $A\leftrightarrow B$, $(By)^{2u} = 1$. Thus the given assignment extends to an endomorphism $\Pi$ of $H$.

    Now, by Note \ref{note.H.1}, $(A^{2s})\Pi_{(x,y)} = (Ax)^{2s} = (A^{1+2si} B^{2sj} C^{sk})^{2s} = A^{2s}$ and, by the automorphism $A\leftrightarrow B$, $(By)^{2s} = B^{2s}$, so $(B^{2s})\Pi_{(x,y)} = B^{2s}$. Also, from \eqref{eq.H.1}, $(C^s)\Pi_{(x,y)} = [Ax,By]^s = C^s$. Thus $\Pi_{(x,y)}$ fixes $Z_2(H)$ pointwise and the nilpotency of $H$ ensures $\Pi_{(x,y)}$ is an automorphism. We may now continue as in the proof of Proposition \ref{autk}.
\end{proof}

\begin{cor}\label{corauth}
    We have
    $\mathrm{Inn}(H)\cap\mathrm{Aut}_2(H) = \langle C\delta,A^{2^{m-1}}\delta, B^{2^{m-1}}\delta\rangle$, $|\mathrm{Inn}(H)\cap\mathrm{Aut}_2(H)|=2^{3m-1}$,
    $$
    |\mathrm{Inn}(H)\mathrm{Aut}_2(H)|=2^{8m-6},\; \mathrm{Inn}(H)\mathrm{Aut}_2(H)\subset \mathrm{Aut}_3(H).
    $$
\end{cor}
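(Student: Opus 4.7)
The plan is to reduce everything to the general fact, recalled in the introduction, that for any group $T$ and any $a \in T$ one has $a\delta \in \mathrm{Aut}_i(T)$ if and only if $a \in Z_{i+1}(T)$, so that $\mathrm{Inn}(T) \cap \mathrm{Aut}_i(T) = Z_{i+1}(T)\delta \cong Z_{i+1}(T)/Z(T)$. Applying this to $T = H$ with $i = 2$ immediately gives $\mathrm{Inn}(H) \cap \mathrm{Aut}_2(H) = Z_3(H)\delta$. Since $Z_3(H) = \langle A^{2^{m-1}}, B^{2^{m-1}}, C\rangle$ as recorded at the start of this section, this subgroup is generated by $A^{2^{m-1}}\delta$, $B^{2^{m-1}}\delta$, and $C\delta$, proving the first assertion. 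The order is then
\[
|\mathrm{Inn}(H) \cap \mathrm{Aut}_2(H)| = |Z_3(H)/Z_1(H)| = 2^{4m-1}/2^m = 2^{3m-1},
\]
using the orders listed at the start of this section.

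For the order of the product, I would apply the standard formula
\[
|\mathrm{Inn}(H)\mathrm{Aut}_2(H)| = \frac{|\mathrm{Inn}(H)|\cdot|\mathrm{Aut}_2(H)|}{|\mathrm{Inn}(H)\cap\mathrm{Aut}_2(H)|} = \frac{2^{5m-3}\cdot 2^{6m-4}}{2^{3m-1}} = 2^{8m-6},
\]
where $|\mathrm{Aut}_2(H)| = 2^{6m-4}$ comes from Proposition \ref{auth} and $|\mathrm{Inn}(H)| = 2^{5m-3}$ was computed at the start of the section.

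Finally, for the inclusion $\mathrm{Inn}(H)\mathrm{Aut}_2(H) \subset \mathrm{Aut}_3(H)$, the inclusion $\mathrm{Aut}_2(H) \subset \mathrm{Aut}_3(H)$ is automatic from $Z_2(H) \subset Z_3(H)$ and the definition of $\mathrm{Aut}_i(H)$. For $\mathrm{Inn}(H) \subset \mathrm{Aut}_3(H)$, the same general fact shows that $a\delta \in \mathrm{Aut}_3(H)$ if and only if $a \in Z_4(H)$. Since $H$ has nilpotency class $4$, we have $Z_4(H) = H$, so every inner automorphism is $3$-central. Nothing here is delicate; the main point I want the reader to notice is that the inclusion costs nothing precisely because the nilpotency class of $H$ equals $4$, matching the index $i = 3$ of the target subgroup. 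No calculation-heavy step is required, so I do not anticipate any real obstacle.
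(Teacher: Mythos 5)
Your proof is correct and follows essentially the same route as the paper: identify $\mathrm{Inn}(H)\cap\mathrm{Aut}_2(H)$ with $Z_3(H)\delta$, read off the generators from $Z_3(H)=\langle A^{2^{m-1}},B^{2^{m-1}},C\rangle$, compute the order as $|Z_3(H)/Z_1(H)|=2^{3m-1}$, and use the product formula together with $|\mathrm{Aut}_2(H)|=2^{6m-4}$. The only (harmless) difference is at the final inclusion, where the paper explicitly checks $A^B=AC$ and $B^A=BC^{-1}$ with $C\in Z_3(H)$, while you invoke the equivalent fact that $H$ has nilpotency class $4$, so $Z_4(H)=H$ and every inner automorphism is $3$-central.
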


\begin{proof}
    As $Z_3(H)/Z_1(H)$ has order $2^{3m-1}$ and is generated by the cosets of $C,A^{2^{m-1}}, B^{2^{m-1}}$,
    it follows that
    $\mathrm{Inn}(H)\cap\mathrm{Aut}_2(H)=\langle C\delta,A^{2^{m-1}}\delta, B^{2^{m-1}}\delta\rangle$ has order $2^{3m-1}$.

    We deduce from Proposition \ref{auth} that $|\mathrm{Inn}(H)\mathrm{Aut}_2(H)|=2^{8m-6}$.
    On the other hand, it is obvious  that $\mathrm{Aut}_2(K)$ is included in $\mathrm{Aut}_3(K)$.
    As $B^A=BC^{-1}$, $A^B=AC$, with $C\in Z_3(H)$, it follows that $\mathrm{Inn}(K)$ is also included in $\mathrm{Aut}_3(H)$.
\end{proof}



\begin{prop}\label{casomdos2}
    The assignment $A\mapsto A^{1+s},\; B\mapsto BA^s$ extends to an automorphism, say $\Gamma$, of $H$ that belongs to $\mathrm{Aut}_3(H)$.
\end{prop}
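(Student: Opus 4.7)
The plan is to verify, in order, that (i) the stated assignment preserves the defining relations of $H$, yielding an endomorphism $\Gamma$; (ii) $\Gamma$ is surjective, hence an automorphism; and (iii) $\Gamma\in\mathrm{Aut}_3(H)$. Parts (ii) and (iii) will be quick, while (i) carries the substance.

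For (iii), since $A^s,B^s\in Z_3(H)$, both $A\Gamma = A\cdot A^s$ and $B\Gamma = B\cdot A^s$ are congruent to $A$ and $B$ modulo $Z_3(H)$, so $\Gamma$ will act trivially on $H/Z_3(H)$ once it is known to be a homomorphism. For (ii), as $m>1$, $1+s$ is odd and hence coprime to $o(A)=2u$, so $A^{1+s}$ generates $\langle A\rangle$; then $A\in\mathrm{im}(\Gamma)$, whence also $B=(BA^s)A^{-s}\in\mathrm{im}(\Gamma)$.

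For (i), set $\widetilde A=A^{1+s}$ and $\widetilde B=BA^s$. The four items to check are the power relations $\widetilde A^{2u}=1=\widetilde B^{2u}$ together with $\widetilde A^{[\widetilde A,\widetilde B]}=\widetilde A^\alpha$ and $\widetilde B^{[\widetilde B,\widetilde A]}=\widetilde B^\alpha$. The workhorse is the congruence $\alpha^s\equiv 1\pmod{2u}$, which follows from binomial expansion of $(1+2^m\ell)^s$ since $s\cdot 2^m=2u$ and the higher-order terms are multiples of $2^{2m}=4u$; a consequence is that $s\alpha^k$, $u\alpha^k$, and $(s+u)\alpha^k$ are congruent modulo $2u$ to $s$, $u$, $s+u$ respectively for every $k$, so $A^s$, $A^u$, and $A^{s+u}$ all centralize $C$ in $H$. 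Also repeatedly used is that $us=2^{3m-3}$ is a multiple of $2u$ when $m\geq 2$. The relation $\widetilde A^{2u}=1$ is immediate. For $\widetilde B^{2u}=1$, one first puts $\widetilde B$ into the normal form $A^{s+u}BC^{u-s}$ via Corollary \ref{cor.product.H} and then applies the special case $(A^aB^bC^c)^{2u}=C^{uab}$ of Note \ref{note.H.1}. The first conjugation relation goes by \eqref{comfor} and Corollary \ref{cor.commutator.H} to obtain $[\widetilde A,\widetilde B]=[A^{1+s},B]^{A^s}=A^uC^{1+s}$, followed by $\widetilde A^{A^uC^{1+s}}=(A^{1+s})^{C^{1+s}}=A^{\alpha^{1+s}(1+s)}=A^{\alpha(1+s)}=\widetilde A^\alpha$, invoking $\alpha^{1+s}\equiv\alpha\pmod{2u}$.

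The main obstacle is the second conjugation relation. After noting $[\widetilde B,\widetilde A]=A^uC^{-(1+s)}$, the plan is to conjugate the normal form $\widetilde B=A^{s+u}BC^{u-s}$ first by $A^u$, where only the middle factor is altered, via $B^{A^u}=BC^{-u}$, giving $A^{s+u}BC^{-s}$, and then by $C^{-(1+s)}$, where the $A$ and $C$ factors are fixed by centralization of $C$ by $A^{s+u}$ while $B^{C^{-(1+s)}}=B^{\alpha^{1+s}}=B^\alpha$, yielding $\widetilde B^{[\widetilde B,\widetilde A]}=A^{s+u}B^\alpha C^{-s}$. On the other side, a direct application of Note \ref{note.H.1} at exponent $\alpha$ to $\widetilde B$, together with $\alpha s\equiv s$, $\alpha u\equiv u$ and divisibility of $2s\ell(s+u)^2$ by $2u$, also produces $\widetilde B^\alpha=A^{s+u}B^\alpha C^{-s}$. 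The delicate point is that the normal form of $\widetilde B$ acquires an unanticipated $A^u$ beyond the naive $A^sB$, and matching the two expressions depends sensitively on this term together with the exact $C$-exponents.
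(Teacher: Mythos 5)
Your proposal is correct and follows essentially the same route as the paper: rewrite $BA^s$ in the normal form $A^{s+u}BC^{u-s}$, compute $[A^{1+s},BA^s]=A^uC^{1+s}$, and verify both conjugation relations and the $2u$-th power relations by direct use of the commutator, product, and power formulas, with the key numerical facts $\alpha^s\equiv 1$ and $us\equiv 0 \pmod{2u}$. The only (harmless) deviation is that you establish that $\Gamma$ is bijective by observing that $A^{1+s}$ generates $\langle A\rangle$, whereas the paper checks injectivity of $\Gamma$ on $Z_1(H)$ and invokes nilpotency; both are valid.
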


\begin{proof}
    By Corollary \ref{cor.commutator.H}, $[B,A^s] = (A^u B^0 C^s)^{-1} = A^u C^{-s}$ and $[B,A^u] = (A^0 B^0 C^u)^{-1} = C^u$, so
    \[
    BA^s
    = A^sB^{A^s}
    = A^sBA^uC^{-s}
    = A^{s+u}BC^{-s+u}.
    \]

    By Corollary \ref{cor.bigCommutator.H}, $[A^{1+s},BA^s] = [A^{1+s},A^{s+u}BC^{-s+u}] = A^uB^0C^{1+s} = A^uC^{1+s}$, and by Corollary \ref{cor.commutator.H}, $[A^{1+s},C^{1+s}] = A^{2s\ell}$. Then
    \[
    (A^{1+s})^{[A^{1+s},BA^s]}
    = (A^{1+s})^{A^uC^{1+s}}
    = A^{\alpha+s}.
    \]

    On the other hand, $(A^{1+s})^\alpha = A^{\alpha+s}$. Thus $(A^{1+s})^{[A^{1+s},BA^s]} = (A^{1+s})^\alpha$.

    As for the second relation, Corollary \ref{cor.commutator.H} and \eqref{comfor} give, $[BA^s,A^{1+s}] = A^uC^{-1-s}$, $[B,A^u] = C^u$, and
	$[A^{u+s},C^{-1}] = 1$, so
    \[
    (BA^s)^{[BA^s,A^{1+s}]}
    = (A^{s+u}BC^{-s+u})^{A^uC^{-1}}
    = (A^{s+u}B)^{C^{-1}}C^{-s}
    = A^{s+u}B^\alpha C^{-s}.
    \]

    On the other hand, Note \ref{note.H.1} gives
    \[
    (BA^s)^\alpha
    = (A^{s+u}BC^{-s+u})^\alpha
    = (A^{s+u}B)^\alpha C^{-s+u}
    = A^{\alpha(s+u)}B^\alpha C^u C^{-s+u}
    = A^{s+u}B^\alpha C^{-s}.
    \]
    Also, by Note \ref{note.H.1}, $(A^{1+s})^{2u} = 1$ and $(BA^s)^{2u} = (A^{s+u}BC^{-s+u})^{2u} = C^{u(s+u)} = 1$. Then the given assignment extends to an endomorphism $\Gamma$ of $H$.

    Since $[C,A^u] = 1$, then $(C^s)\Gamma = [A^{1+s},BA^s]^s = (A^uC^{1+s})^s = A^{us}C^{(1+s)s} = C^{(1+s)s}$, so the restriction of $\Gamma$ to $Z_1(H)$ is an automorphism, whence
    $\Gamma$ is an automorphism, clearly in $\mathrm{Aut}_3(H)$.
\end{proof}

\begin{prop}\label{zi2}
    Let $T$ be a group, and set $Y=T/Z_1(T)$. Let $\lambda:T\to Y$ be the canonical projection,
    and consider the associated map $\Lambda:\mathrm{Aut}(T)\to \mathrm{Aut}(Y)$.
    Then for any $i\geq 0$, $\Lambda$ maps $\mathrm{Aut}_{i+2}(T)$ into $\mathrm{Aut}_{i+1}(Y)$, and the kernels of the
    induced maps $\mathrm{Aut}_{i+2}(T)\to\mathrm{Aut}_{i+1}(Y)/\mathrm{Aut}_{i}(Y)$ and
    $\mathrm{Inn}(T)\mathrm{Aut}_{i+2}(T)\to\mathrm{Inn}(Y)\mathrm{Aut}_{i+1}(Y)/\mathrm{Inn}(Y)\mathrm{Aut}_{i}(Y)$ are
    $\mathrm{Aut}_{i+1}(T)$ and $\mathrm{Inn}(T)\mathrm{Aut}_{i+1}(T)$, respectively.
    Thus, $\mathrm{Aut}_{i+2}(T)/\mathrm{Aut}_{i+1}(T)$ is isomorphic to a subgroup of
    $\mathrm{Aut}_{i+1}(Y)/\mathrm{Aut}_{i}(Y)$,
    and $\mathrm{Inn}(T)\mathrm{Aut}_{i+2}(T)/\mathrm{Inn}(T)\mathrm{Aut}_{i+1}(T)$ imbeds into
    $\mathrm{Inn}(Y)\mathrm{Aut}_{i+1}(Y)/\mathrm{Inn}(Y)\mathrm{Aut}_{i}(Y)$.
\end{prop}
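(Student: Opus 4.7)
The proof proposal rests on the identification $Z_j(Y)=Z_{j+1}(T)/Z_1(T)$, valid for every $j\geq 0$, which in turn yields natural isomorphisms $Y/Z_j(Y)\cong T/Z_{j+1}(T)$. Since $Z_1(T)$ is characteristic in $T$, each $\psi\in\mathrm{Aut}(T)$ descends to an automorphism $\psi\Lambda$ of $Y$, and $\Lambda$ is a homomorphism. Moreover $(a\delta_T)\Lambda=(a\lambda)\delta_Y$, so $\Lambda$ sends $\mathrm{Inn}(T)$ into $\mathrm{Inn}(Y)$, and in fact surjectively because $\lambda$ is onto. The first claim, that $\Lambda$ sends $\mathrm{Aut}_{i+2}(T)$ into $\mathrm{Aut}_{i+1}(Y)$, is then immediate: if $\psi$ acts trivially on $T/Z_{i+2}(T)$, the induced automorphism on $Y/Z_{i+1}(Y)$ is the image of the trivial map under the above isomorphism and is therefore trivial too. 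The same reasoning gives $\mathrm{Aut}_{j+1}(T)\Lambda\subseteq\mathrm{Aut}_j(Y)$ for every $j\geq 0$.

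For the kernel of the map $\mathrm{Aut}_{i+2}(T)\to\mathrm{Aut}_{i+1}(Y)/\mathrm{Aut}_i(Y)$, an element $\psi\in\mathrm{Aut}_{i+2}(T)$ lies in the kernel exactly when $\psi\Lambda\in\mathrm{Aut}_i(Y)$, i.e.\ when $\psi\Lambda$ is trivial on $Y/Z_i(Y)\cong T/Z_{i+1}(T)$. Transporting along the natural isomorphism, this is equivalent to $\psi$ being trivial on $T/Z_{i+1}(T)$, that is, $\psi\in\mathrm{Aut}_{i+1}(T)$. This yields the embedding $\mathrm{Aut}_{i+2}(T)/\mathrm{Aut}_{i+1}(T)\hookrightarrow\mathrm{Aut}_{i+1}(Y)/\mathrm{Aut}_i(Y)$.

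The main obstacle is the parallel assertion for the inner-automorphism saturation, because an arbitrary element of $\mathrm{Inn}(T)\mathrm{Aut}_{i+2}(T)$ need not lie in $\mathrm{Aut}_{i+2}(T)$, so the previous kernel computation does not apply directly. The plan is to reduce to that case. The inclusion $\mathrm{Inn}(T)\mathrm{Aut}_{i+1}(T)\subseteq\ker$ is immediate from the first part. Conversely, suppose $\psi\in\mathrm{Inn}(T)\mathrm{Aut}_{i+2}(T)$ satisfies $\psi\Lambda\in\mathrm{Inn}(Y)\mathrm{Aut}_i(Y)$, say $\psi\Lambda=(c\lambda)\delta_Y\cdot\eta$ with $c\in T$, $\eta\in\mathrm{Aut}_i(Y)$. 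Replace $\psi$ by $\psi^\prime=(c^{-1}\delta_T)\psi$, which still lies in $\mathrm{Inn}(T)\mathrm{Aut}_{i+2}(T)$ and satisfies $\psi^\prime\Lambda=\eta\in\mathrm{Aut}_i(Y)$. Write $\psi^\prime=b\delta_T\tau$ with $b\in T$ and $\tau\in\mathrm{Aut}_{i+2}(T)$. Then $(b\lambda)\delta_Y=\psi^\prime\Lambda\cdot(\tau\Lambda)^{-1}\in\mathrm{Aut}_{i+1}(Y)$, which forces $b\lambda\in Z_{i+2}(Y)=Z_{i+3}(T)/Z_1(T)$ and hence $b\in Z_{i+3}(T)$, so $b\delta_T\in\mathrm{Aut}_{i+2}(T)$. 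Consequently $\psi^\prime\in\mathrm{Aut}_{i+2}(T)$ with $\psi^\prime\Lambda\in\mathrm{Aut}_i(Y)$, so the kernel computation of the previous paragraph gives $\psi^\prime\in\mathrm{Aut}_{i+1}(T)$, and $\psi=c\delta_T\cdot\psi^\prime\in\mathrm{Inn}(T)\mathrm{Aut}_{i+1}(T)$. The embedding of $\mathrm{Inn}(T)\mathrm{Aut}_{i+2}(T)/\mathrm{Inn}(T)\mathrm{Aut}_{i+1}(T)$ into $\mathrm{Inn}(Y)\mathrm{Aut}_{i+1}(Y)/\mathrm{Inn}(Y)\mathrm{Aut}_i(Y)$ then follows from the first isomorphism theorem.
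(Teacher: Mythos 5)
Your proof is correct and follows essentially the same route as the paper's: both rest on the identification $Z_j(Y)=Z_{j+1}(T)/Z_1(T)$ (which the paper establishes by induction and you cite as known), compute the first kernel identically, and handle the inner-automorphism saturation by lifting the inner factor from $Y$ to $T$. Your treatment of the last step is in fact slightly more careful than the paper's, since you explicitly reduce $\psi'$ to an element of $\mathrm{Aut}_{i+2}(T)$ before invoking the kernel computation, whereas the paper applies that computation to $w^{-1}g$ directly (which is legitimate because the argument there never uses membership in $\mathrm{Aut}_{i+2}(T)$).
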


\begin{proof}
    We first show by induction that $\lambda$ sends $Z_{i+1}(T)$ onto $Z_{i}(Y)$ for any $i\geq 0$.
    The base case $i=0$ holds by the very definition of $\lambda$. Suppose that $\lambda$ maps $Z_{i+1}(T)$ onto $Z_i(Y)$
    for some $i\geq 0$. Let $t\in Z_{i+2}(T)$. Then $[t,\sigma]\in Z_{i+1}(T)$, so
    $[t^\lambda,\sigma^\lambda]=[t,\sigma]^\lambda\in Z_i(Y)$ for every $\sigma\in T$,
    whence $t^\lambda\in Z_{i+1}(Y)$. Conversely, if $y\in Z_{i+1}(Y)$, then $y=t^\lambda$ for some $t\in T$. Let $\sigma\in T$.
    Then $[t,\sigma]^\lambda=[t^\lambda,\sigma^\lambda]\in Z_i(Y)$, so $[t,\sigma]^\lambda=w^\lambda$ for some $w\in Z_{i+1}(T)$,
    hence $[t,\sigma]w^{-1}\in\ker(\lambda)=Z_1(T)$, and therefore $[t,\sigma]\in Z_1(T)Z_{i+1}(T)=Z_{i+1}(T)$, which implies $t\in Z_{i+2}(T)$.
    This shows that $\lambda$ maps $Z_{i+2}(T)$ onto $Z_i(Y)$.

    For $t\in T$, we set $\overline{t}=t^\lambda$, and if $g\in \mathrm{Aut}(T)$, then $\overline{g}=g^\Lambda$ is defined
    by $\overline{t}^{\overline{g}}=\overline{t^g}$. This is well defined, as $Z_1(T)$ is a characteristic subgroup of $T$.

    Let $i\geq 0$. We claim that $\Lambda$ sends $\mathrm{Aut}_{i+2}(T)$ into $\mathrm{Aut}_{i+1}(Y)$.
    Indeed, let $g\in \mathrm{Aut}_{i+2}(T)$ and
    $t\in T$. Then $t^g t^{-1}\in Z_{i+2}(T)$, so $\overline{t^g t^{-1}}\in Z_{i+1}(Y)$ by the above, which means
    $\overline{t}^{\overline{g}} \overline{t}^{-1} Z_{i+1}(Y)$, so $g^\Lambda\in \mathrm{Aut}_{i+1}(Y)$,  as claimed.

    By the above, we have a group homomorphism $\eta:\mathrm{Aut}_{i+2}(T)\to \mathrm{Aut}_{i+1}(Y)/\mathrm{Aut}_{i}(Y)$ with
    $\mathrm{Aut}_{i+1}(T)$ contained in $\ker(\eta)$, and we claim equality holds.
    Let $g\in\ker(\eta)$. Then $g^\Lambda\in \mathrm{Aut}_{i}(Y)$. Let $t\in T$.
    Then $\overline{t}^{\overline{g}} \overline{t}^{-1}\in Z_i(Y)$, so $\overline{t^g t^{-1}}\in Z_{i}(Y)$. As $\lambda$
    maps $Z_{i+1}(T)$ onto $Z_i(Y)$, there is some $\sigma\in Z_{i+1}(T)$ such that $\overline{t^g t^{-1}}=\overline{\sigma}$,
    whence $t^g t^{-1}\sigma^{-1}\in Z_1(T)$, so $t^g t^{-1}\in Z_1(T)Z_{i+1}(T)=Z_{i+1}(T)$, and therefore $g\in \mathrm{Aut}_{i+1}(T)$.
    Thus $\ker(\eta)=\mathrm{Aut}_{i+1}(T)$, as claimed.

    Now $\Lambda$ maps $\mathrm{Inn}(T)\mathrm{Aut}_{i+2}(T)$ into
    $\mathrm{Inn}(Y)\mathrm{Aut}_{i+1}(Y)$, and $\mathrm{Inn}(T)\mathrm{Aut}_{i+1}(T)$ into
    $\mathrm{Inn}(Y)\mathrm{Aut}_{i}(Y)$, thus producing a map $\phi:\mathrm{Inn}(T)\mathrm{Aut}_{i+2}(T)\to
    \mathrm{Inn}(Y)\mathrm{Aut}_{i+1}(Y)/\mathrm{Inn}(Y)\mathrm{Aut}_{i}(Y)$ whose kernel contains $\mathrm{Inn}(T)\mathrm{Aut}_{i+1}(T)$,
    and we claim that $\ker(\phi)=\mathrm{Inn}(T)\mathrm{Aut}_{i+1}(T)$. Indeed, if
    $g\in \ker(\phi)$, then $g^\Lambda\in \mathrm{Inn}(Y)\mathrm{Aut}_{i}(Y)$, so that $g^\Lambda=hk$,
    with $h\in \mathrm{Inn}(Y)$ and $k\in\mathrm{Aut}_{i}(Y)$. But $\Lambda$ maps $\mathrm{Inn}(T)$ onto $\mathrm{Inn}(Y)$,
    so $h=w^\Lambda$ for some $w\in \mathrm{Inn}(T)$, whence $k=(w^{-1}g)^\Lambda$, so by the above $w^{-1}g\in \mathrm{Aut}_{i+1}(T)$
    and therefore $g\in \mathrm{Inn}(T)\mathrm{Aut}_{i+1}(T)$, as claimed.
\end{proof}

We apply Proposition \ref{zi2} to the case $T=H$ and $Y=K=H/Z_1(H)$, which has presentation
$$
K = \groupPresentation{\rho,\eta}{\rho^{[\rho,\eta]}=\rho^\alpha,\; \eta^{[\eta,\rho]}=\eta^\alpha,\; \rho^{2^{2m-1}}=1,\; \eta^{2^{2m-1}}=1,\; [\rho,\eta]^{2^{m-1}}=1}.
$$

\begin{theorem}\label{tamanio}
    Let $\lambda:H\to K$ be the projection $A\mapsto \rho$, $B\mapsto\eta$,
    $\Lambda:\mathrm{Aut}(H)\to\mathrm{Aut}(K)$ the
    corresponding homomorphism, and
    $$
    \widehat{\Lambda}:\mathrm{Aut}_3(H)/(\mathrm{Inn}(H)\mathrm{Aut}_2(H))\hookrightarrow\mathrm{Aut}_2(K)/(\mathrm{Inn}(K)\mathrm{Aut}_1(K))
    $$
    the imbedding associated to $\Lambda$, as indicated in Proposition \ref{zi2}. Then
    $\mathrm{Im}(\widehat{\Lambda})=\langle \overline{\Gamma},\overline{\Gamma^\nu}\rangle^{\widehat{\Lambda}}$ is isomorphic to the Klein 4-group.
    Therefore, $|\mathrm{Aut}_3(H)|=2^{8m-4}$ and
    $\mathrm{Aut}_3(H)=\mathrm{Inn}(H)\mathrm{Aut}_2(H)\langle \Gamma,\Gamma^\nu\rangle$,
    where $\Gamma$ is as defined in Proposition \ref{casomdos2}, and $\nu$ is the automorphism $A\leftrightarrow B$ of $H$.
\end{theorem}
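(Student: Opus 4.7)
The plan is to apply Proposition \ref{zi2} at $T=H$, $Y=K$, $i=1$. Since $H$ and $K$ have nilpotency classes $4$ and $3$, $\mathrm{Inn}(H)\subseteq\mathrm{Aut}_3(H)$ and $\mathrm{Inn}(K)\subseteq\mathrm{Aut}_2(K)$, so $\widehat\Lambda$ embeds $\mathrm{Aut}_3(H)/\mathrm{Inn}(H)\mathrm{Aut}_2(H)$ into $\mathrm{Aut}_2(K)/\mathrm{Inn}(K)\mathrm{Aut}_1(K)\cong(\Z/2\Z)^4$, the latter identification given by Proposition \ref{outow}. I will exhibit two elements of $\mathrm{Im}(\widehat\Lambda)$ generating a Klein 4-subgroup and then rule out everything else.

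Since $\mathrm{Aut}_3(H)=\ker(\mathrm{Aut}(H)\to\mathrm{Aut}(H/Z_3(H)))$ is characteristic in $\mathrm{Aut}(H)$, $\Gamma^\nu\in\mathrm{Aut}_3(H)$; a direct calculation using $\nu:A\leftrightarrow B$ gives $A\Gamma^\nu=AB^s$, $B\Gamma^\nu=B^{1+s}$. The images $\Gamma\Lambda,\Gamma^\nu\Lambda$ in $\mathrm{Aut}_2(K)$ act as $(a\mapsto a^{1+s},\ b\mapsto ba^s)$ and $(a\mapsto ab^s,\ b\mapsto b^{1+s})$; by Proposition \ref{autexcel} they correspond to the pairs $(a^s,a^s)$ and $(b^s,b^s)$ in $Z_2(K)^2$, and by Proposition \ref{outow} their cosets in $\mathrm{Aut}_2(K)/\mathrm{Inn}(K)\mathrm{Aut}_1(K)$ are $\overline{\psi_1\psi_4}$ and $\overline{\psi_2\psi_3}$. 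These are non-trivial and linearly independent in $(\Z/2\Z)^4$, so $V:=\langle\overline\Gamma^{\widehat\Lambda},\overline{\Gamma^\nu}^{\widehat\Lambda}\rangle$ is a Klein 4-subgroup of $\mathrm{Im}(\widehat\Lambda)$.

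To prove $\mathrm{Im}(\widehat\Lambda)=V$, take any $g\in\mathrm{Aut}_3(H)$ and, using the surjection $Z_3(H)\twoheadrightarrow Z_2(K)$ from Proposition \ref{zi2}, write $Ag=Au_1$, $Bg=Bu_2$ with $u_1=A^{sa_1}B^{sb_1}C^{c_1}$ and $u_2=A^{sa_2}B^{sb_2}C^{c_2}$. Then $\overline g^{\widehat\Lambda}$ is the coset of $(a_1,b_1,a_2,b_2)\bmod 2$ in $(\Z/2\Z)^4$, and a direct enumeration shows that $V$ is precisely the subgroup $\{a_1\equiv a_2,\ b_1\equiv b_2\pmod 2\}$; it thus suffices to derive these two congruences from the defining relations of $H$. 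This is the main obstacle: both relations $(Au_1)^{[Au_1,Bu_2]}=(Au_1)^\alpha$ and $(Bu_2)^{[Bu_2,Au_1]}=(Bu_2)^\alpha$ already hold automatically in $K$ (by Proposition \ref{autexcel} applied to $K$), so their $H$-versions each reduce, after expansion via Corollaries \ref{cor.commutator.H}, \ref{cor.product.H}, \ref{cor.bigCommutator.H}, \ref{cor.power.H}, and Note \ref{note.H.1} and cancellation of equal terms, to an identity in $Z_1(H)=\langle C^s\rangle$; requiring this identity to hold (using $o(C^s)=2^m$) will produce a linear congruence whose mod-$2$ reduction on the $(a_i,b_i)$-parameters is expected to be $a_1\equiv a_2$ from the $A$-relation and (by $\nu$-symmetry) $b_1\equiv b_2$ from the $B$-relation. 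This verification is mechanical but lengthy and requires no new ideas.

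Once $\mathrm{Im}(\widehat\Lambda)=V$ is established, Proposition \ref{zi2} gives $|\mathrm{Aut}_3(H)/\mathrm{Inn}(H)\mathrm{Aut}_2(H)|=4$, and Corollary \ref{corauth}'s $|\mathrm{Inn}(H)\mathrm{Aut}_2(H)|=2^{8m-6}$ yields $|\mathrm{Aut}_3(H)|=2^{8m-4}$. Since $\Gamma,\Gamma^\nu$ represent the generators of $V$, we conclude $\mathrm{Aut}_3(H)=\mathrm{Inn}(H)\mathrm{Aut}_2(H)\langle\Gamma,\Gamma^\nu\rangle$.
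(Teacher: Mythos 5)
Your setup is sound and follows the paper's route: you correctly identify $\overline{\Gamma}^{\widehat\Lambda}=\overline{\psi_1\psi_4}$ and $\overline{\Gamma^\nu}^{\widehat\Lambda}=\overline{\psi_2\psi_3}$, so that $V=\langle\overline{\psi_1\psi_4},\overline{\psi_2\psi_3}\rangle\subseteq\mathrm{Im}(\widehat\Lambda)$, and you correctly reduce the theorem to showing that no class outside $V$ (equivalently, none of $\overline{\psi_1},\overline{\psi_2},\overline{\psi_1\psi_2}$) lies in the image. The problem is that you then stop exactly where the mathematical content of the theorem lies. The assertion that "requiring this identity to hold \dots will produce a linear congruence whose mod-$2$ reduction \dots is \emph{expected} to be $a_1\equiv a_2$ \dots and $b_1\equiv b_2$" is not a proof: a priori the obstruction could vanish identically (making the image all of $(\Z/2\Z)^4$) or could cut out a different subgroup, and nothing in your argument rules this out. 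The paper's proof consists precisely of carrying out this verification: for a putative lift $\tau$ of $\psi_1$ with $A^\tau=A^{1+s}x$, $B^\tau=By$, $x,y\in Z_1(H)$, one computes $(By)^{[By,A^{1+s}x]}=B^\alpha C^u y$ while $(By)^\alpha=B^\alpha y$, and $C^u\neq 1$ is the order-$2$ obstruction in $Z_1(H)$; a symmetric argument handles $\psi_2$, and a separate (not formally identical) computation with $\psi_5:a\mapsto a^{1+s}b^s$, $b\mapsto b$ handles the class of $\psi_1\psi_2$. Without these computations the theorem is unproved.

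A secondary remark that illustrates why the omitted step cannot be guessed: your prediction that the congruence $a_1\equiv a_2$ arises from the $A$-relation appears to be wrong. For $\psi_1$ (which violates $a_1\equiv a_2$) the paper finds that it is the \emph{second} defining relation, $(By)^{[By,Ax]}=(By)^\alpha$, that fails, not the first. This does not affect the truth of the statement, but it shows that the "mechanical" computation does not come out the way one would naively expect, and hence genuinely needs to be done.
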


\begin{proof}
    As $\mathrm{Inn}(H)\subset \mathrm{Aut}_3(H)$ and $\mathrm{Inn}(K)\subset \mathrm{Aut}_2(K)$,
    Proposition \ref{zi2} yields the imbedding $\widehat{\Lambda}$.

    We claim that
    $\mathrm{Im}(\widehat{\Lambda})=\langle \overline{\Gamma},\overline{\Gamma^\nu}\rangle^{\widehat{\Lambda}}$ is isomorphic to
    the Klein 4-group. It will then follow that $\mathrm{Aut}_3(H)=\mathrm{Inn}(H)\mathrm{Aut}_2(H)\langle \Gamma,\Gamma^\nu\rangle$,
    with $|\mathrm{Aut}_3(H)|=2^{8m-4}$, as $|\mathrm{Inn}(H)\mathrm{Aut}_2(H)|=2^{8m-6}$ by
    Corollary \ref{corauth}.


	We know from Proposition \ref{outow} that $\mathrm{Aut}_2(K)/\mathrm{Inn}(K)\mathrm{Aut}_1(K)$ is generated
	by $\overline{\psi_1},\overline{\psi_2},\overline{\psi_3},\overline{\psi_4}$. Setting
	$T=\langle \overline{\psi_1}\overline{\psi_4}, \overline{\psi_2}\overline{\psi_3}\rangle$, we proceed
    to show that $\mathrm{Im}(\widehat{\Lambda})=T$. By Proposition \ref{casomdos2},
    we have $\Gamma^\Lambda=\psi_1\psi_4$, and therefore $(\nu^{-1}\Gamma\nu)^\Lambda=\psi_2\psi_3$. Thus
    $T\subseteq \mathrm{Im}(\widehat{\Lambda})$. We readily see that
	$1,\overline{\psi_1},\overline{\psi_2},\overline{\psi_1}\overline{\psi_2}$ is a system
    of representatives for the cosets of $T$ in $\overline{\mathrm{Aut}_2(K)}$. Thus, it suffices to show that none of
	$\overline{\psi_1},\overline{\psi_2},\overline{\psi_1}\overline{\psi_2}$ are in
    $\mathrm{Im}(\widehat{\Lambda})$. As $\mathrm{Inn}(H)\mathrm{Aut}_2(H)$ maps onto $\mathrm{Inn}(K)\mathrm{Aut}_1(K)$
	by Proposition~\ref{auth}, this translates as follows: none of $\psi_1,\psi_2,\psi_1\psi_2$
	lift to an automorphism of $H$. Note that we may replace
    any of $\psi_1,\psi_2,\psi_1\psi_2$ by itself multiplied by any element of $\mathrm{Inn}(K)\mathrm{Aut}_1(K)$
    in this statement.

    Suppose, if possible, that $\psi_1$ lifts to an automorphism, say $\tau$, of $H$. Then there are $x,y\in Z_1(H)$ such that
    $$
    A^\tau=A^{1+s}x,\;
    B^\tau=By.
    $$

    Now by Corollary \ref{cor.commutator.H} and \eqref{comfor}, we have $[A^{1+s}x,By] = [A^{1+s},B] = A^u C^{1+s}$,
    and $[A^u,B] = C^u$, so
    $$
    (By)^{[By,A^{1+s}x]}
    = B^{C^{-s}C^{-1}A^u} y
    = (B^\alpha)^{A^u}y
    = (BC^u)^\alpha y
    = B^\alpha C^u y.
    $$

    Since $(By)^\alpha=B^\alpha y$, the second defining relation of $H$ is not preserved by $\tau$. This proves
    that $\psi_1$ does not lift to an automorphism of $H$. Since $\psi_2^\mu=\psi_1$, it follows that
    $\psi_2$ does not lift to an automorphism of $H$ either.

    Let $\psi_5\in\mathrm{Aut}_2(K)$ be given by $a\mapsto a^{1+s}b^s$, $b\mapsto b$. Then	$\overline{\psi_1\psi_2}\equiv 	
		\overline{\psi_1\psi_3}\mod T$ and $\psi_1\psi_3\equiv\psi_5\mod\mathrm{Inn}(K)\mathrm{Aut}_1(K)$. Suppose, if possible, that
    $\psi_5$ lifts to an automorphism, say $\sigma$, of $H$.
    Then there are $w,z\in Z_1(H)$ such that
    $$
    A^\sigma=A^{1+s}B^s w,\;
    B^\sigma=Bz.
    $$
    Since $[A^{1+s}B^s w,Bz] = [A^{1+s},B]^{B^s} = A^u C^{1+s}$,
    we see, as in the previous case, that the second defining relation of $H$ is not preserved by $\sigma$.
\end{proof}

We set $\overline{r} = r/2$ whenever $m>2$ for the remainder of this section.

\begin{prop}\label{muchos}
    Suppose that $m>2$. Let $x = rd$, $y = re$, with $d,e\in\{0,1\}$, and further let $a,b\in\Z$, with $ab\equiv 1\mod s$, as well as $z,z^\prime\in Z_1(H)$. Then the assignment
    \begin{equation}\label{eq.15}
        A\mapsto A^aB^xz,\quad
        B\mapsto B^bA^yz^\prime,
    \end{equation}
    extends to an automorphism of $H$ if and only if $x=y=0$ and $a\equiv b\equiv 1\mod 2s$, or $x=y=r$ and $a\equiv 1+r\equiv b\mod 2s$.
\end{prop}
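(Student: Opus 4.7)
My plan is to verify both directions of the claim by checking the defining relations of $H$ directly, using Corollaries \ref{cor.commutator.H}--\ref{cor.power.H} and Note \ref{note.H.1}. Writing $z = C^{sk}$ and $z' = C^{sk'}$ and setting $X = A^aB^xz$, $Y = B^bA^yz'$, the assignment extends to an endomorphism of $H$ if and only if $X^{[X,Y]} = X^\alpha$ and $Y^{[Y,X]} = Y^\alpha$ hold in $H$; the order relations $X^{2u} = Y^{2u} = 1$ are automatic from Note \ref{note.H.1} (which gives $X^{2u} = C^{uax}$, and $ax$ is even since $r = 2^{m-2}\geq 2$ under $m>2$).

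For necessity, I first invoke Proposition \ref{zi2} to descend $\sigma$ to the automorphism $\bar\sigma$ of $K = H/Z_1(H)$ given by $a \mapsto a^a b^x$, $b \mapsto b^b a^y$. The matrix of $\bar\sigma$ on $K/Z_2(K)$ has determinant $ab - xy$, which by Theorem \ref{autk6} lies in $\{\pm 1\}$ modulo $s$; but $ab \equiv 1 \mod s$ by hypothesis and $xy \in \{0, r^2\}$ with $r^2 \equiv 0 \mod s$, so this check is automatically met and does not yet separate the cases. The refined constraints come from the full relations in $H$: computing $W = [X,Y] = A^{w_A}B^{w_B}C^{ab - xy + w_C}$ via Corollary \ref{cor.bigCommutator.H}, $X^\alpha$ via Corollary \ref{cor.power.H} (using $s\mid\phi(\alpha)$), and rewriting $X^W$ in normal form, the $B$-exponent equation of $X^W = X^\alpha$ modulo $2u$ yields a relation tying $x, y, a, b$ together; combined with $x, y \in \{0, r\}$ and the symmetric analysis of $Y^{[Y,X]} = Y^\alpha$, this rules out the mixed cases $(x,y) \in \{(0,r),(r,0)\}$ and forces $x = y$, and then the $A$-exponent equation at the same precision pins down $a \equiv 1 \mod 2s$ if $x = 0$ and $a \equiv 1 + r \mod 2s$ if $x = r$, with the analogous dichotomy for $b$.

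For sufficiency, the case $x = y = 0$ with $a \equiv b \equiv 1 \mod 2s$ reduces to Proposition \ref{auth}: the images of $A$ and $B$ differ from $A$ and $B$ respectively by elements of $Z_2(H)$, so $\sigma \in \mathrm{Aut}_2(H)$. The case $x = y = r$ with $a \equiv b \equiv 1 + r \mod 2s$ is verified by direct substitution into Corollaries \ref{cor.product.H}, \ref{cor.bigCommutator.H}, and \ref{cor.power.H}; the essential simplifications use $2r = s$ and $r^2 \equiv 0 \mod s$, which collapse many second-order correction terms. The main obstacle is the necessity step: the quotient $K$ only constrains $(a,b,x,y)$ modulo $s$, but the claim demands mod-$2s$ congruences and the rigid coupling $x = y$. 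Extracting these requires careful bookkeeping of the $2s\ell$-correction terms --- specifically the contributions of $\phi(a)b$, $\phi(y)\phi(x)$, and $\phi(\alpha)$ in the $B$- and $C$-exponents of $W$, $X^W$, and $X^\alpha$ --- while accounting for the central freedom introduced by $z$ and $z'$.
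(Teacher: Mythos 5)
Your overall strategy is the same as the paper's: the central factors $z,z'$ drop out of the commutator, the order relations follow from Note \ref{note.H.1}, and everything reduces to verifying the two conjugation relations. But the proposal stops exactly where the real work begins, and the one concrete claim you make about that work is wrong. When $X^{[X,Y]}$ and $X^{\alpha}$ are put in normal form, their $A$- and $B$-exponents agree automatically (both sides are $A^{\alpha a}B^{\alpha x}\cdot(\text{power of }C)$), so your plan to extract constraints from ``the $B$-exponent equation'' and then ``the $A$-exponent equation'' would find nothing there. The entire obstruction sits in the $C$-exponent, and comparing it on the two sides yields the single congruence $x(3a+ya+a\ell-1)+\ell a(b-1)\equiv 0 \bmod 2s$, together with its mate obtained by exchanging the roles of the two generators.

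More importantly, the resolution of those two congruences is the heart of the proposition and is entirely absent from your argument. Ruling out the mixed cases $(x,y)=(0,r),(r,0)$ is quick (one is led to $s\mid r$, a contradiction), but the case $x=y=r$ requires a genuine $2$-adic descent: the congruences first give only $a\equiv b\equiv 1\bmod r$; writing $a=1+rt_1$, $b=1+rt_2$ and dividing by $r$ produces conditions modulo $4$ that force $t_1,t_2$ odd; writing $t_i=2q_i+1$ and dividing by $2$ then forces $q_i$ even, whence $a\equiv b\equiv 1+r\bmod 2s$. ``Careful bookkeeping of the $2s\ell$-correction terms'' is a promissory note, not an argument, and nothing you wrote determines whether the answer is $a\equiv 1+r$ or, say, $a\equiv 1$ modulo $2s$ in that case. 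Two smaller points: your reduction of the sufficiency case $x=y=0$, $a\equiv b\equiv 1\bmod 2s$ to Proposition \ref{auth} is a correct shortcut the paper does not use; but for $x=y=r$ you must still verify that the resulting endomorphism is bijective, which the paper does by computing $(C^s)T=C^{s(ab-xy)}$ with $ab-xy$ odd --- a step your sketch omits.
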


\begin{proof}
    By Corollary \ref{cor.product.H},
    \[B^bA^y = (A^0B^bC^0)(A^yB^0C^0) = A^{y + 2s\ell b\phi(y)} B^{b - 2s\ell\phi(b)y} C^{-by - 2s\ell\phi(b+1)\phi(y)}.
    \]

    Since $z,z^\prime\in Z_1(H) = \gen{C^s}$, then
    \begin{equation}\label{eq.H.2}
        [A^aB^xz,B^bA^yz^\prime]
        = [A^aB^x,B^bA^y]
        = [A^aB^x,A^{y + 2s\ell b\phi(y)} B^{b - 2s\ell\phi(b)y} C^{-by}].
    \end{equation}

    Applying Corollary \ref{cor.bigCommutator.H} to \eqref{eq.H.2}, using the fact that $\phi(n+2st)\equiv\phi(n)\mod s$ for $n,t\in\Z$, and recalling that $ab\equiv 1\mod s$ produces
    \begin{align*}
        \textup{exp} A & \equiv 2s\ell\{x\phi(y) - \phi(a)b - aby\}\equiv -s\ell(xy + 2y + a - 1)\mod 2u,        \\
        \textup{exp} B & \equiv 2s\ell\{a\phi(b) - \phi(x)y + xab\}\equiv s\ell(xy + 2x + b - 1)\mod 2u,         \\
        \textup{exp}\, C & \equiv ab - xy + 2s\ell\{\phi(y)\phi(x) - \phi(a)(\phi(b) + xb)-ay\phi(b)\}\mod 2u,
    \end{align*}
    where $\textup{exp}\, C\equiv 1\mod s$, so
    \begin{equation}\label{eq.H.3}
        \begin{split}
             &[A^aB^x,B^bA^y]\\
             &\quad = A^{2s\ell\{x\phi(y) - \phi(a)b - aby\}} B^{2s\ell\{a\phi(b) - \phi(x)y + xab\}} C^{ab - xy + 2s\ell\{\phi(y)\phi(x) - \phi(a)(\phi(b) + xb) -ay\phi(b)\}}\\
             &\quad\equiv A^{-s\ell(xy + 2y + a - 1)} B^{s\ell(xy + 2x + b - 1)} C\mod Z_1(H).
        \end{split}
    \end{equation}

    By Corollary \ref{cor.commutator.H}, $[B^x,A^{-s\ell(xy + 2y + a - 1)}] = C^{sx(a - 1)}$, $[A^a,B^{s\ell(xy + 2x + b - 1)}] = C^{sxa(y + 2) + sa\ell(b - 1)}$, and $[B^x,C] = B^{-2s\ell x} = B^{2s\ell x} $, so
    \begin{align*}
        (A^aB^x)^{[A^aB^xz,B^bA^yz^\prime]}
        & = (A^aB^x)^{A^{-s\ell(xy + 2y + a - 1)} B^{s\ell(xy + 2x + b - 1)} C} \\
        & = (A^aB^x)^{B^{s\ell(xy + 2x + b - 1)} C} C^{sx(a - 1)}               \\
        & = (A^aB^x)^C C^{sx(3a + ya - 1) + s\ell a(b - 1)}                     \\
        & = A^{\alpha a} B^{\alpha x} C^{sx(3a + ya - 1) + s\ell a(b - 1)}.
    \end{align*}

    On the other hand, by Note \ref{note.H.1}, $(A^aB^x)^\alpha = A^{\alpha a} B^{\alpha x} C^{-sxa\ell}$ and $(A^aB^x)^{2u} = 1$.

    Applying the automorphism $A\leftrightarrow B$ we get,
    \begin{align*}
        (B^bA^y)^{[B^bA^yz^\prime,A^aB^xz]}
        & = B^{\alpha b} A^{\alpha y} C^{-sy(3b + xb - 1) + s\ell b(a - 1)}, \\
        (B^bA^y)^\alpha
        & = B^{\alpha b} A^{\alpha y} C^{syb\ell},                          \\
        (B^bA^y)^{2u}
        & = 1.
    \end{align*}

    Thus, the assignment (\ref{eq.15}) extends to an endomorphism of $H$ if and only if $(A^aB^x)^{[A^aB^x,B^bA^y]} = (A^aB^x)^\alpha$ and
    $(B^bA^y)^{[B^bA^y,A^aB^x]} = (B^bA^y)^\alpha$ if and only if
    $C^{sx(3a + ya - 1) + s\ell a(b - 1)} = C^{-sxa\ell}$ and
    $C^{-sy(3b + xb + b\ell - 1) + s\ell b(a - 1)} = C^{syb\ell}$ if and only if
    \begin{align}
        x(3a + ya + a\ell - 1) + \ell a(b - 1) & \equiv 0\mod 2s,\label{eq.19}  \\
        y(3b + xb + b\ell - 1) + \ell b(a - 1) & \equiv 0\mod 2s.\label{eq.20}
    \end{align}

    Assume that \eqref{eq.19} and \eqref{eq.20} hold. We claim that $d=0=e$ and
    $a\equiv 1\equiv b\mod 2s$, or $d=1=e$ and $a\equiv 1+r\equiv b\mod 2s$.

    Suppose first that $d=0$. From \eqref{eq.19},  we obtain $\ell a(b - 1)\equiv 0\mod 2s$. As $\ell$ and $a$ are odd, we deduce
    $b-1\equiv 0\mod 2s$. Since $ab\equiv 1\mod s$, we infer $a\equiv 1\mod s$. Now, suppose, if possible, that $e=1$. From \eqref{eq.20}, we obtain $s + \ell(r + a - 1)\equiv 0\mod 2s$, and from this, $s\mid \ell(r + a - 1)$. Since $\ell$ is odd and $a\equiv 1\mod s$, this yields $s\mid r$ which is a contradiction. This proves that $e=0$. But then \eqref{eq.20} gives
    $\ell b(a - 1)\equiv 0\mod 2s$. Since $\ell$ and $b$ are odd, we derive $a-1\equiv 0\mod 2s$.

    Suppose next that $e=0$. Then, as indicated above, \eqref{eq.20} gives $a\equiv 1\mod 2s$, and as $ab\equiv 1\mod s$,
    we deduce $b\equiv 1\mod s$. If $d=1$ then \eqref{eq.19} yields the contradiction $s\mid r$. This forces
    $d=0$, which then implies $b\equiv 1\mod 2s$, as shown above.

    Thus $d=0$ if and only if $e=0$, in which case $a\equiv 1\equiv b\mod 2s$.

    Suppose next that $d=1=e$. Then \eqref{eq.19} and \eqref{eq.20} imply $a\equiv 1\equiv b\mod r$. Let $t_1,t_2\in\Z$ be such that $a = 1 + rt_1$, $b= 1 + rt_2$. Replacing in \eqref{eq.19} and \eqref{eq.20} and dividing by $r$ produces
    \begin{align}
        r(3t_1 + t_1\ell + rt_1 + t_2t_1\ell + 1) + \ell(t_2 + 1) + 2 & \equiv 0\mod 4,\label{eq.5.H} \\
        r(3t_2 + t_2\ell + rt_2 + t_1t_2\ell + 1) + \ell(t_1 + 1) + 2 & \equiv 0\mod 4,\label{eq.6.H}
    \end{align}
    which imply $t_1 + 1\equiv 0\equiv t_2 + 1\mod 2$. Let $q_1,q_2\in\Z$ be such that $t_1 = 2q_1 + 1$ and $t_2 = 2q_2 + 1$. Replacing this in \eqref{eq.5.H} and \eqref{eq.6.H} and dividing by 2 gives
    \begin{align*}
        \overline{r}(2q_1(3 + r + 2\ell) + 2q_2\ell + 4q_1q_2\ell + r + 2\ell + 4) + \ell(q_2 + 1) + 1 & \equiv 0\mod 2, \\
        \overline{r}(2q_2(3 + r + 2\ell) + 2q_1\ell + 4q_1q_2\ell + r + 2\ell + 4) + \ell(q_1 + 1) + 1 & \equiv 0\mod 2.
    \end{align*}

    From this we get $\ell(q_2 + 1) + 1\equiv 0\equiv\ell(q_1 + 1) + 1\mod 2$, which imply $q_1\equiv 0\equiv q_2\mod 2$. Thus $a\equiv 1 + r\equiv b\mod 2s$. This proves the claim.

    Suppose, conversely, that $d=0=e$ and
    $a\equiv 1\equiv b\mod 2s$, or $d=1=e$ and $a\equiv 1+r\equiv b\mod 2s$. We claim
    that \eqref{eq.19} and \eqref{eq.20} are true. Indeed, if $d=0=e$ and
    $a\equiv 1\equiv b\mod 2s$, we see directly that \eqref{eq.19} and \eqref{eq.20} hold.
    If $d=1=e$ and $a\equiv 1+r\equiv b\mod 2s$, then $a=1+r+sq_1$ and $b=1+r+sq_2$ for some even $q_1,q_2\in\Z$. After replacing $a$ and $b$ we readily see that \eqref{eq.19} and \eqref{eq.20} hold, as claimed.

    We finally claim that if $d=e=0$ and
    $a\equiv b\equiv 1\mod 2s$, or $d=e=1$ and $a\equiv 1+r\equiv b\mod 2s$, then the endomorphism of $H$, say $T$,
    extending \eqref{eq.15} is actually an automorphism. As $H$ is a finite nilpotent group, it suffices to show that
    the restriction of $T$ to $Z_1(H)=\langle C^s\rangle$ is a monomorphism. Now, from \eqref{eq.H.2}, \eqref{eq.H.3}, and Corollary \ref{cor.power.H}, we have
    \[
    (C^s)T
    = [A^aB^xz,B^bA^yz^\prime]^s
    = [A^aB^x,B^bA^y]^s
    = C^{s(ab-xy)}.
    \]
    As $ab-xy$ is odd, the restriction of $T$ to $Z_1(H)$ is a monomorphism, which completes the proof.
\end{proof}

\begin{cor}\label{ulti}
    Suppose $m>2$. Then the assignment
    $A\mapsto A^{1+r} B^r$,
    $B\mapsto B^{1+r} A^r$ extends to an automorphism, say $\Sigma$, of $H$, which clearly commutes with $\nu$.
\end{cor}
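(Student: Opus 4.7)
The plan is to obtain $\Sigma$ as a direct application of Proposition \ref{muchos}, specifically the $d=e=1$ branch of that proposition. I would set $a=b=1+r$, $x=y=r$, and $z=z^\prime=1$ in the assignment \eqref{eq.15}, so that the resulting endomorphism is precisely the one described in the corollary.

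Before invoking the proposition I must check that its standing hypothesis $ab\equiv 1\mod s$ is satisfied. Computing, $ab=(1+r)^2=1+2r+r^2=1+s+r^2$, and since $m>2$ we have $r=2^{m-2}$, so $r^2=2^{2m-4}$ with $2m-4\geq m-1$. Hence $s\mid r^2$ and $ab\equiv 1\mod s$. The other conditions of the $x=y=r$ branch, namely $a\equiv 1+r\equiv b\mod 2s$, hold trivially. Proposition \ref{muchos} then guarantees that the assignment extends to an automorphism of $H$, which we name $\Sigma$.

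For the commutativity with $\nu$, I would simply trace the generators: under $\nu\Sigma$, $A\mapsto B\mapsto B^{1+r}A^r$ and $B\mapsto A\mapsto A^{1+r}B^r$, while $\Sigma\nu$ produces $A\mapsto A^{1+r}B^r\mapsto B^{1+r}A^r$ and $B\mapsto B^{1+r}A^r\mapsto A^{1+r}B^r$, so both compositions agree on the generators and hence on all of $H$. There is no serious obstacle here; the only nontrivial input is the verification that $s\mid r^2$, which is exactly why the hypothesis $m>2$ is needed.
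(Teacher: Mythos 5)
Your proposal is correct and matches the paper's (implicit) argument exactly: the corollary is intended as the $x=y=r$, $a=b=1+r$ instance of Proposition \ref{muchos}, and your verification that $ab=(1+r)^2\equiv 1+r^2\equiv 1\bmod s$ (using $m>2$ so that $2m-4\geq m-1$) is precisely the hypothesis check needed. The generator-chasing argument for $\Sigma\nu=\nu\Sigma$ is also the standard and correct justification of the final clause.
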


\begin{lemma}\label{mmlin}
    Suppose that $m=2$. Then $H$ is a group of order 512, with presentation
    $$
    \langle A,B,C\,|\, C=[A,B], A^C=A^5, B^C=B^5, A^8=B^8=C^8=1\rangle,
    $$
    where $Z_1(H) = \gen{C^2}$, $[A^2,C]=1=[B^2,C]$, and $[A^2,B^2]=1$. Moreover, if $z,w\in Z_1(H)$,  then
    the assignment $A\mapsto A B z$, $B\mapsto B w$ does not
    extend to an automorphism of $H$.
\end{lemma}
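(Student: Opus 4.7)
The plan is to split the proof into two halves: first verifying the presentation and the three central commutator identities, then proving the non-extension claim by showing that the first defining relation of $H$ cannot be preserved by the given assignment.

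\textbf{The presentation and identities.} With $m=2$ we have $s=2$, and since $\ell$ is odd, $\alpha = 1+4\ell \equiv 5 \pmod{8}$. In $H$, where $A^8=B^8=1$ already by the presentation at the start of Section \ref{autgmodh}, the relations $A^{[A,B]}=A^\alpha$ and $B^{[B,A]}=B^\alpha$ become $A^C=A^5$ and $B^C = B^{\alpha^{-1}} = B^5$ (using $5\cdot 5 \equiv 1 \pmod 8$). The relation $C^8=1$ follows from \eqref{put}: $C^{2^{2m-1}} = A^{2^{3m-2}} = A^{16} = 1$. The identity $Z_1(H) = \langle C^2\rangle$ is stated at the start of Section \ref{autgmodh}. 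For the commutators, $(A^2)^C = A^{10} = A^2$ gives $[A^2,C]=1$, and likewise $[B^2,C]=1$. Applying Corollary \ref{cor.commutator.H} with $n=t=2$ produces $[A^2,B^2] = A^{-8\ell} B^{8\ell} C^{4-4\ell}$; all three factors vanish (the first two by $A^8=B^8=1$, the third because $\ell$ odd makes $4(1-\ell)$ divisible by $8$).

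\textbf{Reducing the non-extension claim.} Suppose for contradiction that the assignment $A\mapsto ABz,\ B\mapsto Bw$ extends to an endomorphism $\tau$ of $H$. The defining relation $A^{[A,B]}=A^\alpha$ forces $(ABz)^{[ABz,Bw]} = (ABz)^\alpha$. Since $z,w \in Z_1(H)$ are central, $[ABz,Bw]=[AB,B]$; and because $o(z)\mid 4$ while $\alpha\equiv 1\pmod 4$, we have $z^\alpha = z$, so $(ABz)^\alpha = (AB)^\alpha z$. Thus the equation reduces to $(AB)^{[AB,B]} = (AB)^\alpha$ in $H$, which I now show fails.

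\textbf{Computation and contradiction.} From Corollary \ref{cor.commutator.H}: $[C,B] = B^{4\ell}$, so $[AB,B] = [A,B]^B = C^B = CB^{4\ell}$; the same corollary gives $[A,B^{4\ell}] = C^{4\ell} = C^4$ and $[A^4,B] = C^4$ (after reduction modulo $8$, using $\ell$ odd). Hence
\[
(AB)^{[AB,B]} = ((AB)^C)^{B^{4\ell}} = (A^\alpha B^\alpha)^{B^{4\ell}} = A^\alpha B^\alpha C^4 = A^5 B^5 C^4,
\]
using $A^{B^{4\ell}} = AC^4$ and the centrality of $C^4$. For the power, Corollary \ref{cor.power.H} gives $(AB)^4 = A^4 B^4 C^{-2}$, which lies in the abelian subgroup $Z_2(H) \cong Z_3(J)/Z_1(J)$. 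So $(AB)^{4\ell} = ((AB)^4)^\ell = A^{4\ell} B^{4\ell} C^{-2\ell} = A^4 B^4 C^{-2\ell}$ (using $4\ell\equiv 4\pmod 8$ for $\ell$ odd). Writing $(AB)^\alpha = AB\cdot(AB)^{4\ell}$ and commuting $A^4$ past $B$ via $[A^4, B] = C^4$ yields $(AB)^\alpha = A^5 B^5 C^{4-2\ell}$. The two canonical forms $A^5 B^5 C^4$ and $A^5 B^5 C^{4-2\ell}$ differ by $C^{2\ell}$, which is nontrivial in $H$ since $\ell$ odd forces $2\ell \not\equiv 0\pmod 8$. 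This contradicts $(AB)^{[AB,B]}=(AB)^\alpha$, completing the proof. The main bookkeeping obstacle is the evaluation of $(AB)^\alpha$; it is tamed by computing $(AB)^4$ first and then raising to the $\ell$th power inside the abelian $Z_2(H)$, so the only non-commutative step left is the single $[A^4,B]$ rearrangement when multiplying by $AB$.
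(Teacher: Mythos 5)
Your proof is correct and follows essentially the same route as the paper: both reduce the non-extension claim to the single equation $(AB)^{[AB,B]}=(AB)^\alpha$ (using centrality of $z,w$), compute $(AB)^{[AB,B]}=A^5B^5C^4$ via $[AB,B]=C^B$, and then exhibit a discrepancy of a nontrivial power of $C$ in the normal form of $(AB)^\alpha$. The only difference is cosmetic: the paper normalizes to $\ell=1$ and evaluates $(AB)^5$ directly from Note \ref{note.H.1}, whereas you keep $\ell$ general and compute $(AB)^{\alpha}$ through $(AB)^4\in Z_2(H)$, which is a valid and equally short bookkeeping variant.
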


\begin{proof}
    The first statement follows by specializing general facts about $H$ to the case $m=2$
    (e.g. the relation $[A^2,B^2]=1$ follows from Corollary \ref{cor.commutator.H}). The given presentation
    shows, in particular, that the isomorphism type of $J$ is independent of $\ell$, and we take $\ell=1$ and $\alpha=5$.

    As for the second statement, we claim that $(ABz)^{[ABz,Bw]}\neq (ABz)^5$. Now $(ABz)^{[ABz,Bw]}=(AB)^{[AB,B]} z$
    and $(ABz)^5=(AB)^5 z$, so it suffices to show that $(AB)^{[AB,B]}\neq (AB)^5$.

    Using \eqref{comfor} and Corollary \ref{cor.commutator.H}, we see that $[AB,B] = C^B = B^4 C$, so
    $$
    (AB)^{[AB,B]}=(AB)^{B^4 C}=(A^{B^4} B)^C=(AC^4 B)^C=A^5 B^5 C^4.
    $$

    On the other hand, using Note \ref{note.H.1} gives
    \[
    (AB)^5
    = A^5 B^5 C^{-\phi(5) + 4\sigma_2(1,5)}
    = A^5 B^5 C^2.
    \]

    Thus, the normal form of the
    elements of $H$ yields that $(AB)^{[AB,B]}\neq (AB)^5$ (with equality modulo $Z_1(H)$, in agreement with Proposition \ref{newauto}).
\end{proof}

Set $\Sigma_1\in \mathrm{Aut}(H)$ to be $\Sigma$ if $m>2$ and $1_H$ if $m=2$.

\begin{theorem}\label{main3}
    The canonical map $\zeta:\mathrm{Aut}(H)\to\mathrm{Aut}(H/Z_3(H))$ has kernel $\mathrm{Aut}_3(H)$
    and image $\langle \Sigma_1,\nu\rangle^\zeta$, so that
    $\mathrm{Aut}(H)=\mathrm{Aut}_3(H)\langle \Sigma_1,\nu\rangle=\mathrm{Aut}_2(H)\mathrm{Inn}(H)\langle \Gamma,\Sigma_1,\nu\rangle$.
    Moreover, we have $\mathrm{Aut}(H)/\mathrm{Aut}_3(H)\cong (\Z/2\Z)^2$ if $m>2$ and
    $\mathrm{Aut}(H)/\mathrm{Aut}_3(H)\cong \Z/2\Z$ if $m=2$. Thus
    the order of $\mathrm{Aut}(H)$ is $2^{8m-2}$ if $m>2$ and $2^{13}$ if $m=2$.
\end{theorem}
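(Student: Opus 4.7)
The equality $\ker\zeta = \mathrm{Aut}_3(H)$ holds by definition, so the task is to identify the image. My plan is to invoke Proposition \ref{zi2} with $T=H$, $Y=K=H/Z_1(H)$, and $i=1$ to obtain an imbedding
$$
\widehat{\Lambda}: \mathrm{Aut}(H)/\mathrm{Aut}_3(H) \hookrightarrow \mathrm{Aut}(K)/\mathrm{Aut}_2(K),
$$
and then exploit the explicit normal form of $\mathrm{Aut}(K)/\mathrm{Aut}_2(K)$ from Theorem \ref{autk6} to pin down precisely which classes lift from~$K$ to~$H$.

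Locating the candidate generators in the target is straightforward: $\nu$ reduces modulo $Z_1(H)$ to $\mu$, and for $m>2$ the automorphism $\Sigma$ acts on $K/Z_2(K)\cong(\Z/s\Z)^2$ by the matrix $\left(\begin{smallmatrix} 1+r & r \\ r & 1+r\end{smallmatrix}\right)$. Using $(1+r)^2\equiv 1\pmod{s}$ and $r^2\equiv 0\pmod{s}$ (both valid when $m>2$), a short matrix computation gives $\left(\begin{smallmatrix} 1+r & r \\ r & 1+r\end{smallmatrix}\right) = M_{f_{1+r}}M_\Phi M_{\Phi^\mu}$, so $\Sigma$ represents the coset of $f_{1+r}\Phi\Phi^\mu$ in $\mathrm{Aut}(K)/\mathrm{Aut}_2(K)$. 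These two images commute (by the $A\leftrightarrow B$ symmetry of $\Sigma$), are each involutions, and are non-trivial, so $\langle\overline{\Sigma_1},\overline{\nu}\rangle$ is the Klein four-group when $m>2$ and $\Z/2\Z$ when $m=2$ (in which case $\Sigma_1=1_H$).

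For the reverse inclusion, take $\Psi\in\mathrm{Aut}(H)$ and write $\Lambda(\Psi)\equiv f_n\Phi^i(\Phi^\mu)^j\mu^k\pmod{\mathrm{Aut}_2(K)}$ in the normal form of Theorem \ref{autk6}; composing with $\nu$ if necessary, assume $k=0$. The matrix of $\Psi$ on $H/Z_3(H)$ then equals $M_{f_n}M_\Phi^i M_{\Phi^\mu}^j$, which simplifies to $\left(\begin{smallmatrix} n & ri \\ rj & t\end{smallmatrix}\right)$ (using $nr\equiv r\pmod{s}$ and $tr\equiv r\pmod{s}$ for the odd exponents $n$ and $t=n^{-1}$). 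Thus up to a tail in $Z_3(H)$, $A\Psi = A^nB^{ri}z_1$ and $B\Psi = A^{rj}B^t z_2$ with $z_1,z_2\in Z_3(H)$. After modifying $\Psi$ by a suitable element of $\mathrm{Aut}_3(H) = \mathrm{Inn}(H)\mathrm{Aut}_2(H)\langle\Gamma,\Gamma^\nu\rangle$ (Theorem \ref{tamanio}) to absorb the $Z_3(H)\setminus Z_1(H)$ part of $z_1,z_2$, the assignment takes the exact form required by Proposition \ref{muchos}, which then forces either $(i,j)=(0,0)$ with $n\equiv 1\pmod{2s}$ (trivial coset) or $(i,j)=(1,1)$ with $n\equiv 1+r\pmod{2s}$ (the $\Sigma$-coset), and excludes $(i,j)\in\{(1,0),(0,1)\}$.

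The main obstacle will be the tail-normalization: I must verify that $\mathrm{Aut}_2(H)$ absorbs the $Z_2(H)$-part of $z_1,z_2$, while $\Gamma$, $\Gamma^\nu$, and the inner automorphisms by $A,B,C$ dispose of the $A^s,B^s,C$-components modulo $Z_2(H)$, leaving only residues in $Z_1(H)=\langle C^s\rangle$. For $m=2$, where $\Sigma_1=1_H$ and $\mathrm{Aut}(K)/\mathrm{Aut}_2(K)\cong\GL_2(\Z/2\Z)\cong S_3$, the situation is easier: Lemma \ref{mmlin} (after the analogous tail normalization) excludes the $\Phi$-coset, and since the only subgroup of $S_3$ containing the transposition $\mu$ but missing the transposition corresponding to $\Phi$ is $\langle\mu\rangle$, the image is $\langle\mu\rangle\cong\Z/2\Z$. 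Combining $|\mathrm{Aut}_3(H)| = 2^{8m-4}$ from Theorem \ref{tamanio} with the indices just obtained yields $|\mathrm{Aut}(H)| = 2^{8m-2}$ for $m>2$ and $|\mathrm{Aut}(H)| = 2^{13}$ for $m=2$, and the decomposition $\mathrm{Aut}(H) = \mathrm{Aut}_2(H)\mathrm{Inn}(H)\langle\Gamma,\Sigma_1,\nu\rangle$ is immediate from $\mathrm{Aut}_3(H) = \mathrm{Aut}_2(H)\mathrm{Inn}(H)\langle\Gamma,\Gamma^\nu\rangle$ and $\Gamma^\nu = \nu^{-1}\Gamma\nu$.
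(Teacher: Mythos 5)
Your proposal follows essentially the same route as the paper's proof: imbed $\mathrm{Aut}(H)/\mathrm{Aut}_3(H)$ into $\mathrm{Aut}(K)/\mathrm{Aut}_2(K)\hookrightarrow\GL_2(\Z/2^{m-1}\Z)$ via Proposition \ref{zi2} and Theorem \ref{autk6}, identify $\nu\mapsto Q$ and $\Sigma\mapsto M_{1+r}RT$, and rule out all other cosets with Proposition \ref{muchos} (and Lemma \ref{mmlin} when $m=2$). The tail-normalization step you flag as the remaining obstacle is precisely the reduction the paper also leaves implicit in its phrase ``which translates as follows,'' so your sketch is correct and matches the published argument.
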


\begin{proof}
    As $J$ is nilpotent of class 5, the nilpotency classes of $H$ and $K$ are 4 and 3, respectively.
    Thus, Proposition \ref{zi2} gives an imbedding
    $\widetilde{\Lambda}:\mathrm{Aut}(H)/\mathrm{Aut}_3(H)\to \mathrm{Aut}(K)/\mathrm{Aut}_2(K)$.
    We appeal to Theorem \ref{autk6}, its proof, and notation therein.

    We have an
    imbedding $\eta:\mathrm{Aut}(K)/\mathrm{Aut}_2(K)\to\GL_2(\Z/2^{m-1}\Z)$, which
    yields an imbedding $\widetilde{\Lambda}\eta:\mathrm{Aut}(H)/\mathrm{Aut}_3(H)\to \GL_2(\Z/2^{m-1}\Z)$. Set
    $\chi=\zeta\widetilde{\Lambda}\eta$ and $I=\mathrm{Im}(\chi)$.
    As $\widetilde{\Lambda}\eta$ is injective, $\mathrm{Im}(\zeta)=\langle \Sigma_1,\nu\rangle^\zeta$
    if and only if $I=\langle \Sigma_1,\nu\rangle^\chi$. Note also that $\nu^{\chi}=Q$.

    Suppose first $m>2$ and set $Z=M_{1+2^{m-2}} R T$. Then
    $\Sigma^{\chi}=Z$, and we must show that $I=\langle Q,Z\rangle$.
    Now $W=V\langle Q\rangle$, with $Q\in I$, so
    it suffices to show that $I\cap V=\langle Z\rangle$. Here
    $V$ is an abelian group of order $2^m$. By Proposition \ref{muchos}, $I\cap\langle U\rangle$ is trivial.
    It remains to analyze the cosets $\langle U\rangle R$, $\langle U\rangle T$, and $\langle U\rangle RT$, inside of $V$.

    We claim that none of the elements in $\langle U\rangle R$ is in $I$, which translates as follows: none of the assignments
    $A\mapsto A^q B^{2^{m-2}}z$, $B\mapsto B^t w$, where $z,w\in Z_1(H)$, $qt\equiv 1\mod 2^{m-1}$, $1\leq q,t\leq 2^{m-1}$,
    extends to an automorphism of $H$. This follows from Proposition \ref{muchos}.

    Since $R=T^Q$ and $U=U^Q$, it follows that none of the elements in $\langle U\rangle T$ is in $I$ either.

    We next claim that if $M_q RT\in I$ for any odd $q$ such that $1\leq q\leq 2^{m-1}$, then $q=1+2^{m-2}$.
    This translates as follows: if the assignment
    $A\mapsto A^q B^{2^{m-2}}z$, $B\mapsto B^t A^{2^{m-2}} w$, where $z,w\in Z_1(H)$, $qt\equiv 1\mod 2^{m-1}$, and $1\leq q,t\leq 2^{m-1}$,
    extends to an automorphism of $H$ then $q=1+2^{m-2}$. This follows from Proposition \ref{muchos}.

    Clearly, $\langle Z,Q\rangle$ is isomorphic to the Klein 4-group,
    so $|\mathrm{Aut}(H)|=|\mathrm{Aut}_3(H)|\times|\langle Z,Q\rangle|=2^{8m-2}$ by Theorem \ref{tamanio}.

    Suppose next $m=2$. Since $\GL_2(\Z/2\Z)$ is the disjoint union of
    $\langle Q\rangle$, $\langle Q\rangle R$, and $\langle Q\rangle T$,
    we must show that no element of the last 2 cosets
     is in $I$. Since $Q$ conjugates $R$ and $T$ into each other, it suffices to show that $R$ is not in $I$,
    which means that the assignment $A\mapsto A B z$, $B\mapsto B w$, where $z,w\in Z_1(H)$, does not
    extend to an automorphism of $H$. This follows from Lemma \ref{mmlin}.

    Thus Theorem \ref{tamanio} gives $|\mathrm{Aut}(H)|=2|\mathrm{Aut}_3(H)|=2^{8m-3}=2^{13}$.
\end{proof}

We next obtain a description of $\mathrm{Aut}(H)$ in terms of group extensions.

\begin{theorem}\label{gpexth}
    We have the following series of normal subgroups of $\mathrm{Aut}(H)$:
    $$
    1\subset \mathrm{Aut}_2(H)\subset \mathrm{Inn}(H)\mathrm{Aut}_2(H)\subset\mathrm{Aut}_3(H)\subset \mathrm{Aut}(H),
    $$
    where the factors
    $\mathrm{Aut}_2(H)\cong (\Z/2^{m-1}\Z)^4\times (\Z/2^{m}\Z)^2$,  $\mathrm{Inn}(H)\mathrm{Aut}_2(H)/\mathrm{Aut}_2(H)\cong
    (\Z/2^{m-1}\Z)^2$, $\mathrm{Aut}_3(H)/\mathrm{Inn}(H)\mathrm{Aut}_2(H)\cong (\Z/2\Z)^2$, and
    $\mathrm{Aut}(H)/\mathrm{Aut}_3(H)$ is isomorphic to $(\Z/2\Z)^2$ if $m>2$ and $\Z/2\Z$ if $m=2$.
\end{theorem}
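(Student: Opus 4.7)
The plan is to assemble the four factor computations from already established results, with only the abelian group structure of $Z_2(H)$ requiring fresh (but routine) analysis. First, I would verify that each subgroup in the series is normal in $\mathrm{Aut}(H)$: both $\mathrm{Aut}_2(H)$ and $\mathrm{Aut}_3(H)$ are normal as kernels of the canonical maps $\mathrm{Aut}(H)\to\mathrm{Aut}(H/Z_i(H))$, $\mathrm{Inn}(H)$ is always normal, and the product $\mathrm{Inn}(H)\mathrm{Aut}_2(H)$ is therefore normal. The chain of inclusions is then immediate, the only nontrivial inclusion $\mathrm{Inn}(H)\mathrm{Aut}_2(H)\subset\mathrm{Aut}_3(H)$ being supplied by Corollary \ref{corauth}.

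For the bottom factor, I would invoke Proposition \ref{auth} to identify $\mathrm{Aut}_2(H)$ with $Z_2(H)\times Z_2(H)$ via the monomorphism $\Pi$, so the task reduces to determining the abelian group structure of $Z_2(H)$. Since $Z_2(H)=\langle A^{2^m},B^{2^m},C^{2^{m-1}}\rangle$ sits inside the abelian group $Z_3(J)/Z_1(J)$ (as noted at the start of Section \ref{autgmodh}), and has order $2^{3m-2}$ by the computation there, I would observe that the listed generators have orders $2^{m-1}$, $2^{m-1}$, and $2^m$ respectively, and these orders multiply to $|Z_2(H)|$. A standard argument for finite abelian $p$-groups (the natural surjection from the external direct product of the cyclic subgroups becomes an isomorphism by comparing orders) then forces $Z_2(H)\cong(\Z/2^{m-1}\Z)^2\times\Z/2^m\Z$, yielding $\mathrm{Aut}_2(H)\cong(\Z/2^{m-1}\Z)^4\times(\Z/2^m\Z)^2$.

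The second factor is handled by the standard isomorphism theorem: $\mathrm{Inn}(H)\mathrm{Aut}_2(H)/\mathrm{Aut}_2(H)\cong\mathrm{Inn}(H)/(\mathrm{Inn}(H)\cap\mathrm{Aut}_2(H))$. Using $\mathrm{Inn}(H)\cong H/Z_1(H)$ together with the identification $\mathrm{Inn}(H)\cap\mathrm{Aut}_2(H)\cong Z_3(H)/Z_1(H)$ from the notation section (or Corollary \ref{corauth}), this quotient collapses to $H/Z_3(H)$, of order $2^{2m-2}$. Since $[A,B]=C\in Z_3(H)$ and $A^{2^{m-1}},B^{2^{m-1}}\in Z_3(H)$, the quotient $H/Z_3(H)$ is a commutative quotient of $\Z/2^{m-1}\Z\times\Z/2^{m-1}\Z$, and order matches, so $H/Z_3(H)\cong(\Z/2^{m-1}\Z)^2$.

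The remaining two factors are exactly the contents of Theorem \ref{tamanio} (giving the Klein 4-group for $\mathrm{Aut}_3(H)/\mathrm{Inn}(H)\mathrm{Aut}_2(H)$) and Theorem \ref{main3} (giving $(\Z/2\Z)^2$ or $\Z/2\Z$ for $\mathrm{Aut}(H)/\mathrm{Aut}_3(H)$ according to whether $m>2$ or $m=2$). I anticipate no serious obstacle here, since the theorem is primarily a consolidation of already-proved material; the only genuinely new piece of work is pinning down the invariant factors of $Z_2(H)$, and that is forced by the orders of the chosen generators together with the known order of $Z_2(H)$.
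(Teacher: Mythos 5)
Your proposal is correct and follows essentially the same route as the paper: identify $\mathrm{Aut}_2(H)\cong Z_2(H)^2$ via Proposition \ref{auth} and pin down $Z_2(H)\cong(\Z/2^{m-1}\Z)^2\times\Z/2^m\Z$ from the generator orders and $|Z_2(H)|=2^{3m-2}$ (the paper phrases this via the normal form of elements of $H$, which is equivalent), compute the second factor as $\mathrm{Inn}(H)/(\mathrm{Inn}(H)\cap\mathrm{Aut}_2(H))\cong H/Z_3(H)\cong(\Z/2^{m-1}\Z)^2$, and quote Theorems \ref{tamanio} and \ref{main3} for the top two factors. No gaps.
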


\begin{proof}
    We know that $Z_2(H)$ is abelian.
    Moreover, by Proposition \ref{auth}, $\mathrm{Aut}_2(H)\cong Z_2(H)\times Z_2(H)$, where
    $Z_2(H)=\langle A^{2^{m}}, B^{2^{m}}, C^{2^{m-1}}\rangle\cong
    (\Z/2^{m-1}\Z)^2\times \Z/2^{m}\Z$ by the normal form of the elements of $H$ and the orders of $A,B,C$. Thus
    $\mathrm{Aut}_2(H)\cong (\Z/2^{m-1}\Z)^4\times (\Z/2^{m}\Z)^2$. Next, we have
    $$
    \mathrm{Inn}(H)\mathrm{Aut}_2(H)/\mathrm{Aut}_2(H)\cong\mathrm{Inn}(H)/\mathrm{Aut}_2(H)\cap \mathrm{Inn}(H)\cong H\delta/Z_3(H)\delta\cong H/Z_3(H)\cong (\Z/2^{m-1}\Z)^2.
    $$

    That $\mathrm{Aut}_3(H)/\mathrm{Inn}(H)\mathrm{Aut}_2(H)\cong (\Z/2\Z)^2$ was shown in Theorem \ref{tamanio}.
    Finally, Theorem \ref{main3} gives the structure of $\mathrm{Aut}(H)/\mathrm{Aut}_3(H)$.
\end{proof}






\section{The automorphism group of $J$}\label{autojota}

Throughout this section we maintain the notation introduced in Sections \ref{backJ} and \ref{section.formulas.J} and recall the numerical
information given in Section \ref{sizeJ}. Also, recall that the exponent of $J$ is $4us$, which implies that
if $x,y\in  J$, then the assignment $A\mapsto x$, $B\mapsto y$ extends to an endomorphism of $J$ if and only if it preserves the first and second defining relations of $J$.

\begin{prop}\label{autj1}
    For every $x,y\in Z_1(J)$ the assignment $A\mapsto Ax,\; B\mapsto By$ extends to a central automorphism $\Omega_{(x,y)}$ of $J$ that fixes $Z_2(J)$ pointwise. Moreover, the corresponding map
    $\Omega:Z_1(J)\times Z_1(J)\to\mathrm{Aut}(J)$ is a group monomorphism whose image is $\mathrm{Aut}_1(J)$. In particular,
    $|\mathrm{Aut}_1(J)|=2^{2m}$.
\end{prop}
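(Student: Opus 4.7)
The plan is to mimic the proof of Proposition~\ref{autk}, exploiting that $x,y\in Z_1(J)$ are central and that $Z_1(J)=\langle A^{2u}\rangle$ has exponent $2^m$ by \eqref{put3} and \eqref{put5}. Since $x,y$ are central, $[Ax,By]=[A,B]=C$ by \eqref{comfor}, so $(Ax)^{[Ax,By]}=(Ax)^C=A^\alpha x$, because $x$ commutes with $C$. On the other hand $(Ax)^\alpha=A^\alpha x^\alpha$, and since $o(x)\mid 2^m$ and $\alpha=1+2^m\ell$, we have $x^\alpha=x$. Thus the first defining relation of $J$ is preserved; the second follows by symmetry, and the power relations $A^{2^{3m-1}}=1=B^{2^{3m-1}}$ are preserved because $x^{2^{3m-1}}=1=y^{2^{3m-1}}$. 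This produces an endomorphism $\Omega_{(x,y)}$ of $J$.

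Next I would check that $\Omega_{(x,y)}$ fixes $Z_2(J)=\langle A^{2u},\,C^s\rangle$ pointwise. For $A^{2u}$, compute $(A^{2u})\Omega_{(x,y)}=(Ax)^{2u}=A^{2u}x^{2u}=A^{2u}$, since $2u=2^{2m-1}\geq 2^m$ for $m\geq 1$ and $x$ has order dividing $2^m$. For $C^s$, we have $(C^s)\Omega_{(x,y)}=[Ax,By]^s=C^s$. In particular $\Omega_{(x,y)}$ fixes $Z_1(J)$, and using the abelianness of $Z_1(J)$ one obtains directly $\Omega_{(x,y)}\circ\Omega_{(x',y')}=\Omega_{(xx',yy')}$. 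Since $\Omega_{(1,1)}=1_J$, each $\Omega_{(x,y)}$ is an automorphism and $\Omega$ is a group homomorphism.

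Finally, injectivity of $\Omega$ is immediate: $\Omega_{(x,y)}=1_J$ sends $A\mapsto A$ and $B\mapsto B$, forcing $x=y=1$. The image is exactly $\mathrm{Aut}_1(J)$ by the definition of central automorphism, since any $\sigma\in\mathrm{Aut}_1(J)$ must satisfy $A^\sigma\in AZ_1(J)$ and $B^\sigma\in BZ_1(J)$, determining unique $x,y\in Z_1(J)$ with $\sigma=\Omega_{(x,y)}$. The order formula $|\mathrm{Aut}_1(J)|=2^{2m}$ then follows from $|Z_1(J)|=2^m$ as recorded in \eqref{put5}. No step is delicate; the only point demanding attention is that both $t\mapsto t^\alpha$ acts trivially on $Z_1(J)$ and $t\mapsto t^{2u}$ kills $Z_1(J)$, both consequences of $\alpha\equiv 1\mod 2^m$ together with $|Z_1(J)|=2^m$.
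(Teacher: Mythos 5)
Your proof is correct and follows exactly the route the paper intends: the paper's own proof of Proposition \ref{autj1} is just the remark ``this follows as in the proof of Proposition \ref{autk},'' and your argument is precisely that adaptation, using the centrality of $x,y$, the exponent $2^m$ of $Z_1(J)=\langle A^{2u}\rangle$, and $\alpha\equiv 1\bmod 2^m$ to verify the relations and the pointwise fixing of $Z_2(J)$. All the details check out, including $x^{2u}=1$ since $2u=2^{2m-1}\geq 2^m$.
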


\begin{proof}
    This follows as in the proof of Proposition \ref{autk}.
\end{proof}

\begin{prop}\label{autjo}
    For every $x,y\in Z_2(J)$ the assignment
    $A\mapsto Ax,\; B\mapsto By$ extends to a 2-central automorphism $\Psi_{(x,y)}$ of $J$ that fixes $Z_1(J)$ and $Z_2(J)/Z_1(J)$ pointwise.  Thus, $|\mathrm{Aut}_2(J)|=2^{4m}$ and the map
    $g:Z_2(J)\times Z_2(J)\to \mathrm{Aut}_2(J)/\mathrm{Aut}_1(J)$, given by
    $(x,y)\mapsto \Psi_{(x,y)}\mathrm{Aut}_1(J)$ is a group epimorphism with kernel $Z_1(J)^2$, so that
    $\mathrm{Aut}_2(J)/\mathrm{Aut}_1(J)\cong (Z_2(J)/Z_1(J))^2$.
\end{prop}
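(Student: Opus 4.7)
The plan is to adapt the strategy from the proof of Proposition~\ref{autexcel}, now using the multiplication, power, and commutator formulas for $J$ established in Section~\ref{section.formulas.J}. The three structural facts that make the argument tractable are: (i) $Z_2(J)=\langle A^{2u},C^s\rangle$ is abelian, since $A^{2u}\in Z_1(J)=Z(J)$, so each $x\in Z_2(J)$ has the form $A^{2ui}C^{sk}$; (ii) $[Z_2(J),J]\subseteq Z_1(J)=Z(J)$, so the central-commutator identity $(gh)^n=g^n h^n[h,g]^{\binom{n}{2}}$ applies with $g\in\{A,B\}$ and $h\in\{x,y\}$; and (iii) conjugation by $C$ fixes $Z_2(J)$ pointwise, because $(A^{2u})^C=A^{2u\alpha}=A^{2u}$ (using $A^{4us}=1$) and $(C^s)^C=C^s$.

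First I would check that the assignment $A\mapsto Ax$, $B\mapsto By$ preserves the defining relations of $J$. By \eqref{comfor} combined with (ii), $[Ax,By]\equiv C \bmod Z_1(J)$, say $[Ax,By]=Cz$ with $z\in Z(J)$, so $(Ax)^{[Ax,By]}=(Ax)^C=A^\alpha\cdot x^C=A^\alpha x$ by (iii). Meanwhile, $(Ax)^\alpha=A^\alpha x^\alpha[x,A]^{\phi(\alpha)}$, reducing the first defining relation to the identity $x^{\alpha-1}[x,A]^{\phi(\alpha)}=1$. Writing $x=A^{2ui}C^{sk}$ and using $C^{2u}=A^{2us}$ together with $A^{4us}=1$, the power $x^{2s\ell}$ collapses to $A^{2usk\ell}$; the commutator formula \eqref{eq.comj.2} gives $[x,A]=A^{-2uk\ell-4u\ell^2\phi(sk)}$, whose $\phi(\alpha)$-th power reduces to $A^{-2usk\ell^2}$ since all other terms carry factors of $4u^2$ or $4us$ that vanish modulo $4us$; their product equals $A^{2usk\ell(1-\ell)}=1$ since $\ell$ is odd. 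The companion relation for $B$ follows by applying $\theta$. Finally, $(Ax)^{4us}=A^{4us}x^{4us}[x,A]^{\binom{4us}{2}}=1$, because $4us$ is the exponent of $J$ and $\binom{4us}{2}$ is divisible by $2s=|Z_1(J)|$ when $m>1$.

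Next I would show that the resulting endomorphism $\Psi_{(x,y)}$ fixes $Z_1(J)$ pointwise and $Z_2(J)/Z_1(J)$ pointwise. To fix $Z_1(J)$ it suffices to compute $(A^{2u})\Psi_{(x,y)}=(Ax)^{2u}=A^{2u}\,x^{2u}\,[x,A]^{u(2u-1)}$, where $x^{2u}=A^{4u^2 i}C^{2usk}=A^{4u^2 i}A^{2u^2 k}=1$ (since $4us\mid 4u^2$ and $4us\mid 2u^2$ when $m>1$, using $u=s^2$ with $s$ even), and $[x,A]^{u(2u-1)}=1$ by the same divisibility. For $Z_2(J)/Z_1(J)$, $(C^s)\Psi_{(x,y)}=[Ax,By]^s=(Cz)^s=C^s z^s\in C^s Z_1(J)$. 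Since $\Psi_{(x,y)}$ restricts to the identity on the center of the nilpotent group $J$, any nontrivial $\ker\Psi_{(x,y)}$ would meet $Z_1(J)$ nontrivially; hence $\Psi_{(x,y)}$ is injective, and therefore an automorphism.

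Finally, to identify $g$, I would argue as follows. For $x,y,x',y'\in Z_2(J)$, since $\Psi_{(x',y')}$ fixes $Z_2(J)/Z_1(J)$ pointwise, $x\Psi_{(x',y')}\equiv x\bmod Z_1(J)$, so $A\Psi_{(x,y)}\Psi_{(x',y')}=Ax'\cdot x\Psi_{(x',y')}\equiv A(xx')\bmod Z_1(J)$, and similarly for $B$; this makes $g$ a homomorphism. Surjectivity is automatic: any $\Phi\in\mathrm{Aut}_2(J)$ sends $A\mapsto A\tilde x$, $B\mapsto B\tilde y$ with $\tilde x,\tilde y\in Z_2(J)$ by the very definition of $\mathrm{Aut}_2(J)$, so $\Phi=\Psi_{(\tilde x,\tilde y)}$. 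The kernel is exactly $Z_1(J)^2$: $\Psi_{(x,y)}$ is central iff $x,y\in Z_1(J)$, by Proposition~\ref{autj1}. Hence $\mathrm{Aut}_2(J)/\mathrm{Aut}_1(J)\cong Z_2(J)^2/Z_1(J)^2\cong(Z_2(J)/Z_1(J))^2$, giving $|\mathrm{Aut}_2(J)|=2^{2m}\cdot 2^{2m}=2^{4m}$. The main obstacle is the arithmetic in step two: the collapse of $x^{\alpha-1}[x,A]^{\phi(\alpha)}$ to $1$ depends on repeatedly invoking $C^{2u}=A^{2us}$, $A^{4us}=1$, and the fact that $2u^2=4us\cdot(s/2)$ is a multiple of $4us$ precisely because $m>1$.
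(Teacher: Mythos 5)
Your proof is correct, and its overall skeleton coincides with the paper's: verify the two defining relations (the order relations being automatic from the exponent of $J$), use the fact that $\Psi_{(x,y)}$ restricts to the identity on $Z_1(J)$ together with nilpotency to get injectivity, and then read off $\mathrm{Aut}_2(J)$, the kernel of $g$, and the order from the images of the generators $A,B$. The one genuine difference is the computational engine. The paper writes $Ax$ in normal form $A^{1+2ui}C^{sk}$ and feeds it into the general power formula of Theorem \ref{thm.power.J} to evaluate $(Ax)^\alpha$ and $(Ax)^{2u}$; you instead exploit $[Z_2(J),J]\subseteq Z_1(J)=Z(J)$ to invoke the elementary identity $(gh)^n=g^nh^n[h,g]^{\binom{n}{2}}$, which reduces the first relation to the single congruence $x^{\alpha-1}[x,A]^{\phi(\alpha)}=1$, checked using only \eqref{eq.comj.2} together with $C^{2u}=A^{2us}$ and $A^{4us}=1$. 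I verified the arithmetic: $x^{2s\ell}=A^{2usk\ell}$, $[x,A]^{\phi(\alpha)}=A^{-2usk\ell^2}$, and their product is $A^{2usk\ell(1-\ell)}=1$ since $\ell$ is odd; likewise the divisibilities $4us\mid 4u^2$ and $4us\mid 2u^2$ that you use for $(Ax)^{2u}=A^{2u}$ do hold precisely because $m>1$. Your route is more self-contained in that it bypasses the heavy power formula, at the small cost of the extra observation that conjugation by $C$ fixes $Z_2(J)$ pointwise; the paper's route is more uniform with the rest of Section \ref{autojota}, where Theorem \ref{thm.power.J} is indispensable anyway.
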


\begin{proof}
    Let $x\in Z_2(J)$. Then $x = A^{2ui} C^{sk}$ for some $i,k\in\Z$. Let us first show that
    \begin{equation}\label{7.eq.task3}
        (Ax)^{[A,B]} = (Ax)^\alpha,\;
        (Bx)^{[B,A]} = (Bx)^\alpha.
    \end{equation}

    Applying Theorem \ref{thm.power.J} to $(Ax)^\alpha = (A^{1+2ui} C^{sk})^\alpha = A^{\textup{exp} A + \xi} B^{\textup{exp} B} C^{\textup{exp} C}$ yields $\textup{exp} A\equiv\alpha + 2ui + 2usk\mod 4us$, $\textup{exp} B\equiv 0\mod 4us$, $\textup{exp}\, C\equiv sk + 2uk\mod 4u$, $\xi\equiv 0\mod 4us$, so
    \[
    (Ax)^\alpha
    = A^{\alpha + 2ui + 2usk} C^{sk + 2uk}
    = A^{\alpha + 2ui} C^{sk}.
    \]

    On the other hand,
    \[
    (Ax)^C
    = (A^{1+2ui} C^{sk})^C
    = A^{\alpha(1+2ui)} C^{sk}
    = A^{\alpha + 2ui} C^{sk}.
    \]

    Thus $(Ax)^{[A,B]} = (Ax)^\alpha$ and, by the automorphism $A\leftrightarrow B$, $(Bx)^{[B,A]} = (Bx)^\alpha$.

    Now, taking $x,y\in Z_2(J)$ and working modulo $Z_1(J)$, we obtain
    \[
    [Ax,By] = [A,B]z
    \]
    for a unique $z\in Z_1(J)$. Then
    \begin{equation}
        \label{8.eq.task3}
        (Ax)^{[Ax,By]} = (Ax)^{[A,B]z} = (Ax)^{[A,B]},\;
        (By)^{[By,Ax]} = (By)^{[B,A]z} = (By)^{[B,A]}.
    \end{equation}

    By \eqref{7.eq.task3} and \eqref{8.eq.task3}, it follows that the defining relations of $J$ are preserved. Thus the above
	assignment extends to an endomorphism $\Psi_{(x,y)}$ of $J$.

    Applying Theorem \ref{thm.power.J} to $(A^{1 + 2ui} C^{sk})^{2u} = A^{\textup{exp} A + \xi} B^{\textup{exp} B} C^{\textup{exp} C}$ produces, $\textup{exp} A\equiv 2u\mod 4us$, 
		$\textup{exp} B\equiv 0\mod 4us$, $\textup{exp}\, C\equiv 0\mod 4u$, $\xi\equiv 0\mod 4us$, so
    \[
    (A^{2u})\Psi_{(x,y)}
    = (Ax)^{2u}
    = (A^{1 + 2ui} C^{sk})^{2u}
    = A^{2u}
    \]
    and $\Psi_{(x,y)}$ restricts to the identity map on $Z_1(J)$. As $J$ is a finite nilpotent group, it follows that $\Psi_{(x,y)}$
	is an automorphism of $J$ that fixes $Z_1(J)$ pointwise. Moreover, since $\Psi_{(x,y)}$ sends $C$ to $Cz$, with $z\in Z_1(J)$,
	we see that $\Psi_{(x,y)}$ fixes $C^{2^{m-1}}$ modulo  $Z_1(J)$, whence $\Psi_{(x,y)}$ fixes $Z_2(J)/Z_1(J)$ pointwise.
	This implies that $g$ is a group homomorphism, whose kernel clearly contains $Z_1(J)^2$.

    On the other hand, by definition, every element of $\mathrm{Aut}_2(J)$
	has the form $\Psi_{(x,y)}$ for a unique $(x,y)\in Z_2(J)\times Z_2(J)$, so $g$ is an epimorphism,
	$|\mathrm{Aut}_2(J)|=2^{4m}$, and
	$|\mathrm{Aut}_2(J)/\mathrm{Aut}_1(J)| = |Z_2(J)/Z_1(J)|^2$, which implies that $\ker(g)=Z_1(J)\times Z_1(J)$.
\end{proof}

We have the following series of normal subgroups of $\mathrm{Aut}(J)$:
\begin{equation}\label{seriedejota}
1\subset \mathrm{Aut}_1(J)\subset\mathrm{Aut}_2(J)\subset \mathrm{Aut}_3(J)\subset
\mathrm{Inn}(J)\mathrm{Aut}_3(J)\subset\mathrm{Aut}_4(J)\subset \mathrm{Aut}(J).
\end{equation}

Here $\mathrm{Aut}_1(J)\cong (\Z/2^m\Z)^2$ and $\mathrm{Aut}_2(J)/\mathrm{Aut}_1(J)
\cong (\Z/2^m\Z)^2$ by Propositions \ref{autj1} and \ref{autjo}.
Regarding $\mathrm{Aut}_3(J)/\mathrm{Aut}_2(J)$, by Proposition \ref{zi2}, we have imbeddings
$$
\mathrm{Aut}_3(J)/\mathrm{Aut}_2(J)\hookrightarrow \mathrm{Aut}_2(H)/\mathrm{Aut}_1(H)\hookrightarrow
\mathrm{Aut}_1(K)\cong Z_1(K)\times Z_1(K).
$$
We proceed to find the image of this composite imbedding and the structure of $\mathrm{Inn}(J)\mathrm{Aut}_3(J)/\mathrm{Aut}_3(J)$.

\begin{theorem}\label{autj3}
    The assignments
    $A\mapsto AB^{2s},\; B\mapsto B^{1 - 4s} A^{2s}$ and
    $A\mapsto A,\; B\mapsto BA^u$ extend to automorphisms, say $\Delta_1$ and $\Delta_2$, of $J$. Moreover,
    $$
    \mathrm{Inn}(J)\mathrm{Aut}_3(J)=\langle \Delta_1,\Delta_2\rangle \mathrm{Inn}(J)\mathrm{Aut}_2(J),\;
    \mathrm{Aut}_3(J)=\langle \Delta_1,\Delta_2, C\delta\rangle\mathrm{Aut}_2(J),
    $$
    $$
    \mathrm{Aut}_3(J)/\mathrm{Aut}_2(J)\cong(\Z/2^{m-1}\Z)^2\times \Z/2\Z,\;
		\mathrm{Inn}(J)\mathrm{Aut}_3(J)/\mathrm{Aut}_3(J)\cong (\Z/2^{m-1}\Z)^2,
    $$
    $$
    |\mathrm{Aut}_3(J)|=2^{6m-1},\; |\mathrm{Inn}(J)\mathrm{Aut}_3(J)|=2^{8m-3}.
    $$
\end{theorem}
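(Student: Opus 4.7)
The plan is to follow the three-step strategy sketched in the introduction: first verify that $\Delta_1$ and $\Delta_2$ extend to automorphisms of $J$ lying in $\mathrm{Aut}_3(J)$; next identify the images of $\Delta_1$, $\Delta_2$, and $C\delta$ under the composite embedding
\[
\mathrm{Aut}_3(J)/\mathrm{Aut}_2(J) \hookrightarrow \mathrm{Aut}_2(H)/\mathrm{Aut}_1(H) \hookrightarrow \mathrm{Aut}_1(K) \cong Z_1(K)^2 \cong (\Z/2^{m-1}\Z)^4
\]
obtained by applying Proposition \ref{zi2} twice; and finally match this image to the subgroup they generate and extract the order data.

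To show $\Delta_1$ extends to an endomorphism of $J$, I expand the two defining relations $(AB^{2s})^{[AB^{2s},B^{1-4s}A^{2s}]} = (AB^{2s})^\alpha$ and $(B^{1-4s}A^{2s})^{[B^{1-4s}A^{2s},AB^{2s}]} = (B^{1-4s}A^{2s})^\alpha$ using the product, commutator, and power formulas of Section \ref{section.formulas.J} and check equality in normal form (recall that $J$ has exponent $4us$, so these are the only relations to verify, as noted at the start of Section \ref{autojota}). The analogous but shorter calculation handles $\Delta_2$. For surjectivity, both $\Delta_i$ induce the identity on $J/[J,J]$, since a direct unraveling of the defining relations of $J$ yields $A^{2s} = B^{2s} = 1$ in $J^{\mathrm{ab}}$ (so $B^{2s}$, $A^{2s}B^{-4s}$, and $A^u$ all lie in $[J,J]$); Burnside's basis theorem for the 2-group $J$ then gives that each $\Delta_i$ is an automorphism. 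That $\Delta_1,\Delta_2 \in \mathrm{Aut}_3(J)$ is immediate from $B^{2s}, A^{2s}B^{-4s}, A^u \in Z_3(J) = \langle A^{2s}, B^{2s}, C^s\rangle$, and $C \in Z_4(J)$ forces $C\delta \in \mathrm{Aut}_3(J)$.

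Using the basis $\{\overline{A^{2s}}, \overline{B^{2s}}\}$ of $Z_3(J)/Z_2(J) \cong Z_1(K) \cong (\Z/2^{m-1}\Z)^2$ coming from the composite above, a direct calculation (with $B^C \equiv B^{1-2s\ell}$ derived from the commutator formula) gives
\[
\Delta_1 \mapsto ((0,1),(1,-2)),\qquad \Delta_2 \mapsto ((0,0),(2^{m-2},0)),\qquad C\delta \mapsto ((\ell,0),(0,-\ell)).
\]
A short linear-algebra argument over $\Z/2^{m-1}\Z$ shows these three images are independent: $C\delta$ and $\Delta_1$ each contribute a $\Z/2^{m-1}\Z$ summand, and $\Delta_2$ contributes an additional $\Z/2\Z$ summand, yielding a subgroup of $(\Z/2^{m-1}\Z)^4$ of order $2^{2m-1}$ isomorphic to $(\Z/2^{m-1}\Z)^2 \times \Z/2\Z$.

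The main obstacle is the matching upper bound, namely that no other element of $(\Z/2^{m-1}\Z)^4$ lifts to $\mathrm{Aut}_3(J)$. I handle this in the spirit of Proposition \ref{muchos}: taking any $\Psi \in \mathrm{Aut}_3(J)$ and writing $A\Psi = Az_A$, $B\Psi = Bz_B$ with $z_A = A^{2si_1}B^{2sj_1}C^{sk_1}z$ and $z_B = A^{2si_2}B^{2sj_2}C^{sk_2}z'$ ($z,z'\in Z_1(J)$), I expand the two defining relations of $J$ with the formulas of Section \ref{section.formulas.J} to extract enough congruences modulo $2^{m-1}$ on $(i_1,j_1,i_2,j_2)$ to force the image of $\Psi$ into the subgroup already produced. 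This calculation is the longest part of the argument but uses only techniques already in play. The conclusions $\mathrm{Aut}_3(J) = \langle \Delta_1,\Delta_2,C\delta\rangle \mathrm{Aut}_2(J)$ and $|\mathrm{Aut}_3(J)| = 2^{4m}\cdot 2^{2m-1} = 2^{6m-1}$ follow. For the remaining statements, the second isomorphism theorem together with $\mathrm{Inn}(J)\cap\mathrm{Aut}_3(J) = Z_4(J)\delta$ gives $\mathrm{Inn}(J)\mathrm{Aut}_3(J)/\mathrm{Aut}_3(J) \cong J/Z_4(J) \cong (\Z/2^{m-1}\Z)^2$ (using $Z_4(J) = \langle A^s,B^s,C\rangle$, so that $J/Z_4(J) = \langle A,B\mid A^s=B^s=[A,B]=1\rangle$), hence $|\mathrm{Inn}(J)\mathrm{Aut}_3(J)| = 2^{8m-3}$; finally, $C\delta \in \mathrm{Inn}(J) \subset \mathrm{Inn}(J)\mathrm{Aut}_2(J)$ may be dropped from the generating set modulo $\mathrm{Inn}(J)\mathrm{Aut}_2(J)$, yielding $\mathrm{Inn}(J)\mathrm{Aut}_3(J) = \langle \Delta_1,\Delta_2\rangle\mathrm{Inn}(J)\mathrm{Aut}_2(J)$.
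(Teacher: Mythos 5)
Your proposal is correct and follows essentially the same route as the paper: both reduce via Proposition \ref{zi2} to computing the image of $\mathrm{Aut}_3(J)/\mathrm{Aut}_2(J)$ inside $Z_1(K)^2$, determine that image by testing which $3$-central assignments preserve the two defining relations (your deferred congruence extraction is precisely the paper's condition \eqref{roj}, namely $2j+h+i\equiv 0\equiv 2g+h+i \bmod 2^{m-1}$), and finish with the same index computations. The only cosmetic differences are that you verify $\Delta_1,\Delta_2$ up front and invoke Burnside's basis theorem for surjectivity, whereas the paper derives the general congruence condition first and checks injectivity on $Z_1(J)$.
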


\begin{proof}
    Applying Proposition \ref{zi2} twice, we obtain
    $$
    \mathrm{Aut}_3(J)/\mathrm{Aut}_2(J)\hookrightarrow
    \mathrm{Aut}_2(H)/\mathrm{Aut}_1(H)\hookrightarrow
    \mathrm{Aut}_1(K),
    $$
    where $\mathrm{Aut}_1(K)\cong Z_1(K)\times Z_1(K)$ by Proposition \ref{autk}. In this way we obtain a homomorphism
    $\psi:\mathrm{Aut}_3(J)\to Z_1(K)^2$ such that $\ker(\psi)=\mathrm{Aut}_2(J)$. We proceed to show that
    \begin{equation}\label{impsi}
        \mathrm{Im}(\psi)
        = \langle (a^{2s},b^{-2s}), (b^{2s},a^{2s} b^{-4s}), (1,a^u)\rangle.
    \end{equation}

    Let $i,j,g,h\in\Z$ and $w,z\in Z_2(J)$. We need to determine under what conditions the assignment
    $$
    A\mapsto AA^{2si}B^{2sj}w,\; B\mapsto B A^{2sg} B^{2sh}z
    $$
    extends to an automorphism of $J$. Skillfully using Propositions \ref{autj1} and \ref{autjo}, we see that this is the case if and only if
    \begin{equation}\label{pupicat}
        A\mapsto A^{1+2si}B^{2sj},\;
        B\mapsto B A^{2sg} B^{2sh}
    \end{equation}
    extends to an automorphism of $J$. We will implicitly use in what follows that $Z_3(J)$ is abelian, as indicated in Section \ref{backJ}.

    By Theorem \ref{thm.commutators.J}, $[B,A^{2sg}] = (A^{2u\ell g} C^{2sg})^{-1} = A^{2u\ell g} C^{-2sg}$. Then
    \[
    B A^{2sg} B^{2sh}
    = A^{2sg} B A^{2u\ell g} C^{-2sg} B^{2sh}
    = A^{2sg} B^{1+2sh} C^{-2sg} A^{2u\ell g}
    \]
    and
    \[
    [A^{1+2si}B^{2sj},B A^{2sg} B^{2sh}]
    = [A^{1+2si}B^{2sj},A^{2sg} B^{1+2sh} C^{-2sg}].
    \]

    Applying Proposition \ref{prop.bigCommutator.J} to the above commutator produces, $\textup{exp} A\equiv 0\mod 2u$, $\textup{exp} B\equiv 0\mod 2u$, $\textup{exp}\, C\equiv 1 + 2s(i+h)\mod 2u$, so
    \[
    [A^{1+2si}B^{2sj},B A^{2sg} B^{2sh}]
    \equiv A^{\textup{exp} A} B^{\textup{exp} B} C^{\textup{exp} C}
    \equiv C^{1 + 2s(i+h)}\mod Z_1(J).
    \]

    Thus
    \[
    (A^{1+2si}B^{2sj})^{[A^{1+2si}B^{2sj},B A^{2sg} B^{2sh}]}
    = (A^{1+2si}B^{2sj})^{C^{1 + 2s(i+h)}}.
    \]

    As $[A^{1+2si},C^{1+2s(i+h)}] = A^{2s\ell + 4u\ell h + 8u\ell i}$ and $[B^{2sj},C^{1+2s(i+h)}] = B^{-4u\ell j}$, then
    \[
    (A^{1+2si}B^{2sj})^{[A^{1+2si}B^{2sj},BA^{2sg}B^{2sh}]}
    = A^{1+2s(i+\ell) + 4u\ell h + 8u\ell i}B^{2sj - 4u\ell j}
    = A^{1+2s(i+\ell) + 4u\ell (h + j + 2i)}B^{2sj}.
    \]

    On the other hand, applying Theorem \ref{thm.power.J} to $(A^{1+2si} B^{2sj})^\alpha = A^{\textup{exp} A + \xi} B^{\textup{exp} } 
		C^{\textup{exp} C}$ gives
    $\textup{exp} A\equiv 1 + 2s(i+\ell) + 4u\ell i\mod 4us$,
    $\textup{exp} B\equiv 2sj + 4u\ell j + 2usj\mod 4us$,
    $\textup{exp}\, C\equiv 2uj\mod 4u$, and
    $\xi\equiv 0 \mod 4us$, so
    \[
    (A^{1+2si} B^{2sj})^\alpha
    = A^{1 + 2s(i+\ell) + 4u\ell i} B^{2sj + 4u\ell j + 2usj} C^{2uj}
    = A^{1 + 2s(i+\ell) + 4u\ell(i-j)} B^{2sj}.
    \]

    Thus $(A^{1+2si}B^{2sj})^{[A^{1+2si}B^{2sj},BA^{2sg}B^{2sh}]} = (A^{1+2si}B^{2sj})^\alpha$ is equivalent to
    \begin{equation}
        \label{eq.J.1}
        A^{4u\ell(2j + h + i)} = 1.
    \end{equation}

    By the automorphism $A\leftrightarrow B$ we have that $(BA^{2sg}B^{2sh})^{[BA^{2sg}B^{2sh},A^{1+2si}B^{2sj}]} = (BA^{2sg}B^{2sh})^\alpha$ is equivalent to
    \begin{equation}
        \label{eq.J.2}
        B^{4u\ell(2g + h + i)} = 1.
    \end{equation}

    Note that \eqref{eq.J.1} and \eqref{eq.J.2} hold if and only if
    \begin{equation}
        \label{roj}
        2j + h + i\equiv 0\mod s,\quad
        2g + h + i\equiv 0\mod s.
    \end{equation}

    Thus, \eqref{pupicat} extends to an endomorphism, say $\Upsilon$, of $J$ if and only if \eqref{roj} holds,
    in which case $\Upsilon$ is an automorphism.
    Indeed, applying Theorem \ref{thm.power.J} to $(A^{1+2si}B^{2sj})^{2u} = A^{\textup{exp} A + \xi} B^{\textup{exp} B} C^{\textup{exp} C}$ yields,
    $\textup{exp} A\equiv 2u\mod 4us$,
    $\textup{exp} B\equiv 0\mod 4us$,
    $\textup{exp}\, C\equiv 0\mod 4u$,
    $\xi\equiv 0\mod 4us$, so
    \[
    (A^{2u})\Upsilon
    = (A^{1+2si}B^{2sj})^{2u}
    = A^{2u},
    \]
    which ensures that the restriction of $\Upsilon$ to $Z_1(J)$ is the identity map and, since $J$ is nilpotent, this guarantees that $\ker(\Upsilon)$ is trivial.

    Now \eqref{roj} implies $g\equiv j\mod 2^{m-2}$ and $h\equiv -i-2j\mod 2^{m-1}$, so that
    $g = j + t2^{m-2}$ and $h = - i - 2j + q2^{m-1}$ with $t,q\in\Z$. Thus, the image of $\psi$ consists of all
    $$
    (a^{2^m},b^{-2^m})^i (b^{2^m}, a^{2^m} b^{-2^{m+1}})^j (1,a^{2m-2})^t,\quad i,j,t.
    $$

    Here $(a^{2^m},b^{-2^m}),(b^{2^m}, a^{2^m} b^{-2^{m+1}}), (1,a^{2m-2})$ generate a subgroup
    of $Z_1(K)^2$ isomorphic to $(\Z/2^{m-1}\Z)^2\times \Z/2\Z$. Thus
    the image of $\psi$, and hence $\mathrm{Aut}_3(J)/\mathrm{Aut}_2(J)$, is as required.
    We also see that $\Delta_1$ and $\Delta_2$ are automorphisms, and that $(C\delta)^\psi$
    and $(a^{2^m},b^{-2^m})$ generate the same subgroup of $Z_1(K)^2$. We conclude that
    $|\mathrm{Aut}_3(J)|=|\mathrm{Aut}_2(J)|\times 2^{2m-1}=2^{6m-1}$, by Proposition~\ref{autjo}. Finally,
    $$
    \mathrm{Inn}(J)\mathrm{Aut}_3(J)/\mathrm{Aut}_3(J)\cong
    \mathrm{Inn}(J)/\mathrm{Aut}_3(J)\cap \mathrm{Inn}(J)\cong J\delta/Z_4(J)\delta\cong J/Z_4(J)\cong (\Z/{2^{m-1}}\Z)^2,
    $$
    so
    $$
    |\mathrm{Inn}(J)\mathrm{Aut}_3(J)|=2^{6m-1}\times 2^{2m-2}=2^{8m-3}.\qedhere
    $$
\end{proof}

We next determine the last two factors of \eqref{seriedejota}. It turns out that
$\mathrm{Aut}_4(J)/\mathrm{Inn}(J)\mathrm{Aut}_3(J)$ is isomorphic to $\Z/2\Z$ and
$\mathrm{Aut}(J)/\mathrm{Aut}_4(J)$ is isomorphic to $(\Z/2\Z)^2$ if $m>2$ and to $\Z/2\Z$ if $m=2$.
This will take considerable effort.

\begin{prop}\label{gammasnosonauto} The assignment $A\mapsto A^{1+s}u_1,\, B\mapsto A^s Bu_2$, where 
$u_1,u_2\in Z_3(J)$, does not extend to an endomorphism of $J$.

\end{prop}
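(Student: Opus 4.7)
The plan is to suppose for contradiction that the given assignment extends to an endomorphism $\tau$ of $J$, and derive a contradiction by verifying that the defining relation $(A')^{[A',B']}=(A')^{\alpha}$ cannot hold, where $A'=A^{1+s}u_1$ and $B'=A^sBu_2$. The argument parallels the lifting obstruction used in Theorem \ref{tamanio} for $\psi_1$, but now lifted one level: the induced assignment on $H=J/Z_1(J)$ lies in the class of $\Gamma$ modulo $\mathrm{Inn}(H)\mathrm{Aut}_2(H)$ and \emph{does} extend to $H$ by Proposition \ref{casomdos2}, so any obstruction must live in the kernel $Z_1(J)=\langle A^{2u}\rangle$ of $J\to H$.

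First I would compute $[A',B']$ modulo $Z_2(J)$. Using the commutator identities \eqref{comfor}, the inclusion $[Z_3(J),J]\subseteq Z_2(J)$, and the identity $[A^{1+s},A^s]=1$, all cross terms involving $u_1$ and $u_2$ drop out, yielding $[A',B']\equiv[A^{1+s},B]\pmod{Z_2(J)}$. By Proposition \ref{prop.bigCommutator.J}, $[A^{1+s},B]\equiv A^{-u\ell(1+s)}C^{1+s}\pmod{Z_1(J)}$. Hence $[A',B']=C^{1+s}\zeta$ for some $\zeta\in Z_2(J)$, and conjugation by $\zeta$ acts trivially on $J$ modulo $Z_1(J)$ because $[Z_2(J),J]\subseteq Z_1(J)$. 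Consequently, modulo $Z_1(J)$ one has $(A')^{[A',B']}\equiv (A')^{C^{1+s}}$.

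Next I would expand both $(A')^{C^{1+s}}$ and $(A')^{\alpha}$ using the defining relation $A^{C}=A^{\alpha}$ (iterated) together with Theorem \ref{thm.power.J}, keeping in mind that $u_1\in Z_3(J)$ commutes with $A^{1+s}$ only up to $Z_2(J)$-valued corrections (since $[A,Z_3(J)]\subseteq Z_2(J)$), and that these corrections again disappear modulo $Z_1(J)$ after conjugation. Thus the comparison reduces to a purely arithmetic condition: the $Z_1(J)$-component of the discrepancy is governed by $(1+s)\alpha(\alpha^{s}-1)$ modulo $o(A)=4us$. Expanding $\alpha^{s}=(1+2s\ell)^{s}$ by the binomial theorem, the leading nonzero correction is $\binom{s}{1}\cdot 2s\ell=2us\ell$, which yields a nontrivial element of $\langle A^{2u}\rangle=Z_1(J)$, contradicting $(A')^{[A',B']}=(A')^{\alpha}$.

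The main obstacle is keeping accurate track of the $Z_2(J)$- and $Z_3(J)$-valued corrections arising from the presence of $u_1,u_2$ and from the expansion of $(A^{1+s}u_1)^{\alpha}$ through Theorem \ref{thm.power.J}. The three key simplifications that make the bookkeeping tractable are: every such correction lands in $Z_2(J)$, conjugation by any element of $Z_2(J)$ is trivial modulo $Z_1(J)$, and $Z_3(J)$ is a characteristic abelian subgroup stable under all inner automorphisms of $J$. Once these simplifications are in place, the whole analysis collapses to the arithmetic of $\alpha^{s}-1$ modulo $4us$, from which the nontrivial $Z_1(J)$-discrepancy immediately follows, yielding the desired contradiction.
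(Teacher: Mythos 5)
Your overall strategy is the same as the paper's: reduce to a $Z_1(J)$-valued obstruction to the first defining relation and show it is nonzero. Your observation that the induced assignment on $H=J/Z_1(J)$ lands in the coset of $\Gamma$ and therefore extends, so that the only possible obstruction lies in $Z_1(J)=\langle A^{2u}\rangle$, is correct and is a nice way to see where the discrepancy must live. The arithmetic core is also essentially right: $(1+s)\alpha(\alpha^s-1)\equiv 2u\ell \bmod 4us$ (note $\binom{s}{1}\cdot 2s\ell=2s^2\ell=2u\ell$, not $2us\ell$ as you wrote), and $A^{2u\ell}$ generates $Z_1(J)$, matching the $A^{2u\ell}$ term the paper isolates.

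The genuine gap is in the step where you declare that ``the comparison reduces to a purely arithmetic condition'' governed by $(1+s)\alpha(\alpha^s-1)$ alone. Every correction you propose to discard --- the $u_1$- and $u_2$-dependent terms, the conjugation by the extra factor of $[A',B']$, the $Z_2(J)$-valued commutators $[A^{1+s},u_1]$ --- is discarded on the grounds that it ``disappears modulo $Z_1(J)$.'' But the obstruction you are hunting for is itself an element of $Z_1(J)$, so a correction that vanishes modulo $Z_1(J)$ is precisely a correction that could cancel your claimed discrepancy $A^{2u\ell}$. What actually has to be shown, and what the paper's computation establishes term by term using Theorems \ref{thm.commutators.J}, \ref{thm.product.J}, \ref{thm.power.J} and Proposition \ref{prop.bigCommutator.J}, is that every such correction lies in $\langle A^{4u}\rangle=Z_1(J)^2$, so that the exponent of the total discrepancy is $2u\ell$ plus a multiple of $4u\ell$, hence odd times $2u$ and nonzero. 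Your outline contains no mechanism for verifying this. Two further concrete inaccuracies: $[A',B']$ is \emph{not} $C^{1+s}\zeta$ with $\zeta\in Z_2(J)$ --- by your own computation it carries a factor $A^{-u\ell(1+s)}$, and $A^{u\ell}\in Z_3(J)\setminus Z_2(J)$ (it happens to centralize $A'$ because $Z_3(J)$ is abelian and contains $A^u$, but your stated justification is false, and this factor genuinely matters for the second relation); and you never perform the reduction, which the paper carries out via Propositions \ref{autjo} and \ref{autj1}, that strips the $Z_2(J)$- and $Z_1(J)$-components off $u_1,u_2$ before the main computation --- without it the bookkeeping you are omitting is even heavier.
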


\begin{proof} Suppose, if possible, that the given assignment extends to an endomorphism, say $\Gamma$, of~$J$.
We first consider the case when $u_1 =A^{2si}B^{2sj}$, $u_2 = A^{2sa}B^{2sb}$.
Applying Theorem \ref{thm.product.J} to $(A^0BC^0)(A^{2sa}B^{2sb}C^0) = A^{\textup{exp} A + \xi} B^{\textup{exp} B} C^{\textup{exp} C}$, gives
    $\textup{exp} A\equiv 2sa\mod 2u$,
    $\textup{exp} B\equiv 1+2sb\mod 2u$,
    $\textup{exp}\, C\equiv -2sa\mod 2u$, and
    $\xi\equiv 0\mod 2u$, so
    \[
    B\Gamma\equiv A^{s+2sa}B^{1+2sb}C^{-2sa}\mod Z_1(J),
    \]
    \[
    [A\Gamma,B\Gamma]
    = [A^{1+s+2si}B^{2sj},A^{s+2sa}B^{1+2sb}C^{-2sa}].
    \]

    By Proposition \ref{prop.bigCommutator.J} applied to $[A\Gamma,B\Gamma] = A^{\textup{exp} A} B^{\textup{exp} B} C^{\textup{exp} C}$, we get
    $\textup{exp} A\equiv u\mod 2u$,
    $\textup{exp} B\equiv 0\mod 2u$,
    $\textup{exp}\, C\equiv 1 + s + 2s(i + b)\mod 2u$, so
    \[
    [A\Gamma,B\Gamma]\equiv A^{u} C^{1 + s + 2s(i + b)}\mod Z_1(J).
    \]
    Thus, using that $Z_3(J)$ is abelian, we obtain
    \[
    (A\Gamma)^{[A\Gamma,B\Gamma]}
    = (A^{1+s+2si}B^{2sj})^{A^{u} C^{1 + s + 2s(i + b)}}=(A^{1+s+2si}B^{2sj})^{C^{1 + s + 2s(i + b)}}.
    \]
    A careful application of
    Theorem \ref{thm.commutators.J} gives
    \[
    (A\Gamma)^{[A\Gamma,B\Gamma]}
    = A^{1 + s + 2si + 2s\ell + 4u\ell(2i + b + 1)}B^{2sj - 4u\ell j} = A^{\alpha + s + 2si + 4u\ell(2i + b + j+1)}B^{2sj}.
    \]

    On the other hand, applying Theorem \ref{thm.power.J} to $(A^{1+s+2si}B^{2sj})^\alpha = A^{\textup{exp} A + \xi} B^{\textup{exp} B} C^{\textup{exp} C}$ gives
    $\textup{exp} A\equiv 1 + 2s\ell + s + 2si + 2u\ell + 4u\ell i\mod 4us$,
    $\textup{exp} B\equiv 2sj + 4u\ell j + 2usj\mod 4us$,
    $\textup{exp}\, C\equiv 2uj\mod 4u$,
    $\xi\equiv 0\mod 4us$, so
    \[
    (A\Gamma)^\alpha
    = (A^{1+s+2si}B^{2sj})^\alpha
    = A^{\alpha + s + 2si + 2u\ell + 4u\ell i} B^{2sj + 4u\ell j + 2usj} C^{2uj}
    = A^{\alpha + s + 2si + 2u\ell + 4u\ell(i-j)} B^{2sj}
    \]
    Since $2u\ell\not\equiv 0\mod 4u$, $(A\Gamma)^{[A\Gamma,B\Gamma]}\neq (A\Gamma)^\alpha$, so $\Gamma$ does not extend to an 
		endomorphism of $J$ in this case.

In general, we have $u_1=v_1z_1$ and $u_2=v_2z_2$, where $v_1=A^{2se_1}B^{2sf_1}$, $v_2=A^{2se_2}B^{2sf_2}$, and $z_1,z_2\in Z_2(J)$.
For any odd $t$, the map $x\mapsto x^t$ defines an automorphism of $Z_2(J)$ and $Z_1(J)$. Thus,
    we can find $x_1,x_2\in Z_2(J)$ such that $x_1^{1+s+2sh_1+2sk_1}=z_1^{-1}$, $x_2^{1+s+2sh_2+2sk_2}=z_2^{-1}$. It follows from
    Proposition \ref{autjo} that $\Gamma\Psi_{(x_1,x_2)}$ sends $A$ to $A^{1+s}v_1 y_1$ and $B$ to $A^{s}B v_2 y_2$, where
    $y_1,y_2\in Z_1(J)$. Next we find $q_1,q_2\in Z_1(J)$ such that $q_1^{1+s+2sh_1+2sk_1}=y_1^{-1}$ and
    $q_2^{1+s+2sh_2+2sk_2}=y_2^{-1}$. We deduce from
    Proposition \ref{autj1} that $\Gamma\Psi_{(x_1,x_2)}\Omega_{(q_1,q_2)}$ sends $A$ to $A^{1+s}v_1$ and $B$ to $A^{s} B v_2$,
		which contradicts the previous case.
\end{proof}

\begin{prop}\label{delta3isauto}
    The assignment $A\mapsto A^{1+s} B^s,\; B\mapsto B^{1 + s(2\delta_{2,m} + s - 3)} A^s$,
    where $\delta_{i,j}$ is the Kronecker delta function, extends to an automorphism, say $\Delta_3$, of $J$.
\end{prop}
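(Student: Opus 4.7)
The plan is to set $X = A^{1+s}B^s$ and $Y = B^{1+s(2\delta_{2,m}+s-3)}A^s$ and verify that the assignment $A\mapsto X$, $B\mapsto Y$ preserves the two conjugation relations $X^{[X,Y]} = X^\alpha$ and $Y^{[Y,X]} = Y^\alpha$. The order relations $X^{4us} = 1 = Y^{4us}$ come for free since the exponent of $J$ is $4us$; a direct application of Theorem \ref{thm.power.J} makes this explicit. This produces an endomorphism $\Delta_3$, and to upgrade it to an automorphism it suffices, by nilpotency of $J$, to check that the restriction to $Z_1(J) = \langle A^{2u}\rangle$ is injective.

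For the first conjugation relation, I would apply Proposition \ref{prop.bigCommutator.J} with $(i,j,k,a,b,c) = (1+s,\, s,\, 0,\, s,\, e,\, 0)$, where $e = 1 + s(2\delta_{2,m} + s - 3)$, to obtain $[X,Y] \equiv A^{\xi_A} B^{\xi_B} C^{\xi_C} \mod Z_1(J)$ with explicit exponents. I would then express $[X,Y]$ as a product $w\,C^{\xi_C}\,w'$, with $w \in Z_3(J)$ (abelian by Section \ref{backJ}) and $w' \in Z_1(J)$, so that the conjugation $X^{[X,Y]}$ can be reduced, via $A^C = A^\alpha$ together with the $C$-conjugation identities of Theorem \ref{thm.commutators.J}, to an explicit normal form $A^{\mathrm{exp}A} B^{\mathrm{exp}B} C^{\mathrm{exp}C}$. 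On the other side, Theorem \ref{thm.power.J} gives $X^\alpha$ directly as an explicit word, and the two expressions are then matched term by term. The special coefficient $1 + s(2\delta_{2,m} + s - 3)$ is engineered precisely so this comparison goes through.

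The second relation $Y^{[Y,X]} = Y^\alpha$ is treated by the same method, but this is the main obstacle: the swap automorphism $\theta$ of $J$ does not interchange $X$ and $Y$, so no symmetry shortcut is available, and one must redo the computation from scratch. Moreover, the case $m = 2$ (where $\delta_{2,m} = 1$, $s = 2$, and the combinatorial functions $\phi, \varphi, \sigma_i$ collapse) needs separate verification from the generic $m > 2$ case; the Kronecker term in $e$ is exactly what keeps both regimes compatible with a uniform statement, and I would split the verification accordingly, handling $m>2$ by reducing exponents modulo $2u$ and $m=2$ by explicit computation in the group of order $2^{11}$.

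To finish, I would apply Theorem \ref{thm.power.J} to compute $(A^{1+s}B^s)^{2u}$; the identities collapse drastically and yield an element of $Z_1(J)$ which is a non-trivial power of $A^{2u}$ (specifically, odd power times $A^{2u}$), so $\Delta_3$ is injective on $Z_1(J)$. Nilpotency of $J$ then forces $\Delta_3 \in \mathrm{Aut}(J)$, completing the proof.
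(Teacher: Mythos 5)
Your overall strategy coincides with the paper's: verify the two conjugation relations by means of the structural formulas of Section \ref{section.formulas.J}, observe that the order relations are automatic because the exponent of $J$ is $4us$, and finish by checking injectivity on $Z_1(J)=\langle A^{2u}\rangle$ via Theorem \ref{thm.power.J} (the paper in fact obtains $(A^{1+s}B^s)^{2u}=A^{2u}$ exactly, so $\Delta_3$ fixes $Z_1(J)$ pointwise and nilpotency does the rest).

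There is, however, a concrete error at the first step. You propose to feed Proposition \ref{prop.bigCommutator.J} the parameters $(i,j,k,a,b,c)=(1+s,s,0,s,e,0)$, which computes $[A^{1+s}B^s,\,A^sB^e]$; but the image of $B$ is $Y=B^eA^s$, not $A^sB^e$. These differ by $[B^e,A^s]$, whose normal form contains a factor $A^{u\cdot(\mathrm{odd})}$, and $A^u\in Z_3(J)\setminus Z_2(J)$; consequently $[X,Y]$ and $[X,A^sB^e]$ need not agree even modulo $Z_1(J)$, and the exponents you would then match against $X^\alpha$ would be wrong. The necessary preliminary step --- which the paper carries out --- is to normalize $B\Delta_3=(B^e)(A^s)$ with Theorem \ref{thm.product.J}, giving $B\Delta_3\equiv A^{s+u}B^{e}C^{-s+ur}\bmod Z_1(J)$, with nontrivial extra factors $A^{u}$ and $C^{-s}$ that change the inputs to Proposition \ref{prop.bigCommutator.J}. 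Two smaller points: for the relation $Y^{[Y,X]}=Y^\alpha$ the congruence of $Y$ modulo $Z_1(J)$ is not enough --- the paper must sharpen the normal form of $B\Delta_3$ to exact exponents modulo $4us$, $4us$, $4u$ before conjugating, because the commutator identities of Theorem \ref{thm.commutators.J} are sensitive to exponents beyond their residues mod $2u$; and the $m=2$ case does not require a separate computation in the group of order $2^{11}$, since in the paper the dichotomy enters only in the final divisibility checks $4s\mid 4\ell\delta_{2,m}+s(\ell^2-1)+u$ and $s\mid \ell\delta_{2,m}+r^2$.
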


\begin{proof} 
    Applying Theorem \ref{thm.product.J} to
    $B\Delta_3 = (A^0 B^{1 + s(2\delta_{2,m} + s - 3)} C^0) (A^s B^0 C^0)$, we obtain
		\begin{equation}\label{bd3}
		B\Delta_3\equiv A^{s+ u} B^{1 - 3s + 2s\delta_{2,m} + u} C^{-s + ur}\mod Z_1(J),
		\end{equation}
		where the exponents of $A,B,C$ were reduced modulo $2u$. Thus
    \[
    [A\Delta_3,B\Delta_3]
    = [A^{1+s} B^s, A^{s + u} B^{1 - 3s + 2s\delta_{2,m} + u} C^{-s + ur}].
    \]

    By Proposition \ref{prop.bigCommutator.J} applied to $[A\Delta_3,B\Delta_3]$, we get
    \[
    [A\Delta_3,B\Delta_3]\equiv A^u B^u C^{1 + 2s(\delta_{2,m}-1) + u}\mod Z_1(J).
    \]
		
		By Theorem \ref{thm.commutators.J},  $[B^s,A^u] =A^{u^2}$ and $[B,A^u] =A^{-us\ell}C^{-u}$, so
		$[B^{1+s},A^u]=A^{-us\ell+u^2}C^{-u}$ by (\ref{comfor}), whence $[A^{1+s},B^u]=B^{-us\ell+u^2}C^{u}=A^{us\ell+u^2}C^{u}$
		using  $\theta$, $B^{2u}A^{2u}=1$ and $A^{4us}=1$. Theorem \ref{thm.commutators.J} also gives $[B^s,C^{2s}]=1$,
which implies $[B^s,C^{u}]=1$ and $[B^s,C^{1+2s(\delta_{2,m}-1)+u}] = [B^s,C]=B^{-2u\ell}=A^{2u\ell}$.
Another application of Theorem \ref{thm.commutators.J} yields 
$[A^{1+s},C^{1+2s(\delta_{2,m}-1)+u}] = A^{2s\ell + 2u\ell + 4u\ell(\delta_{2,m}-1) + 2us}$. Using these commutators yields
    \begin{align*}
        (A\Delta_3)^{[A\Delta_3,B\Delta_3]}
        &= (A^{1+s} B^s)^{A^u B^u C^{1 + 2s(\delta_{2,m}-1) + u}}\\
        &= (A^{1+s} B^s)^{B^u C^{1 + 2s(\delta_{2,m}-1) + u}}A^{u^2}\\
        &= (A^{1+s} C^u B^s)^{C^{1 + 2s(\delta_{2,m}-1) + u}} A^{us\ell}\\
        &= A^{\alpha + s} B^s C^u A^{4u\ell\delta_{2,m} + us\ell + 2us}.
    \end{align*}

	On the other hand, applying Theorem \ref{thm.power.J} to 
	$(A\Delta_3)^\alpha = (A^{1+s} B^s)^\alpha = A^{\textup{exp} A + \xi} B^{\textup{exp} B} C^{\textup{exp} C}$ gives
    $\textup{exp} A\equiv\alpha + s + 2u\ell + 2us + u^2\mod 4us$,
    $\textup{exp} B\equiv s + 2u\ell + us\ell^2\mod 4us$, 
    $\textup{exp}\, C\equiv -u\ell + us\mod 4u$, and
    $\xi\equiv u^2\mod 4us$, so
    \[
    (A\Delta_3)^\alpha
    = A^{\alpha + s + 2u\ell + 2us + u^2} B^{s + 2u\ell + us\ell^2} C^{-u\ell + us} A^{u^2}
    = A^{\alpha + s} B^{s} C^{-u\ell} A^{-us\ell^2 + 2us + u^2}.
    \]

    As $-u\ell\equiv 4u - u\ell\equiv u + u(3-\ell)\mod 4u$, then $(A\Delta_3)^\alpha = A^{\alpha + s} B^{s} C^{u} A^{us(3 - \ell^2 - \ell) + 2us + u^2}$. Therefore $(A\Delta_3)^{[A\Delta_3,B\Delta_3]} = (A\Delta_3)^\alpha$ if and only if $A^{4u\ell\delta_{2,m} + us(\ell^2 - 1) + u^2} = 1$ if and only if $4s\mid 4\ell\delta_{2,m} + s(\ell^2 - 1) + u$. If $m=2$, then $s=2$ and $4s$ is a factor of 
		$2(\ell+1)^2 = 4\ell\delta_{2,m} + s(\ell^2 - 1) + u$. If $m\geq 3$, then $s\geq 4$ and $4s$ is a factor of $s\{(\ell-1)(\ell+1) + s\} = 4\ell\delta_{2,m} + s(\ell^2 - 1) + u$. Thus $(A\Delta_3)^{[A\Delta_3,B\Delta_3]} = (A\Delta_3)^\alpha$.

    As for the second relation, we have
    \begin{equation}\label{bd4}
    [B\Delta_3,A\Delta_3]
    \equiv (A^u B^u C^{1 + 2s(\delta_{2,m}-1) + u})^{-1}
    \equiv C^{-1 - 2s(\delta_{2,m}-1) - u} B^{-u} A^{-u}\mod Z_1(J).
    \end{equation} 
    
		To conjugate $B\Delta_3$ by $[B\Delta_3,A\Delta_3]$ we need a sharpening of (\ref{bd3})
		and information on how each factor of (\ref{bd4}) conjugates $B\Delta_3$. Applying Theorem \ref{thm.product.J} to
    $B\Delta_3 = (A^0 B^{1 + s(2\delta_{2,m} + s - 3)} C^0) (A^s B^0 C^0)$, we obtain
		\begin{align*}
        B\Delta_3
        &= A^{s - u\ell + 2us\delta_{2,m}} B^{1 - 3s + 2s\delta_{2,m} + u + 3us\ell + 2us\delta_{2,m}} C^{-s - 3ur\ell + u(\ell+1) + 2u(\delta_{2,m}+1) + us(\delta_{2,m}+r+1)}\\
        &\qquad A^{us\ell^2(r+1) + 2us + u^2(\delta_{2,m}+r+1)}.\\
				&= A^{s - u\ell} B^{1 - 3s + 2s\delta_{2,m} + u} C^{-s + ur\ell} A^{us(r\ell^2 + \ell^2 + 1) + 2us(\delta_{2,m} + 1)},
				\end{align*}
       where the exponents of $A,B,C$ were reduced modulo $4us,4us,4u$ to produce the first equality, and we used
			$A^{2u}\in Z_1(J)$, $A^{2u}B^{2u}=1$, and $A^{2us}=C^{2u}$ to obtain the second equality.

     By (\ref{comfor}) and Theorem \ref{thm.commutators.J}, we have
    $[A^{s - u\ell},C^{-1 - 2s(\delta_{2,m}-1) - u}] = [A^{s - u\ell},C^{-1}]= A^{-2u\ell + 2us}$,
		$[B^{1 - 3s + 2s\delta_{2,m} + u},C^{-1 - 2s(\delta_{2,m}-1) - u}] = B^{2s\ell - 2u\ell(5-4\delta_{2,m})}$, 
		$[C^{-s + ur\ell},B^{-u}] = 1$, $[C^{-s + ur\ell},A^{-u}] = 1$,   $[A^{s - u\ell},B^{-u}]=[A^s,B^{-u}]=A^{u^2}$, and
		$[B^{\alpha - 3s + 2s\delta_{2,m} + u},A^{-u}]=[B^{1+s},A^{-u}]=A^{us\ell+u^2} C^{u}$.

    Using the above commutators to compute $(B\Delta_3)^{[B\Delta_3,A\Delta_3]}$ yields
    \begin{align*}
        (B\Delta_3)^{[B\Delta_3,A\Delta_3]}
        &= (A^{s - u\ell} B^{1 - 3s + 2s\delta_{2,m} + u} C^{-s + ur\ell})^{C^{-1 - 2s(\delta_{2,m}-1) - u} B^{-u} A^{-u}} A^{us(r\ell^2 + \ell^2 + 1) + 2us(\delta_{2,m} + 1)}\\
        &= (A^{s - u\ell} B^{\alpha - 3s + 2s\delta_{2,m} + u} C^{-s + ur\ell})^{B^{-u} A^{-u}} A^{8u\ell(1-\delta_{2,m}) + us(r\ell^2 + \ell^2 + 1) + 2us\delta_{2,m}}\\
        &= (A^{s - u\ell} B^{\alpha - 3s + 2s\delta_{2,m} + u} C^{-s + ur\ell})^{A^{-u}} 
				A^{8u\ell(1-\delta_{2,m}) + us(r\ell^2 + \ell^2 + 1) + 2us\delta_{2,m}+  u^2}\\
        &= A^{s - u\ell} B^{\alpha - 3s + 2s\delta_{2,m} + u} C^{-s  + u + ur\ell} A^{8u\ell(1-\delta_{2,m}) +   
				us\ell(r\ell - 1) + 2us\delta_{2,m}}.
    \end{align*}

    On the other hand, applying Theorem \ref{thm.power.J} to
    \[
    (A^{s - u\ell} B^{1 - 3s + 2s\delta_{2,m} + u} C^{-s + ur\ell})^\alpha
    = A^{\textup{exp} A + \xi} B^{\textup{exp} B} C^{\textup{exp} C}
    \]
    yields,
    $\textup{exp} A\equiv s - u\ell + 2u\ell - us\ell^2 + 2us + u^2\mod 4us$,
    $\textup{exp} B\equiv\alpha - 3s + 2s\delta_{2,m} + u - 6u\ell + 4u\ell\delta_{2,m} + 2us + u^2\mod 4us$,
    $\textup{exp}\, C\equiv -s - u + ur\ell + u(3 - \ell) + usr\mod 4u$, and
    $\xi\equiv u^2(r+1)\mod 4us$, so
    \begin{align*}
        &(A^{s - u\ell} B^{1 - 3s + 2s\delta_{2,m} + u} C^{-s + ur\ell})^\alpha\\
        &= A^{s - u\ell + 2u\ell - us\ell^2 + 2us + u^2} B^{\alpha - 3s + 2s\delta_{2,m} + u - 6u\ell + 4u\ell\delta_{2,m} + 2us + u^2} C^{-s - u + ur\ell + u(3 - \ell) + usr} A^{u^2(r+1)}\\
        &= A^{s - u\ell} B^{\alpha - 3s + 2s\delta_{2,m} + u} C^{-s - u + ur\ell} A^{4u\ell(2 - \delta_{2,m}) + us\ell + u^2}.
    \end{align*}
    This and $C^{2u}=A^{2us}$ yield
    \begin{align*}
        (B\Delta_3)^\alpha
        &= (A^{s - u\ell} B^{1 - 3s + 2s\delta_{2,m} + u} C^{-s + ur\ell})^\alpha A^{us(r\ell^2 + \ell^2 + 1) + 2us(\delta_{2,m} + 1)}\\
        &= A^{s - u\ell} B^{\alpha - 3s + 2s\delta_{2,m} + u}  C^{-s + u + ur\ell} A^{4u\ell(2 - \delta_{2,m}) + 
				us\ell(r\ell - 1) + 2us\delta_{2,m} + u^2}.
    \end{align*}

    Then $(B\Delta_3)^{[B\Delta_3,A\Delta_3]} = (B\Delta_3)^\alpha$ if and only if $A^{4u\ell\delta_{2,m} + u^2} = 1$ if and only if $s\mid\ell\delta_{2,m} + r^2$. If $m=2$, then $s=2$ is a factor of $\ell + 1 = \ell\delta_{2,m} + r^2$. If $m>2$, then $s\geq 4$ is
a factor of $s\frac{r}{2} = \ell\delta_{2,m} + r^2$. Thus $(B\Delta_3)^{[B\Delta_3,A\Delta_3]} = (B\Delta_3)^\alpha$ and $\Delta_3$ extends to an endomorphism of $J$.

    Let us see that $\Delta_3$ fixes $Z_1(J)$. By Theorem \ref{thm.power.J} applied to $(A^{1+s} B^s)^{2u} = A^{\textup{exp} A + \xi} 
		B^{\textup{exp} B} C^{\textup{exp} C}$, we get
    $\textup{exp} A\equiv 2u + 2us\mod 4us$,
    $\textup{exp} B\equiv 2us + u^2\mod 4us$,
    $\textup{exp}\, C\equiv us\mod 4u$, and
    $\xi\equiv 0\mod 4us$, so
    \[
    (A^{2u})\Delta_3 = (A\Delta_3)^{2u} = (A^{1+s} B^s)^{2u} = A^{2u + 2us} B^{2us + u^2} C^{us} = A^{2u}.
    \]

    Then $\Delta_3$ is an automorphism of $J$.
\end{proof}

Once again, we set $\overline{r} = r/2$ whenever $m>2$.

\begin{prop}\label{sigmaisauto}
    Suppose that $m>2$. Then the assignment
    \[
    A\mapsto A^{1+r}B^rA^{s(sk + r\ell - 2)},\; B\mapsto B^{1+r}A^r,
    \]
    where $k$ is even if $m>3$ and odd if $m=3$, extends to an automorphism, say $\Sigma$, of $J$.
\end{prop}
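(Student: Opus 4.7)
The plan is to follow the template established in the proof of Proposition \ref{delta3isauto}: first verify that the assignment preserves the defining relations $A^{[A,B]}=A^\alpha$ and $B^{[B,A]}=B^\alpha$, then check that $A\Sigma$ and $B\Sigma$ have order dividing $4us$, and finally promote the resulting endomorphism to an automorphism by showing it restricts to the identity on $Z_1(J)=\langle A^{2u}\rangle$, invoking the nilpotency of $J$.

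The first step is to rewrite $A\Sigma=A^{1+r}B^rA^{s(sk+r\ell-2)}$ and $B\Sigma=B^{1+r}A^r$ in normal form via Theorem \ref{thm.product.J}, with exponents of $A$, $B$, $C$ reduced modulo $4us$, $4us$, $4u$ respectively. I would then apply Proposition \ref{prop.bigCommutator.J} to compute $[A\Sigma,B\Sigma]$ modulo $Z_1(J)$. Since $\Sigma$ acts as the identity on the quotient $J/Z_3(J)$, the commutator should take the form $A^{p}B^{q}C^{1+t}$ modulo $Z_1(J)$, with $p,q,t$ belonging to the ideals generated by $2s$, $2s$, and $s$ respectively, coming from the $r$-perturbations of $A\Sigma$ and $B\Sigma$ and from the $A^{s(sk+r\ell-2)}$ tail.

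Next I would compute $(A\Sigma)^{[A\Sigma,B\Sigma]}$ by conjugating successively by each factor of the normal form of $[A\Sigma,B\Sigma]$, using the commutator identities (\ref{comfor}) together with Theorem \ref{thm.commutators.J}; independently, I would compute $(A\Sigma)^\alpha$ via Theorem \ref{thm.power.J}, being careful to track the $\xi_4$ correction which is non-trivial here because $b=r\neq 0$. The equality $(A\Sigma)^{[A\Sigma,B\Sigma]}=(A\Sigma)^\alpha$ should reduce to a congruence in the $A$-exponent modulo $4us$. The role of the extra $A^{s(sk+r\ell-2)}$ tail is precisely to produce an $A^{usk}$ contribution that kills an obstruction term of the form $A^{c\cdot u}\pmod{4us}$, exactly as $\ell\delta_{2,m}+r^2$ did in the proof of Proposition \ref{delta3isauto}; the parity condition on $k$ (even for $m>3$, odd for $m=3$) is what makes $usk$ absorb the residual obstruction, which has a different $2$-adic valuation when $s=4$ than when $s\geq 8$. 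The analogous identity for $B\Sigma$ follows by the same computation with $A\leftrightarrow B$ (or, more directly, from the form of $B\Sigma$, which has no such tail; the asymmetry in the assignment is dictated by the need for the $A$-relation alone to fail without the tail).

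Finally, Theorem \ref{thm.power.J} applied to $(A\Sigma)^{2u}$ and $(B\Sigma)^{2u}$ should yield $A^{2u}$ and $1$ respectively, so that $\Sigma$ fixes $A^{2u}$ and hence acts as the identity on $Z_1(J)$; nilpotency of $J$ then forces $\Sigma$ to be an automorphism. The principal obstacle in executing this plan is purely computational: matching the $A$-exponent modulo $4us$ on the two sides of $(A\Sigma)^{[A\Sigma,B\Sigma]}=(A\Sigma)^\alpha$ requires carefully tracking terms of order $u$, $us$, $ur\ell$, and $usk$, and isolating the single residue class that determines the required parity of $k$ and produces the bifurcation between $m=3$ and $m>3$.
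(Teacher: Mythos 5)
Your plan coincides with the paper's own proof: normal forms for $A\Sigma$ and $B\Sigma$ via Theorem \ref{thm.product.J}, the commutator modulo $Z_1(J)$ via Proposition \ref{prop.bigCommutator.J}, conjugation via Theorem \ref{thm.commutators.J}, the $\alpha$-th power via Theorem \ref{thm.power.J}, reduction of $(A\Sigma)^{[A\Sigma,B\Sigma]}=(A\Sigma)^\alpha$ to a congruence modulo $4$ in which the tail contributes $2usk$ and the parity of $k$ together with $r\overline{r}$ kills the obstruction, and finally the check $(A^{2u})\Sigma=A^{2u}$ plus nilpotency. One caution: the second relation does \emph{not} follow from the first by $A\leftrightarrow B$ (the assignment is not $\theta$-symmetric), and the paper verifies it by an equally long independent computation reducing to $A^{2usk+usr}=1$, i.e.\ to $k+\overline{r}$ being even, so the $B$-relation also constrains the parity of $k$ and cannot be waved away.
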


\begin{proof}
    Let $i = sk + r\ell - 2$. Applying Theorem \ref{thm.product.J} to
    \[
    A\Sigma = (A^{1+r}B^rC^0)(A^{si}B^0C^0) = A^{\textup{exp} A + \xi} B^{\textup{exp} B} C^{\textup{exp} C}
    \]
    yields
    $\textup{exp} A\equiv 1 + r + si - uri\ell\mod 4us$,
    $\textup{exp} B\equiv r + uri\ell + usi\overline{r}\mod 4us$,
    $\textup{exp}\, C\equiv - sri + ui\overline{r}\mod 4u$, and
    $\xi\equiv usi\overline{r}\mod 4us$, so
    \begin{align*}
        A\Sigma
        &= A^{1 + r + si - uri\ell} B^{r + uri\ell + usi\overline{r}} C^{- sri + ui\overline{r}} A^{usi\overline{r}}\\
        &= A^{1 + r + si} B^r C^{- sri} A^{usi(\overline{r}+1)}.
    \end{align*}

    Also, applying the same theorem to $B\Sigma = (A^0B^{1+r}C^0)(A^rB^0C^0) = A^{\textup{exp} A + \xi} B^{\textup{exp} B} C^{\textup{exp} C}$ yields
    $\textup{exp} A\equiv r - sr\ell + ur\overline{r}\ell\mod 4us$,
    $\textup{exp} B\equiv 1 + r - u\overline{r}\ell - ur\overline{r}\ell\mod 4us$,
    $\textup{exp}\, C\equiv - r - s\overline{r} + sr\ell(\overline{r}+1)\mod 4u$, and
    $\xi\equiv ur\ell^2(1-\overline{r}) + 4u\ell^2(\overline{r}-1)\varphi(r+1) - us\overline{r} + 2us(\overline{r}-1)\varphi(r+1) + usr\equiv ur\ell^2(1-\overline{r}) + 4u(\overline{r}-1)\varphi(r+1) - us\overline{r} + usr\overline{r}\mod 4us$, so
    \begin{align*}
        B\Sigma
        &= A^{r - sr\ell + ur\overline{r}\ell} B^{1 + r - u\overline{r}\ell - ur\overline{r}\ell} C^{- r - s\overline{r} + sr\ell(\overline{r}+1)}\\
        &\qquad
        A^{ur\ell^2(1-\overline{r}) + 4u\ell^2(\overline{r}-1)\varphi(r+1) - us\overline{r} + 2us(\overline{r}-1)\varphi(r+1) + usr}\\
        &= A^{r - sr\ell} B^{1 + r - u\overline{r}\ell} C^{- r - s\overline{r} + sr\ell(\overline{r}+1)} A^{ur\ell^2(1-\overline{r}) + 4u(\overline{r}-1)\varphi(r+1) + us\overline{r}(\ell - 1) + usr\overline{r}}.
    \end{align*}

    Then $[A\Sigma,B\Sigma] = [A^{1 + r + si} B^r C^{- sri}, A^{r - sr\ell} B^{1 + r - u\overline{r}\ell} C^{- r - s\overline{r} + sr\ell(\overline{r}+1)}]$.

    Applying Proposition \ref{prop.bigCommutator.J} to $[A\Sigma,B\Sigma]$ produces
    $\textup{exp} A\equiv - sr\ell + u(\overline{r} + 1)\mod 2u$,
    $\textup{exp} B\equiv sr\ell + u(\overline{r}+1)\mod 2u$, and
    $\textup{exp}\, C\equiv 1 + s + si + sri\mod 2u$, so
    \[
    [A\Sigma,B\Sigma]
    \equiv A^{\textup{exp} A} B^{\textup{exp} B} C^{\textup{exp} C}
    \equiv A^{- sr\ell + u(\overline{r} + 1)} B^{sr\ell + u(\overline{r}+1)} C^{1 + s + si + sri}\mod Z_1(J).
    \]

    Thus
    \[
    (A\Sigma)^{[A\Sigma,B\Sigma]}
    = (A^{1 + r + si} B^r C^{- sri})^{A^{- sr\ell + u(\overline{r} + 1)} B^{sr\ell + u(\overline{r}+1)} C^{1 + s + si + sri}} A^{usi(\overline{r}+1)}.
    \]

    By Theorem \ref{thm.commutators.J} applied to $[B^r,A^{- sr\ell + u(\overline{r} + 1)}] = (A^{\textup{exp} A + \xi} B^{\textup{exp} B} C^{\textup{exp} C})^{-1}$ we get
    $\textup{exp} A\equiv - us\overline{r} + usr(\overline{r} + 1)\mod 4us$,
    $\textup{exp} B\equiv us\overline{r} + usr\mod 4us$,
    $\textup{exp}\, C\equiv - u\overline{r}\ell + ur\mod 4u$, and
    $\xi\equiv usr\overline{r}\mod 4us$, so
    \[
    [B^r,A^{- sr\ell + u(\overline{r} + 1)}]
    = (A^{- us\overline{r} + usr(\overline{r} + 1)} B^{us\overline{r} + usr} C^{- u\overline{r}\ell + ur} A^{usr\overline{r}})^{-1}
    = C^{u\overline{r}\ell}.
    \]

    By Theorem \ref{thm.commutators.J}, $[C^{- sri},A^{- sr\ell + u(\overline{r} + 1)}] = 1$.

    Applying the same theorem to $[A^{1 + r + si},B^{sr\ell + u(\overline{r}+1)}] = A^{\textup{exp} A + \xi} B^{\textup{exp} B} C^{\textup{exp} C}$ gives
    $\textup{exp} A\equiv - us\overline{r} + usr\mod 4us$,
    $\textup{exp} B\equiv - ur\ell^2 - us(\overline{r}\ell + \overline{r} + \ell) + usr(\overline{r} + 1)\mod 4us$, and
    $\textup{exp}\, C\equiv sr\ell + u(\overline{r}\ell + \overline{r} + 1) + ur\mod 4u$,
    $\xi\equiv usr\overline{r}\mod 4us$, so
    \begin{align*}
        [A^{1 + r + si},B^{sr\ell + u(\overline{r}+1)}]
        &= A^{- us\overline{r} + usr} B^{- ur\ell^2 - us(\overline{r}\ell + \overline{r} + \ell) + usr(\overline{r} + 1)} C^{sr\ell + u(\overline{r}\ell + \overline{r} + 1) + ur} A^{usr\overline{r}}\\
        &= A^{ur\ell^2 + us(\overline{r} + \ell)} C^{sr\ell + u}.
    \end{align*}

    Also,
    $[C^{- sri + u\overline{r}\ell},B^{sr\ell + u(\overline{r}+1)}] = 1$,
    $[C^{sr\ell + u},B^r] = B^{usr}$,\\
    $[A^{1 + r + si},C^{1 + s + si + sri}] = A^{2s\ell + 2u\ell + 4ui\ell + u\ell + us\ell + 2us}$, and
    $[B^r,C^{1 + s + si + sri}] = B^{- u\ell - us\ell + usi + 2us}$.

    Gathering the above commutators produces
    \begin{align*}
        (A\Sigma)^{[A\Sigma,B\Sigma]}
        &= (A^{1 + r + si} B^r C^{- sri + u\overline{r}\ell})^{B^{sr\ell + u(\overline{r}+1)} C^{1 + s + si + sri}} A^{usi(\overline{r} + 1)}\\
        &= (A^{1 + r + si} C^{sr\ell + u} B^r C^{- sri + u\overline{r}\ell})^{C^{1 + s + si + sri}} A^{ur\ell^2 + us(\overline{r} + \ell) + usi(\overline{r} + 1)}\\
        &= (A^{1 + r + si} B^r C^{sr\ell + u - sri + u\overline{r}\ell})^{C^{1 + s + si + sri}} A^{ur\ell^2 + us(\overline{r} + \ell) + usi(\overline{r} + 1) + usr}\\
        &= A^{\alpha + r + si + u\ell} B^{r - u\ell} C^{sr\ell + u - sri + u\overline{r}\ell} A^{2u\ell + 4ui\ell + ur\ell^2 + us(\overline{r} + \ell) + usi\overline{r} + usr + 2us}.
    \end{align*}

    On the other hand, applying Theorem \ref{thm.power.J} to
    $(A^{1 + r + si} B^r C^{- sri})^\alpha = A^{\textup{exp} A + \xi} B^{\textup{exp} B} C^{\textup{exp} C}$ gives
    $\textup{exp} A\equiv \alpha + r + si + u\ell + 2ui\ell + u\ell\varphi(\alpha) + us\overline{r} + usr\overline{r}\mod 4us$,
    $\textup{exp} B\equiv r + u\ell + ur\ell^2\mod 4us$,
    $\textup{exp}\, C\equiv - sr\ell - sri - u\overline{r}\ell + sr\varphi(\alpha) + ur\overline{r}\equiv - sr\ell - sri - u\overline{r}\ell + ur(\overline{r} + 1)\mod 4u$, and
    $\xi\equiv ur\varphi(\alpha) + usr(\overline{r}+1)\equiv usr\overline{r}\mod 4us$, so
    \begin{align*}
        &(A^{1 + r + si} B^r C^{- sri})^\alpha\\
        &= A^{\alpha + r + si + u\ell + 2ui\ell + u\ell\varphi(\alpha) + us\overline{r} + usr\overline{r}} B^{r + u\ell + ur\ell^2} C^{- sr\ell - sri - u\overline{r}\ell + ur(\overline{r} + 1)} A^{usr\overline{r}}\\
        &= A^{\alpha + r + si + u\ell} B^{r + u\ell} C^{- sr\ell - sri - u\overline{r}\ell} A^{2ui\ell + u\ell\varphi(\alpha) - ur\ell^2 + us\overline{r} + usr(\overline{r}+1)},
    \end{align*}
    and from this,
    \begin{align*}
        (A\Sigma)^\alpha
        &= (A^{1 + r + si} B^r C^{- sri})^\alpha A^{usi(\overline{r}+1)}\\
        &= A^{\alpha + r + si + u\ell} B^{r + u\ell} C^{- sr\ell - sri - u\overline{r}\ell} A^{2ui\ell + u\ell\varphi(\alpha) - ur\ell^2 + us\overline{r} + usi(\overline{r}+1) + usr(\overline{r}+1)}.
    \end{align*}

    Then $(A\Sigma)^{[A\Sigma,B\Sigma]} = (A\Sigma)^\alpha$ if and only if $A^{4u\ell + 2ui\ell - u\ell\varphi(\alpha) + 2us + usi + usr(\overline{r}+1)} = 1$. Replacing $i = sk + r\ell - 2$, the last expression is equivalent to $A^{us - u\ell\varphi(\alpha) + usr\overline{r} + 2usk} = 1$, which is true whenever $1 - \frac{\varphi(\alpha)}{s}\ell + r\overline{r} + 2k\equiv 0\mod 4$. This congruence can be immediately verified.

    As for the second relation, we have
    \begin{align*}
        [B\Sigma,A\Sigma]
        &\equiv (A^{- sr\ell + u(\overline{r} + 1)} B^{sr\ell + u(\overline{r}+1)} C^{1 + s + si + sri})^{-1}\\
        &\equiv C^{- 1 - s - si - sri} B^{- sr\ell - u(\overline{r}+1)} A^{sr\ell - u(\overline{r} + 1)}\mod Z_1(J).
    \end{align*}

    Thus
    \begin{align*}
        (B\Sigma)^{[B\Sigma,A\Sigma]}
        &= (A^{r - sr\ell} B^{1 + r - u\overline{r}\ell} C^{- r - s\overline{r} + sr\ell(\overline{r}+1)})^{C^{- 1 - s - si - sri} B^{- sr\ell - u(\overline{r}+1)} A^{sr\ell - u(\overline{r} + 1)}}\\
        &\qquad A^{ur\ell^2(1-\overline{r}) + 4u(\overline{r}-1)\varphi(r+1) + us\overline{r}(\ell - 1) + usr\overline{r}}.
    \end{align*}

    From Theorem \ref{thm.commutators.J} we obtain
    $[A^{r - sr\ell},C^{- 1 - s - si - sri}] = A^{- u\ell + us(\ell - 1) + 2us + usi}$ and\\
    $[B^{1 + r - u\overline{r}\ell},C^{- 1 - s - si - sri}] = B^{2s\ell + 3u\ell + 2ui\ell + us\ell + 2us + usr}$.

    Applying Theorem \ref{thm.commutators.J} to $[A^{r - sr\ell - u\ell},B^{-sr\ell - u(\overline{r}+1)}] = A^{\textup{exp} A + \xi} B^{\textup{exp} B} C^{\textup{exp} C}$ gives,
    $\textup{exp} A\equiv - us\overline{r} + usr\mod 4us$,
    $\textup{exp} B\equiv us\overline{r} + usr(\overline{r}+1)\mod 4us$,
    $\textup{exp}\, C\equiv - u\overline{r}\ell + ur\mod 4u$, and
    $\xi\equiv usr\overline{r}\mod 4us$, so
    \[
    [A^{r - sr\ell - u\ell},B^{-sr\ell - u(\overline{r}+1)}]
    = A^{- us\overline{r} + usr} B^{us\overline{r} + usr(\overline{r}+1)} C^{- u\overline{r}\ell + ur} A^{usr\overline{r}}
    = C^{-u\overline{r}\ell}.
    \]
    By Theorem \ref{thm.commutators.J},
    $[C^{- r - s\overline{r} + sr\ell(\overline{r}-1)},B^{- sr\ell - u(\overline{r}+1)}] = B^{usr}$,
    $[C^{-u\overline{r}\ell},B^{\alpha + r + u\ell(1 - \overline{r})}] = B^{usr}$, and
    $[C^{- r - s\overline{r} - sr\ell(\overline{r}-1) - u\overline{r}\ell},A^{sr\ell - u(\overline{r} + 1)}] = A^{usr}$. Also, applying the same theorem to\\
    $[A^{sr\ell - u(\overline{r} + 1)},B^{\alpha + r + u\ell(1 - \overline{r})}] = A^{\textup{exp} A + \xi} B^{\textup{exp} B} C^{\textup{exp} C}$ gives,
    $\textup{exp} A\equiv ur\ell^2 + us(\overline{r} - \overline{r}\ell - \ell) + usr(\overline{r} + 1)\mod 4us$,
    $\textup{exp} B\equiv us\overline{r} + usr\mod 4us$,
    $\textup{exp}\, C\equiv sr\ell - u + u\overline{r}(\ell - 1) + ur\mod 4u$, and
    $\xi\equiv usr\overline{r}\mod 4us$, so
    \begin{align*}
        &[B^{\alpha + r + u\ell(1 - \overline{r})},A^{sr\ell - u(\overline{r} + 1)}]\\
        &= (A^{ur\ell^2 + us(\overline{r} - \overline{r}\ell - \ell) + usr(\overline{r} + 1)} B^{us\overline{r} + usr} C^{sr\ell - u + u\overline{r}(\ell - 1) + ur} A^{usr\overline{r}})^{-1}\\
        &= A^{- ur\ell^2 + us(\overline{r} + \ell) + usr} C^{- sr\ell + u}.
    \end{align*}

    Gathering all the above commutators produces
    \begin{align*}
        (B\Sigma)^{[B\Sigma,A\Sigma]}
        &= (A^{r - sr\ell - u\ell} B^{\alpha + r + u\ell(1 - \overline{r})} C^{- r - s\overline{r} + sr\ell(\overline{r}+1)})^{B^{- sr\ell - u(\overline{r}+1)} A^{sr\ell - u(\overline{r} + 1)}}\\
        &\qquad A^{ur\ell^2(1-\overline{r}) + 4u(\overline{r}-1)\varphi(r+1) - 2u\ell - us - 2ui\ell + us\overline{r}(\ell - 1) + usr(\overline{r} + 1) + usi}\\
        &= (A^{r - sr\ell - u\ell} C^{- u\overline{r}\ell} B^{\alpha + r + u\ell(1 - \overline{r})} C^{- r - s\overline{r} + sr\ell(\overline{r}+1)})^{A^{sr\ell - u(\overline{r} + 1)}}\\
        &\qquad A^{ur\ell^2(1-\overline{r}) + 4u(\overline{r}-1)\varphi(r+1) - 2u\ell - us - 2ui\ell + us\overline{r}(\ell - 1) + usr\overline{r} + usi}\\
        &= (A^{r - sr\ell - u\ell} B^{\alpha + r + u\ell(1 - \overline{r})} C^{- r - s\overline{r} + sr\ell(\overline{r}+1) - u\overline{r}\ell})^{A^{sr\ell - u(\overline{r} + 1)}}\\
        &\qquad A^{ur\ell^2(1-\overline{r}) + 4u(\overline{r}-1)\varphi(r+1) - 2u\ell - us - 2ui\ell + us\overline{r}(\ell - 1) + usr(\overline{r} + 1) + usi}\\
        &= A^{r - sr\ell - u\ell} B^{\alpha + r + u\ell(1 - \overline{r})} C^{- r - s\overline{r} + sr\overline{r}\ell + u(1 - \overline{r}\ell)}\\
        &\qquad A^{- ur\overline{r}\ell^2 + 4u(\overline{r}-1)\varphi(r+1) - 2u\ell - 2ui\ell + us\overline{r}\ell + us(\ell - 1) + usi + usr(\overline{r} + 1)}.
    \end{align*}

    On the other hand, applying Theorem \ref{thm.power.J} to
    \[
    (A^{r - sr\ell} B^{1 + r - u\overline{r}\ell} C^{- r - s\overline{r} + sr\ell(\overline{r} + 1)})^\alpha
    = A^{\textup{exp} A + \xi} B^{\textup{exp} B} C^{\textup{exp} C},
    \]
    we get
    $\textup{exp} A\equiv r - sr\ell + u\ell - ur\ell^2 - us + ur\varphi(\alpha) + usr\overline{r}\equiv r - sr\ell + u\ell - ur\ell^2 - us + usr(\overline{r}+1)\mod 4us$,
    $\textup{exp} B\equiv \alpha + r + u\ell(1 - \overline{r}) - us\overline{r} + usr(\overline{r} + 1) + 2us + 2u\varphi(\alpha)\equiv \alpha + r + u\ell(1 - \overline{r}) - us\overline{r} + usr(\overline{r} + 1)\mod 4us$,
    $\textup{exp}\, C\equiv - r - s\overline{r} + sr\overline{r}\ell - u\ell(\overline{r} + 1) + ur\overline{r}\mod 4u$, and
    $\xi\equiv usr(\overline{r} + 1)\mod 4us$, so
    \begin{align*}
        &(A^{r - sr\ell} B^{1 + r - u\overline{r}\ell} C^{- r - s\overline{r} + sr\ell(\overline{r} + 1)})^\alpha\\
        &= A^{r - sr\ell + u\ell - ur\ell^2 - us + usr(\overline{r}+1)} B^{\alpha + r + u\ell(1 - \overline{r}) - us\overline{r} + usr(\overline{r} + 1)} C^{- r - s\overline{r} + sr\overline{r}\ell - u\ell(\overline{r} + 1) + ur\overline{r}} A^{usr(\overline{r} + 1)}\\
        &= A^{r - sr\ell + u\ell} B^{\alpha + r + u\ell(1 - \overline{r})} C^{- r - s\overline{r} + sr\overline{r}\ell - u\ell(\overline{r} + 1)} A^{- ur\ell^2 + us(\overline{r} - 1) + usr},
    \end{align*}
    and from this,
    \begin{align*}
        &(B\Sigma)^\alpha\\
        &= (A^{r - sr\ell} B^{1 + r - u\overline{r}\ell} C^{- r - s\overline{r} + sr\ell(\overline{r} + 1)})^\alpha A^{ur\ell^2(1-\overline{r}) + 4u(\overline{r}-1)\varphi(r+1) + us\overline{r}(\ell - 1) + usr\overline{r}}\\
        &= A^{r - sr\ell + u\ell} B^{\alpha + r + u\ell(1 - \overline{r})} C^{- r - s\overline{r} + sr\overline{r}\ell - u\ell(\overline{r} + 1)}
        A^{- ur\overline{r}\ell^2 + 4u(\overline{r} - 1)\varphi(r + 1) + us(\overline{r} - 1) + us\overline{r}(\ell - 1) + usr(\overline{r} + 1)}.
    \end{align*}

    Then $(B\Sigma)^{[B\Sigma,A\Sigma]} = (B\Sigma)^\alpha$ if and only if $A^{4u\ell + 2ui\ell - us + 2us + usi} = 1$. Replacing $i = sk + r\ell - 2$, the last expression is equivalent to $A^{2usk + usr} = 1$, which happens to be true whenever $k + \overline{r}$ is even. The conditions on $k$ confirm that this is the case. Thus the equality $(B\Sigma)^{[B\Sigma,A\Sigma]} = (B\Sigma)^\alpha$ holds and the given assignment extends to an endomorphism of~$J$.

    Let us see that $\Sigma$ is an automorphism. We have
    \[
    (A^{2u})\Sigma
    = (A\Sigma)^{2u}
    = (A^{1 + r + si} B^r C^{- sri})^{2u}.
    \]

    Applying Theorem \ref{thm.power.J} to $(A^{1 + r + si} B^r C^{- sri})^{2u} = A^{\textup{exp} A + \xi} B^{\textup{exp} B} C^{\textup{exp} C}$ gives
    $\textup{exp} A\equiv 2u + us\mod 4us$,
    $\textup{exp} B\equiv us + usr\mod 4us$,
    $\textup{exp}\, C\equiv ur\mod 4u$,
    $\xi\equiv 0\mod 4us$, so
    \[
    (A^{2u})\Sigma
    = A^{2u + us} B^{us + usr} C^{ur}
    = A^{2u}.
    \]

    Thus $\Sigma$ fixes $Z_1(J)$ pointwise and it is an automorphism.
\end{proof}

\begin{theorem}\label{teoremafinal}
    The factors of the series \eqref{seriedejota} are as follows:
    $$
    \mathrm{Aut}_1(J)\cong (\Z/{2^m}\Z)^2,\;\mathrm{Aut}_2(J)/\mathrm{Aut}_1(J)\cong (\Z/{2^m}\Z)^2,
    $$
    $$
    \mathrm{Aut}_3(J)/\mathrm{Aut}_2(J)\cong  (\Z/{2^{m-1}}\Z)^2\times \Z/2\Z,\;
    \mathrm{Inn}(J)\mathrm{Aut}_3(J)/\mathrm{Aut}_3(J)\cong  (\Z/{2^{m-1}}\Z)^2,
    $$
    $$
    \mathrm{Aut}_4(J)/\mathrm{Inn}(J)\mathrm{Aut}_3(J)\cong \Z/2\Z,\;
    \mathrm{Aut}(J)/\mathrm{Aut}_4(J)\cong\begin{cases} (\Z/2\Z)^2\text{ if }m>2,\\ \Z/2\Z\text{ if }m=2.\end{cases}
    $$
    Thus $|\mathrm{Aut}(J)|=2^{8m}$ if $m>2$ and $|\mathrm{Aut}(J)|=2^{15}$ if $m=2$. Moreover, $\mathrm{Aut}(J)$
    is generated by $\Omega_{(1,A^{2^{2m-1}})},\Psi_{(1,C^{2^{m-1}})},
    \Delta_1,\Delta_2,A\delta,\Delta_3,\theta$, and, if $m>2$, also~$\Sigma$,
    where: $\Omega_{(1,A^{2^{2m-1}})}$ is defined in Proposition \ref{autj1};
    $\Psi_{(1,C^{2^{m-1}})}$ is given in Proposition \ref{autjo};
    $\Delta_1$ and $\Delta_2$ are defined in Proposition \ref{autj3}; $A\delta$ is conjugation by $A$;
    $\Delta_3$ is given in Proposition \ref{delta3isauto}; $\theta$ is the automorphism $A\leftrightarrow B$; and $\Sigma$
    is defined in Proposition \ref{sigmaisauto}.
\end{theorem}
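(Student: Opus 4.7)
The first four factors of \eqref{seriedejota}, namely $\mathrm{Aut}_1(J)$, $\mathrm{Aut}_2(J)/\mathrm{Aut}_1(J)$, $\mathrm{Aut}_3(J)/\mathrm{Aut}_2(J)$, and $\mathrm{Inn}(J)\mathrm{Aut}_3(J)/\mathrm{Aut}_3(J)$, have already been determined in Propositions \ref{autj1}, \ref{autjo}, and Theorem \ref{autj3}, so the remaining task is to identify the top two factors, deduce the order, and verify the generating set. The plan is to apply Proposition \ref{zi2} with $T=J$ and $Y=H$ twice, and then supply concrete lifts from $\mathrm{Aut}(H)$ to $\mathrm{Aut}(J)$ for the generators produced by Theorems \ref{tamanio} and \ref{main3}. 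Since $J$ has class $5$ and $H$ class $4$, we have $\mathrm{Inn}(J)\subseteq\mathrm{Aut}_4(J)$ and $\mathrm{Inn}(H)\subseteq\mathrm{Aut}_3(H)$, so Proposition \ref{zi2} produces imbeddings
\[
\mathrm{Aut}_4(J)/\mathrm{Inn}(J)\mathrm{Aut}_3(J)\hookrightarrow \mathrm{Aut}_3(H)/\mathrm{Inn}(H)\mathrm{Aut}_2(H)
\]
(with $i=2$) and
\[
\mathrm{Aut}(J)/\mathrm{Aut}_4(J)\hookrightarrow \mathrm{Aut}(H)/\mathrm{Aut}_3(H)
\]
(with $i=3$), whose targets are, respectively, the Klein 4-group $\langle\overline{\Gamma},\overline{\Gamma^\nu}\rangle$ by Theorem \ref{tamanio}, and $\langle\overline{\Sigma_1},\overline{\nu}\rangle$ by Theorem \ref{main3}, the latter being the Klein 4-group when $m>2$ and $\Z/2\Z$ when $m=2$.

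For the first imbedding, I will rule out the cosets of $\overline{\Gamma}$ and $\overline{\Gamma^\nu}$ while producing a lift of $\overline{\Gamma\Gamma^\nu}$. Proposition \ref{gammasnosonauto} says that no assignment $A\mapsto A^{1+s}u_1$, $B\mapsto A^sBu_2$ with $u_1,u_2\in Z_3(J)$ extends to an endomorphism of $J$, which is exactly the assertion that nothing in the preimage of the coset of $\overline{\Gamma}$ lifts to $\mathrm{Aut}(J)$; conjugating by $\theta$ (which restricts to the $A\leftrightarrow B$ symmetry and descends to $\nu$ on $H$) rules out $\overline{\Gamma^\nu}$ similarly. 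Proposition \ref{delta3isauto} then exhibits an automorphism $\Delta_3$ of $J$ whose defining formulas, reduced modulo $Z_1(J)$, land in $\mathrm{Aut}_3(H)$, so $\Delta_3\in\mathrm{Aut}_4(J)$. As $\Delta_3$ cannot project into the coset of $\overline{\Gamma}$ or $\overline{\Gamma^\nu}$, its image modulo $\mathrm{Inn}(H)\mathrm{Aut}_2(H)$ is either trivial or $\overline{\Gamma\Gamma^\nu}$; triviality would force $\Delta_3\in\mathrm{Inn}(J)\mathrm{Aut}_3(J)$, contradicting the factor description in Theorem~\ref{autj3}, so the image is $\overline{\Gamma\Gamma^\nu}$. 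Hence $\mathrm{Aut}_4(J)/\mathrm{Inn}(J)\mathrm{Aut}_3(J)\cong\Z/2\Z$ and $|\mathrm{Aut}_4(J)|=2^{8m-2}$.

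For the second imbedding, $\theta$ is an automorphism of $J$ whose projection to $H$ is $\nu$, and, for $m>2$, Proposition \ref{sigmaisauto} provides an automorphism $\Sigma$ of $J$ which, on $H$, differs from $\Sigma_1$ only by right-multiplication of $A$'s image by an element of $Z_3(H)$; since such a modification gives an element of $\mathrm{Aut}_3(H)$, $\Sigma$ projects to $\overline{\Sigma_1}$ in $\mathrm{Aut}(H)/\mathrm{Aut}_3(H)$. Both imbeddings are therefore surjective, giving the stated structure of $\mathrm{Aut}(J)/\mathrm{Aut}_4(J)$. Multiplying the orders of all seven factors of \eqref{seriedejota} yields $|\mathrm{Aut}(J)|=2^{8m}$ if $m>2$ and $2^{15}$ if $m=2$. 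The generator list is obtained factor by factor: $\Omega_{(1,A^{2^{2m-1}})}$ and $\theta$ generate $\mathrm{Aut}_1(J)$ (using $A^{2u}B^{2u}=1$ to recover $\Omega_{(A^{2^{2m-1}},1)}$ as a $\theta$-conjugate); adding $\Psi_{(1,C^{2^{m-1}})}$ completes $\mathrm{Aut}_2(J)$; adding $\Delta_1,\Delta_2,A\delta$ reaches $\mathrm{Inn}(J)\mathrm{Aut}_3(J)$ by Theorem \ref{autj3}; adding $\Delta_3$ reaches $\mathrm{Aut}_4(J)$; and adding $\Sigma$ (when $m>2$) produces all of $\mathrm{Aut}(J)$.

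The main obstacle is concentrated in the preceding technical propositions: the negative extension result of Proposition \ref{gammasnosonauto} and the explicit constructions of $\Delta_3$ and $\Sigma$ in Propositions \ref{delta3isauto} and \ref{sigmaisauto}, each a long direct computation using the multiplication, power, and commutator identities from Section \ref{section.formulas.J}. Once those are granted, the elimination argument above and the generator bookkeeping close the theorem routinely.
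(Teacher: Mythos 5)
Your overall route is the same as the paper's: the first four factors come from Propositions \ref{autj1}, \ref{autjo} and Theorem \ref{autj3}; the top two factors are pinned down via the two imbeddings from Proposition \ref{zi2} into $\mathrm{Aut}_3(H)/\mathrm{Inn}(H)\mathrm{Aut}_2(H)$ and $\mathrm{Aut}(H)/\mathrm{Aut}_3(H)$; Proposition \ref{gammasnosonauto} (and its $\theta$-conjugate) kills the cosets of $\overline{\Gamma}$ and $\overline{\Gamma^\nu}$; $\theta$ and $\Sigma$ supply surjectivity of the top imbedding; and the generator list is the same bookkeeping. There is, however, one step where your justification does not hold up: you claim that $\overline{\Delta_3}$ being trivial in $\mathrm{Aut}_3(H)/\mathrm{Inn}(H)\mathrm{Aut}_2(H)$ would force $\Delta_3\in\mathrm{Inn}(J)\mathrm{Aut}_3(J)$, ``contradicting the factor description in Theorem \ref{autj3}.'' Theorem \ref{autj3} describes $\mathrm{Inn}(J)\mathrm{Aut}_3(J)$ and its index, but says nothing about $\Delta_3$, and nothing in it directly excludes $\Delta_3$ from that subgroup; since every element of $\mathrm{Inn}(J)\mathrm{Aut}_3(J)$, like $\Delta_3$ itself, sends $A$ to $A$ times an element of $Z_4(J)=\langle A^{2^{m-1}},B^{2^{m-1}},C\rangle$, the exclusion genuinely requires a finer invariant. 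The paper closes exactly this point by pushing $\Delta_3$ through the composite
$$
\mathrm{Aut}_3(H)/\mathrm{Inn}(H)\mathrm{Aut}_2(H)\hookrightarrow
\mathrm{Aut}_2(K)/\mathrm{Inn}(K)\mathrm{Aut}_1(K)\cong Z_2(K)^2/(Z_1(K)\times\langle c\rangle)^2
$$
from Propositions \ref{zi2} and \ref{outow}, where its image is $\overline{(a^s b^s,\,a^s b^{s(2\delta_{2,m}+s-3)})}$, visibly nontrivial since $Z_1(K)=\langle a^{2s},b^{2s}\rangle$. (Alternatively one can note that $\mathrm{Inn}(J)\mathrm{Aut}_3(J)$ acts trivially on $J/[J,J]$ because $Z_3(J)\subseteq[J,J]$, while $\Delta_3$ moves $\overline{A}$ to $\overline{A}^{1+s}\overline{B}^s\neq\overline{A}$ there, since the $A$- and $B$-exponents of elements of $[J,J]$ are divisible by $2s$.) With that step supplied, your argument coincides with the paper's; the rest of the proposal is correct.
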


\begin{proof}
    The descriptions of the first 4 factors follow from Propositions \ref{autj1} and \ref{autjo}, and Theorem~\ref{autj3}.
		
		As indicated in Sections \ref{autgmod2} and \ref{autgmodh}, we have presentations
    $$
    K = \groupPresentation{a,b}{a^{[a,b]}=a^\alpha,\; b^{[b,a]}=b^\alpha,\; a^{2^{2m-1}}=1,\; b^{2^{2m-1}}=1,\; [a,b]^{2^{m-1}}=1},
    $$
    $$
    H =\groupPresentation{A_0,B_0}{A_0^{[A_0,B_0]}=A_0^\alpha,\; B_0^{[B_0,A_0]}=B_0^\alpha,\; A_0^{2^{2m-1}}=1,\; B_0^{2^{2m-1}}=1}.
    $$
    Let $\pi_1:J\to H$ and $\pi_2:H\to K$ be the projection maps, given by $A\mapsto A_0,B\mapsto B_0$ and
    $A_0\mapsto a,B_0\mapsto b$, respectively.

    Consider $\mathrm{Aut}(J)/\mathrm{Aut}_4(J)$. By Proposition \ref{zi2}, we have an imbedding
    $$
    f:\mathrm{Aut}(J)/\mathrm{Aut}_4(J)\hookrightarrow
    \mathrm{Aut}(H)/\mathrm{Aut}_3(H).
    $$
    If $m=2$, then $\langle \overline{\nu}\rangle=\mathrm{Aut}(H)/\mathrm{Aut}_3(H)\cong \Z/2\Z$ by Theorem \ref{main3},
		where $\nu$ is the automorphism $A_0\leftrightarrow B_0$; As $\langle \overline{\theta}\rangle^f=\langle \overline{\nu}\rangle$,
		it follows that $\mathrm{Aut}(J)/\mathrm{Aut}_4(J)\cong \Z/2\Z$. If $m>2$, Theorem \ref{main3} gives 
		$\langle \overline{\nu}, \overline{\Sigma_0}\rangle=\mathrm{Aut}(H)/\mathrm{Aut}_3(H)\cong (\Z/2\Z)^2$,
		where $\Sigma_0$ is as defined in Corollary \ref{ulti}; since $\overline{\Sigma}^f=\overline{\Sigma_0}$,
		we deduce that  $\mathrm{Aut}(J)/\mathrm{Aut}_4(J)\cong (\Z/2\Z)^2$.
		
	 Next consider $\mathrm{Aut}_4(J)/\mathrm{Inn}(J)\mathrm{Aut}_3(J)$. By Proposition \ref{zi2}, we likewise have an imbedding
    \begin{equation}
    \label{em2}
    \eta_1:\mathrm{Aut}_4(J)/\mathrm{Inn}(J)\mathrm{Aut}_3(J)\hookrightarrow
    \mathrm{Aut}_3(H)/\mathrm{Inn}(H)\mathrm{Aut}_2(H).
    \end{equation}
    By (\ref{put4}), $\Delta_3\in\mathrm{Aut}_4(J)$, and we show below that the image of $\eta_1$ has order 2 and is generated by~$\overline{\Delta_3}$.
    The order of $\mathrm{Aut}(J)$ follows immediately from the structure of the above factors. Moreover,
    by Propositions \ref{autj1} and \ref{autjo}, and Theorem \ref{autj3}, we have
    \begin{equation}
    \label{em3}
    \mathrm{Inn}(J)\mathrm{Aut}_3(J)=\langle\Omega_{(1,A^{2^{2m-1}})},\Omega_{(A^{2^{2m-1}},1)}, \Psi_{(1,C^{2^{m-1}})}, \Psi_{(C^{2^{m-1}},1)}, \Delta_1,\Delta_2, A\delta, B\delta\rangle.
    \end{equation}
    It follows from \eqref{em3} and the above considerations that $\mathrm{Aut}(J)$ is generated by the right-hand side of \eqref{em3}
    together with $\Delta_3,\theta$ as well as $\Sigma$ if $m>2$. The use of $\theta$ allows us to reduce these generators to the stated list.

    We proceed to compute the required image of $\eta_1$.  By Theorem \ref{tamanio}, the image of $\eta_1$ is contained in $\overline{\langle \Gamma_0,\Gamma_0^\nu\rangle}$,
    where $\Gamma_0$ is as defined in Proposition \ref{casomdos2}, namely
    by $A_0\mapsto A_0^{1+s},B_0\mapsto B_0A_0^s$, and $\overline{\langle \Gamma_0,\Gamma_0^\nu\rangle}$ is the Klein 4-group.

    By Propositions \ref{outow} and \ref{zi2}, we have maps
    $$
    \eta_2:\mathrm{Aut}_3(H)/\mathrm{Inn}(H)\mathrm{Aut}_2(H)\to\mathrm{Aut}_2(K)/\mathrm{Inn}(K)\mathrm{Aut}_1(K),
    $$
    $$
    \eta_3:\mathrm{Aut}_2(K)/\mathrm{Inn}(K)\mathrm{Aut}_1(K)\to Z_2(K)^2/(Z_1(K)\times\langle c\rangle)^2,
    $$
    where $\eta_2$ is injective and $\eta_3$ is a bijection between copies of $(\Z/2\Z)^4$. In this notation, we have
    $$
    \overline{\Delta_3}\eta_1\eta_2\eta_3=\overline{(a^s b^s,a^s b^{s(2\delta_{2,m} + s - 3)})}.
    $$
    As $Z_1(K)=\langle a^{2s},b^{2s}\rangle$, it follows that $\overline{\Delta_3}\eta_1\eta_2\eta_3$ is nontrivial, whence
    $\overline{\Delta_3}\eta_1$ is a nontrivial element of the Klein 4-group $\overline{\langle \Gamma_0,\Gamma_0^\nu\rangle}$.
    Thus, either $\mathrm{Im}(\eta_1)=\langle\overline{\Delta_3}\eta_1\rangle$ has order 2, or  else
    $\mathrm{Im}(\eta_1)=\langle \overline{\Gamma_0},\overline{\Gamma_0^\nu}\rangle$ is the Klein 4-group. Suppose, if
    possible, that the latter case occurs. Then necessarily $\overline{\Gamma_0}\in \mathrm{Im}(\eta_1)$. Since $\mathrm{Inn}(J)$ maps onto
    $\mathrm{Inn}(H)$, we infer the existence of $\gamma\in\mathrm{Aut}_2(H)$ such that $\Gamma_0\gamma$ lifts to an automorphism, say $\beta$,
    of $J$. Here $\Gamma_0\gamma$ is given by $A_0\mapsto A_0^{1+s}w_1, B_0\mapsto B_0A_0^s w_2$, with $w_1,w_2$ in $Z_2(H)$, so
    $\beta$ is given by $A\mapsto A^{1+s}u_1, B\mapsto BA^s u_2$, with $u_1,u_2\in Z_3(J)$. As $A^s\in Z_4(J)$, we can write
		this in the form $A\mapsto A^{1+s}u_1, B\mapsto A^s B v_2$, with $u_1,v_2\in Z_3(J)$, against Proposition \ref{gammasnosonauto}.
\end{proof}

\section{Appendix}

We will prove a more general version of Theorem \ref{thm.commutators.J}. For the purpose
of this proof only, we expand our general notation and let $p\in\N$ be a prime factor of $\alpha-1$, so that $\alpha=1+p^m\ell$,
where $m\geq 1$ and $\ell$ is an arbitrary integer that is not a multiple of $p$.
We assume only that if $(p,m)=(3,1)$, then $(\alpha-1)/3\equiv 1\mod 3$. 
Thus we are in Case 1 or 2, as defined in \cite[Section 4]{MS}. According to \cite[Theorem 5.3]{MS},
the Sylow $p$-subgroup $J$ of the Macdonald group $G$ has presentation
$$
J=\langle A,B\,|\, A^{[A,B]}=A^\alpha, B^{[B,A]}=B^\alpha, A^{2^{3m-1}}=1=B^{2^{3m-1}} \rangle\text{ if }p=2,
$$
$$
J=\langle A,B\,|\, A^{[A,B]}=A^\alpha, B^{[B,A]}=B^\alpha, A^{p^{3m}}=1=B^{p^{3m}} \rangle\text{ if }p\neq 2.
$$
In either case, we set $C=[A,B]$. If $p$ is odd, then $A^{p^{2m}}=B^{-p^{2m}}\in Z(J)$ by \cite[Proposition 6.4]{MS},
while \cite[Theorem 8.1 and Proposition 9.1]{MS} ensure that $\langle A^{p^m}, B^{p^m}, C^{p^m}\rangle$ is abelian.

Recalling the definitions of $\phi$ and $\varphi$ given in Section \ref{section.formulas.J},
we readily see that given $n,k,t\in\Z$, the functions $\phi$ and $\varphi$ satisfy the following properties:
\begin{align}
    &\phi(n+2p^{3m}k)\equiv\phi(n)\mod p^{3m},\label{eq.comj.1}\\
    &\varphi(n+6p^{3m}k)\equiv\varphi(n)\mod p^{3m},\label{eq.comj.10}\\
    &\phi(-n) = \phi(n+1),\label{eq.comj.22}\\
    &\phi(n+1) + \phi(n) = n^2,\label{eq.comj.9}\\
    &\phi(nt) = \phi(n)\phi(t+1) + \phi(n+1)\phi(t),\label{eq.comj.12}\\
    &\phi(n+k) = \phi(n) + \phi(k) + nk,\label{eq.comj.23}\\
    &\varphi(n+k) = \varphi(n) + \varphi(k) + n\phi(k) + \phi(n)k.\label{eq.comj.24}
\end{align}

\begin{theorem}\label{nuevofer}
    For all $n,t\in\Z$ the following identities hold in $J$:
    \begin{align}
        &[C^n,A^t]
        = A^{-p^m\ell nt - p^{2m}\ell^2\phi(n)t},\label{eq.fer}\\
        &[C^n,B^t]
        = B^{p^m\ell nt - p^{2m}\ell^2\phi(n+1)t},\label{eq.fer2}\\
        &[A^n,B^t]
        =A^{-p^m\ell\phi(n)t} B^{p^m\ell n\phi(t)} C^{nt - p^m\ell\phi(n)\phi(t)} A^{\xi(n,t)},\label{eq.fer3}
				\end{align}
    where 
    \[
    \xi(n,t)= p^{2m}\ell^2\chi_p\{2\varphi(n+1)t + (2n-7)\phi(n)\phi(t) - 2n\phi(t) - (3n+1)n\varphi(t)-2\delta_{p,3}3^{m-1}
		\ell\phi(n)\phi(t)\},
    \]
		and $\delta_{p,3}=1$ if $p=3$, $\delta_{p,3}=0$ if $p\neq 3$, $\chi_p=1/2$ if $p=2$, and  $\chi_p=(1+p^m)/2$ if $p\neq 2$.
    \end{theorem}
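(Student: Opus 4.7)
The plan is to establish the three identities in the stated order, each by induction, with the later formulas building on the earlier ones. The starting point is the defining relations of $J$: from $A^C = A^\alpha$ we obtain $[C, A] = A^{1-\alpha} = A^{-p^m\ell}$, and from the corresponding relation for $B$ we get an analogous basic formula for $[C, B]$. Because $[C, A]$ lies in $\langle A \rangle$ and hence commutes with $A$, the intermediate identity $[C, A^t] = A^{-p^m\ell t}$ follows by a straightforward induction on $t$ via $[C, A^{t+1}] = [C, A^t]^A[C, A] = [C, A^t][C, A]$, with a parallel statement for $[C, B^t]$.

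To obtain \eqref{eq.fer} I would then induct on $n$ using the expansion $[C^{n+1}, A^t] = [C^n, A^t]^C[C, A^t]$ from \eqref{comfor}. Conjugation by $C$ raises the $A$-exponent by a factor of $\alpha = 1 + p^m\ell$, which upon expansion contributes exactly the second-order correction $-p^{2m}\ell^2\,nt$ needed to match the stated formula; combining with the new factor $[C, A^t]$ and invoking the identity $\phi(n+1) = \phi(n) + n$ from \eqref{eq.comj.9} closes the induction. Identity \eqref{eq.fer2} is proved by the same method, or more elegantly by applying the automorphism $\theta : A \leftrightarrow B$, $C \leftrightarrow C^{-1}$ to \eqref{eq.fer}; the appearance of $\phi(n+1)$ in place of $\phi(n)$ in \eqref{eq.fer2} is then explained by $\theta$ sending $C^n$ to $C^{-n}$ together with the identity $\phi(-n) = \phi(n+1)$ from \eqref{eq.comj.22}.

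The substantive step is \eqref{eq.fer3}. I would first compute $[A, B^t]$ by induction on $t$ using $[A, B^{t+1}] = [A, B][A, B^t]^B = C\,[A, B^t]^B$, and then induct on $n$ via $[A^{n+1}, B^t] = [A, B^t]^{A^n}[A^n, B^t]$. At each inductive step the accumulated expression must be rewritten in the normal form $A^\ast B^\ast C^\ast A^{\ast\ast}$, where the terminal $A^{\ast\ast}$-factor absorbs all second-order corrections; commuting $B$'s and $C$'s past $A$'s in this rearrangement is exactly what \eqref{eq.fer} and \eqref{eq.fer2} are designed for, and the resulting $A$-exponents are precisely what feeds into $\xi$. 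The main obstacle will be matching the bookkeeping to the stated closed form of $\xi(n, t)$: this requires repeated use of the combinatorial identities \eqref{eq.comj.9}, \eqref{eq.comj.12}, \eqref{eq.comj.22}, \eqref{eq.comj.23}, \eqref{eq.comj.24} for $\phi$ and $\varphi$, together with the structural fact from \cite{MS} that $\langle A^{p^m}, B^{p^m}, C^{p^m}\rangle$ is abelian, so that third-order commutators among these elements vanish and the $A^{\ast\ast}$-terms can be moved through each other freely. The factor $\chi_p$ and the $\delta_{p,3}$ correction reflect delicate $p$-adic arithmetic: when $p$ is odd, $2$ is invertible modulo the relevant $p$-power, converting the implicit division by $2$ in $\phi, \varphi$ into multiplication by $(1+p^m)/2$, whereas for $p = 2$ the factor $1/2$ survives literally because $p^{2m}\ell^2/2 = 2^{2m-1}\ell^2$ is already an integer. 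The exceptional case $(p, m) = (3, 1)$, where the hypothesis $(\alpha-1)/3 \equiv 1 \pmod 3$ is invoked to exclude Case~3 of \cite{MS}, is precisely what lets one trap an otherwise wayward $3$-adic term into the stated $\delta_{p,3}\,3^{m-1}\ell\phi(n)\phi(t)$ contribution; verifying this alignment in the inductive step is where I would expect to spend the most effort.
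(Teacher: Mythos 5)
Your outline follows the paper's strategy almost exactly: \eqref{eq.fer} from the expansion of $\alpha^n$ (you reach the same place by inducting on $n$ via $[C^{n+1},A^t]=[C^n,A^t]^C[C,A^t]$, which is just the telescoped form of the paper's direct computation $[C^n,A^t]=A^{t(1-\alpha^n)}$), \eqref{eq.fer2} by applying $\theta$ and $\phi(-n)=\phi(n+1)$, and \eqref{eq.fer3} by a double induction. The one genuine route difference is that you transpose the double induction: you would first get $[A,B^t]$ by induction on $t$ and then induct on $n$ via $[A^{n+1},B^t]=[A,B^t]^{A^n}[A^n,B^t]$, whereas the paper first proves $[A^n,B]=A^{-p^m\ell\phi(n)+p^{2m}\ell^2\varphi(n+1)}C^n$ by induction on $n$ and then, for fixed $n$, inducts on $t$ via $[A^n,B^{t+1}]=[A^n,B][A^n,B^t]^B$. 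Both orders close, for the same reason: the auxiliary commutators one must unwind at each step (conjugates of $B^{p^m\ell(\ast)}$ and $C^{(\ast)}$) are covered by \eqref{eq.fer}, \eqref{eq.fer2}, the inductive hypothesis, and the abelianness of $\langle A^{p^m},B^{p^m},C^{p^m}\rangle$. Neither order buys anything over the other.

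That said, the proposal defers the entire substantive content of \eqref{eq.fer3} --- verifying that the inductive step reproduces the stated closed form of $\xi(n,t)$ --- to ``bookkeeping,'' and your heuristics for the correction terms are not quite right. The factor $\chi_p$ does not come from the divisions by $2$ inside $\phi$ and $\varphi$; it enters when the residual factors $A^{p^{2m}\ell^2(\ast)}B^{p^{2m}\ell^2(\ast)}$ produced by the inductive step are merged into a single power of $A$ using the relation $A^{p^{2m}}B^{p^{2m}}=1$ (that is, $\chi_p$ is an integer representative of $1/2$ modulo $p^m$, and the $2$'s in $\xi$ are inserted precisely so it can absorb them). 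Likewise $\delta_{p,3}$ is not tied to the exceptional case $(p,m)=(3,1)$: it appears for every $m$ when $p=3$, arising from the commutator $[A^{-p^m\ell\phi(n)t},B]$, where the division by $3$ hidden in $\varphi$ leaves a residue of order $3^{3m-1}$ that survives modulo $o(A)$. These are exactly the points where an unguided computation is most likely to go astray, so the plan, while structurally sound and faithful to the paper's, cannot be certified as a proof of the stated formula for $\xi$ without carrying out that calculation.
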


\begin{proof}
    Suppose first that $n\in\N$ and $t\in\Z$. Then
		$$\alpha^n\equiv (1+p^m\ell)^n\equiv 1 + p^m\ell n + p^{2m}\ell^2\phi(n)\mod p^{3m},$$
		so
    \[
    [C^n,A^t]
    = C^{-n} A^{-t} C^n A^t
    = (A^{-t})^{C^n}A^t= A^{t(1 - \alpha^n)}
    = A^{-p^m\ell nt - p^{2m}\ell^2\phi(n)t}.
    \]

    If $n\in\Z$, let $k\in\N$ be such that $n+2p^{3m}k>0$. Then, by the above and \eqref{eq.comj.1},
    \[
    [C^n,A^t]
    = [C^{n+p^{3m}k},A^t]
    = A^{-p^m\ell (n+2p^{3m}k)t - p^{2m}\ell^2\phi(n+2p^{3m}k)t}=A^{-p^m\ell n t- p^{2m}\ell^2\phi(n)t}.
    \]

    This proves that \eqref{eq.fer} is true for all $n,t\in\Z$.

    To prove \eqref{eq.fer2} just apply the automorphism $A\leftrightarrow B$, $C\leftrightarrow C^{-1}$ of $J$
		to $$[C^{-n},A^t] = A^{p^m\ell nt - p^{2m}\ell^2\phi(-n)t}$$ and use the identity \eqref{eq.comj.22} to get 
		$[C^n,B^t] = B^{p^m\ell nt - p^{2m}\ell^2\phi(n+1)t}$ for every $n,t\in\Z$.
		
		To prove \eqref{eq.fer3} let us first see that for every $n\in\Z$,
    \begin{equation}
        \label{eq.comj.5}
        [A^n,B]
        = A^{-p^m\ell\phi(n) + p^{2m}\ell^2\varphi(n+1)} C^n.
    \end{equation}

    It is clear that \eqref{eq.comj.5} holds for $n=1$. Suppose it is true for some $n\geq 1$. Then by \eqref{comfor}
    \[
    [A^{n+1},B]
    = [A^n,B]^A\, [A,B]
    = (A^{-p^m\ell\phi(n) + p^{2m}\ell^2\varphi(n+1)} C^n)^A\, C
    = A^{-p^m\ell\phi(n) + p^{2m}\ell^2\varphi(n+1)} (C^n)^A\, C.
    \]

    As $[C^n,A] = A^{-p^m\ell n - p^{2m}\ell^2\phi(n)}$, then
    \[
    [A^{n+1},B]
    = A^{-p^m\ell\phi(n) + p^{2m}\ell^2\varphi(n+1)} C^n A^{-p^m\ell n - p^{2m}\ell^2\phi(n)} C.
    \]

    Since $[C,A^{p^{2m}}] = 1$ and $[C^n,A^{-p^m\ell n}] = A^{p^{2m}\ell^2n^2}$, using \eqref{eq.comj.9}, \eqref{eq.comj.23}, 
		and \eqref{eq.comj.24}, we get
    \[
    [A^{n+1},B]
    = A^{-p^m\ell(\phi(n)+n) + p^{2m}\ell^2(\varphi(n+1) - \phi(n) + n^2)} C^{n+1}
    = A^{-p^m\ell\phi(n+1) + p^{2m}\ell^2\varphi(n+2)} C^{n+1}.
    \]

    By induction, \eqref{eq.comj.5} holds for all $n\in\N$. If $n\in\Z$, let $k\in\N$ be such that $n+6p^{3m}k>0$. Then, by the above, \eqref{eq.comj.1} and \eqref{eq.comj.10}, $[A^n,B] = [A^{n+6p^{3m}k},B] = A^{-p^m\ell\phi(n) + p^{2m}\ell^2\varphi(n+1)} C^n$, so \eqref{eq.comj.5} holds for every $n\in\Z$.

    Next fix $n\in\Z$. Let us see that \eqref{eq.fer3} is true for every $t\in\N$ by induction on $t$. By \eqref{eq.comj.5}, we have that \eqref{eq.fer3} is true for $t=1$. Suppose it is true for some $t\geq 1$ and set $\eta_{(x,y)} = A^{-p^m\ell\phi(x)y} B^{p^m\ell x\phi(y)} C^{xy - p^m\ell\phi(x)\phi(y)}$. Then, by \eqref{comfor}, \eqref{eq.comj.5}, and the inductive hypothesis,
    \[
    [A^n,B^{t+1}]
    = [A^n,B] [A^n,B^t]^B
    = A^{-p^m\ell\phi(n) + p^{2m}\ell^2\varphi(n+1)} C^n (\eta_{(n,t)}A^{\xi(n,t)})^B.
    \]

    Since $A^{\xi(n,t)}\in Z(J)$, then
    \begin{equation}
        \label{eq.comj.13}
        [A^n,B^{t+1}]
        = A^{-p^m\ell\phi(n) + p^{2m}\ell^2\varphi(n+1)} C^n \eta_{(n,t)}^B A^{\xi(n,t)},
    \end{equation}
    where
    \[
    \eta_{(n,t)}^B
    = (A^{-p^m\ell\phi(n)t} B^{p^m\ell n\phi(t)} C^{nt - p^m\ell\phi(n)\phi(t)})^B
    = (A^{-p^m\ell\phi(n)t})^B B^{p^m\ell n\phi(t)} (C^{nt - p^m\ell\phi(n)\phi(t)})^B.
    \]

    Here $[C^{nt - p^m\ell\phi(n)\phi(t)},B] = B^{p^m\ell nt - p^{2m}\ell^2(\phi(n)\phi(t) + \phi(nt+1))}$, 
		$[C^{p^m},B^{p^m}] = 1=[C,B^{p^{2m}}]$, and
		$$
		[A^{-p^m\ell\phi(n)t},B] = A^{-p^{2m}\ell^2\phi(n)t(\chi_p+\delta_{p,3}3^{m-1}\ell)} 
		C^{-p^m\ell\phi(n)t}.
		$$
		Therefore,
		\begin{align*}
        \eta_{(n,t)}^B
        &= A^{-p^m\ell\phi(n)t-p^{2m}\ell^2\phi(n)t(\chi_p+\delta_{p,3}3^{m-1}\ell)} 
				C^{-p^m\ell\phi(n)t} B^{p^m\ell n\phi(t)} C^{nt - p^m\ell\phi(n)\phi(t)} \\
				&\quad\; B^{p^m\ell nt - p^{2m}\ell^2(\phi(n)\phi(t) + \phi(nt+1))}\\
        &= A^{-p^m\ell\phi(n)t-p^{2m}\ell^2\phi(n)t(\chi_p+\delta_{p,3}3^{m-1}\ell)} B^{p^m\ell n\phi(t) - p^{2m}\ell^2(\phi(n)\phi(t) + 
				\phi(nt+1))} C^{nt} B^{p^m\ell nt}\\
				&\quad\;
				C^{-p^m\ell\phi(n)(t + \phi(t))}.
    \end{align*}

    Using that $[C^{nt},B^{p^m\ell nt}] = B^{p^{2m}\ell^2n^2t^2}$, $[C,B^{p^{2m}}] = 1$, and by \eqref{eq.comj.23}, we have that
    \begin{align*}
    \eta_{(n,t)}^B
    &= A^{-p^m\ell\phi(n)t-p^{2m}\ell^2\phi(n)t(\chi_p+\delta_{p,3}3^{m-1}\ell)} 
		B^{p^m\ell n\phi(t+1) - p^{2m}\ell^2(\phi(n)\phi(t) + \phi(nt+1) - n^2t^2)}\\
		&\quad\;
		C^{nt - p^m\ell\phi(n)\phi(t+1)}.
    \end{align*}

    Now, as $[C^n,A^{-p^m\ell\phi(n)t}] = A^{p^{2m}\ell^2n\phi(n)t}$, $[C^n,B^{p^m\ell n\phi(t+1)}] = B^{p^{2m}\ell^2n^2\phi(t+1)}$, 
		$[C,B^{p^{2m}}] = 1$, and $A^{p^{2m}\chi_p}\in Z(J)$, then
    \begin{align}
        C^n\eta_{(n,t)}^B
        &= A^{-p^m\ell\phi(n)t + p^{2m}\ell^2\phi(n)t(n-\chi_p-\delta_{p,3}3^{m-1}\ell)}
        B^{p^m\ell n\phi(t+1) + p^{2m}\ell^2(n^2\phi(t+1) - \phi(n)\phi(t) - \phi(nt+1) + n^2t^2)}\nonumber\\
        &\quad\; C^{n(t+1) - p^m\ell\phi(n)\phi(t+1)}.\label{eq.comj.14}
    \end{align}

    Replacing \eqref{eq.comj.14} in \eqref{eq.comj.13} yields
    \begin{align*}
        [A^n,B^{t+1}]
        &= A^{-p^m\ell\phi(n)(t+1) + p^{2m}\ell^2(\varphi(n+1) + \phi(n)t(n-\chi_p-\delta_{p,3}3^{m-1}\ell))}\\
				&\quad\;
        B^{p^m\ell n\phi(t+1) + p^{2m}\ell^2(n^2\phi(t+1) - \phi(n)\phi(t) - \phi(nt+1) + n^2t^2)}
        C^{n(t+1) - p^m\ell\phi(n)\phi(t+1)} A^{\xi(n,t)}\\
        &= \eta_{(n,t+1)} A^{p^{2m}\ell^2(\varphi(n+1) + \phi(n)t(n-\chi_p-\delta_{p,3}3^{m-1}\ell))}
        B^{p^{2m}\ell^2(n^2\phi(t+1) - \phi(n)\phi(t) - \phi(nt+1) + n^2t^2)} A^{\xi(n,t)}.
    \end{align*}

    Thus by \eqref{put},
    \begin{equation}
        \label{eq.comj.6}
        [A^n,B^{t+1}]
        = \eta_{(n,t+1)} A^{p^{2m}\ell^2\chi_p\{2\varphi(n+1) + \phi(n)t(2n-1-2\delta_{p,3}3^{m-1}\ell) - 2n^2\phi(t+1) + 
				2\phi(n)\phi(t) + 2\phi(nt+1) - 2n^2t^2\}} A^{\xi(n,t)}.
    \end{equation}

    On the other hand, as a consequence of \eqref{eq.comj.9}, \eqref{eq.comj.12}, and \eqref{eq.comj.23}, we obtain
    \begin{align}
        2n^2\phi(t+1)
        &= 4\phi(n)\phi(t) + 4\phi(n)t + 2n\phi(t) + 2nt,\label{eq.comj.15}\\
        2\phi(nt+1)
        &= 4\phi(n)\phi(t) + 2\phi(n)t + 2n\phi(t) + 2nt,\label{eq.comj.16}\\
        2n^2t^2
        &= 8\phi(n)\phi(t) + 4\phi(n)t + 4n\phi(t) + 2nt.\label{eq.comj.17}
    \end{align}

    Replacing \eqref{eq.comj.15}, \eqref{eq.comj.16}, and \eqref{eq.comj.17} in \eqref{eq.comj.6} yields
    \[
    [A^n,B^{t+1}]
    = \eta_{(n,t+1)} A^{p^{2m}\ell^2\chi_p\{2\varphi(n+1) + (2n-7)\phi(n)t - 2nt - (3n + 1)n\phi(t)-2\delta_{p,3}3^{m-1}\ell\phi(n)t\}} 
		A^{\xi(n,t)},
    \]
    where, by \eqref{eq.comj.23} and \eqref{eq.comj.24},
    \[
    A^{p^{2m}\ell^2\chi_p\{2\varphi(n+1) + (2n-7)\phi(n)t - 2nt - (3n + 1)n\phi(t)-2\delta_{p,3}3^{m-1}\ell\phi(n)t\}} A^{\xi(n,t)}
    = A^{\xi(n,t+1)}.
    \]

    Thus $[A^n,B^{t+1}] = \eta_{(n,t+1)}A^{\xi(n,t+1)}$. This shows that \eqref{eq.fer3} holds for every $n\in\Z$ and $t\in\N$.

    Finally, if $t\in\Z$, let $k\in\N$ be such that $t+6p^{3m}k>0$. Then, by the above, 
		\eqref{eq.comj.1}, and \eqref{eq.comj.10}, we get $[A^n,B^t] = [A^n,B^{t+6p^{3m}k}] = \eta_{(n,t+6p^{3m}k)}A^{\xi(n,t+6p^{3m}k)} = 
		\eta_{(n,t)}A^{\xi(n,t)}$, and the result follows.
		\end{proof}

We now resume the general notation of the paper, assuming for the remainder of this section that $m\geq 1$. We will
use the relations below instead of \eqref{eq.comj.1} and \eqref{eq.comj.10}:
\begin{align}
    &\phi(n+8usk)\equiv\phi(n)\mod 4us,\quad n,k\in\Z,\label{eq.comj.1new}\\
    &\varphi(n+24usk)\equiv\varphi(n)\mod 4us,\quad n,k\in\Z.\label{eq.comj.10new}
\end{align}

\begin{proof}[Proof of Theorem \ref{thm.product.J}] Let $\Delta = (A^iB^jC^k)(A^aB^bC^c)$ and set $\eta_{(x,y)} = A^{-2s\ell\phi(x)y} B^{2s\ell x\phi(y)} C^{xy - 2s\ell\phi(x)\phi(y)}$ as in the proof of Theorem \ref{nuevofer}. Let us first work with $\Delta$ modulo $Z_1(J)$.

    From Theorem \ref{thm.commutators.J}, we obtain
    $[C^k,A^a] = A^{-2s\ell ka} A^{-4u\ell^2\phi(k)a}$,
    $[B^j,A^a] = \eta^{-1}_{(a,j)}A^{-\xi_1(a,j)}$, and
    $\eta^{-1}_{(a,j)} = C^{2s\ell\phi(j)\phi(a) - ja} B^{-2s\ell\phi(j)a} A^{2s\ell j\phi(a)}$. Then
    \[
    \Delta
    \equiv A^{i+a} B^j C^{2s\ell\phi(j)\phi(a) - ja} B^{-2s\ell\phi(j)a} A^{2s\ell j\phi(a)} C^k A^{-2s\ell ka} B^b C^c
    \mod Z_1(J).
    \]

    Since $[C^{2s},B^{2s}] = 1$, $[C^{-ja},B^{-2s\ell\phi(j)a}] = B^{4u\ell^2j\phi(j)a^2}$, $[C^k,A^{-2s\ell ka}] = A^{4u\ell^2k^2a}$, then
    \[
    \Delta
    \equiv A^{i+a} B^{j - 2s\ell\phi(j)a} C^{2s\ell\phi(j)\phi(a) - ja} A^{2s\ell (j\phi(a) - ka)} C^k B^b C^c
    \mod Z_1(J).
    \]

    As $[C^{2s},A^{2s}] = 1$, $[C^{-ja},A^{2s\ell (j\phi(a) - ka)}] = A^{4u\ell^2ja(j\phi(a) - ka)}$, $[C^k,B^b] = B^{2s\ell kb} B^{-4u\ell^2\phi(k+1)b}$, then
    \[
    \Delta
    \equiv A^{i+a} B^{j - 2s\ell\phi(j)a} A^{2s\ell (j\phi(a) - ka)} C^{2s\ell\phi(j)\phi(a) - ja} B^b C^k B^{2s\ell kb} C^c
    \mod Z_1(J).
    \]

    Seeing that
    $[B^{j - 2s\ell\phi(j)a},A^{2s\ell (j\phi(a) - ka)}] = \eta^{-1}_{(2s\ell (j\phi(a) - ka),\; j - 2s\ell\phi(j)a)} A^{-\xi_1(2s\ell (j\phi(a) - ka),\; j - 2s\ell\phi(j)a)}$, where
    \begin{align*}
        \eta^{-1}_{(2s\ell (j\phi(a) - ka),\; j - 2s\ell\phi(j)a)} =\;
        &C^{2us\phi(j)a(ka - j\phi(a))} C^{2u\ell^2\phi(j)(ka - j\phi(a))} C^{2s\ell j(ka - j\phi(a))}\\
        &B^{4u\ell^2\phi(j)(ka - j\phi(a))} A^{2u\ell^2j(ka - j\phi(a))},
    \end{align*}
    $[C^{2s\ell\phi(j)\phi(a) - ja},B^b] = B^{-2s\ell jab} B^{4u\ell^2(\phi(j)\phi(a)b - \phi(1-ja)b)}$, and $[C^k,B^{2s\ell kb}] = B^{4u\ell^2k^2b}$, then
    \[
    \Delta
    \equiv A^{i + a + 2s\ell (j\phi(a) - ka)} B^{j - 2s\ell\phi(j)a} C^{2s\ell j(ka - j\phi(a))} B^b C^{2s\ell\phi(j)\phi(a) - ja}  B^{2s\ell(kb - jab)} C^{k+c}
    \mod Z_1(J).
    \]

    Since $[C^{2s\ell j(ka - j\phi(a))},B^b] = B^{4u\ell^2jb(ka - j\phi(a))}$, $[C^{2s},B^{2s}] = 1$, $[C^{-ja},B^{2s\ell(kb - jab)}] = B^{4u\ell^2jab(ja - k)}$, then
    \[
    \Delta
    \equiv A^{i + a + 2s\ell (j\phi(a) - ka)} B^{j + b + 2s\ell(kb - jab - \phi(j)a)} C^{k + c - ja + 2s\ell (\phi(j)\phi(a) + j(ka - j\phi(a)))}
    \mod Z_1(J).
    \]

    Using \eqref{eq.comj.9} and \eqref{eq.comj.23} in the exponent of $C$ and collecting the above central elements, we obtain
    \[
    \Delta
    = A^{i + a + 2s\ell (j\phi(a) - ka)} B^{j + b + 2s\ell(kb - jab - \phi(j)a)} C^{k + c - ja + 2s\ell (jka - \phi(j+1)\phi(a))} \Delta^\prime,
    \]
    where
    \begin{align*}
        \Delta^\prime =\;
        &A^{4u\ell^2(-\phi(k)a + k^2a + j^2a\phi(a) - jka^2)
            + 2u\ell^2j(ka - j\phi(a))}\\
        &B^{4u\ell^2(j\phi(j)a^2 - \phi(k+1)b - j\phi(j)\phi(a)
            + \phi(j)ka + \phi(j)\phi(a)b - \phi(1-ja)b
            + k^2b - j^2\phi(a)b + j^2a^2b)}\\
        &C^{2us\phi(j)a(ka - j\phi(a)) + 2u\ell^2\phi(j)(ka
            - j\phi(a))} A^{-\xi_1(a,j)} A^{-\xi_1(2s\ell (j\phi(a)
            - ka),\; j - 2s\ell\phi(j)a)}.
    \end{align*}
		
		It remains to show that $\Delta^\prime = A^{\xi_2(j,k,a,b)}$. For this purpose, note that
\begin{align*}
    &\xi_1(2s\ell(j\phi(a) - ka), j - 2s\ell\phi(j)a)\\
    &= 2u\ell^2\{2\varphi(2s\ell(j\phi(a) - ka)+1)(j - 2s\ell\phi(j)a)\\
    &+ (4s\ell(j\phi(a) - ka) - 7)\phi(2s\ell(j\phi(a) - ka))\phi(j - 2s\ell\phi(j)a)\\
    &- 4s\ell(j\phi(a) - ka)\phi(j - 2s\ell\phi(j)a)\\
    &- (6s\ell(j\phi(a) - ka)+1)(2s\ell(j\phi(a) - ka))\varphi(j - 2s\ell\phi(j)a)\}.
\end{align*}
Since $s\mid\varphi(2s\ell(j\phi(a) - ka) + 1)$ then
\begin{align*}
    &\xi_1(2s\ell(j\phi(a) - ka), j - 2s\ell\phi(j)a)\\
    &\equiv 2u\ell^2(- 7)\phi(2s\ell(j\phi(a) - ka))\phi(j - 2s\ell\phi(j)a)\\
    &\equiv 2us(j\phi(a) - ka)\phi(j - 2s\ell\phi(j)a)\mod 4us.
\end{align*}
By \eqref{eq.comj.22} and \eqref{eq.comj.23}, $\phi(j - 2s\ell\phi(j)a) = \phi(j) + \phi(2s\ell\phi(j)a+1) - 2s\ell j\phi(j)a$, so
\begin{align*}
    &\xi_1(2s\ell(j\phi(a) - ka), j - 2s\ell\phi(j)a)\\
    &\equiv 2us(j\phi(a) - ka)(\phi(j) + \phi(2s\ell\phi(j)a+1))\\
    &\equiv 2us(j\phi(a) - ka)\phi(j) + 2us(j\phi(a) - ka)\phi(2s\ell\phi(j)a+1))\\
    &\equiv 2us(j\phi(a) - ka)\phi(j) + 2u^2(j\phi(a) - ka)\phi(j)a\mod 4us.
\end{align*}
Since $C^{2u} = A^{2us}$, then $C^{2us\phi(j)a(j\phi(a) - ka) + 2u\phi(j)(j\phi(a) - ka)} A^{-\xi_1(2s\ell(j\phi(a) - ka), j - 2s\ell\phi(j)a)} = 1$.

Now, if $n\in\Z$, then the use of \eqref{eq.comj.9} and \eqref{eq.comj.23} yields $n^2 = 2\phi(n) + n$.

Also by \eqref{eq.comj.22}, \eqref{eq.comj.12}, and \eqref{eq.comj.23}, $\phi(1-ja) = \phi(-1+ja+1) = \phi(ja) = \phi(j)\phi(a+1) + \phi(j+1)\phi(a) = 2\phi(j)\phi(a) + \phi(j)a + j\phi(a)$. By (\ref{put}) $\Delta^\prime$ is a power of $A$ whose exponent is 
\begin{align*}
    &= 2u\ell^2\{-2\varphi(a+1)j - (2a-7)\phi(a)\phi(j) + 2a\phi(j) + (3a+1)a\varphi(j) - 2\phi(k)a + 2k^2a + 2j^2a\phi(a)\\
    &\qquad - 2jka^2 + jka - j^2\phi(a) - 2j\phi(j)a^2 + 2\phi(k+1)b + 2j\phi(j)\phi(a) - 2\phi(j)ka - 2\phi(j)\phi(a)b\\
    &\qquad + 2\phi(1-ja)b - 2k^2b + 2j^2\phi(a)b - 2j^2a^2b\}\\
    &= 2u\ell^2\{-2\varphi(a+1)j - 2a\phi(a)\phi(j) + 7\phi(a)\phi(j) + 2a\phi(j) + (3a+1)a\varphi(j) - 2\phi(k)a + 4\phi(k)a\\
    &\qquad + 2ka + 4\phi(j)a\phi(a) + 2ja\phi(a) - 4jk\phi(a) - 2jka + jka - 2\phi(j)\phi(a) - j\phi(a) - 4j\phi(j)\phi(a)\\
    &\qquad - 2j\phi(j)a + 2\phi(k)b + 2kb + 2j\phi(j)\phi(a) - 2\phi(j)ka - 2\phi(j)\phi(a)b + 4\phi(j)\phi(a)b + 2\phi(j)ab\\
    &\qquad + 2j\phi(a)b - 4\phi(k)b - 2kb + 4\phi(j)\phi(a)b + 2j\phi(a)b - 8\phi(j)\phi(a)b - 4\phi(j)ab - 4j\phi(a)b - 2jab\}\\
    &= 2u\ell^2\{\phi(a)\phi(j)(-2a + 7 + 4a - 2 - 4j + 2j - 2b + 4b + 4b - 8b) + \phi(j)a(2 - 2j - 2k + 2b - 4b)\\
    &\qquad + j\phi(a)(2a - 4k + 2b + 2b - 4b - 1) + \phi(k)(-2a + 4a + 2b - 4b) + ka(-2j + j + 2)\\
    &\qquad - 2j(ab + \varphi(a+1)) + (3a+1)a\varphi(j)\}\\
    &= 2u\ell^2\{\phi(a)\phi(j)(-2j + 2a - 2b + 5) - 2\phi(j)a(j + k + b - 1) - j\phi(a)(-2a + 4k + 1)\\
    &\qquad + 2\phi(k)(a - b) - ka(j - 2) - 2j(ab + \varphi(a+1)) + (3a+1)a\varphi(j)\}.
\end{align*}
Thus $\Delta^\prime = A^{\xi_2(j,k,a,b)}$ as stated.
\end{proof}

\begin{proof}[Proof of Corollary \ref{cor.inverse.J}] 
A direct application of Theorem \ref{thm.product.J} and use of \eqref{eq.comj.22} and \eqref{eq.comj.23} give
    \[
    B^{-b} A^{-a} = (A^0B^{-b}C^0)(A^{-a}B^0C^0)
    = A^{-a - 2s\ell\phi(a+1)b} B^{-b + 2s\ell a\phi(b+1)} C^{- ab - 2s\ell\phi(a+1)\phi(b)} A^{\xi_2(-b,0,-a,0)},
    \]
    so
    \begin{align*}
        (A^aB^bC^c)^{-1}
        &= C^{-c}B^{-b}A^{-a}\\
        &= (A^0B^0C^{-c})(A^{-a - 2s\ell\phi(a+1)b}
        B^{-b + 2s\ell a\phi(b+1)}
        C^{- ab - 2s\ell\phi(a+1)\phi(b)}) A^{\xi_2(-b,0,-a,0)}\\
        &= A^{-a - 2s\ell(\phi(a+1)b + ac) - 4u\ell^2\phi(a+1)bc}
        B^{-b + 2s\ell(a\phi(b+1) + bc) - 4u\ell^2a\phi(b+1)c}
        C^{-c - ab - 2s\ell\phi(a+1)\phi(b)}\\
        &\quad\; A^{\xi_2(-b,0,-a,0)
            + \xi^2(0,-c,-a-2s\ell\phi(a+1)b,-b+2s\ell a\phi(b+1))}.\qedhere
    \end{align*}
		\end{proof}

\begin{proof}[Proof of Proposition \ref{prop.bigCommutator.J}]
    By Corollary \ref{cor.inverse.J},
    \begin{align*}
        (A^iB^jC^k)^{-1}
        &\equiv A^{-i - 2s\ell(\phi(i+1)j + ik)} B^{-j + 2s\ell(i\phi(j+1) + jk)}
        C^{-k - ij - 2s\ell\phi(i+1)\phi(j)}\mod Z_1(J),\\
        (A^aB^bC^c)^{-1}
        &\equiv A^{-a - 2s\ell(\phi(a+1)b + ac)} B^{-b + 2s\ell(a\phi(b+1) + bc)}
        C^{-c - ab - 2s\ell\phi(a+1)\phi(b)}\mod Z_1(J).
    \end{align*}

    It is easy to see that $\phi(n+2sk)\equiv\phi(n)\mod s$ for all $n,k\in\Z$, so Theorem \ref{thm.product.J} and the use of \eqref{eq.comj.22}, \eqref{eq.comj.9}, and \eqref{eq.comj.23}, if necessary, implies,
    \[
    (A^iB^jC^k)^{-1}(A^aB^bC^c)^{-1}
    \equiv A^{\delta_1} B^{\delta_2} C^{\delta_3}\mod Z_1(J),
    \]
    where
    \begin{align*}
        \delta_1
        &\equiv - i - a - 2s\ell\{\phi(i+1)j + \phi(a+1)b + j\phi(a+1) + ik + ac + (k + ij)a\}\mod 2u,\\
        \delta_2
        &\equiv - j - b + 2s\ell\{i\phi(j+1) + a\phi(b+1) + \phi(j+1)a + jk + bc + (k + ij)b + jab\}\mod 2u,\\
        \delta_3
        &\equiv - k - c - ij - ab - ja - 2s\ell\{j\phi(a+1)b + jac - i\phi(j+1)a + \phi(i+1)\phi(j)\\
        &\qquad + \phi(a+1)\phi(b) + ij^2a + \phi(j)\phi(a+1)\}\\
        &\equiv - k - c - ij - ab -ja - 2s\ell\{j\phi(a+1)b + i\phi(j)a + \phi(i+1)\phi(j) + \phi(a+1)\phi(b)\\
        &\qquad + \phi(j)\phi(a+1) + jac\}\mod 2u,
    \end{align*}
    and
    \begin{align*}
        &(A^iB^jC^k)(A^aB^bC^c)\\
        &\qquad\equiv A^{i + a + 2s\ell (j\phi(a) - ka)}
        B^{j + b + 2s\ell (kb - jab - \phi(j)a)}
        C^{k + c - ja + 2s\ell (jka - \phi(j+1)\phi(a))}\mod Z_1(J).
    \end{align*}

    Then, applying Theorem \ref{thm.product.J}, and using \eqref{eq.comj.22}, \eqref{eq.comj.9}, and \eqref{eq.comj.23}, shows that
    \begin{align*}
        [A^iB^jC^k,A^aB^bC^c]
        &= (A^iB^jC^k)^{-1}(A^aB^bC^c)^{-1}(A^iB^jC^k)(A^aB^bC^c)\\
        &\equiv A^{\textup{exp} A}B^{\textup{exp} B}C^{\textup{exp} C}\mod Z_1(J),
    \end{align*}
    where
    \begin{align*}
        \textup{exp} A
        &\equiv 2s\ell\{j\phi(a) - 2ka - \phi(i+1)j - \phi(a+1)b - j\phi(a+1) - ik - ac - ija\\
        &\qquad - (j+b)\phi(i+a) + (k+c+ij+ab+ja)(i+a)\}\\
        &\equiv 2s\ell\{j\phi(a) - \phi(i)b + ic - ka\}\mod 2u,\\
        \textup{exp} B
        &\equiv 2s\ell\{kb - jab - \phi(j)a + i\phi(j+1) + a\phi(b+1) + \phi(j+1)a + jk + bc + (k + ij)b\\
        &\qquad + jab - (k+c+ij+ab+ja)(j+b) + (j+b)^2(i+a) - \phi(j+b+1)(i+a)\}\\
        &\equiv 2s\ell\{i\phi(b) - \phi(j)a + kb + j(ib - ab - c)\}\mod 2u,\\
        \textup{exp}\, C
        &\equiv \delta_3 + k + c - ja + 2s\ell (jka - \phi(j+1)\phi(a)) - \delta_2(i + a + 2s\ell (j\phi(a) - ka))\\
        &\qquad + 2s\ell\{(j+b)(i+a)(k+c+ij+ab+ja) - \phi(j+b)\phi(i+a)\}\\
        &\equiv ib - ja + 2s\ell\{\phi(a)(\phi(j) + jb) - \phi(i)(\phi(b) + jb) + ijc - kab\}\mod 2u.\qedhere
    \end{align*}
\end{proof}

\begin{lemma}\label{lemma.sums.J}
    Let $a\in\Z$ and $t\in\N$. Then\vspace{5pt}\\
    $
    \sum_{k=0}^{t} k = \frac{t(t+1)}{2} = \phi(t+1),\\
    \sum_{k=0}^{t} k^2 = \frac{t(t+1)(2t+1)}{6} = 2\varphi(t+1) + \phi(t+1),\\
    \sum_{k=0}^{t} k^3 = (\frac{t(t+1)}{2})^2 = \phi(t+1)^2,\\
    \sum_{k=0}^{t}\phi(ak) = \sum_{k=0}^{t}\frac{ak(ak-1)}{2} = a^2\varphi(t+1) + \phi(a)\phi(t+1),\\
    \sum_{k=0}^{t}\phi(ak)k = \sum_{k=0}^{t}\frac{ak^2(ak-1)}{2} = \frac{a\phi(t+1)(a\phi(t+1)-1)}{2} - a\varphi(t+1),\\
    \sum_{k=0}^{t}\varphi(ak+1) = \sum_{k=0}^{t}\frac{(ak+1)(ak)(ak-1)}{6} = \frac{a\phi(t+1)(a^2\phi(t+1)-1)}{6},\\
    \sum_{k=0}^{t}\varphi(k) = \sum_{k=0}^{t}\frac{k(k-1)(k-2)}{6} = \frac{\phi(t+1)(\phi(t+1)-1)}{6} - \varphi(t+1),\\
    \sum_{k=0}^{t}\varphi(k)k = \sum_{k=0}^{t}\frac{k^2(k-1)(k-2)}{6} = \frac{1}{6}\left(\frac{t(t+1)(6t^3+9t^2+t-1)}{30} - 3\phi(t+1)^2 + 4\varphi(t+1) + 2\phi(t+1)\right).
    $
\end{lemma}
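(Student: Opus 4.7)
My plan is to dispatch these as a family of polynomial identities in $t$, reducing the last five formulas to the first three by linear algebra rather than doing eight separate inductions.

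First I would establish the first three formulas, $\sum_{k=0}^t k$, $\sum_{k=0}^t k^2$, and $\sum_{k=0}^t k^3$, by the standard induction on $t$, with base case $t=0$, where both sides vanish. The induction step for each is a one-line polynomial identity, and we rewrite the outputs in terms of $\phi$ and $\varphi$ using $\phi(t+1)=\tfrac{t(t+1)}{2}$ and $\varphi(t+1)=\tfrac{(t+1)t(t-1)}{6}$. This gives
\[
\sum_{k=0}^{t} k = \phi(t+1),\quad \sum_{k=0}^{t} k^2 = 2\varphi(t+1)+\phi(t+1),\quad \sum_{k=0}^{t} k^3 = \phi(t+1)^2.
\]

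Next I would obtain formulas four through seven by expanding $\phi$ and $\varphi$ as polynomials in their arguments and applying the three sums above term by term. Explicitly, writing $\phi(ak)=\tfrac{a^2k^2-ak}{2}$ gives
\[
\sum_{k=0}^{t}\phi(ak)=\tfrac{a^2}{2}(2\varphi(t+1)+\phi(t+1))-\tfrac{a}{2}\phi(t+1)=a^2\varphi(t+1)+\phi(a)\phi(t+1),
\]
and multiplying by $k$ before summing yields formula five. For $\varphi(ak+1)=\tfrac{(ak)^3-ak}{6}$ a similar linear combination of $\sum k^3$ and $\sum k$ produces formula six, and $\varphi(k)=\tfrac{k^3-3k^2+2k}{6}$ yields formula seven. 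Each is a short algebraic simplification.

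The only formula that requires additional input is the eighth: $\varphi(k)k=\tfrac{k^4-3k^3+2k^2}{6}$ forces me to also record the classical
\[
\sum_{k=0}^t k^4=\frac{t(t+1)(2t+1)(3t^2+3t-1)}{30},
\]
which I would prove by the same single-line induction on $t$. The rest is verifying the factorization $(2t+1)(3t^2+3t-1)=6t^3+9t^2+t-1$ and collecting the remaining terms via the expressions for $\sum k^3$ and $\sum k^2$ already established. The main (very mild) obstacle is purely bookkeeping: making sure the conversions between $\phi(t+1),\varphi(t+1)$ and the raw polynomials in $t$ are carried out consistently, since the right-hand sides in the statement are written in mixed notation. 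No structural difficulty arises, as everything reduces to polynomial identities over $\Q$ verified either by induction or by a direct degree-bounded check.
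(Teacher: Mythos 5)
Your proposal is correct and is essentially the paper's own argument: the paper likewise expands each general term via the distributive law, reduces to the standard power sums, and rewrites the results using the definitions of $\phi$ and $\varphi$. All of your intermediate identities (including $\sum_{k=0}^t k^4=\tfrac{t(t+1)(2t+1)(3t^2+3t-1)}{30}$ and the factorization $(2t+1)(3t^2+3t-1)=6t^3+9t^2+t-1$) check out.
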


\begin{proof}
    In each case the use of the distributive law in the sum's general term, properties of finite sums, and the definitions of $\phi$ and $\varphi$ give the desired results.
\end{proof}

\begin{proof}[Proof of Theorem \ref{thm.power.J}] Fix $a,b,c\in\Z$. Suppose that $n\geq 2$, let $k,t\in\N$, and set $\Delta = (A^aB^bC^c)^n$. Let us first work with $\Delta$ modulo $Z_1(J)$. By definition,
    \begin{align*}
        \Delta
        &= (A^aB^bC^c)(A^aB^bC^c)\cdots (A^aB^bC^c)(A^aB^bC^c)(A^aB^bC^c)\\
        &= A^{na} (B^bC^c)^{A^{(n-1)a}} (B^bC^c)^{A^{(n-2)a}}\cdots (B^bC^c)^{A^{2a}} (B^bC^c)^{A^{a}} (B^bC^c).
    \end{align*}
    As $[B^b,A^{ka}]
    = [A^{ka},B^b]^{-1}
    = (A^{-2s\ell b\phi(ak)} B^{2s\ell a\phi(b)k} C^{abk - 2s\ell\phi(b)\phi(ak)} A^{\xi_1(ak,b)})^{-1}
    = C^{2s\ell\phi(b)\phi(ak) - abk}\\
    B^{-2s\ell a\phi(b)k} A^{2s\ell b\phi(ak)} A^{-\xi_1(ak,b)}$ and $[C^c,A^{ka}] = A^{-2s\ell ack - 4u\ell^2a\phi(c)k}$, we infer
    \begin{align*}
        \Delta
        &\equiv A^{na} (B^b C^{2s\ell\phi(b)\phi((n-1)a) - ab(n-1)} B^{-2s\ell a\phi(b)(n-1)} A^{2s\ell b\phi((n-1)a)} C^c A^{-2s\ell ac(n-1)})\cdots\\
        &\qquad (B^b C^{2s\ell\phi(b)\phi((1)a) - ab(1)} B^{-2s\ell a\phi(b)(1)} A^{2s\ell b\phi((1)a)} C^c A^{-2s\ell ac(1)}) (B^bC^c).
    \end{align*}
    Since $[C^{2s\ell\phi(b)\phi(ka) - abk},B^{-2s\ell a\phi(b)k}] = B^{4u\ell^2a^2\phi(b)bk^2}$ and $[A^{2s\ell b\phi(ka)},C^c] = A^{4u\ell^2bc\phi(ka)}$, then
    \begin{align*}
        \Delta
        &\equiv A^{na} (B^{b - 2s\ell a\phi(b)(n-1)} C^{2s\ell\phi(b)\phi((n-1)a) - ab(n-1) + c} A^{2s\ell(b\phi((n-1)a) - ac(n-1))})\cdots\\
        &\qquad (B^{b - 2s\ell a\phi(b)(1)} C^{2s\ell\phi(b)\phi((1)a) - ab(1) + c} A^{2s\ell (b\phi((1)a) - ac(1))}) (B^bC^c).
    \end{align*}
    Set $g_1(t) = \sum_{k=0}^{t} (b\phi(ka) - ack)$. By Lemma \ref{lemma.sums.J}, $g_1(t) = a^2b\varphi(t+1) + (\phi(a)b - ac)\phi(t+1)$. Then
    \begin{align*}
        \Delta
        &\equiv A^{na} A^{2s\ell g_1(n-1)} (B^{b - 2s\ell a\phi(b)(n-1)} C^{2s\ell\phi(b)\phi((n-1)a) - ab(n-1) + c})^{A^{2s\ell g_1(n-1)}}\cdots\\
        &\qquad (B^{b - 2s\ell a\phi(b)(1)} C^{2s\ell\phi(b)\phi((1)a) - ab(1) + c})^{A^{2s\ell g_1(1)}} (B^bC^c).
    \end{align*}
    As
    \begin{align*}
        &[B^{b - 2s\ell a\phi(b)k},A^{2s\ell g_1(k)}]\\
        &\qquad = [A^{2s\ell g_1(k)},B^{b - 2s\ell a\phi(b)k}]^{-1}\\
        &\qquad = (A^{2u\ell^2bg_1(k)} B^{4u\ell^2\phi(b)g_1(k)} C^{2s\ell bg_1(k) + 2u\ell^2\phi(b)g_1(k) + 2us\ell^3a\phi(b)g_1(k)k} A^{\xi_1(2s\ell g_1(k),b - 2s\ell a\phi(b)k)})^{-1}\\
        &\qquad = C^{-2s\ell bg_1(k)} A^{-2u\ell^2bg_1(k)} B^{-4u\ell^2\phi(b)g_1(k)} C^{- 2u\ell^2\phi(b)g_1(k) - 2us\ell^3a\phi(b)g_1(k)k} A^{-\xi_1(2s\ell g_1(k),b - 2s\ell a\phi(b)k)}
    \end{align*}
    and $[C^{2s\ell\phi(b)\phi(ka) - abk + c},A^{2s\ell g_1(k)}] = A^{-4u\ell^2g_1(k)(-abk + c)}$, we deduce
    \begin{align*}
        \Delta
        &\equiv A^{na + 2s\ell g_1(n-1)} (B^{b - 2s\ell a\phi(b)(n-1)} C^{-2s\ell bg_1(n-1) + 2s\ell\phi(b)\phi((n-1)a) - ab(n-1) + c})\cdots\\
        &\qquad (B^{b - 2s\ell a\phi(b)(2)} C^{-2s\ell bg_1(2) + 2s\ell\phi(b)\phi((2)a) - ab(2) + c}) (B^{b - 2s\ell a\phi(b)(1)} C^{-2s\ell bg_1(1) + 2s\ell\phi(b)\phi((1)a) - ab(1) + c})\\
        &\qquad (B^bC^c).
    \end{align*}
    Set $g_2(t) = \sum_{k=0}^{t} (b - 2s\ell a\phi(b)k)$ and $g_3(t) = 2s\ell\{\phi(b)\phi(ta) - bg_1(t)\} - abt + c$. By Lemma \ref{lemma.sums.J}, $g_2(t) = b(t+1) - 2s\ell a\phi(b)\phi(t+1)$. Therefore
    \[
    \Delta
    \equiv A^{na + 2s\ell g_1(n-1)} B^{g_2(n-1)} (C^{g_3(n-1)})^{B^{g_2(n-2)}}\cdots (C^{g_3(2)})^{B^{g_2(1)}} (C^{g_3(1)})^{B^{g_2(0)}} (C^{g_3(0)}).
    \]
    As $[C^{g_3(k)},B^{g_2(k-1)}] = B^{2s\ell(bck - ab^2k^2)} B^{4u\ell^2(\phi(b)b\phi(ka)k - b^2g_1(k)k + a^2\phi(b)b\phi(k)k - a\phi(b)c\phi(k) - b\phi(-abk+c+1)k)}$, we deduce
    \begin{align*}
        \Delta
        &\equiv A^{na + 2s\ell g_1(n-1)} B^{g_2(n-1)} (C^{g_3(n-1)} B^{2s\ell(bc(n-1) - ab^2(n-1)^2)})\cdots (C^{g_3(2)} B^{2s\ell(bc(2) - ab^22^2)})\\
        &\quad (C^{g_3(1)} B^{2s\ell(bc(1) - ab^21^2)}) (C^{g_3(0)}).
    \end{align*}
    Set $g_4(t) = \sum_{k=0}^{t} (bck - ab^2k^2)$. By Lemma \ref{lemma.sums.J}, $g_4(t) = b(c - ab)\phi(t+1) - 2ab^2\varphi(t+1)$. As $[C^{g_3(k)},B^{2s\ell g_4(k)}] = B^{4u\ell^2g_4(k)(-abk+c)}$, then
    \begin{align*}
        \Delta
        &\equiv A^{na + 2s\ell g_1(n-1)} B^{g_2(n-1) + 2s\ell g_4(n-1)} (C^{g_3(n-1)})^{B^{2s\ell g_4(n-1)}}\cdots (C^{g_3(2)})^{B^{2s\ell g_4(2)}} (C^{g_3(1)})^{B^{2s\ell g_4(1)}} (C^{g_3(0)})\\
        &\equiv A^{na + 2s\ell g_1(n-1)} B^{g_2(n-1) + 2s\ell g_4(n-1)} C^{g_5(n-1)}\mod Z_1(J),
    \end{align*}
    where $g_5(n-1) = \sum_{k=0}^{n-1} g_3(k)$. As $g_3(k) = 2s\ell\{\phi(b)\phi(ak) - a^2b^2\varphi(k+1) - (\phi(a)b-ac)b\phi(k+1)\} - abk + c$, then Lemma \ref{lemma.sums.J} produces
    \[
    g_5(n-1) = 2s\ell\{a^2\phi(b)\varphi(n) + \phi(a)\phi(b)\phi(n) - a^2b^2\sigma_2(1,n) - (\phi(a)b-ac)b\varphi(n+1)\} - ab\phi(n) + cn.
    \]
    Thus
    \[
    \Delta
    = A^{na + 2s\ell g_1(n-1)} B^{g_2(n-1) + 2s\ell g_4(n-1)} C^{g_5(n-1)} A^{\xi_4},
    \]
    were $A^{\xi_4}$ is the central element obtained by gathering all the above central commutators, i.e,
    \begin{align*}
        &A^{\xi_4} =\\
        &A^{4u\ell^2\sum\limits_{k=0}^{n-1}\{bc\phi(ka) - a\phi(c)k - g_1(k)(-abk+c)\} - 2u\ell^2\sum\limits_{k=0}^{n-1}bg_1(k) - \sum\limits_{k=0}^{n-1}\{\xi_1(ka,b) + \xi_1(2s\ell g_1(k),b-2s\ell a\phi(b)k)\}}\\
        &B^{4u\ell^2\sum\limits_{k=0}^{n-1}\{a^2\phi(b)bk^2 - \phi(b)g_1(k) + \phi(b)b\phi(ka)k - b^2g_1(k)k + a^2\phi(b)b\phi(k)k - a\phi(b)c\phi(k) - b\phi(-abk+c+1)k + g_4(k)(-abk+c)\}}\\
        &C^{\sum\limits_{k=0}^{n-1}\{- 2u\ell^2\phi(b)g_1(k) - 2us\ell^3a\phi(b)g_1(k)k\}}.
    \end{align*}
    Using properties of $\phi$ and $\varphi$, and applying Lemma \ref{lemma.sums.J}, gives\vspace{3pt}\\
    $\xi_1(2s\ell g_1(k),b-2s\ell a\phi(b)k)\equiv 2us\phi(b)g_1(k) + 2u^2a\phi(b)g_1(k)k\mod 4us$,\\
    $\xi_1(ak,b) = 4u\ell^2\{b\varphi(ak+1) + a\phi(b)\phi(ak)k - a\phi(b)k\} - 2u\ell^2\{7\phi(b)\phi(ak) + 3a^2\varphi(b)k^2\ + a\varphi(b)k\}$.\vspace{5pt}
    Using the above and $A^{2u}B^{2u}=1$, $A^{2us}=C^{2u}$, produces
    \begin{align*}
        \xi_4
        &= 2u\ell^2\sum_{k=0}^{n-1}\{a(\varphi(b)+2\phi(b)-2\phi(c))k + a^2(3\varphi(b)-2b\phi(b))k^2 + 2a\phi(b)c\phi(k) - 2a^2b\phi(b)\phi(k)k\\
        &\qquad\qquad\quad + (2bc+7\phi(b))\phi(ak) - 2\phi(b)(a+b)\phi(ak)k - 2b\varphi(ak+1) + 2b\phi(-abk+c+1)k\\
        &\qquad\qquad\quad + (2\phi(b)-2c-b)g_1(k) + 2b(a+b)g_1(k)k - 2cg_4(k) + 2abg_4(k)k\}.
    \end{align*}
    Appealing to Lemma \ref{lemma.sums.J} again, it is easy to see that\vspace{3pt}\\
    $
    \sum_{k=0}^{t} g_1(k) = a^2b\sigma_2(1,t+1) + (\phi(a)b-ac)\varphi(t+2),\\
    \sum_{k=0}^{t} g_1(k)k = a^2b(\sigma_3(t+1) + \sigma_1(1,t+1) - \varphi(t+1)) + (\phi(a)b-ac)(\sigma_1(1,t+1) + \varphi(t+2)),\\
    \sum_{k=0}^{t} g_4(k) = b(c-ab)\varphi(t+2) - 2ab^2\sigma_2(1,t+1),\\
    \sum_{k=0}^{t} g_4(k)k = b(c-ab)(\sigma_1(1,t+1) + \varphi(t+2)) - 2ab^2(\sigma_3(t+1) + \sigma_1(1,t+1) - \varphi(t+1)),\\
    \sum_{k=0}^{t}\phi(-abk+c+1)k = \sigma_1(ab,t+1) + (\phi(c+1) - abc)\phi(t+1) - ab(2c+1)\varphi(t+1).
    $\vspace{5pt}\\
    Thus
    \begin{align*}
        \xi_4
        &= 2u\ell^2\{a(\varphi(b)+2\phi(b)-2\phi(c))\phi(n)
        + a^2(3\varphi(b)-2b\phi(b))(2\varphi(n)+\phi(n))
        + 2a\phi(b)c\varphi(n)\\
        &\qquad\qquad - 2a^2b\phi(b)(\sigma_1(1,n)-\varphi(n))
        + (2bc+7\phi(b))(a^2\varphi(n) + \phi(a)\phi(n))\\
        &\qquad\qquad - 2\phi(b)(a+b)(\sigma_1(a,n) - a\varphi(n)) - 2b\sigma_2(a,n)\\
        &\qquad\qquad + (2\phi(b)-2c-b)(a^2b\sigma_2(1,n) + (\phi(a)b-ac)\varphi(n+1))\\
        &\qquad\qquad + 2b(a(\phi(a)b-ac)+b^2(\phi(a)-a^2))(\sigma_1(1,n) + \varphi(n+1))\\
        &\qquad\qquad + 2a^2b^2(a-b)(\sigma_3(n) + \sigma_1(1,n) - \varphi(n))\\
        &\qquad\qquad - 2c((c-ab)b\varphi(n+1) - 2ab^2\sigma_2(1,n))\\
        &\qquad\qquad + 2b(\sigma_1(ab,n) + (\phi(c+1) - abc)\phi(n) - ab(2c+1)\varphi(n))\}.
    \end{align*}
    This completes the proof for the case $n\geq 2$. We readily see that the result is valid when $n\in\{0,1\}$. Now, if $n<0$, as the exponent of $J$ is $4us$ in the case $m>1$ and 8 in the case $m=1$, and letting $q\in\N$ be such that $n + 30\cdot 6\cdot 4usq > 0$, we have that $(A^aB^bC^c)^n = (A^aB^bC^c)^{n+30\cdot 6\cdot 4usq}$. Then from \eqref{eq.comj.1new}, \eqref{eq.comj.10new}, the fact that $\sigma_3(n+30\cdot 6\cdot 4usq)\equiv\sigma_3(n)\mod 4us$, and the above, it follows that the result is also valid for $n<0$.
\end{proof}

\noindent{\bf Acknowledgements.} We thank Eamonn O'Brien for his help with Magma calculations.
We are indebted to the referee for a very careful reading of the manuscript that greatly improved
the presentation of the paper.

\end{document}